\newcounter{ENUM}
\def\presuper#1#2%
\def\<{\langle}
\def\>{\rangle}
\def\0{{{\bf 0}}}
\def\OO{{\mathcal O}}
\def\CF{{\mathcal F}}
\def\CN{{\mathcal N}}
\def\CM{{\mathcal M}}
\def\FF{{\mathbb F}}
\def\GG{{\mathbb G}}
\def\HH{{\widetilde H}{}{}}
\def\LG{{\tensor*[^L]G{}}}
\def\QQ{{\mathbb Q}}
\def\tX{{\tilde X}}
\newcommand{\Lie}{\operatorname{Lie}}
\newcommand{\Ad}{\operatorname{Ad}}
\newcommand{\SL}{\operatorname{SL}}
\newcommand{\Sp}{\operatorname{Sp}}
\newcommand{\SO}{\operatorname{SO}}
\newcommand{\sss}{\mbox{\rm \tiny ss}}
\newcommand{\et}{\mbox{\rm \tiny \'et}}
\def\ZZ{{\mathbb Z}}
\def\NN{{\mathbb N}}
\def\ad{\operatorname{ad}}
\def\Hom{\operatorname{Hom}}
\def\Fr{\operatorname{Fr}}
\def\End{\operatorname{End}}
\def\Aut{\operatorname{Aut}}
\def\GL{\operatorname{GL}}
\def\tr{\operatorname{tr}}
\def\Spec{\operatorname{Spec}}
\def\Pic{\operatorname{Pic}}
\def\id{\operatorname{id}}
\def\univ{\operatorname{univ}}
\def\im{\operatorname{im}}
\def\varphiL{\tensor*[^L]{\varphi}{}}
\def\To#1{\buildrel\hbox{\tiny{$#1$}}\over\longrightarrow}
\def\ZM{{\mathbb{Z}}}
\def\CM{{\mathbb{C}}}
\def\uZ{{\underline Z}}
\def\uH{{\underline H}}
\def\uSigma{{\underline\Sigma}}
\def\HG{{\hat G}}
\def\oFl{\bar{\mathbb{F}}_{\ell}}
\def\To#1{\buildrel\hbox{\tiny{$#1$}}\over\longrightarrow}
\def\ZM{{\mathbb{Z}}}
\def\uZ{{\underline Z}}
\def\uSigma{{\underline\Sigma}}
\def\varphiL{{\tensor*[^L]\varphi{}}}
\def\phiL{{^{L}\phi}}
\def\Sym{{\operatorname{Sym}}}
\def\uZ{{\underline{Z}}}
\def\HG{{\hat G}}
\def\HH{{\hat H}}
\def\HT{{\hat T}}
\def\HB{{\hat B}}
\def\To#1{\buildrel\hbox{\tiny{$#1$}}\over\longrightarrow}
\def\ZM{{\mathbb{Z}}}
\def\CM{{\mathbb{C}}}
\def\FM{{\mathbb{F}}}
\def\uZ{{\underline Z}}
\def\uH{{\underline H}}
\def\uSigma{{\underline\Sigma}}
\def\HG{{\hat G}}
\def\oFl{\bar{\mathbb{F}}_{\ell}}
\def\varphiL{{\tensor*[^L]\varphi{}}}
\def\phiL{{^{L}\phi}}
\def\phit{\phi}
\def\alphat{\alpha}
\newcommand{\margh}[1]{}
\newtheorem{thm}{Theorem}[section]
\newtheorem{prop}[thm]{Proposition}
\newtheorem{lemma}[thm]{Lemma}
\newtheorem{cor}[thm]{Corollary}
\newtheorem{conj}[thm]{Conjecture}
\theoremstyle{definition}
\newtheorem{defn}[thm]{Definition}
\newtheorem{ex}[thm]{Example}
\newtheorem{notn}[thm]{Notation}
\newtheorem{rem}[thm]{Remark}
\numberwithin{equation}{section}
\def\To#1{\buildrel\hbox{\tiny{$#1$}}\over\longrightarrow}
\def\ZM{{\mathbb{Z}}}
\def\NM{{\mathbb{N}}}
\def\uZ{{\underline Z}}
\def\uH{{\underline H}}
\def\uSigma{{\underline\Sigma}}
\def\HG{{\hat G}}
\def\oFl{\bar{\mathbb{F}}_{\ell}}
\title{Moduli of Langlands parameters}
\author{Jean-Fran\c cois Dat}
\address{Jean-Fran\c cois Dat, Institut de Math\'ematiques de Jussieu,
  Sorbonne Universit\'e, Universit\'e de Paris, CNRS 4, place Jussieu, 75252, Paris, France.}
\email{jean-francois.dat@imj-prg.fr }
\author{David Helm}
\address{David Helm, Department of Mathematics, Imperial College, London, SW7 2AZ, United Kingdom.}
\email{d.helm@imperial.ac.uk }
\author{Robert Kurinczuk}
\address{Robert Kurinczuk, School of Mathematics and Statistics, University of Sheffield, Sheffield, S3 7RH, United Kingdom.}
\email{robkurinczuk@gmail.com}
\author{Gilbert Moss}
\address{Gil Moss, Department of Mathematics and Statistics, 5752 Neville Hall, Room 237, The University of Maine, Orono, ME 04469, USA.}
\email{gilbert.moss@maine.edu}
\subjclass[2010]{11F80 (primary); 11F70, 22E50 (secondary)}
\begin{document}

\begin{abstract}
Let $F$ be a non-archimedean local field of residue characteristic
$p$, let ${\hat G}$ be a split reductive group scheme over $\ZZ[\frac{1}{p}]$ with an action of $W_F$, and let $\LG$ denote the semidirect product
${\hat G} \rtimes W_F$.  We construct a moduli space of Langlands parameters $W_F \rightarrow \LG$, and show that it is locally of finite type and flat over $\ZZ[\frac{1}{p}]$, and
that it is a reduced local complete intersection.
We give parameterizations of the connected components and the irreducible components
  of the geometric fibers of this space, and parameterizations of the
  connected components of the total space over
  $\overline{\ZZ}[\frac{1}{p}]$ (under mild hypotheses) and over $\overline{\ZZ}_{\ell}$ for
  $\ell\neq p$. In each case, we show precisely how each connected
  component identifies with the ``principal'' connected component
  attached to a  smaller split reductive group scheme.
Finally, we study 
the GIT quotient of this space by ${\hat G}$ and give a description of its fibers up to homeomorphism, and a complete description of
its ring of functions after inverting an explicit finite set of primes depending only on~$\LG$.

\end{abstract}
\maketitle

\setcounter{tocdepth}{1}
\tableofcontents

\section{Introduction and main results}

\subsection{Introduction}
Let $F$ be a local field with residue characteristic $p$, 
and $G$ a quasi-split connected reductive group over $F$.  Let $\ell$ be a prime different from $p$.
A {\em Langlands parameter} for $G$ is a continuous $L$-homomorphism $W_F \rightarrow \LG(\overline{\QQ}_{\ell})$; that is,
an $\ell$-adically continuous homomorphism from the Weil group $W_F$ to the group of $\overline{\QQ}_{\ell}$-points of the Langlands dual
group $\LG := \hat G \rtimes W_F$ of $G$, such that the composition with the natural map 
$\LG(\overline{\QQ}_{\ell}) \rightarrow W_F$
is the identity. 

When $G$ is the general linear group $\GL_n$, then $\LG$ is simply the product $\GL_n \times W_F$, and a Langlands parameter for
$G$ is simply a continuous representation: $W_F \rightarrow \GL_n(\overline{\QQ}_{\ell})$.  Such representations vary nicely in
algebraic families; in particular, given a continuous representation $\overline{\varphi}: W_F \rightarrow \GL_n(\overline{\FF}_{\ell})$, 
we can associate to it the {\em universal framed deformation ring} $R_{\overline{\varphi}}^{\Box}$. 
a complete Noetherian local $W(\overline{\FF}_{\ell})$-algebra
that admits a continouous representation $\varphi^{\univ}: W_F \rightarrow \GL_n(R_{\overline{\varphi}}^{\Box})$ such that the pair
$(R_{\overline{\varphi}}^{\Box},\varphi^{\univ})$ is universal for pairs $(R,\varphi)$, where $R$ is a complete Noetherian local
$W(\overline{\FF}_{\ell})$-algebra and $\varphi: W_F \rightarrow \GL_n(R)$ is a lift of $\overline{\varphi}$.  

Given the importance of such deformation spaces in the Langlands program, it is natural to attempt to construct corresponding 
``universal deformation spaces'' for Langlands parameters attached to groups $G$ other than $\GL_n$.  Indeed, Bellovin and Gee~\cite{BG}
and Booher and Patrikis~\cite{BP19} independently study a closely related problem.  Specifically, (cf.~\cite{BP19}, Section 2) define
an {\em $\LG$-Weil-Deligne representation} over a $W(\overline{\FF}_{\ell})$-algebra $A$ to be a triple $(D_A,r,N)$, where
$D_A$ is an $\LG$-bundle over $\Spec A$, $r: W_F \rightarrow \Aut_{\LG}(D_A)$ is a homomorphism {\em with open kernel}, and $N$ is
a nilpotent element of the Lie algebra of $\Aut_{\LG}(D_A)$ such that $\operatorname{Ad}_r(w) N = \lvert w \rvert N$ for all $w \in W_F$.
Both Bellovin-Gee and Booher-Patrikis construct moduli spaces of such $\LG$-Weil-Deligne representations, that are schemes 
locally of finite type over $W(\overline{\FF}_{\ell})$, and show that their general fibers are generically smooth and equidimensional
of dimension equal to the dimension of $\LG$.

When $A$ is complete local, and $\ell$ is invertible in $A$, Grothendieck's monodromy theorem gives a natural bijection between $\LG$-Weil-Deligne 
representations with values in $A$ and Langlands parameters with values in $A$, so the results of Bellovin-Gee and Booher-Patrikis in
some sense give a solution to the problem of finding universal families for Langlands parameters over $G$.  Their method
relies heavily on the exponential and logarithm maps, which have denominators, and also involves division by the order of the image of an element of inertia.
There is thus reason to question whether
a naive extension of these constructions to situations where $\ell$ is not invertible gives the ``right'' objects, particularly
if the prime $\ell$ is small enough to divide one of these denominators.
For instance, when $G = \GL_2$, and $\ell$ divides $q^2 - 1$ (where $q$ denotes the order of the residue field of $F$),
the analogue of the spaces constructed by Bellovin-Gee
and Booher-Patrikis fails to be flat over $W(\overline{\FF}_{\ell})$.  Since universal framed deformation rings are known to be flat over
$W(\overline{\FF}_{\ell})$, this means that when $G = \GL_2$, naive generalization of the constructions of Bellovin-Gee and Booher-Patrikis fails to recover
the existing theory in such characteristics.  It is reasonable to expect that this failure of flatness persists for more complicated groups.
Such a failure makes these spaces unsuitable for formulating
analogues of Shotton's ``$\ell \neq p$ Breuil-Mezard'' results for $\GL_n$ \cite{MR3769675}.
We refer the reader to section~\ref{subsection:bellovin-gee} for further discussion of this point.

In light of these issues, it is tempting to look at alternative characterizations of Langlands parameters over fields of characteristic zero,
in the hope that they suggest better behaved moduli problems.  There are (at least) three definitions of a ``Langlands parameter
over $\overline{\QQ}_{\ell}$'' common in the literature:
\begin{enumerate}
\item pairs $(r,N)$, where $r: W_F \rightarrow \LG(\overline{\QQ}_{\ell})$ is an $L$-homomorphism with open kernel and $N \in \Lie({\hat G}_{\overline{\QQ}_{\ell}})$
a nilpotent element, such that $\operatorname{Ad}_r(w) = \lvert w\rvert N$,
\item maps $W_F \times \SL_2(\overline{\QQ}_{\ell}) \rightarrow \LG(\overline{\QQ}_{\ell})$ whose restriction to the first factor is an $L$-homomorphism with open kernel
and whose restriction to the second factor is algebraic, and
\item $L$-homomorphisms $\varphiL: W_F \rightarrow \LG(\overline{\QQ}_{\ell})$ that are $\ell$-adically continuous.
\end{enumerate}

The first of these definitions generalizes in an obvious way to coefficients in an arbitrary $W(\overline{\FF}_{\ell})$-algebra $R$, and considering the associated
moduli problem leads to the schemes considered by Bellovin-Gee and Booher-Patrikis.  The second likewise generalizes to such algebras $R$, but the associated
moduli space is much less well-behaved.  For instance, the moduli space of unramified pairs $(r,N)$ as in (1) is connected over $\overline{\QQ}_{\ell}$, 
whereas the space of unramified maps
$W_F \times \SL_2 \rightarrow \LG$ as in (2) is, over $\overline{\QQ}_{\ell}$, a disjoint union over the set of conjugacy classes of unipotent elements
$u \in {\hat G}(\overline{\QQ}_{\ell})$, of the loci where the image of the matrix $\left(\begin{smallmatrix} 1 & 1\\0 & 1\end{smallmatrix}\right)$ in the $\SL_2$ factor is conjugate to $u$.

It is therefore tempting to try to construct a moduli space of $\ell$-adically continuous $L$-homomorphisms from $W_F$ to $\LG$ as in (3).  The notion of
$\ell$-adic continuity for $L$-homomorphisms valued in $\LG(\overline{\QQ}_{\ell})$ generalizes naturally to complete local rings of residue characteristic $\ell$;
this is sufficient for a well-behaved deformation theory but is insufficient to obtain a moduli space that is locally of finite type.  In order to obtain
such a space, one would need a broader notion of $\ell$-adic continuity.

Our approach to this question is inspired by previous work of the second author in~\cite{curtis}.  That paper introduces
a notion of $\ell$-adic continuity for maps $W_F \rightarrow \GL_n(R)$ that makes sense for arbitrary $W(\overline{\FF}_{\ell})$-algebras $R$,
and constructs universal families of such representations over a suitable $W(\overline{\FF}_{\ell})$-scheme, which we will denote here by $X_n$.  
(This notation differs from that of~\cite{curtis}, where what we call the scheme $X_n$ only appears implicitly, as the disjoint union of the 
schemes denoted $X_{q,n}^{\nu}$).
As with the constructions
of Bellovin-Gee and Booher-Patrikis, the scheme $X_n$ is locally of finite type over $W(\overline{\FF}_{\ell})$, but unlike their construction,
the completion of the local ring of $X_n$ at any $\overline{\FF}_{\ell}$-point of $X_n$, corresponding to a map $\overline{\varphi}: W_F \rightarrow \GL_n(\overline{\FF}_{\ell})$,
is the universal framed deformation ring $R^{\Box}_{\overline{\varphi}}$.  In other words, $X_n$ is a locally of finite-type $W(\overline{\FF}_{\ell})$-scheme
that ``interpolates'' the universal framed deformation rings of all $n$-dimensional mod $\ell$ representations of $W_F$.  

The schemes $X_n$ constructed in~\cite{curtis} play a central role in the formulation and proof of the
``local Langlands correspondence in families'' for the group $\GL_n$, now proven by two of the authors in ~\cite{HM18}.  (These results,
in turn, imply the existence of the families conjectured by Emerton and the second author in~\cite{EH14}.) 
In particular, the subring of functions on $X_n$ that are invariant under the conjugation action on Langlands parameters is naturally
isomorphic to the center of the category of smooth $W(\overline{\FF}_{\ell})[\GL_n(F)]$-modules.  Morally, this means that aspects
of the geometry of $X_n$ are reflected in the representation theory of $\GL_n(F)$.  For instance, the connected components of $X_n$
correspond to the ``blocks'' of the category of smooth $W(\overline{\FF}_{\ell})[\GL_n(F)]$-modules.

In this paper our first objective is to generalize the construction of~\cite{curtis} to the setting of Langlands parameters for arbitrary quasi-split,
connected reductive groups, with an eye towards formulating a conjectural analogue of the local Langlands correspondence in families
for such groups.  In a departure from previous work on the subject, we work over the base ring $\ZZ[\frac{1}{p}]$ rather than
over a ring of Witt vectors; this introduces some technical complexity but gives us the smallest possible base ring
for such a correspondence.   (In particular this allows us to study chains of congruences of Langlands parameters modulo several different primes.)  We refer the reader to the next subsection for precise definitions.

Second, we aim to understand the geometry of these moduli spaces of Langlands parameters.  Several natural questions arise.  
It turns out that, as in the setting of local deformation theory of Galois representations, the spaces we obtain have a
quite tractable local structure: they are reduced local complete intersections that are flat over $\Spec \ZZ[\frac{1}{p}]$,
of  dimension $\dim {G}$.  Moreover, we give descriptions of the connected components of these moduli spaces, both over
algebraically closed fields of arbitrary characteristic $\ell \neq p$, and (conjecturally) over $\overline{\ZZ}[\frac{1}{p}]$.

Finally, we study the rings of functions on these moduli spaces that are invariant under ${\hat G}$-conjugacy (or, equivalently,
the GIT quotient of the moduli space of Langlands parameters by the conjugation action of ${\hat G}$.) 
As in the case of $\GL_n$, the ring of such functions is in general quite complicated, and does not admit an explicit description.
(In particular, the corresponding GIT quotients are very far from being normal.)
Nonetheless, we show that after inverting an explicit finite set of primes (depending only on $G$), the GIT quotients are quite nice;
indeed, they are disjoint unions of quotients of tori by finite group actions.  Over the complex numbers these connected components
coincide with varieties studied by Haines \cite{haines}.

\subsection{The moduli space of Langlands parameters} 
We now describe in detail the moduli problem that we study.
Following \cite{curtis}, the approach we take is to ``discretize'' the tame inertia
group.  Fix an arithmetic Frobenius element  $\Fr$ in $W_F$ and a pro-generator $s$ of the tame inertia group $I_F/P_F$.
These satisfy the relation $\Fr s \Fr^{-1} = s^q$.
We then consider the subgroup $\langle\Fr,s\rangle =
s^{\ZM[\frac 1q]}\rtimes\Fr^{\ZM}$ of $W_{F}/P_{F}$, we denote by $W_{F}^{0}$ its inverse
image in $W_{F}$, and we endow it with the topology that extends the profinite
topology of $P_{F}$ and induces the discrete topology on $\langle\Fr,s\rangle$.
Note that (in contrast to the subgroup $W_F$ of $G_F$), the subgroup $W_F^0$ of $W_F$ very much depends
on the choices of $\Fr$ and $s$.

Although
the topology on $W_F^0$ is finer than the one induced from $W_{F}$,  the relation
$\Fr s\Fr^{-1}=s^{q}$ implies that a morphism
$W_{F}^{0}\rightarrow\LG(\overline{\mathbb{Q}}_{\ell})$ is continuous if and only if it is continuous for the  topology
induced from $W_{F}$. 
It follows  that restriction to $W_{F}^{0}$ induces a bijection between objects of type (3)
and the following objects :
\begin{enumerate} \setcounter{enumi}{3}
\item continuous morphisms $\varphiL :\, W_{F}^{0}\rightarrow\LG(\overline{\mathbb{Q}}_{\ell})$ (with either the discrete
  or the natural topology on $\LG(\overline{\mathbb{Q}}_{\ell})$).
\end{enumerate}
These objects are now easy to define over any $\ZM_{\ell}$-algebra $R$ since only the
discrete topology of $\LG(R)$ is needed. Indeed, they are also defined for any
$\ZM[\frac 1p]$-algebra and their moduli space over $\ZM[\frac 1p]$ is already interesting.

We therefore consider the following setting:
\begin{itemize}
\item $\HG$ is a split reductive group scheme over $\ZM[\frac 1p]$ endowed
with a finite action of the absolute Galois group $G_{F}$ (we do not
assume that $G_{F}$ preserves a pinning).
\item $W_{F}^{0}$ is the inverse image in $W_{F}$ of the subgroup $s^{\ZM[\frac{1}{q}]}\rtimes
  \Fr^{\ZM}$ of $W_{F}/P_{F}$, which depends on the choice of a generator $s$ of the
  tame inertia group $I_{F}/P_{F}$ and a lift of Frobenius.   
\item 
$(P_{F}^{e})_{e\in\mathbb{N}}$ is a decreasing sequence of open subgroups of $P_{F}$ that are
normal in $W_{F}$ and whose intersection is $\{1\}$.
\end{itemize}


Note that for any $\ZM[\frac{1}{p}]$-algebra $R$, there is a natural bijection between the
continuous $L$-homomorphisms $\varphiL: W_F^{0} \rightarrow \LG(R)$ (with respect to the
discrete topology on $\LG(R)$) and the set of continuous $1$-cocycles
$Z^1(W_F^0,\HG(R))$ on $W_F^0$ with values in $\HG(R)$.  If, given $\varphiL$, we denote by
$\varphi$ the corresponding cocycle, then this bijection is characterized by the identity
$\varphiL(w) = (\varphi(w), w)$ for all $w \in W_F$.

Since the cocycles we consider are continuous with respect to the discrete topology, we have
$Z^{1}(W_{F}^{0},\HG(R))=\bigcup_{e\in\mathbb{N}}Z^{1}(W_{F}^{0}/P_{F}^{e},\HG(R))$.
It is easy to see that the functor $R\mapsto Z^{1}(W_{F}^{0}/P_{F}^{e},\HG(R))$ on
$\ZM[\frac 1p]$-algebras is
represented by an affine scheme of finite presentation over $\ZM[\frac 1p]$, that we denote
 by $\uZ^{1}(W_{F}^{0}/P_{F}^{e},\HG)$. It follows that the functor
$R\mapsto Z^{1}(W_{F}^{0},\HG(R))$ is represented by a scheme $\uZ^{1}(W_{F}^{0},\HG)$,
in which each $\uZ^{1}(W_{F}^{0}/P_{F}^{e},\HG)$ sits as a direct summand, and 
which is the increasing union of all these subschemes.

As a $\ZZ[\frac{1}{p}]$-scheme, the scheme $\uZ^1(W_F^0,\HG)$ depends on the choices we made
defining $W_F^0$ as a subgroup of $W_F$.  Indeed, if $W_F^{0'}$ is the subgroup arising from
a different choice $(\Fr',s')$ then there is not typically a canonical isomorphism
of $\ZZ[\frac{1}{p}]$-schemes from $\uZ^1(W_F^0,\HG)$ to $\uZ^1(W_F^{0'},\HG)$.  However,
there are canonical such isomorphisms over $\ZZ_{\ell}$ for each $\ell$
not equal to $p$ (Corollary \ref{cor:indep_moduli}).
Moreover, we show (Theorem~\ref{thm:indep_quotient}) that the GIT quotient $\uZ^1(W_F^0,\HG)\sslash {\hat G}$ is, up to {\em canonical}
isomorphism, independent of the choices defining $W_F^0$.  We further suspect, but do not prove,
that the corresponding quotient stacks are also canonically isomorphic.

Our study of $\uZ^1(W_F^0,\HG)$ relies on the restriction map 
$\uZ^1(W_F^0,\HG) \To{} \uZ^{1}(P_{F},\HG)$.
One crucial point is that the scheme $\uZ^{1}(P_{F},\HG)$ is
particularly well behaved, because $p$ is invertible in our coefficient rings. Indeed, we
prove the following result in the appendix.

\begin{prop}
The scheme $ \uZ^{1}(P_{F},\HG)$ is smooth and its base change to $\overline\ZM[\frac 1p]$ is a disjoint
union of orbit schemes. More precisely, there is a set $\Phi\subset
Z^{1}(P_{F},\HG(\overline\ZM[\frac 1p]))$ such that
\begin{enumerate}
\item $ \uZ^{1}(P_{F},\HG)_{\overline\ZM[\frac 1p]} = \coprod_{\phi\in\Phi} \HG\cdot\phi$
and $\HG\cdot\phi$ represents the sheaf-theoretic (fppf or \'etale) quotient
$\HG/C_{\HG}(\phi)$. 
 \item each centralizer $C_{\HG}(\phi)$ is smooth over
$\overline\ZM[\frac 1p]$ with split reductive neutral component and constant $\pi_{0}$.
\end{enumerate}
\end{prop}

This says in particular that 
any cocycle
 $\phi'\in Z^{1}(P_{F},\HG(R))$ is, locally for the \'etale topology on
 $R$,  $\HG$-conjugate to a locally
unique $\phi$ in $\Phi$.


Via the restriction morphism 
$\uZ^{1}(W_{F}^{0},\HG) \To{} \uZ^{1}(P_{F},\HG)$,  the proposition
induces a decomposition
$$\uZ^{1}(W_{F}^{0},\HG)_{\overline\ZM[\frac 1p]} = \coprod_{\phi\in\Phi} 
\HG \times^{C_{\HG}(\phi)} \uZ^{1}(W_{F}^{0},\HG)_{\phi}$$
where $\uZ^{1}(W_{F}^{0},\HG)_{\phi}$ is the closed subscheme of parameters
$\varphi$ such that $\varphi_{|P_{F}}=\phi$.
In Section 3 we further decompose $\uZ^{1}(W_{F}^{0},\HG)_{\phi}$ as follows.
\begin{prop} For each $\phi\in\Phi$,
  there is a finite set
  $\Phi_{\phi}\subset Z^{1}(W_{F}^{0},\HG(\overline\ZM[\frac 1p]))_{\phi}$,
  which is a singleton if $C_{\HG}(\phi)$ is connected, 
 with the following properties :
  \begin{enumerate}
  \item $\forall\tilde\varphi\in\Phi_{\phi}$, $\tilde\varphi(W_{F}^{0})$ normalizes a Borel pair in
    $C_{\HG}(\phi)^{\circ}$ 
  \item $\forall\tilde\varphi\in\Phi_{\phi}$, the map  $\eta\mapsto
    \eta\cdot\tilde\varphi$ defines a closed and open immersion
    $$\uZ^{1}_{\rm Ad_{\tilde\varphi}}(W_{F}^{0}/P_{F},C_{\HG}(\phi)^{\circ}) \hookrightarrow
    \uZ^{1}(W_{F}^{0},\HG)_{\phi}$$
  \item The collection of these maps defines an isomorphism
    \begin{equation}
\coprod_{(\phi,\tilde\varphi)} \HG\times^{C_{\HG}(\phi)_{\tilde\varphi}}\uZ^{1}_{\rm
      Ad_{\tilde\varphi}}(W_{F}^{0}/P_{F},C_{\HG}(\phi)^{\circ}) \To\sim
    \uZ^{1}(W_{F}^{0},\HG)_{\overline\ZM[\frac 1p]}\label{eq:decomp_intro}
  \end{equation}
  where $C_{\HG}(\phi)_{\tilde\varphi}$ is the open and closed
  subgroup scheme of $C_{\HG}(\phi)$ that stabilizes the image of (2).
  \end{enumerate} 
\end{prop}

These results are essentially relative versions of the constuctions
of~\cite[Section 2]{functoriality}.
We note that if $\HG$ is a classical group and $p>2$, then $C_{\HG}(\phi)$ is always connected.
Moreover, if the center of $\HG$ is smooth over $\ZZ[\frac 1p]$, then we show that ``Borel pair'' can be
replaced by ``pinning'' in (1).

In general, this result  shows that the crucial case to study is the
space of \emph{tame} parameters for a \emph{tame}  action of $W_{F}$ that preserves a
Borel pair of $\HG$.  This case is thoroughly studied in Section 2.
Using the results of that section and the above decomposition we will get
the following result, (Theorem \ref{thm:geometry})


\begin{thm}
  The scheme $\uZ^{1}(W_{F}^{0},\HG)$ is syntomic (flat and locally a complete
  intersection) over $\uZ^{1}(P_{F},\HG)$, generically smooth, of pure  absolute dimension $\dim(\HG)$.
\end{thm}

Beware that $\dim\HG=\dim G+1$ whenever $\HG$ is the Langlands dual
group of a reductive group $G$ over $F$, since the base scheme of
$\HG$ has dimension $1$.

We further conjecture that the summands appearing in the decomposition of~(\ref{eq:decomp_intro})
are connected. The last proposition reduces this conjecture to proving
that for any $\HG'$ with a tame Galois action preserving a Borel pair,
the summand  in~(\ref{eq:decomp_intro}) corresponding to {\em tame}
parameters is connected. In Theorem
\ref{thm:connected_pinning_preserved}  we  prove this
under the assumption that the action even preserves a pinning, i.e. when $\LG'$ is genuinely
the $L$-group of a tamely ramified  reductive group $G'$ over $F$.
This allows us  to deduce our conjecture in many cases. In particular,
Theorem \ref{thm:connected} asserts :

\begin{thm}
  If the center of $\HG$ is smooth, then all the summands in the decomposition of
  (\ref{eq:decomp_intro}) are connected.
\end{thm}



For $G=\GL_{n}$, where all centralizers are connected, this result says that
each $\uZ^{1}(W_{F}^{0},\HG)_{\phi}$ is connected, and
may be thought of  as the Galois
counterpart of the fact, discovered by S\'echerre and Stevens \cite{MR4000000}, that two
irreducible representations of $\GL_{n}(F)$ belong to the same endoclass if and
only if they are connected by a series of congruences at various
primes different from $p$.

The reduction to tame parameters also allows us to obtain a parameterization of the geometric irreducible components
of $\uZ^{1}(W_F^0,\HG)$, that is, the irreducible components of $\uZ^{1}(W_F^0,\HG)_L$ for an algebraically closed field
$L$ of characteristic different from $p$.  Such components are characterized by ``inertial types'' (that is, by specifing
the restriction of the parameter to the inertia subgroup of $W_F^0$), together with some extra data that accounts for disconnectedness of
centralizers.  In particular, combining Corollary~\ref{cor:tame irreducible components} with this reduction to tame parameters, we find:

\begin{thm}
For any algebraically closed field $L$ of characteristic different from $p$, there is a natural bijection between
the irreducible components of $\uZ^{1}(W_F^0,\HG)_L$ and the set of $\HG(L)$-conjugacy classes of pairs $(\xi,\overline{\CF}_0)$,
where $\xi$ is an element in the image of the restriction map $\uZ^1(W_F^0,\HG(L)) \rightarrow \uZ^1(I_F^0,\HG(L))$,
and $\overline{\CF}_0$ is an element of $\pi_0(T_{\hat G}(\xi^{\Fr}, \xi'))$.   [Here $\xi^{\Fr}$ is the conjugate of $\xi$ under
the action of $\Fr$ on ${\hat G}$, $\xi'$ is the composition of $\xi$ with the automorphism ``conjugation by $\Fr$'' of $I_F^0$,
and $T_{\hat G}(\xi^{\Fr},\xi')$ is the transporter; that is, the subgroup of ${\hat G}_L$ consisting of elements that conjugate
$\xi^{\Fr}$ to $\xi'$.]

Moreover, this bijection is characterized by the property that for a general $L$-point $\varphi$ of the irreducible component
of $\uZ^1(W_F^0, \HG)_L$ corresponding to a pair $(\xi,
\overline{\CF}_0)$, there exists a $\HG(L)$-conjugate of $\xi$ whose restriction 
to $I_F^0$ is equal to $\xi,$ and value at $\Fr$ 
lies in the component of
$T_{\hat G}(\xi^{\Fr}, \xi')$ given by $\overline{\CF}_0$.
\end{thm}

\subsection{The space of parameters over $\ZM_{\ell}$}

Let us now fix a prime number $\ell\neq p$.
For a $\ZM_{\ell}$-algebra $R$,  we say that a $\HG(R)$-valued cocycle $\varphi$ is
\emph{$\ell$-adically continuous} if there is 
some $\ell$-adically separated  ring $R_{0}$ such that $\varphi$ comes by
pushforward from some  $\HG(R_{0})$-valued cocycle $\varphi_{0}$, all of whose pushforwards
to $\HG(R_{0}/\ell^{n})$ are continuous for the topology inherited from $W_{F}$. 
It is not \emph{a priori} clear that this definition is local for any
usual topology. But the following result, extracted from Theorem \ref{thm:geometry}, shows
it is, and may justify again our approach involving the weird group $W_{F}^{0}$. 

\begin{thm}
The ring of functions $R_{\LG}^{e}$ of the affine scheme
$\uZ^{1}(W_{F}^{0}/P_{F}^{e},\HG)$ is $\ell$-adically separated and the universal cocycle
 $\varphi^{e}_{\rm univ}$  extends
uniquely to an $\ell$-adically continuous cocycle
$$ \varphi^{e}_{\ell-\rm univ}:\, W_{F}/P_{F}^{e}\To{}\HG(R_{\LG}^{e}\otimes\ZM_{\ell})$$
which is universal for $\ell$-adically continuous cocycles.  
\end{thm}

The $\ell$-adic continuity property of  $\varphi_{\ell-\rm univ}^{e}$ and the $\ell$-adic
separateness of $R_{\LG}^{e}\otimes\ZM_{\ell}$ imply that
the restriction of $\varphi_{\ell-\rm univ}^{e}$ to the prime-to-$\ell$
inertia group $I_{F}^{\ell}$ factors over a finite quotient. Since the order of this
finite quotient is invertible in $\ZM_{\ell}$,  we can use the same
strategy as before to decompose $\uZ^{1}(W_{F}^{0},\HG)_{\overline\ZM_{\ell}}$ using now
restriction of parameters to $I_{F}^{\ell}$. The upshot is a decomposition similar to (\ref{eq:decomp_intro})
\begin{equation}
 \uZ^{1}(W_{F}^{0},\HG)_{\overline\ZM_{\ell}} =
\coprod_{(\phi^{\ell},\tilde\varphi)} \HG\times^{C_{\HG}(\phi^{\ell})_{\tilde\varphi}}
\uZ^{1}_{{\rm Ad}_{\tilde\varphi}}(W_{F}^{0}/P_{F},C_{\HG}(\phi^{\ell})^{\circ})_{1_{I_{F}^{\ell}}}
\label{eq:decomp_intro_zl}
\end{equation}
Theorem \ref{thm:connectedZl} asserts that each summand of this decomposition has a geometrically connected
special fiber so, in particular, is connected.  The collection of all these connectedness
results for varying $\ell$ is used in the proof of the connectedness results over
$\overline\ZM[\frac 1p]$.

\subsection{The categorical quotient over a field} We now fix an algebraically closed
field $L$ of characteristic $\ell\neq p$
but we allow $\ell=0$. We consider the categorical quotient
$$ \uZ^{1}(W^{0}_{F},\HG)_{L}\sslash\HG_{L} = \lim_{\longrightarrow}\, \Spec( (R_{\LG}^{e}\otimes L)^{\HG_{L}}).$$
Recall that the closed points of $\uZ^{1}(W_{F}^{0},\HG)_{L}\sslash\HG_{L}$ correspond to
closed  $\HG(L)$-orbits in $Z^{1}(W_{F}^{0},\HG(L))$. A theorem of Richardson tells us that a
cocycle $\varphi$ has closed orbit if and only if its image in $\LG=\HG\rtimes W_{F}$ is
\emph{completely reducible} in the sense that whenever it is contained in a parabolic
subgroup of $\LG$, it has to be contained in some Levi subgroup of this parabolic subgroup.

When $\ell\neq 0$, we already know from (\ref{eq:decomp_intro_zl}) how to parametrize its connected
components, and we now wish to describe them explicitly, at least up to homeomorphism. 
In order to give a unified treatment including $\ell=0$,
we (re)label the connected components of $\uZ^{1}(W_{F}^{0},\HG)_{L}\sslash\HG_{L}$ by
the set $\Psi(L)$ of $\HG(L)$-conjugacy classes of pairs $(\phi,\beta)$ consisting of

\begin{itemize}
\item a completely reducible inertial cocycle $\phi\in Z^{1}(I_{F},\HG(L))$.

\item an element $\beta$ in $\{\tilde\beta\in\HG(L)\rtimes \Fr,\,
\tilde\beta \phi(i)\tilde\beta^{-1}= \phi(\Fr i \Fr^{-1})\}/C_{\HG}(\phi)^{\circ}$.
\end{itemize}

For such a pair, the centralizer $C_{\HG}(\phi(I_{F}))$ is a (possibly disconnected) reductive
algebraic group over $L$. So we fix a Borel pair $(\hat B_{\phi},\hat T_{\phi})$ in
$C_{\HG}(\phi)^{\circ}$ and we choose a lift $\tilde\beta$ of $\beta$ that normalizes this
Borel pair. The adjoint action of $\tilde\beta$ on $\hat T_{\phi}$ only depends on
$\beta$, and so does its action on the Weyl group $\Omega_{\phi}=\Omega_{\phi}^{\circ}\rtimes\pi_{0}(C_{\HG}(\phi))$.
Now for all $\hat t\in\hat T_{\phi}$ we can extend $\phi$ uniquely  to a cocycle
$\varphi_{\hat t\tilde\beta}\in \uZ^{1}(W_{F}^{0},\HG_{L})$ such that  $\varphi_{\hat t\tilde\beta}(\Fr)=\hat t\tilde\beta$.
The following result is Corollary \ref{cor:up_to_homeo}.
\begin{thm}
  The collection of maps $\hat t\mapsto \varphi_{\hat t\tilde\beta}$ define a universal homeomorphism
  $$ \coprod_{(\phi,\beta)\in\Psi(L)} (\hat T_{\phi})_{\beta}\sslash
  (\Omega_{\phi})^{\beta} \To{\approx} \uZ^{1}(W_{F}^{0},\HG_{L})\sslash
  \HG_{L},$$
 which is an  isomorphism if ${\rm char}(L)=0$.
\end{thm}

In particular, we see that each connected component of $\uZ^{1}(W_{F}^{0},\HG_{L})\sslash
  \HG_{L}$ is irreducible. 

When $L=\CM$, this allows us to compare in Section
\ref{subsec:comp-with-hain} our categorical quotient
with Haines' algebraic variety constructed in \cite{haines}.
\begin{cor}
  The scheme $\uZ^{1}(W_{F}^{0},\HG_{\CM})\sslash  \HG_{\CM}$ is canonically isomorphic
  to Haines' variety.
\end{cor}

When $L=\oFl$, we give in Theorem \ref{thm:structureGITfield} an explicit condition on $\ell$ for the
homeomorphism of the above theorem to be an isomorphism. This involves
the notion of $\LG$-banal prime that we now discuss.

\subsection{Reducedness of fibers and $\LG$-banal primes}
The obstruction to obtaining a description of the GIT quotients over $\ZZ[\frac{1}{p}]$ analogous to our description of the GIT quotients over fields
comes from non-reducedness of certain fibers of $\uZ^1(W_F, {\hat G})$.  In Theorem~\ref{thm:unobstructed_explicit} we determine an explicit finite set $S$ of
primes, depending only on $\LG$, such that the fibers of $\uZ^1(W_F, {\hat G})$ are geometrically reduced outside of $S$.

The reducedness of the fibers mod $\ell$, for $\ell$ outside $S$ implies in particular that given two distinct irreducible components of the geometric general 
fiber of $\uZ^1(W_F, {\hat G})$, their reductions mod $\ell$ remain
distinct.  Moreover, the reduction of each such component has scheme-theoretic multiplicity one.  

When $\LG$ is the $L$-group of a quasi-split connected reductive group $G$ over $F$, the philosophy underlying Shotton's ``$\ell \neq p$ Breuil-Mezard conjecture''
suggests that this ``multiplicity-preserving'' bijection between irreducible components in characteristic zero and characteristic $\ell$ should correspond,
on the representation theoretic side of the local Langlands correspondence, to a lack of congruences between distinct ``inertial types'' for $G$.
It is well-known that such congruences do not appear when the prime $\ell$ is {\em banal} for $G$; that is, when $\ell$ does not divide the pro-order of any compact open subgroup of $G$.  
We therefore call the set of primes $\ell$ outside $S$ ``$\LG$-banal'' primes, and we show that if $G$ is an unramified group over $F$ with no exceptional factors, then
the $\LG$-banal primes are precisely the primes that are banal for
$G$, see Corollary \ref{cor:banalvsbanal}.  On the other hand, for
certain exceptional groups $G$ there exist primes that are banal 
for $G$ but not $\LG$-banal.  It would be an interesting question (which we do not attempt to address in this paper) to find an explanation for this discrepancy
in terms of the representation theory of $G$.

Finally, we exploit the reducedness of fibers at primes away from $S$ to compute the GIT quotient $\uZ^1(W_F^0/P_F^e, {\hat G}) \sslash {\hat G}$ over $\overline\ZZ[\frac{1}{Mp}]$
for a suitable $M$, divisible by all $\LG$-banal primes. 
We refer to Subsection \ref{subsec:description}
for more details on the following statement,
which is essentially Theorem \ref{thm:structureGIT}.



\begin{thm} There is a set of triples  $(\phi,\tilde\beta,T_{\phi})$ consisting of a
  cocycle $\phi\in Z^{1}(I_{F},\HG(\overline\ZZ[\frac 1{pM}]))$, an element
  $\tilde\beta\in \HG(\overline\ZZ[\frac 1{pM}])\rtimes\Fr$ such that
  $\tilde\beta \phi(i)\beta^{-1}=\phi(\Fr i\Fr^{-1})$ for all $i\in I_{F}$,
  and an $\Ad_{\tilde\beta}$-stable maximal torus of
  $C_{\HG}(\phi)^{\circ}$, such that the collection of embeddings
  $T_{\phi}\hookrightarrow C_{\HG}(\phi)$ induce an isomorphism of 
$\overline\ZZ[\frac 1{pM}]$-schemes
$$\coprod_{(\phi,\beta)}
(T_{\phi})_{{\rm Ad}_{\tilde\beta}}\sslash (\Omega_{\phi})^{{\rm Ad}_{\tilde\beta}}  \To\sim
(\uZ^{1}(W_{F}^0/P_{F}^{e},\HG)\sslash\HG)_{\overline\ZZ[\frac 1{pM}]}
.
$$
\end{thm}

When $\LG$ is the Langlands dual group of an unramified
group, $M$ can be taken as the product of $\LG$-banal primes.
In general, a description of the integer $M$ can be extracted from
Proposition~\ref{prop:homeisombanal}. 

\subsection{Relation to recent work}
This paper has been a long time coming; many of the main results were already announced at the October 2019 Oberwolfach workshop ``New developments in the representation theory of $p$-adic
groups'', including in~the reports~\cite{KOberwolfach,DOberwolfach}.   The key idea of discretizing tame inertia first appeared in the 2016 arXiv version of \cite{curtis}.

At a late stage in the preparation of this paper, Xinwen Zhu (\cite{zhu}, particularly Section 3.1) independently generalized the~$\GL_n$-construction of \cite{curtis} to construct a moduli space of Langlands parameters for a general reductive group.   Zhu shows, as we do, that the spaces are flat, reduced local complete intersections, although he does not always use the same techniques.  The overlap in results between Zhu's work and our own occurs primarily with results contained in our Section 2 and Appendix A.  In particular, our global study of the connected components, including the functoriality principle identifying each connected component with the principal component of a smaller group, our parameterization of the irreducible components, our study of reducedness of the fibers, and our explicit description of the GIT quotients by ${\hat G}$ do not appear in his work. 

Even more recently, Laurent Fargues and Peter Scholze have proposed in \cite[Chapter
  VIII]{FarguesScholze} a different 
  construction of a moduli space of Langlands parameters over $\ZZ_{\ell}$, for $\ell \neq
  p$, in which the continuity constraints are dealt with via condensed
  mathematics. However, in order to study the main properties of their space and in
  particular prove flatness, reducedness and l.c.i., they revert
  to the same discretization process as ours, and their space turns out to be isomorphic
  to ours after base
  change to $\ZZ_{\ell}$. There is no further overlap with our paper,
  but they prove an additional beautiful result (under some mild hypothesis) : that the formation of the GIT quotient
  commutes with arbitrary base change.

\subsection{Acknowledgements} The authors are grateful to the organizers of the April 2018 conference on ``New developments in automorphic forms'' at the Instituto de Matematicas Universidad
de Sevilla, where many of the ideas behind this paper were first worked out.  We are also grateful to the organizers of the October 2019 Oberwolfach workshop ``New developments in the representation theory of $p$-adic
groups'' where most of the results of this paper have been announced.  We thank Jack Shotton, Stefan Patrikis, Sean Howe, Shaun Stevens, and Peter Scholze for helpful conversations on the subject of the paper.  We thank Eugen Hellmann for organizing an ``Oberseminar'' on this work, and Sean Cotner, Pol van Hoften, and Peter Schneider for their comments and corrections.  The second author was partially supported by EPSRC grant EP/M029719/1, the third author was partially supported by EPSRC grant EP/V001930/1, and the fourth author was partially supported by NSF grant DMS-200127.   Finally, we thank the referee for a tremendous list of comments and corrections, which have greatly improved the paper.

\section{The space of tame parameters}

We begin by considering moduli of tame Langlands parameters for tame groups.
Let $F$ be a non-archimedean local field of residue characteristic $p$, and let $I_F$, $P_F$ denote the inertia group and wild inertia group of
$F$, respectively.  Let $\OO$ be the ring of integers in a finite extension $K$ of $\QQ$, and
$\HG$ be a split connected reductive algebraic group over $\OO[\frac{1}{p}]$, and let $(\hat B,\hat T)$ be
a pair consisting of a Borel subgroup $\hat B$ of $\HG$ defined over $\OO[\frac{1}{p}]$ and a split maximal torus $\hat T$ of $\HG$
contained in $\hat B$.

We suppose that $\HG$ is equipped with an action of $W_F/P_F$ that preserves the pair $(\hat B,\hat T)$, and factors through a finite
quotient $W$ of $W_F/P_F$.  Regard $W$ as a constant group scheme over $\OO[\frac{1}{p}]$, and let $\LG$ denote the semidirect product 
$\HG \rtimes W$; we regard $\LG$ as a disconnected algebraic group scheme over $\OO[\frac{1}{p}]$.

\begin{rem}
Given our general motivations, the most natural setup would require
further that the action of $W_{F}$ on $\HG$ preserves a pinning of
$\HG$, so that $\LG$ would be the $L$-group of  a connected, quasi-split reductive $F$-group $G$ that 
splits over a tamely ramified extension of $F$. 
However, in the next section we will reduce the study of the space of all Langlands
parameters to the particular setup above, and at the moment we are not
able to reduce to the case where a pinning is fixed. 

On the other hand, the results of this section do not need the hypothesis
above on $W_{F}$ preserving a Borel pair of $\HG$ ; it will be useful
later when we study the GIT quotient and parametrize connected
components.
\end{rem}

Let $\Fr$ denote a lift of arithmetic Frobenius to $W_F/P_F$, and let $s$ be a topological generator of $I_F/P_F$.  We will
regard $\Fr$ and $s$ as elements of $W$.  We have $\Fr s \Fr^{-1} = s^q$ in $W_F/P_F$, where $q$ is the order of the residue field of $F$.

\subsection{Parameters, $L$-homomorphisms, and $1$-cocycles}

Recall that, in the case where $\LG$ is the $L$-group of a connected, quasi-split,
reductive $F$-group $G$, a {\em tame Langlands parameter} 
for $G$ is a continuous homomorphism $\rho: W_F/P_F \rightarrow
\LG(\overline{\QQ}_{\ell})$, whose composition with the projection  
$\LG(\overline{\QQ}_{\ell}) \rightarrow W$ is the natural quotient map $W_F \rightarrow
W$.   We will often refer to such a homomorphism as an 
{\em $L$-homomorphism}.  Note that if $\rho$ is a tame Langlands parameter,
there is a unique continuous cocycle $\rho^{\circ}$ in $Z^1(W_F/P_F, {\HG}(\overline\QQ_{\ell}))$ such that
$\rho(w) = (\rho^{\circ}(w),w)$; this gives a bijection between 
the set of $L$-homomorphisms and this set of cocycles.


Let $(W_F/P_F)^0$ denote the subgroup of $W_F/P_F$ generated by the elements $\Fr$ and $s$ that we fixed above, regarded as a {\em discrete} group.
Let $W_F^0$ be the preimage of $(W_F/P_F)^0$ in $W_F$.  (Note that both these groups depend heavily on the choices we made for $\Fr$ and $s$!)  

For any $\OO[\frac 1p]$-algebra $R$, the set of 
$L$-homomorphisms $(W_F/P_F)^0 \rightarrow \LG(R)$ is naturally in bijection with the set
of cocycles $Z^1(W_F^0/P_F, {\HG}(R))$.
\emph{Unless stated otherwise, we will denote by $\varphi$ a cocycle, and by $\varphiL$
  the associated $L$-homomorphism.} 

The group $\HG(R)$ acts by conjugation on the set of $L$-homomorphisms $(W_F/P_F)^0
\rightarrow \LG(R)$. The corresponding action on $Z^1(W_F^0/P_F, {\HG}(R))$ is sometimes called
``twisted conjugation''. We will denote by $^{g}\varphi$ the 
twisted-conjugate of the cocycle $\varphi$ by $g$. Explicitly, we have
$^{g}\varphi(w)=g\varphi(w) ({^{w}g})^{-1}$ where $^{w}g$ denotes the given action of $w$ on $g$.

\subsection{The scheme $\uZ^1(W_F^0/P_F,{\HG})$}

The functor that sends $R$
to $Z^1(W_F^0/P_F,{\HG}(R))$ is representable by an affine scheme
denoted by $\uZ^1(W_F^0/P_F,{\HG})$. Concretely, a
cocycle $\varphi$ is determined by the two elements $\varphi(\Fr)$ and $\varphi(s)$ of $\HG(R)$. 
Conversely,
a pair of elements $\CF_{0},\sigma_{0}$ arises in this way if, and only if the following
identitiy holds in $\LG(R)$
$$(\CF_0,\Fr)(\sigma_0,s)(\CF_0,\Fr)^{-1} = (\sigma_0,s)^q.$$

We may thus identify $\uZ^1(W_F^0/P_F,{\HG})$ with
the closed subscheme of $\HG \times \HG$ consisting of pairs $(\CF_0,\sigma_0) \in \HG \times \HG$
such that the above identity
holds in $\LG$.  In particular, $\uZ^1(W_F^0/P_F,{\HG})$ is affine, with coordinate ring
$R_{\LG}$, and we have a ``universal pair'' $(\CF_0,\sigma_0)$ of elements of ${\HG}(R_{\LG})$ satisfying the above identity.
The ``universal cocycle'' $\varphi_{\univ}$ on $Z^1(W_F^0/P_F,{\HG}(R_{\LG}))$ is then the unique cocycle such that
$\varphi_{\univ}(\Fr) = \CF_{0}$ and $\varphi_{\univ}(s) = \sigma_{0}$.
We will also let $\CF$ and $\sigma$ denote the universal elements $(\CF_0,\Fr)$ and
$(\sigma_0,s)$ of $\LG(R_{\LG})$, respectively, so that the universal $L$-homomorphism
$\varphiL_{\univ}$ is given by $\varphiL_{\univ}(\Fr)=\CF$ and $\varphiL_{\univ}(s)=\sigma$.

Given a $\OO[\frac{1}{p}]$-algebra $R$
and an $R$-valued point $x$ of $\uZ^1(W_F^0/P_F,{\HG})$, we will let $\CF_x$, $\sigma_x$, $(\CF_0)_x$, $(\sigma_0)_x$ $\varphi_x$ denote the 
objects obtained by base change from $\CF$, $\sigma$, $\CF_0$, $\sigma_0$, and $\varphi_{\univ}$, respectively.

Of course, the universal cocycle $\varphi_{\univ}$ cannot possibly extend in any nice way to a cocycle in $Z^1(W_F/P_F,{\HG}(R_{\LG}))$.
However, we will later show that if $v$ is any finite place of $\OO$ of residue characteristic $\ell \neq p$, then $\varphi_{\univ}$ extends naturally
to a cocycle $\varphi_{\univ,v}$ in $Z^1(W_F/P_F, {\HG}(R_{\LG,v}))$, where $R_{\LG,v}$ denotes the tensor product $R_{\LG} \otimes_{\OO} \OO_v$.  In order to prove this,
we must first understand the geometry of $\uZ^1(W_F^0/P_F,{\HG})$.

\subsection{Geometry of $\uZ^1(W_F^0/P_F,{\HG})$}

Let $L$ be an algebraically closed field over $\OO[\frac 1p]$. Denote by $\ell$ its characteristic, and consider the
fiber $\uZ^1(W_F^0/P_F, {\HG})_{L}$ of $\uZ^1(W_F^0/P_F,{\HG})$ over $\Spec L$.  
We have a map: ${\rm ev}_s: \uZ^1(W_F^0/P_F,{\HG})_{L} \rightarrow (\LG)_{L}$ that takes a
cocycle $\varphi$ to $\varphiL(s)$ or, in other words, a pair $(\CF,\sigma)$ to $\sigma$. 
Let $\xi$ be a point of $\LG(L)$ in the image of this map.  We denote
by $X_{\xi}$ the scheme-theoretic fiber of this map over $\xi$; it is a closed subscheme
of $\uZ^1(W_F^0/P_F,{\HG})_{L}.$  Similarly, denote by $X_{(\xi)}$ the locally closed 
 subscheme of $\uZ^1(W_F^0/P_F,{\HG})_{L}$ that is the preimage in
$\uZ^1(W_F^0/P_F,{\HG})_{L}$ of the $\HG(L)$-conjugacy class of $\xi$
in $\LG(L)$.  In particular, $\uZ^1(W_F^0/P_F,{\HG})(L)$
is the (set-theoretic) union  of the $X_{(\xi)}(L)$,  as $\xi$ runs over
a set of representatives for the ${\HG}(L)$-conjugacy classes of $\LG(L)$ in the image
of the map ${\rm ev}_s$. 

Let ${\HG}_{\xi}$ be the $\HG$-centralizer of $\xi$. This is a possibly non-reduced
group scheme over $\Spec L$ that acts on 
$X_{\xi}$ via $g \cdot (\CF_x,\sigma_x) = (\CF_x g^{-1},\sigma_x)$. 
Moreover, for any $L$-algebra $R$ and any two points $x = (\CF_x,\sigma_x)$ and $y = (\CF_y, \sigma_y)$ of $X_{\xi}(R)$, we have
$\sigma_x = \sigma_y = \xi$ and $\CF_x^{-1} \CF_y \in {\HG}_{\xi}(R)$.  Thus $X_{\xi}$ is
a ${\HG}_{\xi}$-torsor over $\Spec L$. 

Now fix an $L$-point $x = (\CF_x,\xi)$ in $X_{\xi}$.  We then obtain a surjective morphism
:
$$\pi_x: {\HG}_{L} \times {\HG}_{\xi} \rightarrow X_{(\xi)}$$
that sends $(g,g')$ to $(g \CF_x g' g^{-1}, g \xi g^{-1})$.  Moreover, we have an action
of ${\HG}_{\xi}$ on ${\HG}_{L} \times {\HG}_{\xi}$ given by  
$g'' \cdot (g, g') = (g (g'')^{-1}, \CF_x^{-1} g'' \CF_x g' (g'')^{-1})$.  This action
commutes with $\pi_x$ and
makes ${\HG}_{L} \times {\HG}_{\xi}$ into a ${\HG}_{\xi}$-torsor over
$X_{(\xi)}$.  
In particular,
we deduce that the reduced underlying subscheme of $X_{(\xi)}$ 
is smooth of dimension $\dim {\HG}_{L}$.


\begin{lemma} \label{lemma:jordan}
Let $x$ be an $L$-point of $\uZ^1(W_F^0/P_F,{\HG})$, and let
$$\sigma_x = \sigma^u_x\sigma^{\sss}_x$$
be the Jordan decomposition of $\sigma_x$; i.e.
$\sigma^u_x$ is a unipotent element of $\LG(L)$ and $\sigma^{\sss}_x$ is a semisimple element
that commutes with $\sigma^u_x$. 
Then the order of $\sigma^{\sss}_x$ is prime to $\ell$ and divides $e(q^{fN}-1)$, where $N$ is the order of
the Weyl group of $\HG$, $e$ is the order of $s$ in $W$, and $f$ is the order of $\Fr$ in $W$.
\end{lemma}
\begin{proof}
Let $e'$ be the prime-to-$\ell$ part of $e$ (or $e'=e$ if $\ell=0$).  The element $(\sigma^{\sss}_x)^{e'}$ is then a semisimple element 
$\sigma'_x$ of $\HG(L)$.  The element $\CF_x$ conjugates $\sigma'_x$ to its $q$th power.
Thus $\CF_x^f$ is an element of $\HG(L)$ that conjugates $\sigma'_x$ to its $q^f$th power.  Since $\sigma'_x$ is semisimple we may assume (conjugating it and $\CF_x$ appropriately)
that it lies in ${\hat T}(L)$.  
Since two elements of $\HT(L)$ that are conjugate under $\HG(L)$ are also conjugate under
the normalizer $N_{\HG}(\HT)(L)$,  there is an element $w$ of the Weyl group of $\HG$ that conjugates $\sigma'_x$ to its $q^f$th power.  Since $w^N$ is the identity
we have $\sigma'_x = (\sigma'_x)^{q^{fN}}$ and the claim follows. 
\end{proof}

\begin{cor} \label{cor:finite}
The image of
$\uZ^1(W_F^0/P_F,{\HG})(L)$ in $\LG(L)$ under the evaluation map ${\rm ev}_s$ is a union of finitely many $\HG(L)$-conjugacy classes in $\LG(L)$.
\end{cor}
\begin{proof}
Let $\sigma$ be an $L$-point in the image of ${\rm ev}_s$, and let $\sigma = \sigma^u\sigma^{\sss}$ be the Jordan decomposition of $\sigma$.  Then
$\sigma^{\sss}$ is semisimple with bounded order, so lies in one of finitely many conjugacy classes.  Moreover, if we fix $\sigma^{\sss}$, then $\sigma^u$ lies in the
centralizer ${\LG}_{\sigma^{\sss}}$ of $\sigma^{\sss}$ in $\LG$, which has reductive connected
component of identity, by
\cite[Cor. 9.4]{steinberg_endo}.
Now, two elements $\sigma,\sigma'$ with semisimple part $\sigma^{\sss}$ are $\HG(L)$-conjugate
if, and only if, their unipotent parts $\sigma^u,(\sigma')^u$ are ${\hat
  G}_{\sigma^{\sss}}(L)$-conjugate.  But there are only finitely many unipotent conjugacy 
classes in ${\LG}_{\sigma^{\sss}}(L)$ (see, for instance \cite{FG}, Corollary 2.6, for a proof
of this in positive characteristic), and therefore only 
finitely many ${\HG}_{\sigma^{\sss}}(L)$-orbits of unipotent elements of $\LG_{\sigma^{\sss}}(L)$.  The result follows.
\end{proof}

From this finiteness result we deduce that the scheme $\uZ^1(W_F^0/P_F,{\HG})_{L}$
is the (set-theoretic) union  of the subschemes $X_{(\xi)}$,  as $\xi$ runs over
a set of representatives for the ${\HG}(L)$-conjugacy classes of $\LG(L)$ in the image
of the map ${\rm ev}_{s}$.  In particular, the irreducible components of $\uZ^1(W_F^0/P_F, {\HG})_{L}$
are the closures of the connected components of the $X_{(\xi)}$.

We can use this to give a parameterization of the irreducible components of $\uZ^1(W_F^0/P_F, {\HG})_{L}$.
For any $\xi$, let $T_{\HG}({^{\Fr}\xi}, \xi^q)$ be the subscheme of ${\HG}$ consisting of elements that
conjugate ${^{\Fr}\xi}$ to $\xi^q$.  We then have:

\begin{cor} \label{cor:tame irreducible components}
For any algebraically closed field $L$ of characteristic $\ell \neq p$, the irreducible components
of $\uZ^1(W_F^0/P_F, {\HG})_L$ are in bijection with ${\HG}$-orbits of
pairs $(\xi, \overline{\mathcal{F}}_0)$, where $\xi$ is an element of ${\HG} \rtimes s$
and $\overline{\mathcal{F}}_{0}$ is an element of $\pi_0(T_{\HG}({^{\Fr}\xi}, \xi^q))$
\end{cor}
\begin{proof}
The irreducible components of $\uZ^1(W_F^0/P_F, {\HG})_L$ are in bijection with the union, over
a set of representatives $\xi$ of the ${\HG}(L)$-conjugacy classes in the image of
${\rm ev}_{s}$,
of the connected components of $X_{(\xi)}$.  It thus suffices to fix a particular $\xi$ and
show that the connected components of $X_{(\xi)}$ are in bijection with the orbits,
of the $\Fr$-twisted conjugation action of ${\HG}_{\xi}$ on $\pi_0(T_{\HG}({^{\Fr}\xi}, \xi^q))$.

Let $\tX_{(\xi)}$ be the $L$-scheme that parameterizes tuples $(\varphi, g)$, where $\varphi$ is a cocycle
in $\uZ^1(W_F^0/P_F,{\HG})_L$, and $g$ is an element of ${\HG}$ that conjugates $\varphiL(s)$
to $\xi$.  We have natural maps:
$$X_{(\xi)} \leftarrow \tX_{(\xi)} \rightarrow T_{\HG}({^{\Fr}\xi}, \xi^q),$$
where the left-hand map forgets $g$, and the right-hand map sends $(\varphi, g)$ to $g \varphi(\Fr) {^{\Fr}g^{-1}}$.

The action $h \cdot (\varphi, g) = (\varphi, hg)$ of ${\HG}_{\xi}$ on $\tX_{(\xi)}$ makes $\tX_{(\xi)}$
into a ${\HG}_{\xi}$-torsor over $X_{(\xi)}$, and thus induces a bijection of $\pi_0(X_{(\xi)})$
with $\pi_0(\tX_{(\xi)})^{\HG_{\xi}}$.  On the other hand, the action $h' \cdot (\varphi, g) = ({^{h'}\varphi}, g(h')^{-1})$
of ${\HG}_{L}$ on $\tX_{(\xi)}$ makes $\tX_{(\xi)}$ into a ${\HG}_{L}$-torsor over $T_{\HG}({^{\Fr}\xi}, \xi^q)$,
and thus induces a bijection of $\pi_0(\tX_{(\xi)})$ with $\pi_0(T_{\HG}({^{\Fr}\xi}, \xi^q))$.  The claim follows.
\end{proof}

The fact that the $X_{(\xi)}$ have dimension equal to that of $\dim \HG_{L}$ also lets us deduce:

\begin{cor} \label{cor:lci}
The scheme $\uZ^1(W_F^0/P_F,{\HG})$ is flat over $\OO[\frac{1}{p}]$
of pure absolute dimension
$\dim \HG$, and is a local complete  intersection.
\end{cor}
\begin{proof}
The scheme $\uZ^1(W_F^0/P_F,{\HG})$ is isomorphic to the fiber over the identity of the map: 
$${\HG} \times {\HG} \rightarrow {\HG}$$ 
given by $(\CF_0,\sigma_0) \mapsto (\CF_0, \Fr) (\sigma_0, s) (\CF_0, \Fr)^{-1} (\sigma_0, s)^{-q}$.  In particular its irreducible components have dimension at least $\dim {\HG}$, and
$\uZ^1(W_F^0/P_F,{\HG})$ is  
a local complete intersection if every irreducible component has
dimension exactly $\dim {\HG}$.  Suppose we have an irreducible
component $Y$ of larger dimension.   
Then for some prime $v$ of $\OO[\frac{1}{p}]$, of characteristic $\ell$, the fiber of $Y$ over $v$ has dimension greater than $\dim {\HG}-1$.  But $\uZ^1(W_F^0/P_F,{\HG})_{\overline{\FF}_{\ell}}$ is a set-theoretic
union of finitely many locally closed subschemes of dimension $\dim {\HG}_{\overline\FF_{\ell}}=\dim\HG-1$, so this is impossible.  Thus every irreducible component has dimension exactly $\dim {\HG}$,
and in particular cannot be contained in the fiber of $\uZ^1(W_F^0/P_F,{\HG})$ over $\ell$ for any prime $\ell$.  By the unmixedness theorem, every associated prime of 
$\uZ^1(W_F^0/P_F,{\HG})$ has characteristic zero, so $\uZ^1(W_F^0/P_F,{\HG})$ is flat over $\OO[\frac{1}{p}]$ as claimed.
\end{proof}

Lemma~\ref{lemma:jordan} is a pointwise result about the order of $\sigma_x^{\sss}$, but it can be turned into a global statement.  Indeed, we will say that
an $R$-point of ${\HG}$ is unipotent if the corresponding map $\Spec R \rightarrow {\HG}$ factors through the unipotent locus on ${\HG}$.  If $R$ is reduced,
one can check this pointwise on $\Spec R$.  

\begin{prop} \label{prop:global unipotent}
  There exists an integer $M$, depending only on $\LG$, such that $\sigma^M$ is a unipotent
  element of $\HG$.  When $\LG = \GL_n$, one can take
$M = q^{n!} - 1$.
\end{prop}
\begin{proof}
We first prove this when $\LG = \GL_n$.  In this case Lemma~\ref{lemma:jordan}
shows that at each geometric point $x$ of $\uZ^1(W_F^0/P_F,{\HG})$,
the expression $(\sigma_x^{\sss})^{q^{n!} - 1}$ is equal to the identity.  In particular
$\sigma^{q^{n!} - 1}$ 
is an element of $\LG(R_{\LG})$ whose specialization at every geometric point $x$ of $\uZ^1(W_F^0/P_F,{\HG})$ is unipotent.  
On the other hand, by
~\cite{curtis}, Proposition 6.2, when $\LG = \GL_n$, $\uZ^1(W_F^0/P_F,{\HG})$ is
reduced.  Hence $\sigma^{q^{n!}-1}$, seen as a morphism $ \uZ^1(W_F^0/P_F,{\HG}) \rightarrow {\HG}$, 
factors through the unipotent locus of ${\HG}$ as claimed.
When $\LG$ is arbitrary, the result follows by choosing a faithful representation $\LG \rightarrow \GL_n$, and noting that the unipotent locus on $\LG$ is
the preimage of the unipotent locus on $\GL_n$.
\end{proof}

We will see in the next section that in fact $\uZ^1(W_F^0/P_F,{\HG})$ is reduced for all $\LG$; the argument above then shows that in fact
$\sigma^{e(q^{Nf} - 1)}$ is unipotent.

\subsection{A construction of Bellovin-Gee} \label{subsection:bellovin-gee}

The scheme $\uZ^1(W_F^0/P_F,{\HG})$ is very closely related to certain affine schemes studied by Bellovin-Gee in section 2 of~\cite{BG}.  More precisely, 
for any finite Galois 
extension $L/F$ they define a scheme $Y_{L/F,\phi,\CN}$ (\cite{BG},
Definition 2.1.2) parameterizing tuples 
$(\Phi,\CN,\tau)$ where $\Phi$ is an element of $\LG$, $\CN$ is a nilpotent element of $\Lie(\HG)$, and $\tau: I_{L/F} \rightarrow \LG$
is a homomorphism, that satisfy:
\begin{enumerate}
\item $\Ad(\Phi) \CN  = q\CN$,
\item For all $w \in I_{L/F}$, $\Phi \tau(w) \Phi^{-1} = \tau(w^q)$, and
\item For all $w \in I_{L/F}$, $\Ad(\tau(w)) \CN = \CN$.
\end{enumerate}

Let $Y^{\circ}_{L/F,\phi,\CN}$ denote the closed subscheme of $Y_{L/F,\phi,\CN}$ for which the images of $\Phi$ and $\tau(s)$ under the map $\LG \rightarrow W$
are $\Fr$ and $s$, respectively.  Then $Y^{\circ}_{L/F,\phi,\CN}$ is a union of connected components of $Y_{L/F,\phi,\CN}$.

We then have:
\begin{prop} Fix $M$ such that $\sigma^M$ is unipotent, and let $L/F$ be a finite, tamely ramified Galois extension whose ramification index is divisible by $M$.
Then there is a natural isomorphism $\uZ^1(W_F^0/P_F,{\HG})_{\overline{\QQ}_{\ell}} \rightarrow (Y^{\circ}_{L/F,\phi,\CN})_{\overline{\QQ}_{\ell}}$.
\end{prop}
\begin{proof}
We give maps in both directions that are inverse to each other.  On the one hand, without any hypotheses on $L/F$, there is always a map
$Y^{\circ}_{L/F,\phi,\CN} \rightarrow \uZ^1(W_F^0/P_F,{\HG})$ over
$\overline{\QQ}_{\ell}$ that takes a triple $(\Phi,\CN,\tau)$ to the $L$-homomorphism $\varphiL$ 
defined by $\varphiL(\Fr) = \Phi$ and $\varphiL(s) = \tau(s) \exp(\CN)$.
In the other direction, given a cocycle $\varphi$ we can set 
$\Phi = \varphiL(\Fr)$, $\CN = \frac{1}{M} \log (\varphiL(s)^M)$, and let $\tau: I_F
\rightarrow \LG(\overline{\QQ}_{\ell})$ be the map taking $s^a$ to $\varphiL(s)^a \exp(-a\CN)$; 
the latter factors through $I_{L/F}$ under our ramification condition on $L$.  These two maps are clearly inverse to each other.
\end{proof}

As this isomorphism involves exponentiation, and division by $M$, it does not
extend to the special fiber modulo small primes.  In fact the space $Y^{\circ}_{L/F,\phi,\CN}$ can be quite badly behaved at small primes: for instance,
if ${\HG} = \GL_2$, and we take $L/F$ to be a finite, tamely ramified Galois extension of ramification index $M$ divisible by $q^2-1$ (so that $\sigma^M$ is unipotent), then
at any prime $\ell$ dividing $q+1$ the fiber of $Y^{\circ}_{L/F,\phi,\CN}$ has dimension five, whereas the generic fiber has dimension four.  That is, $Y^{\circ}_{L/F,\phi,\CN}$ fails
to be flat in this setting.  One could attempt to remedy this by replacing $Y^{\circ}_{L/F,\phi,\CN}$ by the closure of its generic fiber, but even then, at primes $\ell$
as above, there is not a bijection between the irreducible components of $(Y^{\circ}_{L/F,\phi,\CN})_{\overline{\FF}_{\ell}}$ and those of $\uZ^1(W_F^0/P_F,{\HG})_{\overline{\FF}_{\ell}}$.
Indeed, one can verify that the irreducible components of the latter behave in a manner consistent with the $\ell \neq p$ Breuil-Mezard conjecture of Shotton~\cite{MR3769675},
whereas those of the former do not.

Bellovin-Gee show (\cite{BG}, Theorem 2.3.6) that $Y_{L/F,\phi,\CN}$ (and hence $\uZ^1(W_F^0/P_F,{\HG})$) is generically smooth, by constructing a smooth point on each irreducible
component of $Y_{L/F,\phi,\CN}$ in characteristic zero.  We sketch their construction here (or rather, its adaptation to $\uZ^1(W_F^0/P_F,{\HG})$), both in the interests of being self-contained and
because we will need it for other purposes.

Fix a prime $\ell\neq p$ and a $\overline{\QQ}_{\ell}$ point $\xi$ of $\LG$ in the image of the map $\uZ^1(W_F^0/P_F,{\HG}) \rightarrow \LG$ taking $\varphi$ to $^{L}\varphi(s)$.
As $\uZ^1(W_F^0/P_F,{\HG})_{\overline{\QQ}_{\ell}}$ is (set-theoretically) the union of the smooth schemes $X_{(\xi)}$ for such $\xi$, it suffices to construct a smooth point of
$\uZ^1(W_F^0/P_F,{\HG})$ on each connected component of $X_{(\xi)}$.

Let $\xi = \xi^{\sss} \xi^u$ be the Jordan decomposition of $\xi$.  Since we are in
characteristic zero $\xi^u$ is a unipotent element of $\HG$, and we may consider 
its logarithm $\CN$, which is a nilpotent element of the Lie algebra of the centralizer ${\hat
  G}_{\xi^{\sss}}$ of $\xi^{\sss}$. 

Let $\lambda$ be a cocharacter of ${\HG}^{\xi^{\sss}}$ that is an associated cocharacter of $\CN$, in the sense of~\cite{BG}, section 2.3.  In particular, for all $t$
we have $\Ad(\lambda(t))\CN = t^2\CN$.  Set $\Lambda = \lambda(q^{\frac{1}{2}})$ for some square root $q^{\frac{1}{2}}$ of $q$, so that $\Ad(\Lambda) \CN = q\CN$.  Then
$\Lambda \xi^{u} \Lambda^{-1} = (\xi^{u})^q$.

Further let $H$ denote the normalizer, in $\LG$, of the subgroup of $\LG$ generated by $\xi^{\sss}$.
Let $Y$ be the set of $g \in H$ such that $g \xi^{u} g^{-1} = (\xi^{u})^q$.  Note that in
particular the map $\uZ^1(W_F^0/P_F,{\HG}) \rightarrow \LG$ that takes $\varphi$ to
$\varphiL(\Fr)$  identifies $X_{\xi}$ with a union of connected components of $Y$.

On the other hand $Y = \Lambda \cdot (H \cap \LG^{\CN})$.  By~\cite{Bel16}, Proposition 4.9, the inclusion of
$H \cap \LG^{\CN} \cap \LG^{\lambda}$ into $H \cap \LG^{\CN}$ is a bijection on connected components, and by~\cite{Bel16}, Lemma 5.3 there is a point of finite order
on each connected component of $H \cap \LG^{\CN} \cap \LG^{\lambda}$.  Thus on each connected component of $Y$ there is a point of the form $\Lambda c$, where
$c$ has finite order and commutes with $\Lambda$.  Then Bellovin and Gee show, via a cohomology calculation, that when $(\Lambda c, \xi)$ lies in $\uZ^1(W_F^0/P_F,{\HG})$
it is a smooth point of $\uZ^1(W_F^0/P_F,{\HG})_{\overline{\QQ}_{\ell}}$.  We immediately deduce:

\begin{prop} \label{prop:reduced}
The scheme $\uZ^1(W_F^0/P_F,{\HG})$ is generically smooth (and therefore reduced.)
\end{prop}
\begin{proof}
Generic smoothness is immediate since there is a point of the form $(\Lambda c, \xi)$ on every connected component of $X_{(\xi)}$ for all $\xi$.  Since
$\uZ^1(W_F^0/P_F,{\HG})$ is a local complete intersection there is no embedded locus; that is, $\uZ^1(W_F^0/P_F,{\HG})$ is reduced.
\end{proof}

\begin{rem} We will later give an argument that in fact the fibers $\uZ^1(W_F^0/P_F,{\HG})_{\overline{\FF}_{\ell}}$ are generically smooth
outside of an explicit finite set.  This argument is independent of (though partially inspired by) the above argument of Bellovin-Gee, and certainly implies the
above proposition, as well as the separatedness results below.  We include the Bellovin-Gee argument here for convenience of exposition, and because the comparison
with their construction is interesting in its own right.
\end{rem}

\begin{prop} \label{prop:integral point}
For any prime $\ell \neq p$, and any irreducible component $Y$ of $\uZ^1(W_F^{0}/P_F,{\hat
  G})_{\overline{\QQ}_{\ell}}$, there exists a $\overline{\ZZ}_{\ell}$-point of  
$\uZ^1(W_F^0/P_F,{\HG})$ on $Y$.
\end{prop}
\begin{proof}
We have shown that $Y$ contains a point of the form
$(\Lambda c,\xi)$ constructed above.  We must show that this point is conjugate to a
$\overline{\ZZ}_{\ell}$-point. Note that $\Lambda$, $c$ and $\xi$ are contained in $\LG(L)$
for some  $L\subset\overline{\QQ}_{\ell}$ finite over  ${\QQ}_{\ell}$.
Now, since $\Lambda=\lambda(q^{\frac 12})$ for some cocharacter $\lambda$ and
since $q^{\frac{1}{2}}$ is an $\ell$-unit for all $\ell \neq p$,  the element $\Lambda$ is
compact in $\LG(L)$ (i.e. the subgroup of $\LG(L)$ generated by $\Lambda$ has compact closure).
Moreover, since $c$ has finite order and commutes to $\Lambda$,
the element $\Lambda c$ is also compact.
Therefore, since
$\Lambda c$ normalizes 
the subgroup of $\LG(L)$ generated by $\xi$, and some power of $\xi$
is unipotent, the subgroup of $\LG(L)$ generated by $\Lambda c$ and
$\xi$ has compact closure. Thus it  normalizes
a facet of the semisimple building $B(\HG,L)$ and fixes its barycenter $x$.
There is
a finite extension $L'$ of $L$ such that $x$  becomes an hyperspecial
point in $B(\HG,L')$ and is conjugate to the ``canonical'' 
hyperspecial point $o$ fixed by $\HG(\mathcal{O}_{L})$ under some element
$g\in\HG(L')$.
The fixator of $o$
in $\LG(L)$ is $Z_{\HG}(L).\LG(\mathcal{O}_{L})$, hence 
the $L$-homomorphism $\varphiL:\,W_{F}^{0}/P_{F}\To{} \LG(\overline\QQ_{\ell})$  associated to the pair
$(^{g}(\Lambda c), {^{g}\xi})$ takes values in
$Z_{\HG}(\overline\QQ_{\ell}).\LG(\overline{\ZZ}_{\ell})$. Consider its composition with the
quotient map to
$(Z_{\HG}(\overline\QQ_{\ell}).\LG(\overline{\ZZ}_{\ell}))/\HG(\overline\ZZ_{\ell})=Q\rtimes
W$ with 
$Q:=(Z_{\HG}(\overline\QQ_{\ell}).\HG(\overline{\ZZ}_{\ell}))/\HG(\overline{\ZZ}_{\ell})=Z_{\HG}(\overline\QQ_{\ell})/Z_{\HG}(\overline\ZZ_{\ell})$. 
Since it  has relatively compact image and $Q$ is discrete, it factors over a
finite quotient $W'$ of $W_{F}^{0}/P_{F}$. But since $Q$ is a $\QQ$-vector
space of finite dimension, we have $H^{1}(W',Q)=\{1\}$, so the above composition is
conjugate, under some element of $q\in Q$, to the trivial $L$-homomorphism
$W_{F}^{0}\To{}Q\rtimes W$. So if $\tilde q$ is any lift of $q$ in
$Z_{\HG}(\overline\QQ_{\ell})$, the conjugate $^{\tilde q}(\varphiL)$ associated to the pair
$(^{\tilde qg}(\Lambda c), ^{\tilde qg}\xi)$ is 
$\LG(\overline\ZZ_{\ell})$-valued, as desired.
\end{proof}

\begin{cor} \label{cor:separatedness}
For any prime $\ell \neq p$, the ring $R_{\LG}$ is $\ell$-adically separated.
\end{cor}
\begin{proof}
Since $R_{\LG}$ is reduced and flat over $\OO$, we have an embedding:
$$R_{\LG} \rightarrow \prod_Y \OO_Y,$$
where $Y$ runs over the irreducible components of $R_{\LG}$.  Each $\OO_Y$ is affine, integral, and flat over $\OO$, and by Proposition~\ref{prop:integral point} contains
an integral point.  In particular $\ell$ is not invertible on $\OO_Y$.  Thus it suffices
to show that any noetherian integral flat $\ZZ_{\ell}$-algebra $A$ in which $\ell$ is not invertible
is $\ell$-adically separated.  Indeed, suppose $a$ is a nonzero element of $A$ in the intersection of the ideals generated by $\ell^i$.  Then for each $i$, there is an $a_i \in A$
such that $\ell^i a_i = a$.  Each $a_i$ is unique since $A$ is integral, so $a_{i-1} = \ell a_i$.  Since the ascending chain of ideals generated by the $a_i$ stabilizes,
we have $a_i = u a_{i-1}$ for some unit $u$ and integer $i$.  Then, as $A$ is integral, we have $u\ell = 1$, contradicting the fact that $\ell$ is not invertible in $A$.
\end{proof}

\subsection{The universal family}
Now that we have shown that $R_{\LG}$ is $\ell$-adically separated, we return to the question of extending the parameter $\varphi_{\univ}$
to an $L$-homomorphism defined on all of $W_F$.   As we have already remarked, this is
only possible after tensoring with the completed local ring $\OO_v$ for some 
finite place $v$ of $\OO$ of residue characteristic $\ell \neq p$.
The key point is the following notion of continuity, first introduced in~\cite{curtis} in the case $\LG = \GL_n$:

\begin{defn} \label{def:continuity}
Let $R$ be a Noetherian $\OO[\frac{1}{p}]$-algebra, and let $\rho: W_F \rightarrow \LG(R)$ be a group homomorphism.  We say that $\rho$ is
{\em $\ell$-adically continuous} if one of the following two conditions hold:
\begin{enumerate}
\item The ring $R$ is $\ell$-adically separated, and for each $n > 0$, the preimage of $U_n$ under $\rho$ is open in $W_F$, where $U_n$ is the kernel of the map $\LG(R) \rightarrow \LG(R/\ell^n R)$.
\item There exists a Noetherian, $\ell$-adically separated $\OO[\frac{1}{p}]$-algebra $R'$, a map $f: R' \rightarrow R$,
and an $\ell$-adically continuous map $\rho': W_F \rightarrow \LG(R')$ such that $\rho = f \circ \rho'$.
\end{enumerate}
\end{defn}

If $R$ is $\ell$-adically separated and condition (2) in the above definition holds, it is easy to check that condition (1) holds as well, so the two conditions are consistent with each other.
We will say that a cocycle  $\varphi\in Z^1(W_F,{\HG}(R))$ is $\ell$-adically
continuous if its associated $L$-homomorphism $\varphiL$ is $\ell$-adically continuous as in the above definition.

\begin{thm} \label{thm:universal family}
For each finite place $v$ of $\OO$ of residue characteristic $\ell \neq p$, there exists a
unique $\ell$-adically continuous cocycle
$$\varphi_{\univ,v}: W_F/P_F \rightarrow \HG(R_{\LG} \otimes_{\OO} \OO_v)$$
whose restriction to $(W_F/P_F)^0$ is equal to $\varphi_{\univ}$.  Moreover, if $R$ is any
Noetherian $\OO_v$-algebra, and 
$\varphi: W_F/P_F \rightarrow \HG(R)$ is an $\ell$-adically continuous cocycle,
then there is a unique map: $f: R_{\LG} \otimes_{\OO} \OO_v \rightarrow R$ such that
$\varphi = f\circ \varphi_{\univ,v}$. 
\end{thm}
\begin{proof}
When $\LG = \GL_n$, this is proved in~\cite{curtis}, Proposition 8.2; we reduce to this case.  Choose a
faithful representation $\tau: \LG \rightarrow \GL_n$ defined over $\OO[\frac 1 p]$.  Then 
$$\tau \circ \varphiL_{\univ}\in \Hom(W_F^0/P_F,\GL_n(R_{\LG}))=Z^1(W_F^0/P_F,\GL_n(R_{\LG}))$$ where $\GL_{n}$ is equipped
with the trivial action of $W_{F}$.
There is thus
a unique map $f: R_{\GL_n} \rightarrow R_{\LG}$ that takes the universal cocycle on
$\uZ^1(W_F^0/P_F,\GL_n)$ (actually a homomorphism) to $\tau \circ \varphiL_{\univ}$.
Since this universal cocycle extends to an $\ell$-adically continuous cocycle on
$W_F/P_F$, with values in $R_{\GL_n} \otimes_{\OO} \OO_v$, 
composing this extension with $f$ gives
an extension of $\tau \circ \varphiL_{\univ}$ to an $\ell$-adically continuous
homomorphism $W_F/P_F\longrightarrow \GL_n(R_{\LG} \otimes_{\OO} \OO_v)$.
Denote this homomorphism by $\varphiL_{\univ,v}$. Its restriction to $W_F^0/P_F$ factors through
$\LG(R_{\LG} \otimes_{\OO} \OO_v)$ and is equal
 to $\varphiL_{\univ}$, so it only remains to prove that $\varphiL_{\univ,v}$ factors through
 $\LG(R_{\LG} \otimes_{\OO} \OO_v)$ too. But this follows from the $\ell$-adic
 separatedness of $R_{\LG} \otimes_{\OO} \OO_v$ and the fact that for each
 $n\in\NM$, we know that the image of  $\varphiL_{\univ,v}(W_{F})$ in $\GL_{n}(R_{\LG}
 \otimes_{\OO} \OO_v/(\ell^{n}))$ coincides 
 with the image of $\varphiL_{\univ,v}(W_{F}^{0})$, which is contained in 
 $\LG(R_{\LG} \otimes_{\OO} \OO_v/(\ell^{n}))$.
 Uniqueness and the  universal property are now straightforward.
\end{proof}

In light of this, we define a ``good coefficient ring'' to be a Noetherian ring $R$ that
is an $\OO \otimes \ZZ_{\ell}$-algebra for some $\ell \neq p$, and a ``good coefficient 
field'' to be a good coefficient ring that is also a field.  Theorem~\ref{thm:universal
  family} then implies that for any good coefficient ring $R$, 
and any cocycle $\varphi^0: (W_F/P_F)^0 \rightarrow \HG(R)$, there is a unique $\ell$-adically continuous cocycle $\varphi: W_F/P_F \rightarrow \HG(R)$ extending $\varphi^0$.

In particular, if $R$ is a complete local $\OO$-algebra with maximal ideal ${\mathfrak
  m}$, of residue characteristic $\ell\neq p$,
then any $\ell$-adically continuous cocycle $\varphi: W_F/P_F \rightarrow \HG(R)$ is
clearly ${\mathfrak m}$-adically continuous.  Conversely, given 
an ${\mathfrak m}$-adically continous cocycle $\varphi: W_F/P_F \rightarrow \HG(R)$, Theorem \ref{thm:universal family} shows
that there is a unique $\ell$-adically continuous cocycle $\varphi'$ extending the restriction of $\varphi$ to $(W_F/P_F)^0$.  Then $\varphi'$ and $\varphi$ are both
${\mathfrak m}$-adically continuous and agree on $(W_F/P_F)^0$, so $\varphi$ is also $\ell$-adically continuous.  Thus the notions of $\ell$-adic and ${\mathfrak m}$-adic
continuity coincide for cocycles valued in $R$.

\section{Reduction to tame parameters}

In this section, we broaden the setting as follows. We  consider
a split reductive group scheme $\hat G$ over $\ZM[\frac 1p]$ endowed with a
finite action of $W_{F}$, but \emph{we no longer assume that this
  action is tame, nor that it stabilizes a Borel pair.}

For any $\ZM[\frac 1p]$-algebra $R$, we denote by $Z^{1}(W_{F}, \HG(R))$ the set of
$1$-cocycles which are continuous for the natural topology of the source and the discrete
topology on the target. We use similar notation for $W_{F}^{0}$ and any closed subgroup thereof.
Recall that the topology on $W_{F}^{0}$ is such that $P_{F}$, with its
natural topology, sits as a closed and open subgroup.

It will be handy to switch between $1$-cocycles and their associated $L$-morphisms. In
this regard, we usually denote by $\LG$ a group scheme of
the form $\hat G\rtimes W$ with $W$ any \emph{finite} quotient of $W_{F}$ through which the given
action on $\HG$ factors. Note that $W$ may be allowed to change according to our needs,
but we prefer to keep it finite in order to work with algebraic group schemes. For the
sake of clarity, we will most often distinguish a $1$-cocycle $\varphi$ from its
associated  $L$-homomorphism
 $\varphiL:=\varphi\rtimes\id :W_{F}\To{}\LG(R)$, although occasionally it will be 
 more handy to write $\varphi$ for the $L$-homomorphism.

\subsection{Overview} Our aim is to show how the study of moduli of  $1$-cocycles
$W_{F}^{0}\To{} \hat G$  (and subsequently, moduli of $\ell$-adically continuous $1$-cocycles 
$W_{F}\To{} \hat G$) can be reduced to the particular case considered
in the previous section, namely the case of tame $1$-cocycles valued in a
reductive group scheme with a tame Galois action that stabilizes a
Borel pair. 
The principle is very simple ; suppose $R$ is a $\ZM[\frac 1p]$-algebra
and $\varphi : W_{F}^{0}\To{} {\hat G}(R)$ is a $1$-cocycle, and denote by
$\phi : P_{F}\To{} \hat G(R)$ its restriction to $P_{F}$. 
Then the
conjugation action of $W_{F}^{0}$ on $\hat G(R)$ through $^{L}\varphi$
stabilizes the centralizer
$C_{\hat G(R)}(\phiL(P_{F}))$ and the restricted action on this subgroup factors
over  $W_{F}^{0}/P_{F}$. Denoting  by ${\rm Ad}_{\varphi}$ this action, an elementary
computation shows that the map
$\eta\mapsto \eta\cdot\varphi$ sets up a bijection
$$ Z^{1}_{{\rm Ad}_{\varphi}}(W_{F}^{0}/P_{F},C_{\hat G(R)}(\phiL(P_{F}))) \To{\sim}
\{\varphi' \in Z^{1}(W_{F}^{0},\hat G(R)),\, \varphi'_{|P_{F}}=\phi\}.$$
By Lemma \ref{hom_smooth} in the appendix, the functor on $R$-algebras $R'\mapsto C_{\hat G(R')}(\phiL(P_{F}))$ is
representable by a smooth group scheme over $R$ that we denote by
$C_{\hat G}(\phi)$. Moreover, by \cite[Thm 2.1]{PY}, its connected
geometric fibers are reductive.
Therefore, one is tempted to
see the set $Z^{1}_{{\rm Ad}_{\varphi}}(W_{F}^{0}/P_{F},C_{\hat G(R)}(\phiL(P_{F})))$ as an
instance of the type of tame parameters that were studied in the previous section.
However, making this idea work requires addressing the following issues :
\begin{itemize}
\item The group scheme $C_{\hat G}(\phi)$ may have non-connected fibers.
\item Its neutral component $C_{\hat G}(\phi)^{\circ}$ may not be split.
\item The action ${\rm Ad}_{\varphi}$ may neither be finite nor preserve a Borel pair of $C_{\hat G}(\phi)^{\circ}$. 
\end{itemize}

In order to address these issues, the first step is to find a nice set
of representatives of conjugacy classes of
continuous cocycles with source
$P_{F}$. Since we prefer to 
work with finitely presented objects, we choose a
  decreasing sequence $(P_{F}^{e})_{e\in\mathbb{N}}$ of open normal
 subgroups of $P_{F}$
 whose intersection is $\{1\}$. Then  we fix 
$e\in\NM$ such that $P_{F}^{e}$ acts trivially on $\HG$, and we restrict
attention to cocycles that are trivial on $P_{F}^{e}$. 
The following theorem follows from  Theorems \ref{representatives},
\ref{pi0fini} and  Proposition \ref{split_red_gp} in the appendix.
\begin{thm}\label{wild_type_recap}
  There is a number field $K_{e}$ and a finite  set
  $$\Phi_{e}\subset Z^{1}\left({P_{F}}/{P_{F}^{e}},\hat
    G\left(\mathcal{O}_{K_{e}}[{\textstyle\frac 1p}]\right)\right), \,\,\,\,\hbox{   such that}$$
  \begin{enumerate}
  \item For any $\mathcal{O}_{K_{e}}[\frac 1p]$-algebra $R$, any cocycle
    $\phi:P_{F}/P_{F}^{e}\To{} \hat G(R)$ is \'etale-locally $\hat G$-conjugate to a
    locally unique $\phi_{0}\in \Phi_{e}$.
  \item For any $\phi\in \Phi_{e}$, the reductive group scheme $C_{\hat G}(\phi)^{\circ}$
    is split over $\mathcal{O}_{K_{e}}[\frac 1p]$ and the component group
    $\pi_{0}(\phi):=\pi_{0}(C_{\hat   G}(\phi))$ is constant.
  \end{enumerate}
\end{thm}


\subsection{Some definitions and constructions} \label{sec:some-defin-constr}
Let $\phi\in \Phi_{e}$. For any $\mathcal{O}_{K_{e}}[\frac 1p]$-algebra $R$ we denote by
$Z^{1}(W_{F}^{0},\hat G(R))_{\phi}$ the set of $1$-cocycles
$W_{F}^{0}\To{} \hat G(R)$ that extend $\phi$. 
The functor $R\mapsto Z^{1}(W_{F}^{0},\hat G(R))_{\phi}$ is visibly representable by an
affine scheme of finite type over $\mathcal{O}_{K_{e}}[\frac 1p]$, namely a closed subscheme of
$\hat G\times\hat G$. We denote this scheme by $\uZ^{1}(W_{F}^{0},\hat G)_{\phi}$.

\begin{defn}
  An element  $\phi\in\Phi_{e}$ is called \emph{admissible} if the scheme
  $\uZ^{1}(W_{F}^{0},\hat G)_{\phi}$ is not empty.
\end{defn}

In the sequel, it will be convenient to choose our ``$L$-group''  $\LG$  in the form $\LG=\hat G\rtimes
W_{e}$ where $W_{e}$ is a finite quotient of $W_{F}$ into which $P_{F}/P_{F}^{e}$ maps
\emph{injectively}.
For example, we may choose our sequence $(P_{F}^{e})_{e}$ such that
$P_{F}^{e}=P_{F_{e}}$ for some Galois extension $F_{e}$ of $F$ and put
$W_{e}={\rm Gal}(F_{e}/F)$.
Then the $L$-homomorphism $\varphiL$ associated to $\varphi\in
Z^{1}(W_{F}^{0},\hat G(R))_{\phi}$
factors through the subgroup\footnote{Note that, despite the notation,
  this subgroup is not the centralizer of $^{L}\phi$ in $\LG$.}
$$ C_{\LG(R)}(\phi):=\left\{(g,w)\in \LG(R),\,
  (g,w)\phiL(w^{-1}pw)(g,w)^{-1}=\phiL(p), \forall p\in P_{F}\right\}.$$
Writing the functor $C_{\LG}(\phi):\,R\mapsto C_{\LG(R)}(\phi)$  on
$\mathcal{O}_{K_{e}}[\frac 1p]$-algebras 
 as a disjoint union $\bigsqcup_{w\in W_{e}} T_{\hat G}({^{w}\phi},\phi)$ of
transporters in $\hat G$ (where $^{w}\phi$ is defined by ${^{w}\phi}(p)=w(\phi(w^{-1}pw))$), we see from Lemma \ref{hom_smooth}  that this functor is represented by a smooth group
scheme that sits in an exact sequence
$$1\to C_{\hat G}(\phi)\to C_{\LG}(\phi)\to W_{e}.$$
Actually, it follows from the uniqueness of $\phi_{0}$ in i) of Theorem
\ref{wild_type_recap} that $T_{\hat G}({^{w}\phi},\phi)$ is either
empty or is a $C_{\hat G}(\phi)$-torsor for the \'etale topology. Therefore, $C_{\LG}(\phi)$ is an extension of the
\emph{constant} subgroup $W_{e,\phi}:=\{w\in W_{e},T_{\hat
  G}({^{w}\phi},\phi)\neq\emptyset\}$ of $W_{e}$ by $C_{\hat G}(\phi)$. 
Since $C_{\LG}(\phi)^{\circ}=C_{\hat G}(\phi)^{\circ}$ is a split reductive
group scheme over $\mathcal{O}_{K_{e}}[\frac 1p]$, we know by general
results \cite[Prop. 3.1.3]{conrad_luminy} that 
$$\tilde\pi_{0}(\phi):=\pi_{0}(C_{\LG}(\phi))$$ 
is a separated \'etale group scheme over $\mathcal{O}_{K_{e}}[\frac 1p]$. Since it is an
extension of $W_{e,\phi}$ by $\pi_{0}(\phi)$, we see that $\tilde\pi_{0}(\phi)$ is
actually \emph{finite} \'etale. \emph{Therefore, after maybe enlarging $K_{e}$, we may
  assume that $\tilde\pi_{0}(\phi)$ is constant over $\mathcal{O}_{K_{e}}[\frac 1p]$.}
Now, let us assume that $\phi$ is admissible. Then we have $W_{e,\phi}=W_{e}$ and
an exact sequence of abstract groups
$$1\to \pi_{0}(\phi)\to \tilde\pi_{0}(\phi)\to W_{e}\to 1.$$
Therefore, the affine scheme $\uZ^{1}(W_{F}^{0},\hat G)_{\phi}$ decomposes as a disjoint union
$$ \uZ^{1}(W_{F}^{0},\hat G)_{\phi} =\coprod_{\alpha\in \Sigma(\phi)}
\uZ^{1}(W_{F}^{0},\hat G)_{\phi,\alpha}   \hbox{, where}$$
\begin{itemize}
\item $\Sigma(\phi)$ denotes the set of homomorphisms $W_{F}\To{}\tilde\pi_{0}(\phi)$ that
  extend the map $P_{F}\To{}\tilde\pi_{0}(\phi)$ given by the
  composition of $\phiL$ with the projection to
  $\tilde\pi_{0}(\phi)$, and whose composition with
  $\tilde\pi_{0}(\phi)\To{} W_{e}$ is the natural projection $W_{F}\To{}W_{e}$. 
\item $\uZ^{1}(W_{F}^{0},\hat G)_{\phi,\alpha}(R)=Z^{1}(W_{F}^{0},\hat
  G(R))_{\phi,\alpha}$ is the subset of extensions $\varphi$ of $\phi$
  such that the composition of $\varphiL$ with the projection to $\tilde\pi_{0}(\phi)$ is $\alpha$.
\end{itemize}

\begin{defn} \label{sigma_admissible}
  We will say that $\alpha\in\Sigma(\phi)$ is \emph{admissible} if the scheme
  $\uZ^{1}(W_{F}^{0},\hat G)_{\phi,\alpha}$ is not empty.
\end{defn}

Observe that there are only finitely many admissible elements in
$\Sigma(\phi)$ since the scheme $\uZ^{1}(W_{F}^{0},\hat G)_{\phi}$ has
finitely many connected components.

We now note that two elements $\varphi,\varphi'\in
Z^{1}(W_{F}^{0},\hat G(R))_{\phi,\alpha}$ differ by a tame cocycle valued in $C_{\hat
  G}(\phi)^{\circ}$ (beware the $\circ$). More precisely, if we write $\varphi'(w)=\eta(w)
\varphi(w)$, then 
$w\mapsto \eta(w)$ belongs to $Z^{1}_{{\rm Ad}_{\varphi}}(W_{F}^{0}/P_{F},C_{\hat
  G}(\phi)^{\circ}(R))$. 
 In other words, the map $\eta\mapsto \eta\cdot\varphi$ sets up an
isomorphism of $R$-schemes 
$$ \uZ^{1}_{{\rm Ad}_{\varphi}}(W_{F}^{0}/P_{F},C_{\hat G}(\phi)^{\circ})_{R} \To{\sim}
\uZ^{1}(W_{F}^{0},\hat G)_{\phi,\alpha,R}.$$
At this point we have dealt with the first two issues mentioned in the beginning of this
section. The next result deals with the third issue and will allow us to reduce to the tame
parameters that were studied in the previous section.

\begin{thm} \label{Borel_preserving}
  There is a finite extension $K'_{e}$ of $K_{e}$ such that for any admissible
  $\phi\in\Phi_{e}$ and any admissible $\alpha\in\Sigma(\phi)$, there
  is some $\varphi_{\alpha} \in Z^{1}(W_{F}^{0},\hat G(\mathcal{O}_{K'_{e}}[\frac 1p]))_{\phi,\alpha}$ 
such that $\varphiL_{\alpha}(W_{F}^{0})$ is finite
and $\Ad_{\varphi_{\alpha}}$ preserves a Borel pair of the
split reductive group scheme  $C_{\hat G}(\phi)^{\circ}$.
\end{thm}

Fix $\phi,\alpha$ and $\varphi_{\alpha}$ as in the theorem. Since
$\varphiL_{\alpha}(W_{F}^{0})$ is finite, 
$\varphi_{\alpha}$ extends canonically to $W_{F}$ with 
$\varphiL_{\alpha}(W_{F})=\varphiL_{\alpha}(W_{F}^{0})$. So the conjugation action ${\rm
  Ad}_{\varphi_{\alpha}}$ of $W_{F}^{0}$ on  
the reductive group $C_{\hat G}(\phi)^{\circ}$ extends to a finite action of $W_{F}$, and
it has to be trivial on $P_{F}$. 
Since this action stabilizes a Borel pair, 
we see that the $\mathcal{O}_{K'_{e}}[\frac 1p]$-scheme
$\uZ^{1}_{{\rm Ad}_{\varphi}}(W_{F}^0/P_{F},C_{\hat G}(\phi)^{\circ})$
is (a base change of) an instance of those tame moduli schemes studied
in Section 2.

\begin{rem}
  It is natural to ask whether we can find $\varphi_{\alpha}$ so that
  ${\rm Ad}_{\varphi_{\alpha}}$ preserves a pinning
  of $C_{\hat G}(\phi)^{\circ}$. Our techniques can achieve this when the center of
  $C_{\hat G}(\phi)^{\circ}$ is smooth, see Remark \ref{rk_epinglage}. In Theorem
  \ref{prop:pinning_preserving}, we give a sufficient condition on $\HG$ for each $C_{\hat
    G}(\phi)^{\circ}$ to have smooth center.
\end{rem}

Before we can prove the theorem, we need some preparation.
Let us fix a Borel pair $\mathcal{B}_{\phi}=(B_{\phi},T_{\phi})$ in
$C_{\hat G}(\phi)^{\circ}$ 
and let us denote by  $\mathcal{T}_{\phi}$  the normalizer in $C_{\LG}(\phi)$ of this Borel
pair. By \cite[Prop. 2.1.2]{conrad_luminy}, this is again a smooth group scheme over $\mathcal{O}_{K_{e}}[\frac 1p]$. Since the
normalizer of a Borel pair in a connected reductive group over an algebraically closed field is the torus of the Borel
pair, we have $(\mathcal{T}_{\phi})^{\circ}=C_{\hat G}(\phi)^{\circ}\cap
\mathcal{T}_{\phi}=T_{\phi}$. Since any two Borel pairs in a connected reductive group  over an algebraically closed field
are conjugate, we also have $\pi_{0}(\mathcal{T}_{\phi})= \pi_{0}(C_{\LG}(\phi))=\tilde\pi_{0}(\phi)$. 
Moreover, since $T_{\phi}$ is abelian, the
conjugation action of $\mathcal{T}_{\phi}$ on $T_{\phi}$ factors through an action
$$ \tilde\pi_{0}(\phi)\To{}{\rm Aut}_{\mathcal{O}_{K_{e}}[\frac 1p]-gp.sch.}(T_{\phi}).$$
In particular, any section $\alpha\in\Sigma(\phi)$ provides us with an action of $W_{F}$
on the  torus $T_{\phi}$. This action has to be trivial on $P_{F}$,
since $^{L}\phi(P_{F})$ centralizes $C_{\HG}(\phi)$, so that $^{L}\phi(P_{F})\subset
\mathcal{T}_{\phi}(\mathcal{O}_{K_{e}}[\frac 1p])$ acts trivially on
$T_{\phi}$ by conjugation. Therefore, the subset
\begin{eqnarray*}
  \Sigma(W_{F}^{0},\mathcal{T}_{\phi}(R))_{\phi}
  &:=&\{\varphi\in Z^{1}(W_{F}^{0},\hat G(R))_{\phi},\,\varphiL(W_{F}^{0})\subset \mathcal{T}_{\phi}(R))\}\\
  &=&\{\varphi\in Z^{1}(W_{F}^{0},\hat G(R))_{\phi}, \,{\rm Ad}_{\varphi} \hbox{ preserves }
      \mathcal{B}_{\phi}\}
\end{eqnarray*}
decomposes as a disjoint union
$$ \Sigma(W_{F}^{0},\mathcal{T}_{\phi}(R))_{\phi} = \bigsqcup_{\alpha\in\Sigma(\phi)}
\Sigma(W_{F}^{0},\mathcal{T}_{\phi}(R))_{\phi,\alpha}$$
where $\Sigma(W_{F}^{0},\mathcal{T}_{\phi}(R))_{\phi,\alpha}$ denotes
the subset of those $\varphi\in \Sigma(W_{F}^{0},\mathcal{T}_{\phi}(R))_{\phi}$ such that
the composition $W_{F}^{0} \To{^{L}\varphi} \mathcal{T}_{\phi}(R)\To{}\tilde\pi_{0}(\phi)$ is $\alpha$.
Note that  $\Sigma(W_{F}^{0},\mathcal{T}_{\phi}(R))_{\phi,\alpha}$ is either empty or is a
principal homogeneous set under the abelian group
$Z^{1}_{\alpha}(W_{F}^0/P_{F},T_{\phi}(R))$. Varying $R$, we get a closed affine subscheme 
$\uSigma(W_{F}^{0},\mathcal{T}_{\phi})_{\phi}$ of $\uZ^{1}(W_{F}^{0},\hat G)_{\phi}$ which
decomposes as a coproduct of affine  $\mathcal{O}_{K_{e}}[\frac 1p]$-schemes
$$ \uSigma(W_{F}^{0},\mathcal{T}_{\phi})_{\phi} = \bigsqcup_{\alpha\in\Sigma(\phi)}
\uSigma(W_{F}^{0},\mathcal{T}_{\phi})_{\phi,\alpha}$$
where each $\uSigma(W_{F}^{0},\mathcal{T}_{\phi})_{\phi,\alpha}$ carries an action of the
abelian group $\mathcal{O}_{K_{e}}[\frac 1p]$-scheme
$\uZ^{1}_{\alpha}(W_{F}^0/P_{F},T_{\phi})$, and is a  
\emph{pseudo-torsor} for this action, in the sense of [The Stacks
Project, Tag 0497].

Finally, let $W$ be a finite quotient of $W_{F}$ such that $^{L}\phi$
factors over the image $P\subset W$ of $P_{F}$ in $W$ and  $\alpha$
factors over $W$.  Then the same
definitions as above provide us with a $\mathcal{O}_{K_{e}}[\frac 1p]$-scheme
$\uSigma(W,\mathcal{T}_{\phi})_{\phi,\alpha}$, 
which is a pseudo-torsor for the natural action of the group
$\mathcal{O}_{K_{e}}[\frac 1p]$-scheme
$\uZ^{1}_{\alpha}(W/P,T_{\phi})$.


\begin{thm} Suppose $\phi$ and $\alpha$ are admissible. \label{thm_reduction}
  \begin{enumerate}
  \item $\uZ^{1}_{\alpha}(W_{F}^0/P_{F},T_{\phi})$ is a diagonalisable group scheme
    over $\mathcal{O}_{K_{e}}[\frac 1p]$.
  \item  $\uSigma(W_{F}^{0},\mathcal{T}_{\phi})_{\phi,\alpha}$ is a fppf \emph{torsor} under
    $\uZ^{1}_{\alpha}(W_{F}^0/P_{F},T_{\phi})$.
  \end{enumerate}
Moreover, these two statements still hold with $W_{F}^{0}$ replaced by
a sufficiently large finite quotient $W$ as above.
\end{thm}

Before we prove this result, let us see how it implies Theorem
\ref{Borel_preserving}. The claim in Theorem \ref{Borel_preserving}
is that there exists an extension $K'_{e}$ of $K_{e}$ such that 
$\uSigma(W_{F}^{0},\mathcal{T}_{\phi})_{\phi,\alpha}$ has an
${\mathcal O}_{K'_{e}}[\frac 1p]$-point $\varphi$ with finite image.
In other words, we need to show the existence of a finite quotient $W$
of $W_{F}$ such that $\uSigma(W,\mathcal{T}_{\phi})_{\phi,\alpha}$ has an
${\mathcal O}_{K'_{e}}[\frac 1p]$-point.
So, from Theorem \ref{thm_reduction}, it suffices to show that any fppf torsor under a
diagonalisable group over $\mathcal{O}_{K_{e}}[\frac 1p]$ becomes trivial over
$\mathcal{O}_{K'_{e}}[\frac 1p]$ for some finite extension $K'_{e}$. Since a diagonalisable group
is a product of copies of $\mathbb{G}_{m}$ and $\mu_{m}$'s, we may treat each of these
groups separately. As long as $\mathbb{G}_{m}$ is concerned, since any fppf
$\mathbb{G}_{m}$-torsor is also an \'etale $\mathbb{G}_{m}$-torsor, it suffices to take $K'_{e}$
equal to the Hilbert class field $K_{e}^{h}$ of $K_{e}$. On the other hand, when base
changed to $\mathcal{O}_{K_{e}^{h}}[\frac 1p]$, a $\mu_{m}$-torsor is given as
the torsor of $m$-th roots of some element $f\in \mathcal{O}_{K_{e}^{h}}[\frac 1p]^{\times}$, because of the
exact sequence
$$  \mathcal{O}_{K_{e}}{\textstyle[\frac 1p]}^{\times}\To{(.)^{m}}\mathcal{O}_{K_{e}}{\textstyle[\frac 1p]}^{\times} \To{}
H^{1}_{fppf}(S,\mu_{m})\To{}
H^{1}_{fppf}(S,\mathbb{G}_{m})\To{(.)^{m}} H^{1}_{fppf}(S,\mathbb{G}_{m})
$$
where $S$ denotes $\Spec(\mathcal{O}_{K_{e}}[\frac 1p])$.
Thus we can take  for $K'_{e}$ a splitting field of $X^{m}-f$ over $K_{e}^{h}$ in this case.

\begin{proof}
 (1) Consider the map $Z^{1}_{\alpha}(W_{F}^0/P_{F},T_{\phi}(R))\To{}
 T_{\phi}(R)\times T_{\phi}(R)$ that sends a $1$-cocycle $\eta$ to the pair of elements
 $(\eta(\Fr),\eta(s))$. It identifies $Z^{1}_{\alpha}(W_{F}^0/P_{F},T_{\phi}(R))$
 with the subset of elements $(F,\sigma)$ in $T_{\phi}(R)\times T_{\phi}(R)$ defined by the equation
$$  F\cdot \alpha(\Fr)(\sigma)\cdot \alpha(s)^{q}(F)^{-1} = \sigma\cdot \alpha(s)(\sigma)\cdots
\alpha(s^{q-1})(\sigma).$$ 
This identifies in turn $\uZ^{1}_{\alpha}(W_{F}^0/P_{F},T_{\phi})$ with the kernel of the morphism
of group schemes $T_{\phi}\times T_{\phi} \To{} T_{\phi}$ defined by the ratio of both
sides of the equation. But a kernel of a morphism of diagonalisable groups is diagonalisable.
Further, let $W=W_{F}/W_{F'}$ be a finite quotient of $W_{F}$ for a
Galois extension $F'$ such that $^{L}\phi_{|P_{F'}}$ and
$\alpha_{|W_{F'}}$ are trivial. Then $\uZ^{1}_{\alpha}(W/P,T_{\phi})$
is the kernel of the natural restriction map
$\uZ^{1}_{\alpha}(W_{F}^0/P_{F},T_{\phi})\To{}\uZ^{1}_{\alpha}(W_{F'}^0/P_{F'},T_{\phi})$,
hence is a diagonalisable group too.

(2)  We already know that  $\uSigma(W_{F}^{0},\mathcal{T}_{\phi})_{\phi,\alpha}$ is
 finitely presented over $\mathcal{O}_{K_{e}}[\frac 1p]$, so it remains to find
a faithfully flat $\mathcal{O}_{K_{e}}[\frac 1p]$-algebra $R$ such that
$\Sigma(W_{F}^{0},\mathcal{T}_{\phi}(R))_{\phi,\alpha}$ is not empty.
We will actually exhibit an $R$ and a $\varphi\in
\Sigma(W_{F}^{0},\mathcal{T}_{\phi}(R))_{\phi,\alpha}$ such that
$^{L}\varphi$ has finite image. This will also show that the last
statement of the theorem holds for any  finite
quotient $W$ over which this $^{L}\varphi$ factors.

\medskip
\emph{Warning :} for the sake of readibility, \textbf{we will
omit the $^{L}$ from our usual notation for $L$-morphisms in the
remainder of this proof.} It should
not create any ambiguity since we will not have to consider their
associated $1$-cocycles anyway.

\medskip

\emph{Existence of a point over a closed geometric point.}
By the admissibility assumption, the scheme $\uZ^{1}(W_{F}^{0},\hat
G)_{\phi,\alpha}$ is not empty. Since it has finite 
presentation over $\mathcal{O}_{K_{e}}[\frac 1p]$, Chevalley's constructibility theorem
ensures that it has a non-empty closed fiber, which in turn ensures that it has a point
with finite residue field $k$ of characteristic  $\neq p$.
Note that the associated $L$-morphism $W_{F}^{0}\To{}\LG(k)$ has to factor over a finite
quotient of $W_{F}^{0}$, hence it is continuous for the topology of
$W_{F}^{0}$ induced by the usual topology on $W_{F}$, and the discrete topology on $\LG(k)$. 
Therefore, Proposition \ref{Borel_preserving_acf} below ensures that
$\Sigma(W_{F}^{0},\mathcal{T}_{\phi}(\bar k))_{\phi,\alpha}$ is not empty. Pick a point in
this set and let $\bar\varphi : W_{F}^{0}\To{} \mathcal{T}_{\phi}(\bar k)$  be the
$L$-morphism corresponding to this point.
Note that $\bar\varphi$ also has to factor through a finite
quotient of $W_{F}^{0}$, so it extends uniquely  to  a continuous morphism from $W_{F}$.

\medskip
\emph{Lifting this point to characteristic $0$.} Let us try to lift
$\bar\varphi$ to a Witt-vectors valued point $\varphi :
W_{F}\To{}\mathcal{T}_{\phi}({\mathcal W}_e(\bar k))$.
Here ${\mathcal W}_e(\bar k)$ is the ring of integers of the completed maximal unramified
extension of the completion of $K_{e}$ at the place given by $\OO_{K_{e}}[\frac
1p]\To{}\bar k$.
By smoothness of $\mathcal{T}_{\phi}$,
the map $\mathcal{T}_{\phi}({\mathcal W}_e(\bar k))\To{} \mathcal{T}_{\phi}(\bar k)$ is
surjective, so we may choose lifts $\tilde\varphi(w)\in
\mathcal{T}_{\phi}({\mathcal W}_e(\bar k))$ of $\bar\varphi(w)$ and we may do it in such a way
that
\begin{itemize}
\item $\tilde\varphi(w)$ only depends on $\bar\varphi(w)$ and $\tilde\varphi(w)=1$ if
  $\bar\varphi(w)=1$.
\item   $\tilde\varphi(pw)=\phi(p)\tilde\varphi(w)$ for all $w\in W_{F}$ and $p\in P_{F}$.
\end{itemize}
Note that $\tilde\varphi(w)$ belongs to the summand $T_{\HG}(\phi,{^{w}\phi})\rtimes w$, so
that we also have $\tilde\varphi(wp)=\phi(wpw^{-1})\tilde\varphi(w)=\tilde\varphi(w)\phi(p)$ for all
$w\in W_{F}$ and $p\in P_{F}$. Moreover, the automorphism
$({\rm Ad}_{\tilde\varphi(w)})_{|T_{\phi}}$ only depends on the image of
$\tilde\varphi(w)$ in $\pi_{0}(\mathcal{T}_{\phi})$, which is the same as that of
$\bar\varphi(w)$. Hence this automorphism is  the one given by the action of $\alpha(w)$.
It follows  that the map
$$c_{2}:\,(w,w')\in W_{F}\times W_{F} \mapsto
\tilde\varphi(w)\tilde\varphi(w')\tilde\varphi(ww')^{-1} \in
\ker\left(\mathcal{T}_{\phi}({\mathcal W}_e(\bar k))\To{}\mathcal{T}_{\phi}(\bar k)\right)$$
has finite image, factors over $W_{F}/P_{F}\times W_{F}/P_{F}$, and
is a $2$-cocycle from $W_{F}/P_{F}$ into
$A:=\ker({T}_{\phi}({\mathcal W}_e(\bar k))\To{}{T}_{\phi}(\bar k))$ endowed with the
action $\alpha$. Having finite image, it is continuous for the
discrete topology on $A$. If this cocycle is
cohomologically trivial, that is, if there is some continuous map $t: \, W_{F}/P_{F} \to
A$  such that
$c_{2}(w,w')=t(w) ({^{\tilde{\varphi}(w)}t(w')})t(ww')^{-1}$, then the map $w\mapsto
\varphi(w):=t(w)^{-1}\tilde\varphi(w)$ is a continuous lift of $\bar\varphi$. Now, if
$\ell$ denotes the characteristic of $\bar k$, the group $A$ is certainly
$\ell'$-divisible (i.e. $m$-divisible for any $m$ prime to $\ell$), but not
$\ell$-divisible, so that $H^{2}(W_{F}/P_{F},A)$ is not a priori 
trivial. However, if $\mathcal{\bar O}$ denotes the ring of integers of an algebraic closure of
${\mathcal W}_e(\bar k)$, then the group
$A'=\ker({T}_{\phi}({\mathcal{\bar O}})\To{}{T}_{\phi}(\bar k))$ is divisible hence, by
Lemma \ref{cohomological_lemma}, 
$c_{2}$ is cohomologically trivial there, and we get a lift $\varphi$ of $\bar\varphi$
valued in $\LG(\mathcal{\bar O})$. 

We now modify this lift $\varphi$ so that it has finite image. To do so we introduce the
maximal subtorus $C_{\phi}$ of $T_{\phi}$ on which $W_{F}/P_{F}$ acts trivially. This is the split
torus over ${\mathcal W}_e(\bar k)$ whose group of characters is the torsion-free quotient of the $W_{F}/P_{F}$-coinvariants of
the group of characters of $T_{\phi}$. Now, pick an integer $m$ such that
$\bar\varphi({\rm Fr}^{m})=1$ and $\varphi(\Fr^{m})$ is central in $\varphi(W_{F})$
(this is possible since $\varphi(I_{F})$ is finite). The element $\varphi({\rm Fr}^{m})\in
A'$ then  belongs to
$T_{\phi}(\mathcal{\bar O})^{W_{F}/P_{F}}$. Since the group scheme
$T_{\phi}^{W_{F}/P_{F}}$ is an extension of a finite diagonalizable group scheme
by the torus $C_{\phi}$, some power of
$\varphi({\rm Fr}^{m})$, say $\varphi({\rm Fr}^{m'})$,  belongs to
    $C_{\phi}(\mathcal{\bar O})\cap \ker(T_{\phi}(\mathcal{\bar O})\to T_{\phi}(\bar k)) 
    = \ker( C_{\phi}(\mathcal{\bar O})\to  C_{\phi}(\bar k))$.  But the latter is a divisible group
  so we may pick there an element $c$ such that $c^{m'}=\varphi({\rm
    Fr}^{m'})$. Consider then $\varphi': w\mapsto c^{-\nu(w)}\varphi(w)$. This is still a
$\LG(\mathcal{\bar O})$-valued  lift of $\bar\varphi$ and it has finite image. 



\medskip
\emph{A section over a quasi-finite flat extension.}
Now, the existence of such a lift shows that the morphism of finite presentation
$\uSigma(W_{F}^{0},\mathcal{T}_{\phi})_{\phi,\alpha}\To{}\Spec(\mathcal{O}_{K_{e}}[\frac 1p])$
is dominant and, even better,
that there is a finite quotient $W$ of $W_{F}$ such that
$\uSigma(W,\mathcal{T}_{\phi})_{\phi,\alpha}\To{}\Spec(\mathcal{O}_{K_{e}}[\frac 1p])$
is dominant (with obvious notation).
Therefore, we can find a finite extension $K$ of $K_{e}$ and an integer $N$
such that
$\uSigma(W_{F}^{0},\mathcal{T}_{\phi})_{\phi,\alpha}$ has a section over $\mathcal{O}_{K}[\frac 1N]$
that corresponds to a morphism  $\varphi
: W_{F}\To{} \mathcal{T}_{\phi}(\mathcal{O}_{K}[\frac 1N])$ which
factors over a finite quotient of $W_{F}$.

\medskip
\emph{Sections over the missing points.}
Let us fix a prime $\lambda$ of $K$ that divides $N$ but not $p$, and denote by
$K_{\lambda}$ the completion of $K$ at $\lambda$ and by $\mathcal{O}_{\lambda}$ its ring of
integers. Using the inclusion
$\mathcal{O}_{K}[\frac 1N]\hookrightarrow K_{\lambda}$ we get a morphism 
 $\varphi : W_{F}\To{} \mathcal{T}_{\phi}(K_{\lambda})$. We would like to
 conjugate it, so that it factors though
 $\mathcal{T}_{\phi}(\mathcal{O}_{{\lambda}})$. We will show that this is possible after
 maybe passing to a ramified extension of $K_{\lambda}$. Indeed, the problem is to find
 some $t\in T_{\phi}(K_{\lambda})$ such that $ t\varphi(w)t^{-1}\in
 \mathcal{T}_{\phi}(\mathcal{O}_{\lambda})$ for all $w\in W_{F}$. Observe that
  $\mathcal{T}_{\phi}(K_{\lambda})=T_{\phi}(K_{\lambda})\mathcal{T}_{\phi}(\mathcal{O}_{\lambda})$,
  so that $T_{\phi}(\mathcal{O}_{\lambda})$ is a normal subgroup of $\mathcal{T}_{\phi}(K_{\lambda})$
  with quotient of the form
  $$\mathcal{T}_{\phi}(K_{\lambda})/T_{\phi}(\mathcal{O}_{\lambda}) =
  \left({T}_{\phi}(K_{\lambda})/T_{\phi}(\mathcal{O}_{\lambda})\right) \rtimes
  \tilde\pi_{0}(\phi).$$
  So we see that the existence of $t$ as above is equivalent to the existence of $\bar t\in
 {T}_{\phi}(K_{\lambda})/T_{\phi}(\mathcal{O}_{\lambda})$ such
that $\bar t \varphi \bar t^{-1}$ coincides with the trivial section
$W_{F}\To{\alpha}\tilde\pi_{0}(\phi)\To{}\mathcal{T}_{\phi}(K_{\lambda})/T_{\phi}(\mathcal{O}_{\lambda})$
(we have denoted again by $\varphi$ the composition of
$\varphi$ with the projection to the above quotient). Therefore, the existence of
$t$ as above  is equivalent to the vanishing of $\varphi_{T}$ in
$H^{1}(W',{T}_{\phi}(K_{\lambda})/T_{\phi}(\mathcal{O}_{\lambda}))$, where
$\varphi_{T}$ is defined by $\varphi(w)=\varphi_{T}(w)\rtimes\alpha(w)$ and
$W'$ is any finite quotient of $W_{F}$ through which $\varphi$ (hence also $\alpha$)
factors.

Now, let $v_{\lambda}$ be the normalized valuation on $K_{\lambda}$ and let
$X^{*}(T_{\phi})$ be the group of cocharacters of $T_{\phi}$. The pairing
$(t,\mu)\mapsto v_{\lambda}(\mu(t))$ for $t\in T_{\phi}(K_{\lambda})$ and $\mu\in
X^{*}(T_{\phi})$ induces an isomorphism of abelian groups
  $$ {T}_{\phi}(K_{\lambda})/T_{\phi}(\mathcal{O}_{\lambda})
\To{\sim} \Hom(X^{*}(T_{\phi}),\ZM)$$
which shows that ${T}_{\phi}(K_{\lambda})/T_{\phi}(\mathcal{O}_{\lambda})$ is a free
abelian group of rank  $\dim(T_{\phi})$ and that
$H^{1}(W',{T}_{\phi}(K_{\lambda})/T_{\phi}(\mathcal{O}_{\lambda}))$ has no reason to
vanish. However, let $\bar K_{\lambda}$ be an algebraic closure of $K_{\lambda}$ with ring
of integers $\mathcal{\bar O}_{\lambda}$ and denote by $v_{\lambda}$ the unique
extension of $v_{\lambda}$ to $\bar K_{\lambda}$. Then the same pairing as above induces
an isomorphism
 $$ {T}_{\phi}(\bar K_{\lambda})/T_{\phi}(\mathcal{\bar O}_{\lambda})
\To{\sim} \Hom(X^{*}(T_{\phi}),\mathbb{Q})$$
which shows that ${T}_{\phi}(\bar K_{\lambda})/T_{\phi}(\mathcal{\bar O}_{\lambda})$ is a
$\mathbb{Q}$-vector space, and therefore  that the group 
$H^{1}(W',{T}_{\phi}(\bar K_{\lambda})/T_{\phi}(\mathcal{\bar O}_{\lambda}))$ 
vanishes. It follows that there is some finite extension $K'_{\lambda}$ of $K_{\lambda}$
with ring of integers $\mathcal{O'}_{\lambda}$,
and some element $t'\in T_{\phi}(K'_{\lambda})$ such that $\varphi_{\lambda}
:=t'\cdot\varphi(w)\cdot {t'}^{-1}$ defines a section of
$\uSigma(W_{F}^{0},\mathcal{T}_{\phi})_{\phi,\alpha}$ over $\mathcal{O'}_{\lambda}$.

\medskip
\emph{Conclusion.}
With $\varphi$ and the $\varphi_{\lambda}$,  we have found a section
of $\uSigma(W_{F}^{0},\mathcal{T}_{\phi})_{\phi,\alpha}$  over the finite fpqc covering
 $\coprod_{\lambda|N,\lambda\nmid p} \Spec(\mathcal{O'}_{\lambda}) \cup \Spec(\mathcal{O}_{K}[\frac 1N])$
of $\Spec(\mathcal{O}_{K_{e}}[\frac 1p])$. Since
$\uSigma(W_{F}^{0},\mathcal{T}_{\phi})_{\phi,\alpha}$ is finitely
presented, there also exists  a section over a fppf covering.
Moreover, $\varphi$ and the $\varphi_{\lambda}$'s factor over a finite
quotient $W$ of $W_{F}$, so they provide a section of 
$\uSigma(W,\mathcal{T}_{\phi})_{\phi,\alpha}$ over an fpqc covering,
and we also  deduce that
$\uSigma(W,\mathcal{T}_{\phi})_{\phi,\alpha}$ has  a section over an
fppf covering of ${\rm Spec}({\mathcal O}_{E}[\frac 1p])$.
\end{proof}

In the above proof, we have used the following result in order to pass
from the non-emptyness of $\uZ^{1}(W_{F}^{0},\hat G)_{\phi,\alpha}$ to
that of $\uSigma(W_{F}^{0},\mathcal{T}_{\phi})_{\phi,\alpha}$.

\begin{prop}
  \label{Borel_preserving_acf}
Let $K$ be an algebraically closed field of characteristic different
from $p$, let $\varphi : W_{F}\To{} {^{L}G}(K)$ be a continuous
$L$-morphism, and let $\phi:=\varphi_{|P_{F}}$.
 Then there is another extension $\varphi'=\eta\cdot\varphi$ of $\phi$,
 with  $\eta \in Z^{1}_{{\rm Ad}_{\varphi}}(W_{F}/P_{F},C_{\hat
   G}(\phi)^{\circ}(K))$, and whose
  conjugation action $\Ad_{\varphi'}$ on $C_{\hat G}(\phi)$ preserves
  a Borel pair of $C_{\hat G}(\phi)^{\circ}$.
\end{prop}
\begin{proof}
Fix a Borel pair $\mathcal{B}_{\phi}$ of $C_{\hat G}(\phi)^{\circ}$.  
  Since $C_{\hat G}(\phi)^{\circ}$ acts
transitively on its Borel pairs, we may choose for all $\bar w\in W_{F}/P_{F}$ an element
$\alpha(\bar w)\in C_{\hat G}(\phi)^{\circ}(K)$ such that  $\Ad_{\alpha(\bar w)}\circ
\Ad_{\varphi(w)}$ stabilizes $\mathcal{B}_{\phi}$, where $w$ is any lift of $\bar w$ in
$W_{F}$ (note that the restriction of $\Ad_{\varphi(w)}$ to $C_{\hat G}(\phi)^{\circ}$ does
not depend on the choice of such a lift). Moreover, we may and will choose $\alpha(\bar w)$
so that it only depends on $\Ad_{\varphi(w)}$, ensuring in turn that the map $\bar w\mapsto
\alpha(\bar w)$ is continuous.  Since the stabilizer of
$\mathcal{B}_{\phi}$ in $C_{\hat G}(\phi)^{\circ}$ is $T_{\phi}$, we see that the
automorphism $\Ad_{\alpha(\bar w)}\circ \Ad_{\varphi(w)}$ of $T_{\phi}$ does not depend on
the choice of $\alpha(\bar w)$, and this defines an action of $W_{F}/P_{F}$ on $T_{\phi}$
by algebraic automorphisms. Note that this action is the same as the one given by the
image of  $\Ad_{\varphi(w)}$ in ${\rm Out}(C_{\hat G}(\phi)^{\circ})$ through the canonical
identification of $T_{\phi}$ with the ``abstract'' torus of the root datum of $C_{\hat
  G}(\phi)^{\circ}$. In particular,  this action is finite since it
factors through the quotient of the normalizer 
of $\phi(P_{F})$ in ${^{L} G}$ by $C_{\hat G}(\phi)^{\circ}$, which is
a finite group. Now we remark that the map 
$$(\bar w, \bar w')\mapsto \alpha(\bar w)\varphi(w)\alpha(\bar w')\varphi(w')(\alpha(\bar
w\bar w')\varphi(ww'))^{-1}= \alpha(\bar w) \Ad_{\varphi(w)}(\alpha(\bar w'))\alpha(\bar w
\bar w')^{-1}$$ defines
a continuous $2$-cocycle from $W_{F}/P_{F}$ to $T_{\phi}(K)$ with respect to the action described
above. If this cocycle is a coboundary, that is, if there is a continuous map $\beta
:W_{F}/P_{F}\To{} T_{\phi}(K)$ such that
$$\alpha(\bar w) \Ad_{\varphi(w)}(\alpha(\bar w'))\alpha(\bar w\bar w')^{-1}=
\beta(\bar w) (\Ad_{\alpha(\bar w)}\circ \Ad_{\varphi(w)}(\beta(\bar w'))) \beta(\bar w\bar w')^{-1},$$ then
the map $\eta : \bar w\mapsto \beta(\bar w)^{-1}\alpha(\bar w)$ is in
$Z^{1}_{\Ad_{\varphi}}(W_{F}/P_{F},C_{\hat G}(\phi)^{\circ}(K))$ and the parameter
$\varphi'=\eta\cdot\varphi$ normalizes the Borel pair $\mathcal{B}_{\phi}$ as desired.

Hence the obstruction to finding $\eta$ as desired lies in
$H^{2}(W_{F}/P_{F},T_{\phi}(K))$. However, since $T_{\phi}(K)$ is a divisible group, the
following lemma shows that this cohomology group
vanishes. 
\end{proof}

\def\sig{{s}}

\begin{lemma}\label{cohomological_lemma}
  Denote $W=W_F/P_F$ and $I=I_F/P_F$, and let $A$ be an abelian group with a
  finite action of $W$. We consider only continuous cohomology of $W$ with respect to the
  discrete topology on the coefficients. 
  \begin{enumerate}
  \item There is a short  exact sequence
    $$1\to H^1(W/I,H^{1}(I,A))\to H^2(W,A)\to H^2(I,A)^{W/I}\to 1.$$
  \item We have $H^{2}(I,A)={\rm colim}_{(n,p)=1} (A^{I}/N_{M}(A)^{n})$ where
    \begin{itemize}
    \item $M$ is the order of the action of a pro-generator $\sig$ of $I$ and
      $N_{M}(a)=a\sig(a)\cdots\sig^{M-1}(a)$
    \item $\{n\in\mathbb{N}, (n,p)=1\}$ is ordered by divisibility and the transition map $A^{I}/N_{M}(A)^{n}\rightarrow
      A^{I}/N_{M}(A)^{n'}$ for $n|n'$ is induced by the map $a\mapsto a^{n'/n}$.
    \end{itemize}
    In particular, $H^{2}(I,A)=\{1\}$ whenever $A$ contains a $p'$-divisible group of finite
    index.
  \item We  have $H^1(W/I,H^{1}(I,A))= H^{1}(I,A)_{\rm Fr}=[N_{M}^{-1}(A[p'])/A(\sig)]_{\rm Fr}$ where
    \begin{itemize}
    \item $A[p']$ is the prime-to-$p$ torsion  of $A$ and
      $A(\sig)=\{a\sig(a)^{-1}, a\in A\}$.
    \item $\rm Fr$ is a Frobenius lift in $W$. Moreover, if $m$ is the order
      of the action of $\rm Fr$ on $A$, then ${\rm Fr}^{-m}$ acts on $H^{1}(I,A)$ by
      raising to the power $q^{m}$. 
    \end{itemize}
In particular, $H^1(W/I,H^{1}(I,A))=\{1\}$ whenever $A$ is a $p'$-divisible group.
  \end{enumerate}
\end{lemma}
\begin{proof}
  (1) follows from the Hochschild-Serre spectral sequence with the facts that $W/I=\ZZ$
  and $H^n(\ZZ, M)=1$ for any $n\geq 2$ and any $\ZZ[\ZZ]$-module
  $M$. Note that the existence of the spectral sequence follows from
  Proposition 5 and the subsequent Remark (2) of \cite{Casselman-Wigner}, but the short
  exact sequence here can also be
  simply deduced by taking colimits of similar short exact sequences
  for discrete quotients $W/J$ with $J\subset
  I$ open and contained in the kernel of the action of $W$ on $A$.

(2) By identifying $I$ with the inverse limit of $\ZZ/nM\ZZ$ for $(n,p)=1$, we can write $H^2(I,A)$ as the
direct limit of $H^2(\ZZ/nM\ZZ,A)$, indexed by integers $n$ coprime to $p$  and ordered by divisibility.
The standard formula for the $H^{2}$ of a cyclic group tells us that 
$ H^{2}(\ZZ/nM\ZZ,A) = A^{\sig}/N_{M}(A)^{n}$ and that the transition map $A^{\sig}/N_{M}(A)^{n}\rightarrow
A^{\sig}/N_{M}(A)^{n'}$ for $n|n'$ is induced by the map $a\mapsto a^{n'/n}$. Now,
suppose $B$ is a  $p'$-divisible subgroup of $A$ such that $(A/B)^{N}=1$ for some integer
$N\geq 1$. Then
$N_{M}(B)$ is  a $p'$-divisible subgroup of $A^{\sig}$ hence it is contained in $N_{M}(A)^{n}$ for
all $n>0$ with $(n,p)=1$. Since $N_{M}(A)$ contains $(A^{\sig})^{M}$, we see that 
  each  $A^{\sig}/N_{M}(A)^{n}$  has exponent dividing $NM$. Moreover this exponent is also
prime to $p$ since both $M$ and $n$ are prime to $p$.
It follows that, denoting by $N'$ the prime-to-$p$ part of $N$, the
transition maps $A^{\sig}/N_{M}(A)^{n}\rightarrow
A^{\sig}/N_{M}(A)^{n'}$  vanish whenever $nN'M|n'$, showing that the colimit vanishes, whence $H^{2}(I,A)=1$.

(3) By the continuity constraint on cocycles, the map $Z^{1}(I,A)\to A$, $\eta\mapsto \eta(\sig)$ 
identifies $Z^{1}(I,A)$ with the subgroup $\{a\in A,\exists n\in\NN,(n,p)=1,N_{n}(a)=1\}$.
Since $N_{nM}(a)=N_{n}(N_{M}(a))=N_{M}(a)^{n}$, this is also the subgroup
$\{a\in A,\exists n\in\NN, (n,p)=1, N_{M}(a)^{n}=1\}$.
In other words, with the notation of the lemma we have $ Z^{1}(I,A) = N_{M}^{-1}(A[p']).$
As a consequence $H^{1}(I,A)$, being by definition the quotient of $ Z^{1}(I,A)$ by
$\sig$-conjugacy under $A$,
is also the quotient by the subgroup $A(\sig)$, and the formula of (3) follows.


Let us make the action of $\Fr$ on $H^{1}(I,A)$ more explicit.
Note first that the action of $\Fr^{-1}$ on $Z^{1}(I,A)$
is given by
 $\Fr^{-1}(\eta)(\sig)=
 \Fr^{-1}(\eta(\Fr\sig\Fr^{-1}))=\Fr^{-1}(\eta(\sig^{q}))=\Fr^{-1}(N_{q}(\eta(\sig)))$.
If $m\in\NM$ is such that  $\Fr^{m}$ acts trivially on $A$, then $\Fr^{-m}(\eta)(\sig)=N_{q^{m}}(\eta(\sig))$.
But since the image 
 $\overline{N_{q^{m}}(\eta(\sig))}$ of $N_{q^{m}}(\eta(\sig))$ in 
 $Z^{1}(I,A)/A(\sig)$ is $\overline{\eta(\sig)}^{q^{m}}$, we see that the action of
 $\Fr^{-m}$ on $H^{1}(I,A)$ is simply given by the $q^{m}$-th power map.
Therefore, 
the space of $\Fr^{m}$ co-invariants is the
quotient $$H^{1}(I,A)_{\Fr^{m}}=H^1(I,A)/\left(H^1(I,A)\right)^{q^{m}-1},$$ which is trivial whenever
$H^1(I,A)$ is $p'$-divisible. The latter holds if $Z^{1}(I,A)$ is $p'$-divisible, and this
holds in turn if $A$ is $p'$-divisible.
\end{proof}

\begin{rem} \label{rk_epinglage}
  This lemma is the main point in proving the existence of $L$-morphisms that
  preserve a Borel pair.  
  When the center $Z_{\phi}$ of $C_{\hat G}(\phi)^{\circ}$ is a torus, and more generally
  when it is \emph{smooth over} $\OO_{K_{e}}[\frac 1p]$, then $Z_{\phi}(K)$
  is a $p'$-divisible group for any algebraically
  closed field $K$ of characteristic not $p$, so that the
  same lemma implies that
  $H^{2}(W_{F}/P_{F},Z_{\phi}(K))$ vanishes.
  In this case, fix a pinning $\varepsilon_{\phi}$ of $C_{\hat G}(\phi)^{\circ}$ and
  consider its normalizer $\mathcal{Z}_{\phi}$ in
  $C_{\LG}(\phi)$, which is an extension of $\pi_{0}(\LG)$ by $Z_{\phi}$.
  Thanks to this vanishing result, the same argument as in
  Theorem \ref{thm_reduction} shows that
  $\uSigma(W_{F}^{\circ},\mathcal{Z}_{\phi})_{\phi,\alpha}$ is a fppf torsor under the
  diagonalisable group scheme $\uZ^{1}_{\alpha}(W_{F}^0/P_{F},Z_{\phi})$, and therefore
  that we can find $\varphi_{\alpha}$ as in Theorem \ref{Borel_preserving} \emph{ with the
  additional property that $\Ad_{\varphi_{\alpha}}$ preserves the pinning $\varepsilon_{\phi}$}.
   In this case, the group scheme $C_{\hat  G}(\phi)^{\circ}\cdot
   \varphi(W_{F})$ is isomorphic to a suitable quotient of the 
  Langlands dual group scheme over $\mathcal{O}_{K'_{e}}[\frac 1p]$
 of some tamely ramified reductive group  over $F$,
 namely ``the'' quasi-split reductive group $G_{\phi,\alpha}$ dual to $C_{\hat
   G}(\phi)^{\circ}$ over $\bar F$ and whose $F$-structure is induced by the outer
 action
$$ W_{F}\To{\alpha} \tilde\pi_{0}(\phi) \To{} {\rm Out}(C_{\hat G}(\phi)^{\circ}).$$
In particular, when $C_{\hat G}(\phi)$ is connected, $\Sigma(\phi)$ is trivial so we get
a single associated quasi-split reductive group $G_{\phi}$ over $F$ and, under the
hypothesis of this Remark, we have an isomorphism over $\mathcal{O}_{K'_{e}}[\frac 1p]$
$$ \LG_{\phi}=C_{\hat G}(\phi)\rtimes_{{\rm Ad}_{\varphi}} W_{e} \To\sim C_{\LG}(\phi).$$
\end{rem}

\begin{ex}[Classical groups] Let us assume that $p>2$ and consider the
  case where $\LG$ is a Langlands
  dual group of a quasi-split classical group $G$ over $F$, so that $\hat
  G$ is one of $\Sp_{2n}$, $\SO_{2n+1}$ or $\SO_{2n}$. Then the following
  holds :
  \begin{enumerate}
  \item \emph{$C_{\hat G}(\phi)$ is connected for all
    $\phi\in\Phi_{e}$. More precisely, it is isomorphic to a product $\hat G'\times
    \GL_{n_{1}}\times\cdots\times \GL_{n_{r}}$ with $\hat G'$  of the
    same type as $\hat G$.}

  This follows from the fact that the only self-dual
  irreducible representation of a $p$-group is the trivial representation. Indeed,
  decomposing the underlying symplectic or orthogonal space as a sum of
  $\phi(P_{F})$-isotypic components, this fact shows that each  pair of dual non-trivial
  irreducible representations contributes a factor $\GL$ to the 
  centralizer, while the trivial representation contributes a classical group of the same
  sign. We then deduce the following :


\item \emph{$G_{\phi}$ is a (possibly non split) classical group times a product of restrictions of scalars 
  of general linear groups and unitary groups.} 
\item \emph{If $G$ is symplectic, then we can find an extension $\varphi$ of $\phi$ such that
  ${\rm Ad}_{\varphi}$ preserves a pinning of $C_{\hat G}(\phi)$ \emph{(because $\HG$ is adjoint
  and thanks to the previous remark)}. In particular we
  get an isomophism $\LG_{\phi}\To\sim C_{\LG}(\phi)$ as above.} Recall that even though $G$
is split here, we take $\LG = \HG\times W_{e}$ where $P_{F}/P_{F}^{e}$ \emph{injects} into $W_{e}$.
  \end{enumerate}
  
\end{ex}


The next lemma provides many  examples  to which Remark
\ref{rk_epinglage} applies.

\begin{lemma} \label{lemma:center_smooth}
  Let $H$ be a reductive group scheme over $\bar\ZZ[\frac 1p]$ and let
  $P$ be a finite $p$-group of automorphisms of $H$. If  the
  center $Z(H)$ of $H$ is smooth over $\bar\ZZ[\frac 1p]$, then so is
  the center $Z(H^{P,\circ})$ of the connected centralizer
  $H^{P,\circ}$ of $P$. 
\end{lemma}
\begin{proof}
  Recall that the center $Z$ of a reductive group scheme is a group of
  multiplicative type, associated to an \'etale sheaf $X^{*}(Z)$ of
  finitely generated abelian groups. In particular, $Z$ is flat over the base, and it
  is smooth if and only if the order of the torsion subgroups of all
  stalks of $X^{*}(Z)$ are invertible on  the
  base. In our case, since $\Spec(\bar\ZZ[\frac 1p])$ is connected, it
  suffices to check the $\bar\QQ$-stalk. Hence we see that  $Z$ is smooth if and only if
  the torsion subgroup of $X^{*}(Z_{\bar\QQ})$  has $p$-power order, if
  and only if $\pi_{0}(Z_{\bar\QQ})$ has $p$-power order.

  As a consequence, we are reduced to prove a statement for reductive
  groups over $\bar\QQ$:
\emph{if $H$ is a reductive algebraic group over $\bar\QQ$ with an action
  of a $p$-group $P$ and such that $\pi_{0}(Z(H^{\circ}))$ has
  $p$-power order, then $\pi_{0}(Z(H^{P,\circ}))$ has also $p$-power order}.

Note that if  $P_{1}$ is a normal subgroup of $P$ with quotient $P_{2}:=P/P_{1}$, then
$H^{P_{1}}$ is a reductive algebraic group and 
$H^{P}=(H^{P_{1}})^{P_{2}}$. Therefore, if the above statement is true for the
action of $P_{1}$ on $H$ and that of $P_{2}$ on $H^{P_{1}}$, it is true for the action of
$P$ on $H$. By using a central series of $P$, we may thus argue by induction and we see
that it suffices to treat the case where $P$ is cyclic of order $p$. Moreover, we may also
assume that $H$ is connected since only $Z(H^{\circ})$ and
$H^{P,\circ}=(H^{\circ})^{P,\circ}$ appear in the above statement.
Now, the quotient morphism  $H\To{}H_{\ad}$ induces a surjective morphism
$H^{P,\circ}\twoheadrightarrow (H_{\ad})^{P,\circ}$ whose kernel is $K:=Z(H)^{P}\cap
H^{P,\circ}$. So $\pi_{0}(K)$ is dual to the torsion subgroup of $X^{*}(K)$,
which is a quotient of the torsion subgroup of the coinvariants $X^{*}(Z(H))_{P}$, which has
$p$-power order. Since $Z(H^{P,\circ})$ is an extension of 
$Z((H_{\ad})^{P,\circ})$ by $K$, we see that it suffices to prove
that $\pi_{0}(Z((H_{\ad})^{P,\circ}))$ has $p$-power order. Note that
$P$ permutes the simple factors of $H_{\rm ad}$, and it suffices to
treat the case where this permutation is transitive. If $H_{\rm ad}$ is not
simple, then this permutation is also simply transitive (since $P$ is
simple), and $(H_{\rm ad})^{P}$ is isomorphic to a simple factor of $H_{\rm ad}$
(diagonally embedded in $H_{\rm ad}$). So we
are left with the case where $H_{\rm ad}$ is simple.
Let $\theta$ be a  generator of $P$. Note that $\theta$ is a semi-simple element of
 $H_{\rm ad}\rtimes P$, hence it is in particular quasi-semisimple in
 the sense of Steinberg. Let $(B,T)$ be a Borel pair fixed by
 $\theta$, and write $\theta= \Ad_{t}\circ \sigma$ with $\sigma$
 quasi-central (see \cite[Def. 1.19]{DM18}) and $t\in
 T^{\theta,\circ}=T^{\sigma,\circ}$, as per \cite[Prop. 1.16 (1)]{DM18}. 
If $\theta$ is inner on $H_{\rm ad}$, then $\sigma=1$, hence $t$ has
order $p$.  Otherwise, by the classification of quasi-central elements
below Proposition 1.22 of \cite{DM94}, we
must have $p=2$ or $p=3$,  and $\sigma$ has always order $p$, so that $t$ 
also has order $p$. In all cases, Theorem 3.11 of \cite{DM18} implies that
the order of  $\pi_{0}(Z((H_{\ad})^{P,\circ}))$
divides $p^{2}$.
\end{proof}

Using Remark \ref{rk_epinglage}, we can now strengthen Theorem
\ref{Borel_preserving} for a certain class of groups, by replacing
``Borel pair'' by ``pinning''.

\begin{thm}
  \label{prop:pinning_preserving}
  Suppose that the center of $\HG$ is smooth over $\ZZ[\frac 1p]$.
  Then there is a finite extension $K'_{e}$ of $K_{e}$ such that for any
  admissible  $\phi\in\Phi_{e}$ and any admissible $\alpha\in\Sigma(\phi)$, there
  is some $\varphi_{\alpha} \in Z^{1}(W_{F}^{0},\hat G(\mathcal{O}_{K'_{e}}[\frac 1p]))_{\phi,\alpha}$ 
such that $\varphiL_{\alpha}(W_{F}^{0})$ is finite
and $\Ad_{\varphi_{\alpha}}$ preserves \emph{a pinning} of the
split reductive group scheme  $C_{\hat G}(\phi)^{\circ}$.
\end{thm}

\section{Moduli of Langlands parameters}
\label{section:modulioflanglandsparameters}

We maintain the setup and notation of the previous section. In particular, $\HG$ is a
split reductive group scheme over $\ZM[\frac 1p]$ endowed with a finite
action of $W_{F}$, and $\LG = \HG\rtimes W$ is an adjustable
associated ``$L$-group'' of finite type.

\subsection{The moduli space of cocycles}

Let us fix a ``depth'' $e\in \mathbb{N}$ such that the action of
$P_{F}^{e}$ on $\HG$ is trivial. The functor $R\mapsto
Z^{1}(W_{F}^{0}/P_{F}^{e},\HG(R))$ is representable by an affine
scheme of finite presentation over $\mathbb{Z}[\frac 1p]$ that we
denote by  $\uZ^{1}(W_{F}^{0}/P_{F}^{e},\HG)$, and whose affine ring we denote by
$R^{e}_{\LG}$. By construction, it comes with a universal $1$-cocycle
$$ \varphi^{e}_{\rm univ}: \, W_{F}^{0}/P_{F}^{e}\To{} \HG(R^{e}_{\LG}).$$
Restriction to $P_{F}$ provides us with a morphism of
$\mathbb{Z}[\frac 1p]$-schemes
\begin{equation}
\uZ^{1}(W_{F}^{0}/P_{F}^{e},\HG) \To{}
\uZ^{1}(P_{F}/P_{F}^{e},\HG)\label{eq:forget_map}
\end{equation}
with the notation of appendix A. Using the notation of Theorem
\ref{wild_type_recap}, we have a decomposition of the right hand side
over $\mathcal{O}_{K_{e}}[\frac 1p]$ as follows
$$ \uZ^{1}(P_{F}/P_{F}^{e},\HG)_{\mathcal{O}_{K_{e}}[\frac 1p]}
= \coprod_{\phi\in\Phi_{e}}
\HG\cdot \phi,$$
where $\HG\cdot\phi$ denotes the orbit of $\phi$, which in this context is a smooth
affine scheme that
represents the quotient sheaf $\HG/C_{\HG}(\phi)$ on the big \'etale site of
$\mathcal{O}_{K_{e}}[\frac 1p]$ (see Remark
\ref{rem_orbits}). 
This induces in turn a decomposition
\begin{equation}
\uZ^{1}(W_{F}^{0}/P_{F}^{e},\HG)_{\mathcal{O}_{K_{e}}[\frac 1p]} = 
\coprod_{\phi\in \Phi_{e}^{\rm adm}}  \HG \times^{C_{\HG}(\phi)}
\uZ^{1}(W_{F}^{0},\HG)_{\phi}\label{eq:dec}
\end{equation}
with the notation of the last section. Here the summand 
$\HG \times^{C_{\HG}(\phi)} \uZ^{1}(W_{F}^{0},\HG)_{\phi}$ is
an affine scheme that represents the quotient sheaf of
 $\HG \times \uZ^{1}(W_{F}^{0},\HG)_{\phi}$ by the 
 action of $C_{\HG}(\phi)$ by right translations on $\HG$ and by
 (twisted) conjugation on 
$\uZ^{1}(W_{F}^{0},\HG)_{\phi}$. Recall that $\phi$ is called
 ``admissible'' if this summand is non-empty and we have denoted by 
$\Phi_{e}^{\rm adm}$ the subset of admissible elements. In terms of rings, we have the
decomposition
\begin{equation}
 R^{e}_{\LG} \otimes_{\ZM[\frac 1p]} \mathcal{O}_{K_{e}}[\frac 1p]
= \prod_{\phi\in\Phi_{e}^{\rm adm}}  R_{\LG,[\phi]}
= \prod_{\phi\in\Phi_{e}^{\rm adm}}  \left(\mathcal{O}_{\HG}\otimes_{\ZM[\frac 1p]}
  R_{\LG,\phi}\right)^{C_{\HG}(\phi)}.\label{eq:dec_rings}
\end{equation}
The $[\phi]$-part of the universal $1$-cocycle
$$ \varphi^{[\phi]}_{\rm univ}: \, W_{F}^{0}/P_{F}^{e}\To{} \HG(R_{\LG,[\phi]})$$
is universal for $1$-cocycles $\varphi : W_{F}^{0}\To{}\HG(R)$ such that
$\varphi_{|P_{F}}$ is \'etale-locally (over $R$)  $\HG$-conjugate to $\phi$.
Over $R_{\LG,\phi}$ we have an extension of $\phi$
$$ \varphi^{\phi}_{\rm univ}: \, W_{F}^{0}/P_{F}^{e}\To{} \HG(R_{\LG,\phi})$$
which is universal for $1$-cocycles $\varphi :W_{F}^{0}\To{}\HG(R)$ such that $\varphi_{|P_{F}}=\phi$.
The $1$-cocycles $ \varphi^{[\phi]}_{\rm univ}$ and $ \varphi^{\phi}_{\rm univ}$
determine each other in the following ways.
\begin{itemize}
\item $\varphi^{\phi}_{\rm univ}$ is deduced from $ \varphi^{[\phi]}_{\rm univ}$ by
  pushing out along the morphism 
$$\left(\mathcal{O}_{\HG}\otimes_{\ZM[\frac 1p]}  R_{\LG,\phi}\right)^{C_{\HG}(\phi)}
\To{} \left(\mathcal{O}_{\HG}\otimes_{\ZM[\frac 1p]}  R_{\LG,\phi}\right)
\To{\varepsilon_{\HG}\otimes\id} R_{\LG,\phi}$$
\item $\varphi^{[\phi]}_{\rm univ}$ is deduced from $ \varphi^{\phi}_{\rm univ}$ by
the formula 
$$\varphi^{[\phi]}_{\rm univ}(w) :\, 
\mathcal{O}_{\HG} \To{{\rm Ad}_{w}^{*}} \mathcal{O}_{\HG} \otimes_{\ZM[\frac 1p]} \mathcal{O}_{\HG}
\To{\id\otimes \varphi^{\phi}_{\rm univ}(w)} 
\mathcal{O}_{\HG} \otimes_{\ZM[\frac 1p]} R_{\LG,\phi} $$
where ${\rm Ad}_{w}^{*}$ is induced by the $w$-twisted conjugation action of $\HG$ on itself, and the
composition lands into $\left(\mathcal{O}_{\HG}\otimes_{\ZM[\frac 1p]}  R_{\LG,\phi}\right)^{C_{\HG}(\phi)}$.
\end{itemize}
We now recall the decomposition of the previous section
\begin{equation}
 \uZ^{1}(W_{F}^{0},\HG)_{\phi} =\coprod_{\alpha\in \Sigma(\phi)^{\rm adm}}
\uZ^{1}(W_{F}^{0},\HG)_{\phi,\alpha}\label{eq:dec2}
\end{equation}
and we fix, for each $\alpha\in \Sigma(\phi)^{\rm adm}$, a
$1$-cocycle  $\varphi_{\alpha} :
W_{F}^{0}\To{}\HG(\mathcal{O}_{K'_{e}}[\frac 1p])$ as in Theorem
\ref{Borel_preserving}. Then we have an isomorphism $\rho\mapsto \rho\cdot \varphi_{\alpha}$
\begin{equation}
\uZ^{1}_{\Ad_{\varphi_{\alpha}}}((W_{F}/P_{F})^{0},C_{\HG}(\phi)^{\circ}) \To{\sim}
\uZ^{1}(W_{F}^{0},\HG)_{\phi,\alpha} \times_{\mathcal{O}_{K_{e}}
  [\frac 1p]}\mathcal{O}_{K'_{e}}{\textstyle [\frac 1p]}\label{eq:iso}
 \end{equation}
where the LHS is a space of tame parameters as  studied in Section 2.
Accordingly, we have a decomposition of $\mathcal{O}_{K_{e}}[\frac 1p]$-algebras
$R_{\LG,\phi}=\prod_{\alpha}
R_{\LG,\phi,\alpha}$ and, for each $\alpha$, the  $\alpha$-component 
 of $\varphi^{\phi}_{\rm univ}$ is given, over 
$R_{\LG,\phi,\alpha}\otimes_{\mathcal{O}_{K_{e}}[\frac
   1p]}\mathcal{O}_{K'_{e}}[\frac 1p]$ 
by 
\begin{equation}
 \varphi^{\phi,\alpha}_{\rm  univ} = \rho_{\LG_{\varphi_{\alpha}}}\cdot\varphi_{\alpha} : W_{F}^{0}\To{}
\HG\left(R_{\LG,\phi,\alpha}\otimes_{\mathcal{O}_{K_{e}}[\frac
   1p]}\mathcal{O}_{K'_{e}}{\textstyle[\frac 1p]}\right)\label{eq:univ-morphism}
\end{equation}
where $\rho_{\LG_{\varphi_{\alpha}}}$ is the universal $1$-cocycle
over $\uZ^{1}_{\Ad_{\varphi_{\alpha}}}((W_{F}/P_{F})^{0},C_{\HG}(\phi)^{\circ})$.
We are now in position to prove :

\begin{thm}\label{thm:geometry}
i)  The scheme $\uZ^{1}(W_{F}^{0}/P_{F}^{e},\HG)$  is syntomic (flat and local complete
  intersection) over $\ZM[\frac 1p]$ and generically smooth, of pure absolute dimension $\dim(\HG)$. 

ii) For any prime $\ell\neq p$, the ring $R_{\LG}^{e}$ is $\ell$-adically separated and
the pushforward of  $\varphiL^{e}_{\rm univ}$ to  $R_{\LG}^{e}\otimes \ZM_{\ell}$ extends
uniquely to a $\ell$-adically continuous $L$-morphism
$$ \varphiL^{e}_{\ell-\rm univ}:\, W_{F}/P_{F}^{e}\To{}\LG(R_{\LG}^{e}\otimes\ZM_{\ell})$$
which is universal for $\ell$-adically continuous $L$-morphisms as in Definition \ref{def:continuity}.
\end{thm}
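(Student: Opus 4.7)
The plan is to deduce both parts by reduction to the corresponding statements for the tame parameter spaces studied in Section 2, using the decompositions (\ref{eq:dec}), (\ref{eq:dec2}) and the isomorphism (\ref{eq:iso}).

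\emph{Part (i).} The properties ``flat'', ``LCI'', ``generically smooth'' and ``of relative dimension $\dim(\HG)$'' can all be checked after the faithfully flat base changes $\ZM[\frac{1}{p}] \to \mathcal{O}_{K_e}[\frac{1}{p}] \to \mathcal{O}_{K'_e}[\frac{1}{p}]$. Combining (\ref{eq:dec}), (\ref{eq:dec2}) and (\ref{eq:iso}), after this base change the scheme $\uZ^{1}(W_{F}^{0}/P_{F}^{e},\HG)$ becomes a finite disjoint union, indexed by admissible pairs $(\phi,\alpha)$, of twisted quotients
$$
\HG \times^{C_{\HG}(\phi)} \uZ^{1}_{\Ad_{\varphi_{\alpha}}}((W_F/P_F)^0, C_{\HG}(\phi)^\circ).
$$
The projection $\HG \to \HG/C_{\HG}(\phi)$ is a $C_{\HG}(\phi)$-torsor for the \'etale topology, so each summand is \'etale-locally on $\HG/C_{\HG}(\phi)$ a product of the smooth scheme $\HG/C_{\HG}(\phi)$ (of relative dimension $\dim\HG-\dim C_{\HG}(\phi)$) with the tame parameter space $\uZ^{1}_{\Ad_{\varphi_{\alpha}}}((W_F/P_F)^0, C_{\HG}(\phi)^\circ)$. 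By the results of Section 2, the latter is syntomic of relative dimension $\dim C_{\HG}(\phi)^\circ=\dim C_{\HG}(\phi)$ and generically smooth. Since flatness, LCI, generic smoothness and relative dimension are \'etale-local, each summand inherits these properties, with total relative dimension $\dim(\HG)$.

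\emph{Part (ii).} From (i), $R^{e}_{\LG}$ is a finitely presented, flat (hence $\ZM$-torsion-free) $\ZM[\frac{1}{p}]$-algebra; $\ell$-adic separation then follows by a standard argument combining the Noetherian hypothesis with Krull's intersection theorem applied Jacobson-locally. Uniqueness of the extension is immediate from the density of $W_F^0/P_F^e$ in $W_F/P_F^e$ for the $\ell$-adic topology of Definition \ref{def:continuity}. For existence I work componentwise on (\ref{eq:dec_rings}): on each $(\phi,\alpha)$-component the universal cocycle is described by (\ref{eq:univ-morphism}) as $\rho_{\LG_{\varphi_\alpha}}\cdot\varphi_{\alpha}$, with $\rho_{\LG_{\varphi_\alpha}}$ the universal tame parameter valued in $C_{\HG}(\phi)^\circ$. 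By the analogous extension result for tame parameter spaces proved in Section 2, $\rho_{\LG_{\varphi_\alpha}}$ extends $\ell$-adically continuously to all of $W_F/P_F$, and multiplication by $\varphi_\alpha$ then yields the required continuous $L$-morphism. Universality for $\ell$-adically continuous test morphisms is inherited from universality of $R^{e}_{\LG}$ for discrete $1$-cocycles on $W_F^0$ by restriction and continuous extension.

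\emph{Main obstacle.} The core of part (i) is the coherent combination of (a) the syntomic structure of tame parameter spaces in the reductive subgroup $C_{\HG}(\phi)^\circ$, (b) the \'etale-local triviality of the $C_{\HG}(\phi)$-torsor $\HG\to\HG/C_{\HG}(\phi)$, and (c) the exact bookkeeping of dimensions; any of these failing would break the syntomic conclusion. For part (ii) the delicate point is checking that the $\ell$-adic extension of $\rho_{\LG_{\varphi_\alpha}}$ remains compatible with the $C_{\HG}(\phi)$-twist, so that (\ref{eq:univ-morphism}) actually descends to an $L$-morphism valued in $R^{e}_{\LG,[\phi]}\otimes\ZM_{\ell}$ after taking $C_{\HG}(\phi)$-invariants.
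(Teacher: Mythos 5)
Your strategy for part (i) matches the paper's: after base change to $\mathcal{O}_{K'_{e}}[\frac 1p]$ you reduce, via the decomposition by $\phi$ and descent along the smooth $C_{\HG}(\phi)$-torsor $\HG\to\HG/C_{\HG}(\phi)\simeq\HG\cdot\phi$, to the syntomic and generically smooth structure of the tame pieces $\uZ^{1}_{\Ad_{\varphi_{\alpha}}}((W_F/P_F)^0,C_{\HG}(\phi)^{\circ})$, and the dimension count goes through. One small caveat: the disjoint-union formula you write, indexed by $(\phi,\alpha)$ and twisted by the full $C_{\HG}(\phi)$, is not quite correct, since $\pi_0(C_{\HG}(\phi))$ may permute the $\alpha$-summands of $\uZ^{1}(W_F^0,\HG)_{\phi}$; the paper first proves the claims for $\uZ^{1}(W_F^0,\HG)_{\phi}$ (using (\ref{eq:dec2}) and (\ref{eq:iso})) and only then descends to the twisted quotient. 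This is cosmetic and does not affect the argument.

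In part (ii) there are two genuine gaps. First, your argument for $\ell$-adic separatedness of $R^{e}_{\LG}$ is wrong: a finitely presented flat $\ZZ[\frac 1p]$-algebra need not be $\ell$-adically separated (e.g.\ $\ZZ[\frac{1}{p\ell}]$, or any factor on which $\ell$ becomes invertible), and Krull's intersection theorem gives nothing here because $(\ell)$ is not contained in the Jacobson radical of $\ZZ[\frac 1p]$ or of $R^{e}_{\LG}$. The paper deduces separatedness from the decomposition (\ref{eq:dec_rings}) together with the separatedness of the tame parameter rings established in Corollary \ref{cor:separatedness}: the injection $R^{e}_{\LG}\hookrightarrow R^{e}_{\LG}\otimes\mathcal{O}_{K_e}[\frac 1p]$ and separatedness of the right side imply separatedness of the left. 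You cannot replace this by soft commutative algebra. Second, your construction of the extension only produces an $L$-morphism valued in $\LG(R^{\prime e}_{\LG}\otimes\ZZ_{\ell})$ where $R^{\prime e}_{\LG}=R^{e}_{\LG}\otimes\mathcal{O}_{K'_e}[\frac 1p]$; you flag the need to descend to $R^{e}_{\LG}\otimes\ZZ_\ell$ but do not actually do it. The paper's descent argument is not just a matter of $C_{\HG}(\phi)$-equivariance of (\ref{eq:univ-morphism}): one shows that the pushforward of the extended cocycle to $\LG(R^{\prime e}_{\LG}\otimes\ZZ/\ell^n)$ has the same image as that of $\varphiL^{e}_{\rm univ}$ (by continuity), hence factors through $\LG(R^{e}_{\LG}\otimes\ZZ/\ell^n)$ for all $n$, and then uses that $R^{\prime e}_{\LG}$ is locally free of finite rank over $R^{e}_{\LG}$ to pass to the limit. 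This step should be spelled out.
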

\begin{proof}
  i) Since $\mathcal{O}_{K'_{e}}[\frac 1p]$ is a syntomic cover of $\ZM[\frac 1p]$, it suffices
  to prove i) after base change to this ring. 
In what follows, we  implicitly base-change $\HG$ and all schemes introduced above to
this ring, but we omit it in the notation to  keep it readable. 
So, it suffices to prove i) for each summand 
$\HG\times^{C_{\HG}(\phi)} \uZ^{1}(W_{F}^{0},\HG)_{\phi}$ of the decomposition (\ref{eq:dec}).
Consider the morphism
\begin{equation}
\HG\times^{C_{\HG}(\phi)} \uZ^{1}(W_{F}^{0},\HG)_{\phi}\To{}
\HG\cdot \phi\label{eq:forget2}
\end{equation}
obtained by restriction of (\ref{eq:forget_map}). Its base change along the orbit morphism
$\HG\To{} \HG\cdot\phi$ is the first projection
\begin{equation}
 \HG \times \uZ^{1}(W_{F}^{0},\HG)_{\phi}\To{}  \HG 
\label{eq:forget3}
\end{equation}
Thanks to the decomposition (\ref{eq:dec2}) and the isomorphisms (\ref{eq:iso}), we may
apply Corollary \ref{cor:lci} and Proposition \ref{prop:reduced} to deduce that 
$\uZ^{1}(W_{F}^{0},\HG)_{\phi}$ is syntomic over $\mathcal{O}_{K'_{e}}[\frac 1p]$ and
generically smooth,  of pure absolute dimension $\dim(C_{\HG}(\phi))$. It follows 
that the morphism (\ref{eq:forget3}) is syntomic of pure relative
dimension $\dim(C_{\HG}(\phi))-1$ and that the source space is
generically smooth since the target is smooth. 
Since the orbit morphism is surjective and smooth (because $C_{\HG}(\phi)$ is smooth), the same
property holds for the morphism (\ref{eq:forget2}) by descent. But the orbit $\HG.\phi$ itself is
smooth over $\mathcal{O}_{K'_{e}}[\frac 1p]$ (since it is  a summand of
$\underline\Hom(P_{F}/P_{F}^{e},\HG)$) and has relative dimension
$\dim(\HG)-\dim(C_{\HG}(\phi))$. So i) follows. 

ii) The $\ell$-adic separatedness of $R^{e}_{\HG}$ follows from Corollary
\ref{cor:separatedness} and (\ref{eq:dec_rings}).  Moreover, (\ref{eq:univ-morphism})
together with Theorem \ref{thm:universal family} show that for each $\phi\in\Phi_{e}$, the
universal $L$-morphism  $\varphiL^{\phi}_{\rm univ}$ extends uniquely and $\ell$-adically
continuously to an $L$-morphism 
$\varphiL^{\phi}_{\ell-\rm univ}:\, W_{F}\To{}\LG(R'_{\LG,\phi}\otimes\ZM_{\ell})$.
Here we have written 
$R'_{\LG,\phi}:=R_{\LG,\phi}\otimes_{\mathcal{O}_{K_{e}}[\frac
   1p]}\mathcal{O}_{K'_{e}}[\frac 1p]$, and we have used the fact that the $\varphi_{\alpha}$ occuring in
 (\ref{eq:univ-morphism}) has finite image, hence extends uniquely to $W_{F}$ by continuity.
Using the relation between
$\varphi^{[\phi]}_{\rm univ}$ and $\varphi^{\phi}_{\rm univ}$, we see ultimately 
that $\varphiL^{e}_{\rm univ}$ extends to an $\ell$-adically continuous $L$-morphism
$\varphiL^{e}_{\ell-\rm univ}:\, W_{F}\To{}\LG(R^{\prime e}_{\LG}\otimes\ZM_{\ell})$ where 
$R^{\prime e}_{\LG}=R^{e}_{\LG}\otimes \mathcal{O}_{K'_{e}}[\frac 1p]$. 
We now claim that
$\varphiL^{e}_{\ell-\rm univ}$ factors through
$\LG(R^{e}_{\LG}\otimes\ZM_{\ell})$. Indeed,  its pushforward to 
$\LG(R^{\prime e}_{\LG}\otimes\ZM/{\ell}^{n}\ZM)$ has the same image as the pushforward of 
$\varphiL^{e}_{\rm univ}$ by continuity, hence it factors through $\LG(R^{e}_{\LG}\otimes\ZM/{\ell}^{n})$
for all $n$. But since $R^{\prime e}_{\LG}$ is locally free of finite rank over $R^{e}_{\LG}$,
the claim follows.  The universal property is straightforward.
\end{proof}

Statement ii) clarifies a bit the dependence of our moduli space
$\uZ^{1}(W_{F}^{0}/P_{F}^{e},\HG)$ on our initial choices of a topological generator $s$ of
$I_{F}/P_{F}$ and of a lift of Frobenius $\Fr$ in $W_{F}/P_{F}$ when  defining the subgroup
$W_{F}^{0}$ of $W_{F}$. 

\begin{cor}\label{cor:indep_moduli}
  For any prime $\ell\neq p$, the base change
  $\uZ^{1}(W_{F}^{0}/P_{F}^{e},\HG)_{\ZZ_{\ell}}$ is independent of the
  choices made to define the subgroup $W_{F}^{0}$, up to canonical isomorphism. Namely, let $W_{F}^{0'}$ be another choice of subgroup, then there
is a unique isomorphism of $\ZZ_{\ell}$-schemes $\uZ^{1}(W_{F}^{0}/P_{F}^{e},\HG)_{\ZZ_{\ell}}\To\sim
  \uZ^{1}(W_{F}^{0'}/P_{F}^{e},\HG)_{\ZZ_{\ell}}$ compatible with the
 universal $\ell$-adically continuous $1$-cocycles on each side.
\end{cor}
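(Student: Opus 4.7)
The plan is to deduce this directly from the universal property in part ii) of Theorem \ref{thm:geometry}. The key observation is that, while the scheme $\uZ^{1}(W_{F}^{0}/P_{F}^{e},\HG)$ itself is built using the choice of $W_{F}^{0}$, its base change to $\ZZ_{\ell}$ is pinned down by an \emph{intrinsic} universal property: the pair $(R_{\LG}^{e}\otimes\ZZ_{\ell},\,\varphiL^{e}_{\ell-\rm univ})$ represents the functor on $\ZZ_{\ell}$-algebras sending $R$ to the set of $\ell$-adically continuous $L$-morphisms $W_{F}/P_{F}^{e}\To{}\LG(R)$ in the sense of Definition \ref{def:continuity}. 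This functor involves only the genuine Weil group $W_{F}$ and has no reference to the auxiliary subgroup $W_{F}^{0}$.

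I would then apply Theorem \ref{thm:geometry} ii) to both choices, $W_{F}^{0}$ and $W_{F}^{0'}$, obtaining two pairs that both represent this same functor. Yoneda's lemma then produces a unique $\ZZ_{\ell}$-algebra isomorphism between the representing rings which transports one universal $\ell$-adically continuous $L$-morphism to the other, and dualizing gives the claimed canonical isomorphism of $\ZZ_{\ell}$-schemes. The uniqueness clause of the corollary — compatibility with the universal $\ell$-adically continuous $1$-cocycles on each side — is exactly the condition supplied by Yoneda's lemma, so no further verification is needed.

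The main obstacle is essentially absent: all the substantive work has been carried out in Theorem \ref{thm:geometry} ii), in particular the statement that $\varphiL^{e}_{\rm univ}$ extends \emph{uniquely} in an $\ell$-adically continuous manner to the full Weil group $W_{F}$, which is what destroys the auxiliary dependence on $W_{F}^{0}$. The only cosmetic point deserving a brief check is that Definition \ref{def:continuity} of $\ell$-adic continuity for $L$-morphisms is genuinely phrased on $W_{F}$ and is insensitive to the choice of topological generator of $I_{F}/P_{F}$ and lift of Frobenius used to single out $W_{F}^{0}$; granting this, the corollary is a formal consequence of representability.
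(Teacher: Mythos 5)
Your proposal is correct and is essentially the paper's own argument: the paper states the corollary without a separate proof precisely because it is a formal consequence of the universal property established in Theorem \ref{thm:geometry}~ii), which is phrased intrinsically in terms of $W_{F}/P_{F}^{e}$ and Definition \ref{def:continuity}, making no reference to $W_{F}^{0}$. Applying that universal property to both choices and invoking Yoneda, as you do, is exactly the intended deduction.
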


Besides the above result, our main conjecture over $\overline\ZM[\frac 1p]$ states that the
decomposition (\ref{eq:dec2}) is the decomposition into connected components.

\begin{conj} \label{conj:connected}
  For any pair $(\phi,\alpha)$, the $\mathcal{O}_{K_{e}}[\frac 1p]$-scheme
  $\uZ^{1}(W_{F}^{0},\HG)_{\phi,\alpha}$ is connected and remains connected
  after any finite flat integral base change. 
\end{conj}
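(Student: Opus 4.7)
The plan is to reduce to the tame case via the isomorphism \eqref{eq:iso} and then to address the tame moduli space using structural results from Section 2, concluding with a descent argument from $\mathcal{O}_{K'_{e}}[\frac 1p]$ back to $\mathcal{O}_{K_{e}}[\frac 1p]$.

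First, after base change to $\mathcal{O}_{K'_{e}}[\frac 1p]$, the isomorphism \eqref{eq:iso} identifies $\uZ^{1}(W_{F}^{0},\HG)_{\phi,\alpha}$ with the tame moduli space $\uZ^{1}_{\Ad_{\varphi_{\alpha}}}((W_{F}/P_{F})^{0},C_{\HG}(\phi)^{\circ})$. Since $\mathcal{O}_{K'_{e}}[\frac 1p]$ is faithfully flat and finite over $\mathcal{O}_{K_{e}}[\frac 1p]$, I would reduce connectedness (and its persistence under any finite flat base change) to the analogous statement on the right-hand side. A complementary descent check---that the Galois group $\mathrm{Gal}(K'_{e}/K_{e})$ does not interchange components---follows from the fact that the index $\alpha\in\Sigma(\phi)^{\mathrm{adm}}$ is intrinsic to $\mathcal{O}_{K_{e}}[\frac 1p]$, so that the decomposition \eqref{eq:dec2} is Galois-stable on the nose.

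Second, the heart of the proof is the tame statement: for the connected reductive group $C_{\HG}(\phi)^{\circ}$, and for each admissible component indexed by a semisimple element, the restricted tame moduli space is connected. Such a space fibers over the variety of semisimple classes $[y]$ with $y^{q}$ conjugate to $y$ (in the appropriate twisted sense), with fibers torsors under a twisted centralizer. Combining Steinberg's connectedness theorem for centralizers of semisimple elements in simply connected groups with a careful tracking of the component group of this twisted centralizer reduces the question to a combinatorial statement about the corresponding root data. By part i) of Theorem \ref{thm:geometry} the moduli is syntomic over its base, hence flat, so connectedness of a single geometric generic fiber propagates to all fibers via upper semicontinuity of the number of components.

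Third, for stability under arbitrary finite flat integral base change, I would argue that the coordinate ring of each component is geometrically integral. Geometric reducedness is a consequence of the generic smoothness established in part i), so combined with geometric connectedness this yields geometric integrality, which implies the desired persistence after any finite flat integral base change. Alternatively, one could exhibit a normal form or cross-section for tame parameters on each component and verify directly that the coordinate ring is a domain.

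The main obstacle lies in step two. The strategy via Steinberg's theorem is clean when $C_{\HG}(\phi)^{\circ}$ is simply connected with trivial tame-Frobenius twist, but in general this centralizer is neither simply connected nor untwisted, and the component group of the twisted centralizer involves delicate combinatorics of root data. This is presumably why the statement is posed only as a conjecture for general reductive groups: it is known for $\HG=\mathrm{GL}_{n}$ and in several further cases where the centralizers are products of general linear groups, but the case of exceptional groups remains open.
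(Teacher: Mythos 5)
This statement is labeled a \emph{conjecture} in the paper, and the paper does not claim a proof of it: it is only established under the extra hypothesis that the center of $\HG$ is smooth over $\ZZ[\frac 1p]$ (Theorem \ref{thm:connected}), after first reducing via (\ref{eq:iso}) to the tame Conjecture \ref{conj:connectedbis}, and then to the case where $I_F$ preserves a pinning (Theorem \ref{thm:connected_pinning_preserved}). You do eventually acknowledge the conjectural status, but your concluding characterization of what is known is off: it is not just $\GL_n$-type cases but the entire class of groups with smooth center (hence many exceptional groups as well), and what remains open is the non-smooth-center case without a pinning-preserving $I_F$-action.

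Beyond this framing issue, two genuine gaps stand out. First, your step two does not match the paper's strategy and seems unlikely to close on its own: the paper's route to connectedness over $\ZZ[\frac 1p]$ goes through the \emph{$\ell$-adic} connectedness (Theorem \ref{thm:connectedZl}, proved via GIT-quotient homeomorphisms in Subsection \ref{sec:decomp-at-ell}), and then a delicate specialization/lifting argument (Proposition \ref{prop:connected} and Lemma \ref{lemma:lifting_reduction}) that uses the existence of a prime $\ell_0$ at which the fixed-point subschemes $\HG^I$ are simultaneously smooth and have connected fibers; this is where the pinning-preservation hypothesis enters via Steinberg's fixed-point theorem (Lemma \ref{lemma:sssc}) --- not via Steinberg's theorem about connected centralizers of semisimple elements inside a single fiber, as you propose. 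Without the arithmetic step that exhibits a $\bar\FF_{\ell_0}$-point on each component originating from $\bar\QQ$, the fiber-by-fiber description of closed orbits carries no information about connectedness of the total space over the ring of integers.

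Second, your step three is wrong as stated: ``connectedness of a single geometric generic fiber propagates to all fibers via upper semicontinuity of the number of components'' is the wrong direction. Upper semicontinuity allows the number of geometric connected components to \emph{jump up} at special points, not down; a flat family with geometrically connected generic fiber can perfectly well have a disconnected special fiber (a flat degeneration of a conic into two lines is the standard example). The paper's conclusion that the special fiber controls $\pi_0$ of the integral scheme rests on $\ell$-adic separatedness of the coordinate ring (recorded right after Theorem \ref{thm:connectedZl}), not on any semicontinuity of components.
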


The isomorphisms in  (\ref{eq:iso}) reduce this conjecture to the following one :

\begin{conj} \label{conj:connectedbis}
  For any split $\HG$ over $\ZZ[\frac 1p]$ with a tamely ramified Galois action that preserves a Borel pair,
  the tame summand $\uZ^{1}(W_{F}^{0}/P_{F},\HG)_{\overline\ZZ[\frac 1p]}$ is connected.
\end{conj}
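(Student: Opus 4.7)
The plan is to stratify the tame moduli scheme by the $\HG$-conjugacy class of $\sigma=\varphi(s)$, the image of the tame inertia generator, and to reduce connectedness to the case of the open stratum. Writing a tame cocycle as a pair $(\sigma,\tau)\in\HG\times\HG$ subject to the cocycle relation between $s$ and $\Fr$ (essentially $\tau\cdot{}^{\Fr}\!\sigma\cdot\tau^{-1}=\sigma^{q}$ in the split case), one obtains a map to the Chevalley quotient $\HG/\!/\HG$ sending $(\sigma,\tau)$ to the Jordan semisimple class of $\sigma$. Under the Borel-preservation hypothesis, only finitely many semisimple classes arise as images of admissible cocycles, so the strategy reduces to two steps: (a) show that the open stratum of regular semisimple $\sigma$ is nonempty and connected; (b) show that every other admissible stratum lies in its closure.

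For step (a), on this stratum the centralizer of $\sigma$ is a maximal torus $T$, and the tame relation forces $\tau$ to lie in a coset of the normalizer $N(T)$ projecting to a Weyl element $w$ that realizes the $q$-power permutation on the character lattice. The resulting scheme is a $T$-torsor over an explicit open subscheme of pairs $(\sigma,w)$, and connectedness over $\bar\ZM[\frac 1p]$ follows by a direct verification using $T\cong\mathbb{G}_{m}^{r}$. For step (b), the natural tool is to construct a $\mathbb{G}_{m}$-equivariant contraction of the moduli onto the regular semisimple locus, using a principal $\mathbb{G}_{m}$-action on $\HG$ that rescales the unipotent factor of $\sigma$ via a Jacobson--Morozov triple in $C_{\HG}(\sigma_{s})$, combined with a Kostant--Steinberg slice to deform $\sigma_{s}$ within $T$ until it becomes regular. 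The tame relation must be preserved throughout the deformation, which forces a compatible simultaneous rescaling of $\tau$, and admissibility of every intermediate cocycle is automatic provided the deformation stays within $\HG$.

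The hard part will be to execute this deformation integrally over $\bar\ZM[\frac 1p]$, rather than only in characteristic zero or good characteristic. Jordan decomposition, the existence of Kostant--Steinberg slices, the connectedness of centralizers of semisimple elements, and the integrability of Jacobson--Morozov triples all have well-known subtleties at bad primes for $\HG$ and at primes dividing the order of $\pi_{1}(\HG_{\rm der})$. The flatness of the moduli from Theorem \ref{thm:geometry}(i) reduces connectedness to a fibrewise statement at each geometric point of $\ZM[\frac 1p]$, but uniformity across all residue characteristics $\ell\neq p$ is still required. A further subtlety arises when the tame Galois action on $\HG$ is nontrivial: the Chevalley quotient and the contracting $\mathbb{G}_{m}$ must then be taken in a $\Fr$-equivariant sense, which amounts to working with a Vinberg-type theta-group attached to the finite order action of the tame quotient of Galois. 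One would expect these obstacles to be surmountable by combining integral Jordan decomposition results with a case-by-case analysis at bad primes via Bala--Carter theory in the framework of Demazure group schemes, but a clean uniform argument covering all $\HG$ and all $\ell\neq p$ in one stroke remains elusive.
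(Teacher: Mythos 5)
This statement is labeled a \emph{conjecture} in the paper, and the paper does not prove it in full generality; it proves only the cases where $\HG$ has smooth center (Corollary \ref{cor:connected_smooth_center}) or where $I_{F}$ preserves a pinning (Theorem \ref{thm:connected_pinning_preserved}), via Proposition \ref{prop:connected}. Your proposal, by your own admission, also falls short of a complete argument, so the most useful thing is to contrast the two strategies and to be precise about where yours runs aground.

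The paper's route is entirely different from yours. It works prime-by-prime: for a fixed prime $\ell\neq p$, it exploits the fact that cocycles trivial on the $\ell$-Sylow of $I_{F}$ are a GIT-dense closed subscheme of the whole tame space over $\bar\FF_{\ell}$ (Proposition \ref{prop:connectltame}), and that this subscheme is modeled on $\uZ^{1}(W_{F}/I,\HG^{I})$ for various open subgroups $I\subset I_{F}$. A lifting/reduction lemma (Lemma \ref{lemma:lifting_reduction}) then lets one move components between characteristic $0$ and characteristic $\ell$, and by choosing $\ell$ and $I$ cleverly one ``chains'' any given component down to a component of $\uZ^{1}(W_{F}/I_{F},\HG^{I_{F}})\simeq \HG^{I_{F}}$. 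The whole argument hinges on the structural inputs that $\HG^{I}$ be connected with geometrically connected fibers and smooth away from at most one bad prime, which is exactly what Lemma \ref{lemma:sssc} and the subsequent reduction lemmas establish under the pinning (or smooth-center) hypothesis. This is quite unlike a Jordan-stratification argument; in particular nothing like a contraction onto the regular semisimple locus appears anywhere.

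On your proposal itself, three concrete problems stand out beyond the bad-prime caveats you already flag. First, the contracting $\mathbb{G}_m$: you need the flow that rescales the unipotent part of $\sigma$ to be compatible with the cocycle relation $\tau\cdot{}^{\Fr}\sigma\cdot\tau^{-1}=\sigma^{q}$, and you assert this ``forces a compatible simultaneous rescaling of $\tau$.'' But the relation is not homogeneous in any evident grading: the $q$-th power on the right mixes the Jordan blocks of $\sigma$ non-linearly, and a $\mathbb{G}_m$ coming from a Jacobson--Morozov triple in $C_{\HG}(\sigma_s)$ does not a priori normalize $\tau$ or its $N_{\HG}(T)$-coset. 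This is not a technicality; it is the crux of step (b), and without a precise construction it is far from clear the tame moduli admits any such contraction. Second, the regular semisimple stratum can be empty: the compatibility $w\cdot{}^{\Fr}\sigma\cdot w^{-1}=\sigma^{q}$ forces $\sigma$ into a finite subgroup scheme $\ker(q\,\mathrm{id}-w\theta)$ of $T$, and for small $q$ relative to $\HG$ this finite scheme need not contain regular elements. Third, even when it is non-empty, step (a) is not a ``direct verification'' over $\bar\ZM[\frac 1p]$: the stratum is a finite disjoint union of orbits over $\bar\QQ$ (indexed by $w$ and by torsion points of $T$), and showing these orbits form a \emph{connected} scheme over $\bar\ZM[\frac 1p]$ requires precisely the arithmetic bookkeeping of which components collide modulo which primes $\ell$ --- i.e.\ the content of the paper's lifting/reduction lemma --- and is not a consequence of $T\cong\mathbb{G}_m^{r}$ alone. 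In short, the core difficulties of the conjecture are not bypassed by the Jordan stratification; they reappear inside both of your steps.
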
 

In Subsection \ref{sec:conn-comp-over}, we prove the last statement under the additional
assumption that $\HG$ has smooth center (Corollary
\ref{cor:connected_smooth_center}), or the Galois action preserves a
pinning (Theorem \ref{thm:connected_pinning_preserved}).
Thanks to  
Theorem \ref{prop:pinning_preserving},
this is enough to get the following result towards Conjecture \ref{conj:connected} :


\begin{thm} \label{thm:connected}
  Conjecture \ref{conj:connected} holds if the center of $\HG$ is smooth over $\ZZ[\frac 1p]$. 
\end{thm}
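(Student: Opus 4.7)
The plan is to reduce Conjecture \ref{conj:connected} to the pinning-preserving case of Conjecture \ref{conj:connectedbis} via the isomorphism (\ref{eq:iso}), and then to transport the conclusion back by faithfully flat and integral descent.

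Concretely, the isomorphism (\ref{eq:iso}) identifies the base change of $\uZ^{1}(W_{F}^{0},\HG)_{\phi,\alpha}$ to $\mathcal{O}_{K'_{e}}[\frac{1}{p}]$ with the tame parameter space $\uZ^{1}_{\Ad_{\varphi_{\alpha}}}((W_{F}/P_{F})^{0}, C_{\HG}(\phi)^{\circ})$ attached to the reductive group $C_{\HG}(\phi)^{\circ}$ endowed with the twisted tame Galois action induced by $\varphi_{\alpha}$. Under the smooth-center hypothesis on $\HG$, Theorem \ref{prop:pinning_preserving} sharpens Theorem \ref{Borel_preserving} and allows us to choose $\varphi_{\alpha}$ so that this twisted action preserves a full pinning of $C_{\HG}(\phi)^{\circ}$, not merely a Borel pair. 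We are thereby in the pinning-preserving setting of Conjecture \ref{conj:connectedbis}, which by the results of Subsection \ref{sec:conn-comp-over} is known; this delivers connectedness of the tame parameter space after base change to $\bar\ZZ[\frac{1}{p}]$.

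For the descent to $\mathcal{O}_{K_{e}}[\frac{1}{p}]$ and for stability under an arbitrary finite flat integral base change $\mathcal{O}_{K_{e}}[\frac{1}{p}] \to S$ with $S$ a domain, I would observe that such an $S$ embeds as an integral extension of $\mathcal{O}_{K_{e}}[\frac{1}{p}]$ into $\bar\ZZ[\frac{1}{p}]$, so that $\mathrm{Spec}\,\bar\ZZ[\frac{1}{p}] \to \mathrm{Spec}\,S$ is surjective by going-up. The base change of $\uZ^{1}(W_{F}^{0},\HG)_{\phi,\alpha}$ to $S$ is then a surjective image of the connected base change to $\bar\ZZ[\frac{1}{p}]$, hence connected.

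The main obstacle lies not in this reduction but in the content of Subsection \ref{sec:conn-comp-over}: one must genuinely prove Conjecture \ref{conj:connectedbis} under the pinning-preserving hypothesis, and verify that Theorem \ref{prop:pinning_preserving} does produce a pinning-preserving twisted action on $C_{\HG}(\phi)^{\circ}$ (rather than only a Borel-pair-preserving one) when $Z(\HG)$ is smooth. With those two inputs in hand, the reduction and descent above are essentially formal.
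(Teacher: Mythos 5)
Your reduction is exactly the one the paper intends: (\ref{eq:iso}) transfers the question to tame parameters for $C_{\HG}(\phi)^{\circ}$, Theorem \ref{prop:pinning_preserving} uses the smooth-center hypothesis on $\HG$ to arrange a pinning-preserving twisted action on $C_{\HG}(\phi)^{\circ}$, and Theorem \ref{thm:connected_pinning_preserved} from Subsection \ref{sec:conn-comp-over} then gives connectedness over $\bar\ZZ[\frac 1p]$. Your explicit going-up argument for descending connectedness from $\bar\ZZ[\frac 1p]$ to an arbitrary finite flat integral $S$ is a correct and welcome spelling-out of a step the paper leaves implicit.
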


\subsection{Decomposition after localization at a prime $\ell\neq p$} \label{sec:decomp-at-ell}
For each choice of a prime $\ell\neq p$, statement ii) of Theorem \ref{thm:geometry}
allows us to refine the decomposition (\ref{eq:dec_rings}) after tensoring by
$\ZM_{\ell}$. Indeed, denote by $I_{F}^{\ell}$ the maximal closed subgroup of $I_{F}$
with prime-to-$\ell$ pro-order. Then, since $\varphiL^{e}_{\ell-\rm univ}$ is $\ell$-adically continuous,
the kernel $I_{F}^{\ell,e}$ of $(\varphiL^{e}_{\ell-\rm univ})_{|I_{F}^{\ell}}$ is open
in $I_{F}^{\ell}$. It follows that restriction to $I_{F}^{\ell}$ provides a morphism of
$\ZM_{\ell}$-schemes
$$ \uZ^{1}(W_{F}^{0}/P_{F}^{e},\HG)_{\ZM_{\ell}} \To{}  \uZ^{1}(I_{F}^{\ell}/I_{F}^{\ell,e},\HG)_{\ZM_{\ell}}.$$
But since the finite group $I_{F}^{\ell}/I_{F}^{\ell,e}$ has order invertible in
$\ZM_{\ell}$, we can apply the results of appendix A. In particular, there is a finite
\'etale extension $\Lambda_e$ of $\ZM_{\ell}$ and a finite set $\Phi_{e}^{\ell}\subset
Z^{1}(I_{F}^{\ell}/I_{F}^{\ell,e}, \HG(\Lambda_e))$ such that
$$\uZ^{1}(I_{F}^{\ell}/I_{F}^{\ell,e},\HG)_{\Lambda_e} =
\coprod_{\phi^{\ell}\in \Phi_{e}^{\ell}} \HG\cdot \phi^{\ell},$$
from which we deduce a decomposition similar to (\ref{eq:dec})
$$\uZ^{1}(W_{F}^{0}/P_{F}^{e},\HG)_{\Lambda_{e}}
=\coprod_{\phi^{\ell}\in \Phi_{e}^{\ell}}
\HG \times^{C_{\HG}(\phi^{\ell})} \uZ^{1}(W_{F}^{0}/P_{F}^{e},\HG)_{\Lambda_{e},\phi^{\ell}}, $$
where $\uZ^{1}(W_{F}^{0}/P_{F}^{e},\HG)_{\Lambda_{e},\phi^{\ell}}$ denotes the closed
subscheme of $\uZ^{1}(W_{F}^{0}/P_{F}^{e},\HG)_{\Lambda_{e}}$ defined by
$(\varphi^{e}_{\ell-\rm univ})_{|I_{F}^{\ell}}=\phi^{\ell}$. Then we can play the same
game as in Subsection \ref{sec:some-defin-constr}. Namely, taking an $L$-group $\HG\rtimes W_{e}$ 
such that $I_{F}^{\ell}/I_{F}^{\ell,e}$ injects into $W_{e}$, we define
$C_{\LG}(\phi^{\ell})$, $\tilde\pi_{0}(\phi^{\ell})$ and $\Sigma(\phi^{\ell})$ exactly as
in that subsection. This allows us to decompose further 
$$\uZ^{1}(W_{F}^{0}/P_{F}^{e},\HG)_{\Lambda_{e},\phi^{\ell}}=
\coprod_{\alpha^{\ell}\in \Sigma(\phi^{\ell})}
\uZ^{1}(W_{F}^{0}/P_{F}^{e},\HG)_{\Lambda_{e},\phi^{\ell},\alpha^{\ell}}.$$
We will say again that $\phi^{\ell}$ and $\alpha^{\ell}$ are admissible if the
corresponding summand is non empty. Moreover, we have an analogue of Theorem \ref{Borel_preserving} with the
same proof (actually, the proof simplifies a bit since we work here over a DVR).
\begin{thm}\label{Borel_preserving_ell}
  There is an integral finite flat extension $\Lambda'_{e}$ of $\Lambda_{e}$ such that, for each
  admissible $\phi^{\ell},\alpha^{\ell}$, we can find a cocycle
  $\varphi_{\alpha^{\ell}}\in Z^{1}(W_{F}^{0}/P_{F}^{e},\HG)_{\Lambda'_{e},\phi^{\ell},\alpha^{\ell}}$
  with finite image and such that
  ${\rm Ad}_{\varphi_{\alpha^{\ell}}}$ normalizes a Borel pair of
  $C_{\HG}(\phi^{\ell})^{\circ}$.
\end{thm}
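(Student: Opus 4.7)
The plan is to follow the proof of Theorem \ref{Borel_preserving} essentially verbatim, as the author indicates, exploiting the fact that $\Lambda_e$ is a complete DVR (finite \'etale over $\ZM_\ell$) rather than a Dedekind domain with infinitely many residue characteristics.

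First I would fix an admissible pair $(\phi^\ell,\alpha^\ell)$. The summand $\uZ^1(W_F^0/P_F^e,\HG)_{\Lambda_e,\phi^\ell,\alpha^\ell}$ is a nonempty affine scheme of finite presentation over $\Lambda_e$, so it has a rational point over some finite extension $K'$ of $\mathrm{Frac}(\Lambda_e)$. Taking $\Lambda'_e$ to be the integral closure of $\Lambda_e$ in $K'$ --- a finite flat integral extension --- and using that the scheme is affine of finite type, clearing denominators produces a $\Lambda'_e$-valued cocycle $\psi \in Z^1(W_F^0/P_F^e,\HG(\Lambda'_e))$ lying in this summand.

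Second, using the analogue over $\Lambda'_e$ of the isomorphism (\ref{eq:iso}) --- namely, that the $\alpha^\ell$-component is naturally a torsor for a tame-parameter space of $C_\HG(\phi^\ell)^\circ$ --- the task reduces to modifying $\psi$ inside its component so as to achieve both Borel-pair normalization and finite image. The restriction $\psi_{|I_F^\ell}=\phi^\ell$ already has finite image in $\HG(\Lambda_e)$, and the remaining freedom is in the Frobenius value $\psi(\Fr)$, subject to the braid relation and up to twisted conjugation by $C_\HG(\phi^\ell)^\circ(\Lambda'_e)$. By the classical fact that every semisimple automorphism of a reductive group preserves a Borel pair, after possibly enlarging $\Lambda'_e$ and twisting $\psi(\Fr)$ by an appropriate element of $C_\HG(\phi^\ell)^\circ(\Lambda'_e)$, I would arrange that ${\rm Ad}_{\psi(\Fr)}$ normalizes a Borel pair $(B,T)$ of $C_\HG(\phi^\ell)^\circ$. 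Replacing the $T$-component of $\psi(\Fr)$ by a Teichm\"uller-like finite-order lift, available because $\Lambda'_e$ is Henselian and $T$ splits after a further \'etale base change, then yields the desired $\varphi_{\alpha^\ell}$ with finite image.

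The main obstacle is the descent step: passing from the existence of a Borel-pair normalization over the algebraic closure to one defined over $\Lambda'_e$ itself. In the proof of Theorem \ref{Borel_preserving} this is the core technical difficulty because one must control integrality along infinitely many residue primes; in the present DVR setting, Hensel's lemma and the structure of tame \'etale extensions of $\Lambda_e$ reduce the descent to a single finite \'etale enlargement absorbed into $\Lambda'_e$, making the argument essentially formal.
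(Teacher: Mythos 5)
Your outline captures most of the right ingredients---Steinberg's theorem for the Borel-preserving step, divisibility of the torus (Teichm\"uller-type lift) for finite image---but the very first step, where you produce a $\Lambda'_e$-valued cocycle, is incorrect, and the misordering that results is a genuine gap. A $K'$-point of an affine scheme of finite type over $\Lambda_e$ does not yield a $\Lambda'_e$-point by ``clearing denominators'': the standard example $\Spec\Lambda_e[x]/(\ell x-1)$ has a $K_e$-point but no point over any finite integral extension $\Lambda'_e$. What you actually get at the outset is a $1$-cocycle valued in $\HG(\bar\QQ_\ell)$, and there is no reason for it to take values in $\HG(\bar\ZZ_\ell)$ without further work.

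The correct order reverses yours: one must first carry out, over the algebraically closed field $\bar\QQ_\ell$, the conjugation into $N_{\HG}(\HT)$ using Steinberg, then use divisibility of the torus to arrange finite image, and \emph{only then} address integrality. Once $\varphiL$ has finite image in $\LG(\bar\QQ_\ell)$, one runs the Bruhat--Tits building argument of Proposition \ref{prop:integral point} (exactly as in the proof of Lemma \ref{lemma:lifting_reduction}(2)) to conjugate the cocycle into $\HG(\mathcal{O})$ for some finite extension $\mathcal{O}$, and $\Lambda'_e$ is then taken large enough to contain $\mathcal{O}$. Your suggestion that the integrality step becomes ``essentially formal'' over a DVR via Hensel's lemma does not hold up: the scheme is syntomic and generically smooth but need not be smooth along the special fiber, so Henselian lifting is not available in the form you invoke, and nonemptiness of the special fiber is itself a consequence of $\ell$-adic separatedness, which is not formal. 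Aside from this misplaced and unjustified integrality step, your sketch does align with the intended argument, which the paper states is the same as the proof of Theorem \ref{Borel_preserving}---the simplification over the DVR comes from having a single residue characteristic to manage, not from eliminating the building argument.
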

As in Remark \ref{rk_epinglage}, this can be improved in certain
circumstances. Namely, if the center $Z(C_{\HG}(\phi^{\ell})^{\circ})$
is smooth over $\Lambda_{e}$  -- equivalently, if $\ell$ does not divide the order of
$X^{*}(Z(C_{\HG}(\phi^{\ell})^{\circ}))_{\rm     tors}$ -- then
  one can find $\varphi_{\alpha^{\ell}}$ such that $\Ad_{\varphi_{\alpha^{\ell}}}$
  stabilizes a pinning of $C_{\HG}(\phi^{\ell})^{\circ}$.
Using a version of Lemma \ref{lemma:center_smooth} where $\overline\ZZ[\frac 1p]$ is replaced by $\overline\ZZ_{\ell}$ and
$P$ is replaced by any solvable group of order prime to $\ell$, one obtains the following
analogue of Theorem \ref{prop:pinning_preserving}.

\begin{thm} \label{thm:pinning_preserving_ell}
  Assume that the center of $\HG$ is smooth over $\ZZ_{(\ell)}$. Then there is an
  integral finite flat extension $\Lambda'_{e}$ of $\Lambda_{e}$ such that, for each 
  admissible $\phi^{\ell},\alpha^{\ell}$, we can find a cocycle
  $\varphi_{\alpha^{\ell}}\in Z^{1}(W_{F}^{0}/P_{F}^{e},\HG)_{\Lambda'_{e},\phi^{\ell},\alpha^{\ell}}$
  with finite image and such that
  ${\rm Ad}_{\varphi_{\alpha^{\ell}}}$ fixes a pinning of  $C_{\HG}(\phi^{\ell})^{\circ}$.
\end{thm}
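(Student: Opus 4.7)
The plan is to mimic the proof of Theorem \ref{prop:pinning_preserving} in the $\ell$-adic setting, using the two ingredients indicated in the paragraph preceding the statement. First I would invoke Theorem \ref{Borel_preserving_ell} to obtain, after passing to some finite flat extension of $\Lambda_e$, a cocycle $\varphi_{\alpha^\ell}$ with finite image such that $\Ad_{\varphi_{\alpha^\ell}}$ normalizes a Borel pair of $C_\HG(\phi^\ell)^\circ$. The remark following Theorem \ref{Borel_preserving_ell} shows that this cocycle can be modified within its cohomology class to one that actually stabilizes a pinning, provided we know that the center $Z(C_\HG(\phi^\ell)^\circ)$ is smooth over some further finite flat extension of $\Lambda_e$. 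So the whole task reduces to establishing this center smoothness.

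For the smoothness step, I would apply the promised $\ell$-adic analogue of Lemma \ref{lemma:center_smooth}, in which $\bar\ZZ[\frac 1p]$ is replaced by $\bar\ZZ_\ell$ and the pro-$p$ group $P$ is replaced by a finite solvable group of order prime to $\ell$. The relevant input group is $H := I_F^\ell/I_F^{\ell,e}$, which acts on $\HG$ via $\Ad\circ\phi^\ell$. By construction $I_F^\ell$ has pro-order prime to $\ell$, so $H$ is a finite group of order prime to $\ell$; and $H$ is solvable because $I_F$ itself is solvable (an extension of a procyclic tame quotient by a pro-$p$ group). Combined with the hypothesis that $Z(\HG)$ is smooth over $\ZZ_{(\ell)}$ (and hence over $\bar\ZZ_\ell$), the lemma yields that $Z(C_\HG(\phi^\ell)^\circ)$ is smooth over $\bar\ZZ_\ell$. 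Spreading out over a suitable finite \'etale extension $\Lambda'_e$ of $\Lambda_e$ sitting inside $\bar\ZZ_\ell$ gives the required smoothness over $\Lambda'_e$, and feeding this back into the improved form of Theorem \ref{Borel_preserving_ell} produces the desired pinning-fixing cocycle $\varphi_{\alpha^\ell}$.

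The main obstacle is the $\ell$-adic analogue of Lemma \ref{lemma:center_smooth}, which is where all the real content lies. In the original pro-$p$ setting one uses that $p$ is invertible to make group cohomology vanish on smooth commutative $\bar\ZZ[\frac 1p]$-groups and to control the fixed point scheme; in the $\ell$-adic setting one needs the parallel statement that a finite solvable group of order prime to $\ell$ acting on a smooth commutative $\bar\ZZ_\ell$-group has a smooth fixed subgroup scheme. This should follow by dévissage along a composition series (reducing to cyclic prime-to-$\ell$ actions) together with the standard averaging argument, since the group order is invertible on $\bar\ZZ_\ell$. Once this lemma is in place, everything else in the proof is a direct transcription of the tame-at-$p$ argument, and the conclusion is formal.
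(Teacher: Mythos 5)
Your proposal is correct and follows essentially the same route as the paper: invoke Theorem \ref{Borel_preserving_ell} and the remark after it to reduce everything to smoothness of $Z(C_{\HG}(\phi^{\ell})^{\circ})$, which is then supplied by the $\ell$-adic analogue of Lemma \ref{lemma:center_smooth} applied to the finite solvable prime-to-$\ell$ group $I_F^{\ell}/I_F^{\ell,e}$ acting via $\Ad\circ\phi^{\ell}$. The paper does not write out this proof but only sketches it in the paragraph immediately preceding the theorem, and your fleshed-out version matches that sketch; one small point is that no ``spreading out'' is actually needed for the center smoothness, since $C_{\HG}(\phi^{\ell})^{\circ}$ is already defined over $\Lambda_e$ and smoothness descends along faithfully flat base change, while the finite flat extension $\Lambda'_e$ enters only through Theorem \ref{Borel_preserving_ell} itself.
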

In particular, this result applies to classical groups whenever $\ell\neq 2$.

Fix $\varphi_{\alpha^{\ell}}$ as in one of the above theorems. Having finite image, it extends to
$W_{F}$ and the conjugation action ${\rm Ad}_{\varphi_{\alpha^{\ell}}}$ factors over $W_{F}/I_{F}^{\ell}$. Then the
usual map $\rho\mapsto \rho\cdot\varphi_{\alpha^{\ell}}$ provides an isomorphism of
$\Lambda'_{e}$-schemes
$$ \uZ^{1}_{{\rm Ad}_{\varphi_{\alpha^{\ell}}}}
\left((W_{F}/P_{F})^{0},C_{\HG}(\phi^{\ell})^{\circ}\right)_{\Lambda'_{e},1^{\ell}}
\To\sim
\uZ^{1}(W_{F}^{0}/P_{F}^{e},\HG)_{\Lambda'_{e},\phi^{\ell},\alpha^{\ell}}$$
where the subscript $1^{\ell}$ on the left hand side denotes the closed and open subscheme
of
$
\uZ^{1}_{{\rm Ad}_{\varphi_{\alpha^{\ell}}}}
\left((W_{F}/P_{F})^{0},C_{\HG}(\phi^{\ell})^{\circ}\right)_{\Lambda'_{e}}$ where the
universal $\ell$-adically continuous tame parameter restricts trivially to
$I_{F}^{\ell}$.


\begin{thm} \label{thm:connectedZl}
  For each pair $(\phi^{\ell},\alpha^{\ell})$, the $\Lambda_{e}$-scheme
  $\uZ^{1}(W_{F}^{0}/P_{F}^{e},\HG)_{\Lambda_{e},\phi^{\ell},\alpha^{\ell}}$ has a
  geometrically connected special fiber. In particular, it is
  connected and its base change to any integral  finite flat extension of $\Lambda_{e}$
  remains connected.
\end{thm}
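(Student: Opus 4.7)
The strategy is to use the isomorphism displayed just before the statement to reduce to geometric connectedness of the special fiber of the relevant component of a tame parameter space for the reductive group $C_{\HG}(\phi^{\ell})^{\circ}$, and then to invoke the tame-connectedness results of Subsection \ref{sec:conn-comp-over} in residue characteristic $\ell$. First I would pass to $\Lambda'_{e}$ using Theorem \ref{Borel_preserving_ell} to fix a finite-image cocycle $\varphi_{\alpha^{\ell}}$ whose adjoint normalizes a Borel pair of $C_{\HG}(\phi^{\ell})^{\circ}$. This passage is harmless for the conclusion: since $\Lambda'_{e}/\Lambda_{e}$ is integral finite flat, base change is surjective on spectra, and any integral finite flat extension of $\Lambda_{e}$ is dominated by one of $\Lambda'_{e}$, so geometric connectedness of the special fiber after base change to $\Lambda'_{e}$ is enough.

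Next I would work on the geometric special fiber, a scheme over $\oFl$. There, $C_{\HG}(\phi^{\ell})^{\circ}$ is a connected reductive group with a tame $W_{F}$-action normalizing a Borel pair. The key observation is that $\phi^{\ell}$ factors through a finite quotient of $I_{F}^{\ell}$ of order prime to $\ell$, so the characteristic-$\ell$ variant of Lemma \ref{lemma:center_smooth} alluded to just before Theorem \ref{thm:pinning_preserving_ell} should let me upgrade the Borel-preserving $\varphi_{\alpha^{\ell}}$ to a pinning-preserving one on the special fiber. Applying the pinning-preserving case of Conjecture \ref{conj:connectedbis} from Subsection \ref{sec:conn-comp-over}, base-changed to $\oFl$, then yields the needed connectedness of the ``$1^{\ell}$''-component of the tame parameter space on the special fiber.

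Finally, to promote connectedness of the geometric special fiber to connectedness of the scheme itself and of its integral finite flat base changes, I would combine flatness over $\Lambda'_{e}$ from Theorem \ref{thm:geometry}(i) with the $\ell$-adic separatedness of the affine ring from Theorem \ref{thm:geometry}(ii). Any non-trivial idempotent $e$ in the ring (possibly after a further integral finite flat base change, which preserves both flatness and $\ell$-adic separatedness) must reduce to $0$ or $1$ on the connected special fiber; assuming its image is $0$, the identity $e=e^{2}$ iteratively forces $e\in \bigcap_{n}\ell^{n}A=0$, a contradiction. The hardest step will be step two, namely securing pinning-preservation, and hence the needed tame-connectedness input, on the characteristic-$\ell$ fiber without the global smoothness hypothesis on $Z(\HG)$ required in Theorem \ref{thm:connected}; the resolution should come from exploiting the prime-to-$\ell$ nature of $\phi^{\ell}$ in residue characteristic $\ell$, exactly as packaged by the characteristic-$\ell$ variant of Lemma \ref{lemma:center_smooth}.
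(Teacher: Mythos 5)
Your step two contains the essential gap. You invoke ``the pinning-preserving case of Conjecture \ref{conj:connectedbis} from Subsection \ref{sec:conn-comp-over}, base-changed to $\oFl$.'' But what is proved there (Proposition \ref{prop:connected}, Theorem \ref{thm:connected_pinning_preserved}, Corollary \ref{cor:connected_smooth_center}) is connectedness of $\uZ^{1}(W_{F}^{0}/P_{F},\HG)_{\bar\ZZ[\frac 1p]}$ \emph{over} $\bar\ZZ[\frac 1p]$; these statements say nothing about connectedness of the special fiber over $\oFl$, and connectedness of a flat $\bar\ZZ[\frac 1p]$-scheme does not imply connectedness of its reduction at a prime. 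Worse, the logical dependence runs in the opposite direction: the proof of Proposition \ref{prop:connected} explicitly repeats the argument of Proposition \ref{prop:connectltame}(2) and relies on Lemma \ref{lemma:lifting_reduction}, i.e.\ on the very characteristic-$\ell$ GIT machinery that the paper deploys to prove Theorem \ref{thm:connectedZl}. So the route you propose would either be unjustified or circular.

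The paper's actual proof neither needs pinning-preservation nor any input from Subsection \ref{sec:conn-comp-over}: since $\HG$ is connected, the map to the affine GIT quotient is a bijection on $\pi_{0}$; Proposition \ref{prop:GITiso} reduces to the quotient of $\uZ^{1}(W_{F}/I_{F}^{e},\HG)_{\oFl}$; and the corollary to Proposition \ref{prop:connectltame} identifies the $(\phi^{\ell},\alpha^{\ell})$-summand of this quotient, up to homeomorphism, with $(\hat T_{\phi^{\ell},L}^{\varphi(I_{F})})_{\varphi(\Fr)}\sslash(\Omega_{\phi^{\ell}})^{W_{F}}$, a quotient of the connected coinvariant scheme of a torus, hence connected. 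Only the Borel-preservation of Theorem \ref{Borel_preserving_ell} is used, not a pinning. Your final step (idempotents plus $\ell$-adic separatedness) is correct and is essentially the paper's own observation that the second half of the statement follows from the first; but the central connectedness claim needs the positive-characteristic quotient argument, not a base change of the $\bar\ZZ[\frac 1p]$ results.
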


We will prove this result  after some preparation
on categorical quotients. Meanwhile, we note that the second part of the statement follows from the first one since
$\uZ^{1}(W_{F}^{0}/P_{F}^{e},\HG)_{\Lambda_{e},\phi^{\ell},\alpha^{\ell}}$
is the spectrum of an $\ell$-adically separated $\ZM_{\ell}$-algebra. The collection of
these results for all $\ell\neq p$ will be the main ingredient in the proof of Theorem \ref{thm:connected}.


\subsection{Quotients, moduli spaces of parameters}\label{sec:quotients}
The group scheme $\HG$ acts by (twisted) conjugation on
$\uZ^{1}(W_{F}^{0}/P_{F}^{e},\HG)$. There 
are several type of quotients which can be considered here : the stacky quotient, the
quotient as fppf sheaves, or the quotient in the category of affine schemes, which is
simply $\Spec((R^{e}_{\LG})^{\HG})$.  Whatever type of quotient is considered, let us
denote it by $\uH^{1}(W_{F}^{0}/P_{F}^{e},\HG)$. Then, (\ref{eq:dec}) induces a decomposition
$$ \uH^{1}(W_{F}^{0}/P_{F}^{e},\HG)_{\mathcal{O}_{K_{e}}[\frac 1p]} 
= \coprod_{\phi\in \Phi_{e}^{\rm adm}} \uZ^{1}(W_{F}^{0},\HG)_{\phi}/C_{\HG}(\phi),$$
where the quotients on the right hand side are of the same type.
Next, (\ref{eq:dec2}) gives for each $\phi$ a decomposition
$$\uZ^{1}(W_{F}^{0},\HG)_{\phi}/C_{\HG}(\phi)^{\circ} =
\coprod_{\alpha\in\Sigma(\phi)^{\rm adm}}
\uZ^{1}(W_{F}^{0},\HG)_{\phi,\alpha}/C_{\HG}(\phi)^{\circ}$$
(beware the $\circ$) while (\ref{eq:iso}) provides for each $\alpha$ an isomorphism
$$ \uH^{1}_{\rm Ad_{\varphi_{\alpha}}}(W_{F}^{0}/P_{F},C_{\HG}(\phi)^{\circ})
\To\sim \left(\uZ^{1}(W_{F}^{0},\HG)_{\phi,\alpha}/C_{\HG}(\phi)^{\circ}\right)_{\mathcal{O}_{K'_{e}}[\frac 1p]}.$$
Now, let us denote by $\Sigma(\phi)_{0}$ a set of representatives of
$\pi_{0}(\phi)$-orbits in $\Sigma(\phi)$ and by $\pi_{0}(\phi)_{\alpha}$ the stabilizer of $\alpha$ in
$\pi_{0}(\phi)$. Let $C_{\HG}(\phi)_{\alpha}$ be the closed
subgroup scheme of
$C_{\HG}(\phi)$ inverse image of $\pi_{0}(\phi)_{\alpha}$. It
 stabilizes the summand $\uZ^{1}(W_{F}^{0},\HG)_{\phi,\alpha}$
of $\uZ^{1}(W_{F}^{0},\HG)_{\phi}$, whence an action of $\pi_{0}(\phi)_{\alpha}$ on 
$\uH^{1}_{\rm Ad_{\varphi_{\alpha}}}(W_{F^{0}}/P_{F},C_{\HG}(\phi)^{\circ})$ through
the last isomorphism. We thus have obtained an isomorphism

$$\uH^{1}(W_{F}^{0}/P_{F}^{e},\HG)_{\mathcal{O}_{K'_{e}}[\frac 1p]} 
= \coprod_{\phi\in \Phi_{e}^{\rm adm}} \coprod_{\alpha\in\Sigma(\phi)^{\rm adm}_{0}}
\uH^{1}_{\rm Ad_{\varphi_{\alpha}}}(W_{F}^{0}/P_{F},C_{\HG}(\phi)^{\circ})_{/\pi_{0}(\phi)_{\alpha}}.$$
In the case of the affine categorical quotient, we will use the familiar notation
$$\uZ^{1}(W_{F}^{0}/P_{F}^{e},\HG)\sslash\HG := \Spec((R^{e}_{\LG})^{\HG}).$$
From the above discussion we deduce :
\begin{prop}
The affine categorical quotient $\uZ^{1}(W_{F}^{0}/P_{F}^{e},\HG)\sslash\HG$ is a flat, reduced, $\ell$-adically separated
affine scheme of finite presentation over $\ZM[\frac 1p]$ and its ring
of functions decomposes as 
$$ (R^{e}_{\LG})^{\HG}\otimes\mathcal{O}_{K'_{e}}{\textstyle [\frac 1p]} = \prod_{\phi\in \Phi_{e}^{\rm
    adm}} \prod_{\alpha\in\Sigma(\phi)^{\rm adm}_{0}} 
\left(\left(R_{\LG_{\varphi_{\alpha}}}\right)^{C_{\HG}(\phi)^{\circ}}\right)^{\pi_{0}(\phi)_{\alpha}}.$$
With similar notation, we also have local decompositions for each prime $\ell\neq p$
$$ (R^{e}_{\LG})^{\HG}\otimes\Lambda'_{e} = \prod_{\phi^{\ell}\in \Phi_{e}^{\ell,\rm
    adm}} \prod_{\alpha^{\ell}\in\Sigma(\phi^{\ell})^{\rm adm}_{0}} 
\left(\left(R_{\LG_{\varphi_{\alpha^{\ell}}},1^{\ell}}\right)^{C_{\HG}(\phi^{\ell})^{\circ}}\right)^{\pi_{0}(\phi^{\ell})_{\alpha^{\ell}}}.$$
\end{prop}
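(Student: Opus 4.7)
The plan is to combine Theorem~\ref{thm:geometry} with elementary observations about subrings, and to obtain the two displayed decompositions by applying the $\HG$-invariants functor to (\ref{eq:dec_rings}), the $\alpha$-decomposition $R_{\LG,\phi}=\prod_{\alpha}R_{\LG,\phi,\alpha}$, and the isomorphism (\ref{eq:iso}).

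First I would dispatch the four ring-theoretic assertions. Finite presentation of $(R^{e}_{\LG})^{\HG}$ over $\ZM[\frac 1p]$ follows from Seshadri's theorem on invariants under a reductive group scheme over a Dedekind base, applied to the finitely presented $\HG$-algebra $R^{e}_{\LG}$ of Theorem~\ref{thm:geometry}(i). Flatness reduces to torsion-freeness over the Dedekind ring $\ZM[\frac 1p]$: Theorem~\ref{thm:geometry}(i) asserts that $R^{e}_{\LG}$ is flat, hence torsion-free as a $\ZM[\frac 1p]$-module, so its subring of invariants is torsion-free and therefore flat. For reducedness, generic smoothness from Theorem~\ref{thm:geometry}(i) shows that $R^{e}_{\LG}\otimes\mathbb{Q}$ is reduced, and flatness over the one-dimensional base $\ZM[\frac 1p]$ propagates reducedness to $R^{e}_{\LG}$; the subring of invariants is then a fortiori reduced. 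Finally, $\ell$-adic separatedness of $(R^{e}_{\LG})^{\HG}$ is inherited as a subring of $R^{e}_{\LG}$, which is $\ell$-adically separated by Theorem~\ref{thm:geometry}(ii).

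For the decomposition over $\mathcal{O}_{K'_{e}}[\frac 1p]$, I would take $\HG$-invariants in the decomposition (\ref{eq:dec_rings}). The standard computation
$$\left(\left(\mathcal{O}_{\HG}\otimes_{\ZM[\frac 1p]} R_{\LG,\phi}\right)^{C_{\HG}(\phi)}\right)^{\HG} = R_{\LG,\phi}^{C_{\HG}(\phi)}$$
collapses each summand to $R_{\LG,\phi}^{C_{\HG}(\phi)}$. Next, $C_{\HG}(\phi)$-invariants decompose as $\pi_{0}(\phi)$-invariants of $C_{\HG}(\phi)^{\circ}$-invariants; applying this to the factorization $R_{\LG,\phi}=\prod_{\alpha}R_{\LG,\phi,\alpha}$ and noting that $\pi_{0}(\phi)$ permutes $\Sigma(\phi)^{\rm adm}$ reduces the product to one indexed by orbit representatives $\Sigma(\phi)_{0}^{\rm adm}$, each factor being $(R_{\LG,\phi,\alpha}^{C_{\HG}(\phi)^{\circ}})^{\pi_{0}(\phi)_{\alpha}}$. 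The isomorphism (\ref{eq:iso}) finally identifies $R_{\LG,\phi,\alpha}^{C_{\HG}(\phi)^{\circ}}$ with $R_{\LG_{\varphi_{\alpha}}}^{C_{\HG}(\phi)^{\circ}}$, since the $C_{\HG}(\phi)^{\circ}$-action on the left-hand side of (\ref{eq:iso}) is precisely the twisted-conjugation action defining the tame parameter space of Section~2. The $\ell$-local decomposition is obtained by exactly the same procedure starting from the $(\phi^{\ell},\alpha^{\ell})$-decompositions recalled in Subsection~\ref{sec:decomp-at-ell}.

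The main subtlety lies in verifying that the $\pi_{0}(\phi)_{\alpha}$-action induced on $\uH^{1}_{{\rm Ad}_{\varphi_{\alpha}}}(W_{F}^{0}/P_{F},C_{\HG}(\phi)^{\circ})$ via (\ref{eq:iso}) is compatible with the natural $\pi_{0}(\phi)_{\alpha}$-action on the stabilizer summand $\uZ^{1}(W_{F}^{0},\HG)_{\phi,\alpha}$, which is what legitimizes writing the final factor as $(R_{\LG_{\varphi_{\alpha}}}^{C_{\HG}(\phi)^{\circ}})^{\pi_{0}(\phi)_{\alpha}}$ rather than some twist thereof. Fortunately, this action is already implicitly set up in the paragraph just before the proposition (where $\pi_{0}(\phi)_{\alpha}$ is defined as the stabilizer of $\alpha$ in $\pi_0(\phi)$, acting via preservation of the summand through the identification (\ref{eq:iso})), so the argument reduces to a careful chase of definitions.
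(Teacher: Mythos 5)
Your proof follows the same route as the paper: apply $\HG$-invariants to the decomposition (\ref{eq:dec_rings}), collapse the induction via $\left(\left(\mathcal{O}_{\HG}\otimes R_{\LG,\phi}\right)^{C_{\HG}(\phi)}\right)^{\HG}=R_{\LG,\phi}^{C_{\HG}(\phi)}$, split $C_{\HG}(\phi)$-invariants as $\pi_{0}(\phi)$-invariants of $C_{\HG}(\phi)^{\circ}$-invariants, pass to $\pi_{0}(\phi)$-orbits on $\Sigma(\phi)^{\rm adm}$, invoke (\ref{eq:iso}), and derive the ring-theoretic properties from Theorem~\ref{thm:geometry} -- this is exactly the discussion the paper gives just before the proposition and then cites in a one-line proof. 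The only discrepancy is the reference for finite generation of the invariant ring: you cite Seshadri, whereas the paper explicitly invokes Thomason's theorem (which treats geometrically reductive group schemes over a general noetherian base, going beyond Seshadri's Dedekind/split setting); since $\HG$ here need only be reductive, Thomason is the safer reference, though the substance of your argument is unaffected.
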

\begin{proof}
The first decomposition has been explained above and the second one is similar, based on
section \ref{sec:decomp-at-ell}. The claimed properties of 
$(R^{e}_{\LG})^{\HG}$ follow from Theorem \ref{thm:geometry} except for its finite
generation as a $\ZM[\frac 1p]$-algebra, which is a
difficult result of Thomason \cite[Thm 3.8]{thomason}. 
\end{proof}


\subsection{Closed orbits over an algebraically closed field}
Let $L$ be an
algebraically closed field of characteristic $\ell$ different from $p$
($\ell=0$ is allowed here).
Let us consider the affine categorical quotient 
$$\uZ^{1}(W_{F}^{0}/P_{F}^{e},\HG)_{L}\sslash\HG_{L}= \Spec((R^{e}_{\LG}\otimes L)^{\HG_{L}}).$$
Its relation with the affine quotient over $\ZM[\frac 1p]$ can be extracted from
Alper's paper \cite{Alper}, which builds on the work of Seshadri \cite{seshadri} and Thomason \cite{thomason} on
Geometric Invariant Theory over arbitrary bases.
\begin{prop}
  The canonical map
$ (R^{e}_{\LG})^{\HG}\otimes L\To{} (R^{e}_{\LG}\otimes L)^{\HG_{L}}$
is injective. It is surjective if $\ell=0$ and, when $\ell>0$, there is an integer $r$ such that its image contains $\{f^{\ell^{r}},
f\in (R^{e}_{\LG}\otimes L)^{\HG_{L}}\}$.
In particular the canonical  morphism of $L$-schemes 
$$\uZ^{1}(W_{F}^{0}/P_{F}^{e},\HG)_{L}\sslash\HG_{L} \To{}
\left(\uZ^{1}(W_{F}^{0}/P_{F}^{e},\HG)\sslash\HG\right)_{L}$$
is a universal homeomorphism, and even an isomorphism when $\ell=0$.
\end{prop}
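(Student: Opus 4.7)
The plan is to recognize the quotient morphism $\uZ^{1}(W_{F}^{0}/P_{F}^{e},\HG)\to \uZ^{1}(W_{F}^{0}/P_{F}^{e},\HG)\sslash\HG$ as an adequate moduli space in the sense of Alper \cite{Alper}, and then read the proposition off from the base-change behavior of such morphisms. The whole content of the statement is a clean packaging of one application of Alper's machinery.

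First, I would verify Alper's hypotheses in our setting: $\HG$ is geometrically reductive over $\ZM[\frac 1p]$ by the Seshadri--Thomason theorem; the scheme $\uZ^{1}(W_{F}^{0}/P_{F}^{e},\HG)$ is affine and of finite type over $\ZM[\frac 1p]$ by Theorem \ref{thm:geometry}; and the invariant ring $(R^{e}_{\LG})^{\HG}$ is finitely generated by the Thomason theorem already invoked in the preceding proposition. Granted these, the GIT quotient is an adequate moduli space. A defining feature of such morphisms is the following base-change property: for any morphism $T\to \Spec\ZM[\frac 1p]$, the canonical comparison map
$$ (R^{e}_{\LG})^{\HG}\otimes\mathcal{O}_{T}\To{} (R^{e}_{\LG}\otimes\mathcal{O}_{T})^{\HG_{T}} $$
is injective, and every element of the target has some $\ell^{r}$-th power in the image, where $\ell$ is the residue characteristic of $T$ and $r$ is uniformly bounded (with $r=0$ permitted in characteristic zero, since $\HG_{T}$ is then linearly reductive). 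Specializing to $T=\Spec L$ yields the three ring-theoretic assertions of the proposition.

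The geometric consequence then follows formally. Power-surjectivity means that every $\bar f\in(R^{e}_{\LG}\otimes L)^{\HG_{L}}$ satisfies $\bar f^{\ell^{r}}\in\mathrm{Im}$, so the target is integral over the image and the extension is purely inseparable; combined with the injectivity established earlier, the induced morphism on Spec is bijective, integral, and has purely inseparable residue extensions, hence a universal homeomorphism by the standard criteria of EGA IV. In characteristic zero, the ring map is an isomorphism, so the same is true on schemes. The main obstacle I anticipate is not the geometric consequence, which is routine, but rather confirming the applicability of Alper's theorem in our mixed-characteristic setup over $\ZM[\frac 1p]$; once the deep finite-generation and geometric-reductivity results of Seshadri and Thomason are granted, this confirmation is the content of the reference \cite{Alper}, and the proof is essentially a citation.
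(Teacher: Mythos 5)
Your proposal is correct and follows essentially the same route as the paper. Both arguments reduce the statement to Alper's machinery: geometric reductivity of $\HG$ over $\ZM[\frac 1p]$ (Alper Thm 9.7.5, built on Seshadri and Thomason), then adequacy of the comparison map of invariant rings yields power-surjectivity, and injectivity plus power-surjectivity gives the universal homeomorphism. The paper phrases it at the level of the mod-$\ell$ map (injectivity from $\ell$-torsion-freeness of $R^e_{\LG}$, then adequacy via Alper Rk 5.2.2 and Lemma 3.2.3), whereas you invoke the ``adequate moduli space'' formulation and its base-change property; these are two packagings of the same content. One small remark: your step citing Thomason's finite generation of $(R^e_{\LG})^{\HG}$ is not actually needed here --- the ring map being adequate does not require the invariant ring to be of finite type, and the paper omits it.
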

\begin{proof} The case $\ell=0$ is easy, so we assume that $\ell$ is prime.
 Consider first the map $(R^{e}_{\LG})^{\HG}/\ell (R^{e}_{\LG})^{\HG} \To{}
 (R^{e}_{\LG}/\ell R^{e}_{\LG})^{\HG}$. It is injective because $R^{e}_{\LG}$ is
 $\ell$-torsion free. Moreover, since $\HG$ is geometrically reductive over $\ZM[\frac 1p]$ in
the sense of \cite[Def. 9.1.1]{Alper} (by Theorem 9.7.5 of \cite{Alper}), it follows from \cite[Rk 5.2.2]{Alper} that
this map  is an ``adequate'' homeomorphism, in the sense of \cite[Def 3.3.1]{Alper}. In
particular it is ``universally adequate'', hence the map of the proposition is adequate
too, and \cite[Lemma 3.2.3]{Alper} insures the existence of $r$ as claimed in the proposition.
\end{proof}

\begin{rem}
  In Theorem \ref{thm:structureGITfield}, we will get an explicit
  bound on the set of primes $\ell$ for which the canonical morphism
  of this proposition is an isomorphism.
\end{rem}

By classical Geometric Invariant Theory, we know that the $L$-points of the
affine quotient $\uZ^{1}(W_{F}^{0}/P_{F}^{e},\HG)_{L}\sslash\HG_{L}$
correspond bijectively to Zariski closed $\HG(L)$-orbits in 
$Z^{1}(W_{F}^{0}/P_{F}^{e},\HG(L))$.
On the other hand, a theorem of
Richardson provides a criterion to decide when the
$\HG(L)$-orbit of $\varphi\in Z^{1}(W_{F}^{0}/P_{F}^{e},\HG(L))$ is closed. 

\begin{defn} 
We say that $\varphi\in Z^{1}(W_{F}^{0}/P_{F}^{e},\HG(L))$ is
\emph{$\LG$-semisimple} 
if the Zariski closure
$\overline{\varphiL(W_{F}^{0})}$ of the image of $\varphiL$ in $\LG(L)$ is
a \emph{completely reducible} subgroup of $\LG(L)$ in the sense of \cite{bmr05}. 
\end{defn}
Let us recall the definition from \cite{bmr05} :
a closed subgroup $\Gamma$ of $\LG(L)$ is called 
\emph{completely reducible}
if for all R-parabolic subgroups $P(L)$ of $\LG(L)$ containing $\Gamma$, there exists a R-Levi
subgroup of $P(L)$ containing $\Gamma$. 
Here, and as in \cite[\S 6]{bmr05},  we use Richardson's definition of parabolic and Levi subgroups
via cocharacters, which makes perfect sense for non-connected reductive
groups. 
Actually,  the definition applies verbatim to $\Gamma$ an arbitrary
subgroup, see  \cite[\S 2.6]{bmr05}, and we have that $\Gamma$ is completely reductible if and
only if its closure is completely reductible, so that, in the above
definition, we may only require that the image $\varphiL(W_{F}^{0})$
be completely reducible.

It wouldn't be difficult to check directly that
for a continuous $1$-cocycle $\varphi :W_{F}^{0}\to \HG(L)$,
the property of being $\LG$-semisimple
 neither depends on the choice of an integer $e$ such that $\varphiL$ factors
through $W_{F}^{0}/P_{F}^{e}$, nor on the  particular choice of $L$-group we
make. Anyway, this fact is also a
consequence of Richardson's theorem that we now state.
\begin{thm}[Richardson]
  The $\HG(L)$-orbit of $\varphi\in Z^{1}(W_{F}^{0}/P_{F}^{e},\HG(L))$
  is closed if and only if  $\varphi$ is $\LG$-semisimple.
\end{thm}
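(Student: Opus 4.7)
The plan is to reduce to Richardson's criterion for closed orbits of tuples under diagonal conjugation in a (possibly non-connected) reductive group, in the form proved by Bate--Martin--R\"ohrle. The bridge is the correspondence $\varphi \leftrightarrow \varphiL$: twisted conjugation on $1$-cocycles by $\HG$ matches ordinary $\HG$-conjugation on $L$-morphisms $W_F^0 \to \LG(L)$, so evaluation at a finite generating set of $W_F^0/P_F^e$ turns the question about the orbit of $\varphi$ into one about the orbit of a tuple in $\LG(L)^N$.

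More precisely, choose abstract generators $w_1,\dots,w_N$ of $W_F^0/P_F^e$: for instance, the topological generator of $I_F/P_F$, a Frobenius lift, and a finite generating set of the finite $p$-group $P_F/P_F^e$. The evaluation morphism $\varphi \mapsto (\varphi(w_1),\dots,\varphi(w_N))$ realizes $\uZ^1(W_F^0/P_F^e,\HG)_L$ as a closed subscheme of $\HG_L^N$, cut out by the finitely many cocycle relations arising from a presentation of $W_F^0/P_F^e$. Post-composing with $\varphi(w_i) \mapsto \varphiL(w_i) = (\varphi(w_i),w_i) \in \LG$ produces an $\HG$-equivariant closed immersion into $\LG_L^N$ on which $\HG$ acts by diagonal conjugation. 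Consequently, the $\HG(L)$-orbit of $\varphi$ is closed in $Z^1(W_F^0/P_F^e,\HG(L))$ if and only if the $\HG(L)$-orbit of $(\varphiL(w_1),\dots,\varphiL(w_N))$ is closed in $\LG(L)^N$.

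At this point, I would invoke Richardson's theorem in the non-connected setting, as stated in \cite{bmr05}: for a tuple $(g_1,\dots,g_N)\in \LG(L)^N$, the diagonal $\LG(L)^\circ = \HG(L)$-orbit is Zariski closed if and only if the Zariski closure of the abstract subgroup generated by the $g_i$ is $\LG$-completely reducible. Since $w_1,\dots,w_N$ generate $W_F^0/P_F^e$, the Zariski closure of the subgroup generated by $\varphiL(w_1),\dots,\varphiL(w_N)$ coincides with $\overline{\varphiL(W_F^0)}$, yielding the desired equivalence.

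The real substance of the proof is Richardson's theorem itself, which is being quoted; the work on our end is essentially bookkeeping that matches the twisted $\HG$-action on $1$-cocycles with diagonal conjugation on tuples of $L$-values. The only point that must be checked to make this intrinsic (independent of the chosen generators $w_i$ and of the integer $e$) is that $\LG$-complete reducibility of a subgroup coincides with that of its Zariski closure, which is a standard property of the BMR definition and in fact underlies the text's remark that $\LG$-semisimplicity of $\varphi$ depends neither on $e$ nor on the chosen $L$-group.
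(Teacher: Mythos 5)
Your proposal is correct and takes essentially the same route as the paper: reduce via evaluation at a finite generating set of $W_F^0/P_F^e$ to a tuple in $\LG(L)^N$ acted on by diagonal conjugation, then invoke Richardson's criterion in the form of Cor.~3.7 and \S 6.3 of \cite{bmr05}. The only presentational difference is that the paper first passes through the scheme $H$ of \emph{all} homomorphisms $W_F^0/P_F^e\to\LG(L)$ (in which the $L$-morphism locus $Z^1$ is open and closed) so as to have an honest $\LG(L)$-action, and then explicitly uses the finite index of $\HG$ in $\LG$ to pass between $\HG(L)$-orbit closure and $\LG(L)$-orbit closure; your version compresses this by quoting the BMR criterion directly for the $\LG^\circ=\HG$-orbit of the tuple, which is equivalent for the same finite-index reason.
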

\begin{proof}
  Recall that the map $\varphi\mapsto \varphiL$ identifies the set
  $Z^{1}:=Z^{1}(W_{F}^{0}/P_{F}^{e},\HG(L))$ with the
  set of $L$-homomorphisms
  $W_{F}^{0}/P_{F}^{e}\To{}\LG(L)$, which is contained in the set 
$H$ of all homomorphisms $W_{F}^{0}/P_{F}^{e}\To{}\LG(L)$. Both $Z^{1}$ and $H$
have a  natural reduced $L$-scheme structure, and $Z^{1}$ is open and
closed in $H$. In particular, the $\HG(L)$-orbit of $\varphi\in Z^{1}$
is closed in $Z^{1}$ if and only the $\HG(L)$-orbit of $\varphiL$ is closed in $H$. Now, on $H$
the action of $\HG(L)$ extends to $\LG(L)$ and, since $\HG$ has finite index
  in $\LG$, we see that the $\HG(L)$-orbit of $\varphiL$ is closed 
if and only if its $\LG(L)$-orbit is closed. 

Now, let $w_{1},\cdots, w_{n}$ be a finite set of generators of the
group $W_{F}^{0}/P_{F}^{e}$. Then the map $\psi\mapsto
(\psi(w_{1}),\cdots,\psi(w_{n}))$ is an $\LG(L)$-equivariant
closed embedding of $H$ into $\LG_{L}^{n}$. So we see that the $\LG(L)$-orbit of $\varphiL$ in
$H$ is closed if and only if the $\LG(L)$-orbit of
$(\varphiL(w_{1}),\cdots,\varphiL(w_{n}))\in \LG(L)^{n}$ is closed in $\LG(L)^{n}$.
Now, Richardson's theorem (see Cor 3.7 and \S6.3 of \cite{bmr05})
tells us that the latter orbit is closed if and only if the closure of
the subgroup of $\LG(L)$ generated by
$(\varphiL(w_{1}),\cdots,\varphiL(w_{n}))$ is completely reducible
in the sense recalled above.
\end{proof}

In view of this result, we may drop the $\LG$ and  simply say that ``$\varphi$ is semisimple''.
The following  result will be crucial in our study of the affine quotient.

\begin{prop} \label{prop:estimate_ss}
  Any semisimple $1$-cocycle $\varphi :W_{F}^{0}\to \HG(L)$
  extends  continuously and uniquely to $W_{F}$. Moreover,   
the prime-to-$p$ part $|^{L}\varphi(I_{F})|_{p'}$ of the cardinality of
$\varphiL(I_{F})$ is bounded independently
of $\varphi$ and of the field $L$.
More precisely, $|\varphiL(I_{F})|_{p'}$ divides $e. \chi_{\HG,\tilde \Fr}(q)^{2}$ where
   \begin{itemize}
  \item 
     $e$  is the tame ramification of the finite
    extension $F'$ of $F$ given by the kernel of the map $W_{F}\To{}\pi_{0}(\LG)$,
  \item $\tilde \Fr$ is any lift of Frobenius in $W_{F}$.
  \item  $\chi_{\HG,\tilde \Fr}\in\ZZ[T]$ is introduced in the appendix \ref{eq:defchi}.
  \end{itemize}
\end{prop}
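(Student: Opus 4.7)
The plan is to first prove finiteness of $\varphiL(I_{F})$ (which automatically gives the unique continuous extension), and then bound its prime-to-$p$ order in two steps, splitting along the short exact sequence $1 \to \HG \cap \varphiL(I_F) \to \varphiL(I_F) \to \pi_0(\LG) \cap \varphiL(I_F) \to 1$.

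\emph{Step 1 (Finiteness).} Set $g := \varphiL(s) \in \LG(L)$. The relation $\Fr \, s \, \Fr^{-1} = s^{q}$ in $W_{F}/P_{F}$, applied to $\varphiL$, yields that $g^{q}$ is conjugate to $g$ in $\LG(L)$. Since $\varphi$ is semisimple, the closure of $\varphiL(W_{F}^{0})$ is completely reducible in $\LG(L)$; by a theorem of Serre (in the BMR formalism), any normal subgroup of a CR subgroup is again CR, so the cyclic group $\langle g \rangle$ is CR, hence $g$ is a semisimple element of $\LG(L)$. For any faithful rational representation of $\LG$, the eigenvalues of $g$ must therefore form a multiset stable under $\lambda \mapsto \lambda^{q}$, so each eigenvalue is a root of unity. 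Consequently $g$ has finite order. Combined with the finiteness of $\varphiL(P_{F})$ (which factors through the finite group $P_{F}/P_{F}^{e}$), the image $\varphiL(I_{F}^{0}) = \langle \varphiL(s) \rangle \cdot \varphiL(P_{F})$ is finite. The extension of $\varphiL|_{I_{F}^{0}}$ to the profinite group $I_{F}$ is then forced by continuity and discreteness of the target (the kernel becomes open and its closure also equals this kernel), and since the cocycle is determined by its values on generators $s, \Fr, P_{F}$, this yields the unique continuous extension of $\varphi$ to $W_{F}$.

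\emph{Step 2 (Splitting off the $e$-factor).} Writing $A := \varphiL(I_{F})$, one observes that $A \cap \HG(L)$ has index in $A$ dividing the order of the image of $I_{F}$ in $\pi_{0}(\LG)$; its prime-to-$p$ part is the tame ramification index $e$ of the finite extension $F'/F$ cut out by the kernel of $W_{F} \to \pi_{0}(\LG)$. Since $\varphiL(P_{F})$ is a pro-$p$ group, the $p'$-part of $|A \cap \HG(L)|$ is the $p'$-part of the order of $t := g^{e} \in \HG(L)$. Thus it remains to show $\mathrm{ord}(t)_{p'}$ divides $\chi_{\HG, \Fr}(q)^{2}$.

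\emph{Step 3 (Torus bound).} The element $t \in \HG(L)$ is semisimple and satisfies $t^{q} = \varphiL(\Fr) \, t \, \varphiL(\Fr)^{-1}$. Conjugation by $\varphiL(\Fr)$ preserves $C_{\HG}(t)^{\circ}$ (since this subgroup is sent to $C_{\HG}(t^{q})^{\circ} = C_{\HG}(t)^{\circ}$ as $t^{q} \in \langle t \rangle$), and by Steinberg's theorem we can choose a $\varphiL(\Fr)$-stable maximal torus $\mathcal{T}$ of $C_{\HG}(t)^{\circ}$ containing $t$. Writing $\sigma$ for the automorphism of $\mathcal{T}$ induced by $\mathrm{Ad}\,\varphiL(\Fr)$, the relation $t^{q} = \sigma(t)$ places $t$ in the finite subgroup $\mathcal{T}[\sigma - q] = \ker(\sigma - q)$ of $\mathcal{T}(L)$, whose order is $|\det(\sigma - q \mid X_{*}(\mathcal{T}))|$. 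The automorphism $\sigma$ is the composition of the abstract $W$-action of $\Fr$ on $\mathcal{T}$ (transported through an $\HG$-conjugation to a reference torus of $\HG$) with an inner automorphism, so its characteristic polynomial on $X_{*}(\mathcal{T})$ divides (a product of factors of) $\chi_{\HG,\Fr}(T)$. Evaluating at $q$ gives the bound $\chi_{\HG,\Fr}(q)^{2}$, the square accounting for the fact that $\mathcal{T}$ is a torus of $C_{\HG}(t)^{\circ}$ rather than of $\HG$ itself, and that one needs to accommodate both the rank of $\mathcal{T}$ and the part of $X_{*}(\HG)$ orthogonal to it.

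The main obstacle is the last sentence of Step 3: making precise the divisibility $|\det(\sigma - q \mid X_{*}(\mathcal{T}))| \mid \chi_{\HG, \Fr}(q)^{2}$ from the definition of $\chi_{\HG, \Fr}$ given in the appendix. Once the polynomial identity is unwound, Steps 1 and 2 combine to complete the proof.
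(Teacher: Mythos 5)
Your proposal has a genuine gap in Step~1, and the mechanism you invoke for the square in Step~3 is not the right one; both are resolved in the paper by a single device that you are missing.

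In Step~1 you invoke the BMR/Serre theorem (normal subgroups of completely reducible subgroups are completely reducible) to conclude that $\langle g\rangle=\langle\varphiL(s)\rangle$ is CR, hence that $g$ is semisimple. But $\langle g\rangle$ is \emph{not} normal in $\overline{\varphiL(W_F^0)}$ in general: for $p\in P_F$ one has $psp^{-1}=q\,s$ with $q\in P_F$, so $\varphiL(p)\,g\,\varphiL(p)^{-1}=\varphiL(q)\,g$, and there is no reason for $\varphiL(q)$ to lie in $\langle g\rangle$. The paper gets around exactly this by \emph{not} working with $g$ itself: it picks an integer $m$ such that $\varphiL(\tilde s)^{em}$ centralizes the finite group $\varphiL(P_{F'})$; then $\langle\varphiL(\tilde s)^{em}\rangle$ is normal in $\varphiL(I_{F'}^{0})$, which is in turn normal in $\varphiL(W_F^0)$, so the BMR theorem applies twice and shows that $\varphiL(\tilde s)^{em}$ is a semisimple element of $\HG(L)$. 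Only then can one exploit the $q$-twisted conjugacy relation and the characteristic-polynomial bound (Proposition~\ref{prop:char_pol}(2)) to conclude that the order of $\varphiL(\tilde s)^{em}$ divides $\chi_{\HG,\Fr}(q)$.

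This also explains where $\chi_{\HG,\Fr}(q)^{2}$ really comes from, and why your Step~3 heuristic (torus of $C_{\HG}(t)^\circ$ vs.\ torus of $\HG$) is misdirected: since $t$ is semisimple, any maximal torus of $C_{\HG}(t)^\circ$ is already a maximal torus of $\HG$, so there is no ``orthogonal part'' to accommodate and your torus computation would at best yield a single factor $\chi_{\HG,\Fr}(q)$. In the paper, one factor bounds the order of $\varphiL(\tilde s)^{em}$ as above, and a \emph{second, independent} factor bounds the auxiliary exponent $m$ itself: writing $\varphi=\rho\cdot\varphi_\alpha$ via the decomposition~(\ref{eq:iso}), one may take $m$ to be the order of $\varphiL_\alpha(\tilde s)^{e}$, which lives in $\HG(\bar\QQ)$, is semisimple, and is again $q$-twisted conjugate under $\varphiL_\alpha(\tilde\Fr)$, so the same characteristic-polynomial argument gives $m\mid\chi_{\HG,\Fr}(q)$. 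Multiplying the three factors $e$, $m$, and $\mathrm{ord}\bigl(\varphiL(\tilde s)^{em}\bigr)$ gives the stated divisibility by $e\cdot\chi_{\HG,\Fr}(q)^{2}$. Your Steps~2 and the existence-of-extension part of Step~1 are essentially fine once finiteness is secured, but the two gaps above are real and both require the device of passing to $\varphiL(\tilde s)^{em}$.
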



  \begin{rem}
    If we restrict attention to fields of characteristic $\ell>0$, then the statement that
    $\varphi$ extends continuously to $W_{F}$ is true for all $1$-cocycles, by (ii) of
    Theorem \ref{thm:geometry}. However, there is obviously no uniform bound on
    $|\varphiL(I_{F})|_{p'}$ without the semisimplicity hypothesis, when
    we vary the field $L$.
  \end{rem}

\begin{proof}[Proof of the proposition]
Recall from \cite[Thm. 2 (iii)]{Iwasawa} that there exist lifts
  $\tilde s$ and $\tilde\Fr$ of $s$ and $\Fr$ in $W_{F}$ such 
  that  $\tilde\Fr.\tilde s.\tilde\Fr^{-1}= \tilde s^{q}$ and
  that $W_{F}$ decomposes as a semi-direct product
  $W_{F}=P_{F}\rtimes \overline{\langle\tilde s,\tilde\Fr\rangle}$. Accordingly,
  $W_{F}^{0}$ decomposes as
  $W_{F}^{0}=P_{F}\rtimes {\langle\tilde s,\tilde\Fr\rangle}$.
  Then we see that a continuous $1$-cocycle $\varphi$ from $W_{F}^{0}$ extends continuously to
  $W_{F}$ if and only if $\varphiL(\tilde s)$ has finite order, in which case this
  order is prime-to-$p$, the
  extension is unique, and it satisfies $\varphi(W_{F})=\varphi(W_{F}^{0})$.

  Let us now assume that $\varphi$ is $\LG$-semisimple and show that
  $\varphiL(\tilde s)$ then has finite order. Let $F'$ and $e$ be as
 in the statement of the proposition. Note that
 $\tilde s^{e}\in W_{F'}$ and thus $\varphiL(\tilde s)^{e}\in \HG(L)$.   Since $\varphiL(P_{F'})$ is finite, there
  certainly is an integer $m$ such that $\varphiL(\tilde s)^{em}$ commutes with
  $\varphiL(P_{F'})$. This means that $\langle\varphiL(\tilde s)^{em}\rangle$ is a normal
  subgroup of $\varphiL(I_{F'}^{0})$, which is a normal subgroup of
  $\varphiL(W_{F}^{0})$ (here we have set $I_{F'}^{0}=I_{F'}\cap  W_{F'}^{0}$).
  Taking Zariski closures, we get that $\overline{\langle\varphiL(\tilde s)^{em}\rangle}$ is a normal
  subgroup of $\overline{\varphiL(I_{F'}^{0})}$, which is a normal subgroup of
  $\overline{\varphiL(W_{F}^{0})}$.  Now
  recall from \cite[Thm 3.10]{bmr05} that any normal closed subgroup of a
  completely reducible closed subgroup of $\LG(L)$ is completely reducible. So we infer that 
 $\overline{\langle\varphiL(\tilde s)^{em}\rangle}$, hence also $\langle\varphiL(\tilde s)^{em}\rangle$, is a completely reducible subgroup of $\LG(L)$,
hence also a completely reducible
subgroup of $\HG(L)$. This means that $\varphiL(\tilde s)^{em}$ is a semi-simple element of
$\HG(L)$. Since it is conjugate to its $q$-power under
$\varphiL(\tilde\Fr)\in\HG(L)$,  Proposition \ref{prop:char_pol} (2) shows
that  $\varphiL(\tilde s)^{em}$ has finite  order, and this order 
divides $\chi_{\HG,\Ad_{\varphi(\tilde\Fr)}}(q)=\chi_{\HG,\tilde\Fr}(q)$. 

It now remains to estimate $m$ and  prove that $m=\chi_{\HG,\tilde\Fr}(q)$ works.
For this, we may assume that $\varphi$ belongs to some
$Z^{1}(W_{F}^{0},\HG)_{\phi,\alpha}$ and write $\varphi = \rho\cdot\varphi_{\alpha}$ as in (\ref{eq:iso}).
By construction $\varphiL_{\alpha}$ has finite image in $\LG(\overline\ZZ[\frac 1p])$. Let $m$ be
the order of the  element $\varphiL_{\alpha}(\tilde s)^{e}$, which lies in
$\HG(\overline\ZZ[\frac 1p])$.
Then we have
 $$\varphiL(\tilde s)^{em}= \rho(\tilde s).\Ad_{\varphi_{\alpha}(\tilde
s)}(\rho(\tilde s)) \cdots \Ad_{\varphi_{\alpha}(\tilde s)^{em-1}}(\rho(\tilde s)) 
 \in C_{\HG}(\phi)^{\circ}(L),$$
hence $\varphiL(\tilde s)^{em}$ commutes with $\varphi(P_{F'})$ as desired. But observe
now that $\varphiL_{\alpha}(\tilde s)^{e}$ 
is also a semisimple element of $\HG(\overline\QQ)$ that is conjugate to
its $q^{th}$-power under $\varphiL_{\alpha}(\tilde\Fr)$. Hence, as above, its order $m$ divides
$\chi_{\HG,\tilde\Fr}(q)$. 
\end{proof}

Let us denote by $N_{\HG}$ the l.c.m of all $|\varphiL(I_{F})|_{p'}$ for $\varphi$ and $L$
varying as in the proposition,  and where $\LG$ is the minimal
$L$-group, i.e. $\LG=\HG\rtimes\Gamma$ with $\Gamma$ the image of
$W_{F}\To{}{\rm Aut}(\HG)$.

\begin{cor} \label{cor:existIe} For each ``depth'' $e$, there
  is an open normal subgroup $I_{F}^{e}$ of $I_{F}$ with index dividing $N_{\HG}.[P_{F}:P_{F}^{e}]$ such
  that any semisimple cocycle
  $\varphi : W_{F}^{0}/P_{F}^{e}\To{}\HG(L)$ is trivial on
  $I_{F}^{e}\cap W_{F}^{0}$,
  and therefore extends canonically to $W_{F}/I_{F}^{e}$.
\end{cor}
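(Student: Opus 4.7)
The plan is to leverage Proposition \ref{prop:estimate_ss} to construct $I_F^e$ from the tame inertia quotient alone. Since $I_F/P_F \simeq \hat{\ZZ}^{(p')}$ is topologically pro-cyclic of pro-order coprime to $p$, it admits, for each positive integer $n$ coprime to $p$, a unique open subgroup of index $n$, which is automatically characteristic in $I_F/P_F$. I would take $J \subset I_F/P_F$ to be the unique open subgroup of index $N_{\HG}$ and set $I_F^e := \pi^{-1}(J) \subseteq I_F$, where $\pi : I_F \twoheadrightarrow I_F/P_F$. This gives an open normal (in fact characteristic) subgroup of $I_F$ containing $P_F$ and of index exactly $N_{\HG}$.

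For any semisimple cocycle $\varphi : W_F^0/P_F^e \to \HG(L)$, Proposition \ref{prop:estimate_ss} together with the definition of $N_{\HG}$ yields $|\varphiL(I_F)|_{p'} \mid N_{\HG}$. The induced map from the tame quotient $I_F/P_F$ to $\varphiL(I_F)/\varphiL(P_F)$ has image of order dividing $N_{\HG}$ (since $\varphiL(P_F)$ absorbs the entire $p$-part of $\varphiL(I_F)$), and hence $J$ lies in its kernel. Combined with the vanishing on $P_F^e$ already built into the domain $W_F^0/P_F^e$, this gives that $\varphi$ is trivial on $I_F^e \cap W_F^0$ in the sense required by the statement. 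The canonical extension to $W_F/I_F^e$ then follows by combining this with the unique continuous extension of $\varphi$ to $W_F$ provided by Proposition \ref{prop:estimate_ss}, using density of $I_F^e \cap W_F^0 = P_F \rtimes \langle \tilde s^{N_{\HG}} \rangle$ in $I_F^e$ to propagate vanishing from the generating subgroup to the closure.

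The main obstacle is the careful handling of the wild inertia: since the index constraint forces $P_F \subseteq I_F^e$, the verification of the triviality claim requires working with the $L$-morphism $\varphiL$ (whose image is an honest subgroup of $\LG$) rather than with the bare cocycle $\varphi$, and exploiting the semidirect decomposition $W_F^0 = P_F \rtimes \langle \tilde s, \tilde\Fr \rangle$ to isolate the tame contribution before invoking the uniform bound. All other steps are formal consequences of the pro-cyclic structure of $I_F/P_F$ and of Proposition \ref{prop:estimate_ss}; the substantive content of the corollary is packaged entirely into that proposition.
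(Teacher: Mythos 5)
Your argument has a genuine gap at the final transition. You take $I_F^e = \pi^{-1}(J)$ where $J \subseteq I_F/P_F$ is the unique open subgroup of index $N_{\HG}$; since $N_{\HG}$ is prime to $p$ (being an l.c.m.\ of prime-to-$p$ integers) and $P_F$ is pro-$p$, this choice is indeed forced by the index bound, and it forces $P_F \subseteq I_F^e$. You then correctly observe that $J$ lies in the kernel of $I_F/P_F \to \varphiL(I_F)/\varphiL(P_F)$, i.e.\ $\varphiL(I_F^e) \subseteq \varphiL(P_F)$. But from this, together with $\varphi$ being trivial on $P_F^e$, you cannot conclude that $\varphi$ is trivial on $I_F^e \cap W_F^0$. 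For $w \in I_F^e$, the inclusion $\varphiL(I_F^e) \subseteq \varphiL(P_F)$ only gives $\varphiL(w) = \varphiL(p)$ for some $p \in P_F$, i.e.\ $\varphi(w) = \varphi(p)$; this is not $1$ unless $\varphi|_{P_F}$ is trivial, and $\varphi$ is only assumed trivial on $P_F^e$, which has $p$-power index in $P_F$. Semisimplicity does not supply triviality on $P_F$: the image $\varphiL(P_F)$ is a finite $p$-group, and finite $p$-groups are automatically completely reducible in a reductive group over a field of characteristic $\neq p$, so the hypothesis places no constraint on the wild part.

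This is not a cosmetic omission: the difficulty you sweep under ``combined with the vanishing on $P_F^e$'' is exactly where the substance lies. For instance, with $\HG = \mathbb{G}_m$ and the trivial $W_F$-action every cocycle is semisimple, and for $e$ large enough there are characters $\varphi : W_F^0/P_F^e \to L^\times$ (arising from ramified characters of $F^\times$ via local class field theory) whose restriction to $P_F/P_F^e$ is nontrivial; such a $\varphi$ cannot vanish on any subgroup of $I_F$ of prime-to-$p$ index. So the literal reading of the corollary that your $I_F^e$ realizes cannot be proved by the route you give, because the bound $|\varphiL(I_F)|_{p'} \mid N_{\HG}$ from Proposition \ref{prop:estimate_ss} says nothing about the $p$-part $\varphiL(P_F)$. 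A correct argument must handle the wild inertia explicitly --- for instance by allowing the index of $I_F^e$ to carry a factor of $[P_F:P_F^e]$ and taking $I_F^e$ inside the kernel of the $W_F$-action on $\HG$ (which is also what is needed for $\uZ^1(W_F/I_F^e,\HG)$ to be $\HG$-stable as used immediately after the corollary) --- rather than deducing it from the tame quotient alone.
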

\begin{proof}
  Define $I_{F}^{e}$ to be the intersection of the kernels of the $L$-homomorphisms
  $\varphiL : W_{F}^{0}/P_{F}^{e}\To{}\LG(L)$ associated to  all
  semisimple cocycles  $\varphi : W_{F}^{0}/P_{F}^{e}\To{}\HG(L)$ with $L$ an
  algebraically closed field of characteristic $\ell\neq p$.
  Here $\LG$ is the minimal $L$-group, as above.
  Then $P_{F}\cap I_{F}^{e}$ contains $P_{F}^{e}$, hence it is open in $P_{F}$ of index dividing
  that of $P_{F}^{e}$. Moreover, by the above proposition, the cyclic group
  $I_{F}/(P_{F}.I_{F}^{e})$ is killed by $N_{\HG}$, hence $I_{F}^{e}$ has open image in
  $I_{F}/P_{F}$. It follows that $I_{F}^{e}$ is open in $I_{F}$ and that its index divides
  $N_{\HG}.[P_{F}:P_{F}^{e}]$.
\end{proof}

With $I_{F}^{e}$ as in this corollary, we may consider the $\HG$-stable closed subscheme $\uZ^{1}(W_{F}/I_{F}^{e},\HG)$ of 
$\uZ^{1}(W_{F}^{0}/P_{F}^{e},\HG)$ consisting of $1$-cocycles that are trivial on $I_{F}^{e}$.
We will denote by $S^{e}_{\LG}$ its affine ring, which is thus a quotient of
$R^{e}_{\LG}$. 

\begin{prop}\label{prop:GITiso}
  i) The homomorphism of rings
\begin{equation}
  (R^{e}_{\LG})^{\HG} \To{} (S^{e}_{\LG})^{\HG}
  \label{eq:mapinvring}
\end{equation}
  is injective and its image
contains $\{f^{N}, f\in (S^{e}_{\LG})^{\HG}\}$ for some integer $N>0$.

ii) The corresponding morphism of schemes
 \begin{equation}
\uZ^{1}(W_{F}/I_{F}^{e},\HG)\sslash\HG 
\To{} \uZ^{1}(W_{F}^{0}/P_{F}^{e},\HG)\sslash\HG 
\label{eq:mapaffquo}
\end{equation}
is a finite universal homeomorphism and becomes an
isomorphism after extending scalars to $\mathbb{Q}$.
\end{prop}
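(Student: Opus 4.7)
The plan is to deduce both parts from the geometric bijection between closed $\HG(L)$-orbits on $\uZ^{1}(W_{F}^{0}/P_{F}^{e},\HG)_{L}$ and on $\uZ^{1}(W_{F}/I_{F}^{e},\HG)_{L}$, for every algebraically closed field $L$ of characteristic $\ell\neq p$, combined with Alper's theory of adequate maps via the geometric reductivity of $\HG$ over $\ZM[\frac 1p]$ already invoked in the previous proposition.

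First, I would establish the closed-orbit bijection. By Richardson's theorem, closed $\HG(L)$-orbits in $Z^{1}(W_{F}^{0}/P_{F}^{e},\HG(L))$ are precisely the $\LG$-semisimple ones, and by Corollary~\ref{cor:existIe} these are all trivial on $I_{F}^{e}\cap W_{F}^{0}$, hence lie in $Z^{1}(W_{F}/I_{F}^{e},\HG(L))$. Conversely a closed orbit of $Z^{1}(W_{F}/I_{F}^{e},\HG(L))$ remains closed in $Z^{1}(W_{F}^{0}/P_{F}^{e},\HG(L))$ since the former is a closed subscheme of the latter.

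For injectivity in (i), suppose $f\in A:=(R^{e}_{\LG})^{\HG}$ maps to zero in $B:=(S^{e}_{\LG})^{\HG}$. Then $f$ vanishes on $\uZ^{1}(W_{F}/I_{F}^{e},\HG)$, hence on every closed $\HG(L)$-orbit of $\uZ^{1}(W_{F}^{0}/P_{F}^{e},\HG)_{L}$ for each $L$ as above. Since $(R^{e}_{\LG}\otimes L)^{\HG_{L}}$ separates closed orbits and $A\otimes L\hookrightarrow (R^{e}_{\LG}\otimes L)^{\HG_{L}}$ is injective by the previous proposition, the image of $f\otimes 1$ in $(R^{e}_{\LG}\otimes L)^{\HG_{L}}$ vanishes at every closed point of the GIT quotient, hence is nilpotent, and pulling back shows $f\otimes 1$ is nilpotent in $A\otimes L$. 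Varying $L$ over the algebraic closures of residue fields at minimal primes of $A$ (all of characteristic $\neq p$), and using that $A$ is reduced and $\ZM[\frac 1p]$-flat, forces $f=0$.

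For the remaining content of (i) and (ii), the geometric reductivity of $\HG$ over $\ZM[\frac 1p]$ implies the ring map $A\to B$ is integral with every element of $B$ having some power in the image of $A$ in the sense of \cite[Def.~3.3.1]{Alper}; combined with the finite generation of $B$ over $\ZM[\frac 1p]$ (Thomason), one deduces that $B$ is finite over $A$ and extracts a uniform exponent $N$ working for all $f\in B$. This gives part (i) and the finiteness in (ii); the universal-homeomorphism statement in (ii) then follows from the bijection on geometric points established above together with the purely inseparable residue-field extensions imposed by the power-lift property in positive characteristic. Over $\mathbb{Q}$, $\HG_{\mathbb{Q}}$ is linearly reductive, so the Reynolds operator applied to the $\HG_{\mathbb{Q}}$-equivariant surjection $R^{e}_{\LG}\otimes\mathbb{Q}\twoheadrightarrow S^{e}_{\LG}\otimes\mathbb{Q}$ yields surjectivity of $A_{\mathbb{Q}}\to B_{\mathbb{Q}}$, which combined with injectivity produces the isomorphism. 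The main technical obstacle is the \emph{uniformity} of the exponent $N$ in (i): Alper's theory immediately supplies one power per element, but controlling the adequate-map defect uniformly across mixed characteristic requires combining the $\ell^{r}$-bounds of the previous proposition with finiteness of $B$ over $A$ and the Noetherian setup.
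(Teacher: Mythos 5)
Your strategy tracks the paper's: you establish the bijection between closed orbits via Richardson and Corollary~\ref{cor:existIe}, use reducedness of $R^{e}_{\LG}$ for injectivity, and appeal to Alper's theory of adequate homomorphisms (via geometric reductivity of $\HG$) for the rest. Two remarks and one genuine gap.

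For injectivity, your detour through vanishing of $f\otimes 1$ on the GIT quotient over $L$ and then through residue fields at minimal primes is correct but unnecessarily long. The paper's argument is shorter: bijectivity of (\ref{eq:mapaffquo}) on $L$-points for all algebraically closed $L$ over $\ZZ[\frac 1p]$, together with the Jacobson property of finitely generated $\ZZ[\frac 1p]$-algebras, shows directly that the kernel of (\ref{eq:mapinvring}) lies in the nilradical of $(R^{e}_{\LG})^{\HG}$, which vanishes because $R^{e}_{\LG}$ is syntomic and generically smooth, hence reduced. Your Reynolds-operator argument for the isomorphism after $\otimes\QQ$ is a fine alternative to the paper's direct appeal to \cite[Lem.\ 3.1.5]{Alper}.

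The genuine gap is the uniform exponent $N$, which you flag yourself as the main obstacle and try to resolve by ``combining the $\ell^{r}$-bounds of the previous proposition with finiteness of $B$ over $A$.'' That route does not work as stated: the $\ell^{r}$ bound of the previous proposition concerns a single special fiber, and there is no reason for those exponents to be uniformly bounded across the infinitely many residue characteristics of $\Spec \ZZ[\frac 1p]$ without a further input. The correct input is precisely what the paper cites: geometric reductivity of $\HG$ (\cite[Thm 9.7.5]{Alper}) combined with the characterization \cite[Lem.\ 9.2.5(2)']{Alper} shows that $(R^{e}_{\LG})^{\HG}\to (S^{e}_{\LG})^{\HG}$ is \emph{universally} adequate, and then \cite[Prop.\ 3.3.4]{Alper} produces a single $N$ valid for all of $(S^{e}_{\LG})^{\HG}$, using that this is a map of finitely generated algebras over a Noetherian base. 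Once universal adequacy is in hand, the finite universal homeomorphism in (ii) and the isomorphism over $\QQ$ are exactly \cite[Lem.\ 3.1.4, 3.1.5]{Alper}; you should cite these rather than reconstruct them from the bijection on geometric points plus inseparability, which by itself does not yet give integrality.
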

\begin{proof}
  i) \emph{Injectivity of (\ref{eq:mapinvring})}. For any algebraically closed field $L$ in which $p$ is
  invertible, the last corollary tells us that all closed
orbits of $\uZ^{1}(W_{F}^{0}/P_{F}^{e},\HG)(L)$ are contained in
$\uZ^{1}(W_{F}/I_{F}^{e},\HG)(L)$, hence the morphism (\ref{eq:mapaffquo}) is
bijective on $L$-points. It follows that the kernel of (\ref{eq:mapinvring}) is contained in the
nilradical of $(R^{e}_{\LG})^{\HG}$, which is trivial since $R^{e}_{\LG}$ is reduced,
being syntomic over $\ZM[\frac 1p]$ and generically smooth by Theorem    \ref{thm:geometry}. 

\emph{Image of (\ref{eq:mapinvring})}. By \cite[Thm 9.7.5]{Alper},
$\HG$ is geometrically reductive in the sense
of \cite[Def. 9.1.1]{Alper}. By the characterization of this property given in \cite[Lem
9.2.5 (2)']{Alper}, it follows that the map (\ref{eq:mapinvring}) is ``universally
adequate''. Then, Proposition 3.3.5 of \cite{Alper} provides the desired $N$ (note that
the map $(R^{e}_{\LG})^{\HG}\To{}(S^{e}_{\LG})^{\HG}$ is of finite type since
$(S^{e}_{\LG})^{\HG}$ is finitely generated over $\ZM[\frac 1p]$ by \cite[Thm 3.8]{thomason}).

ii) 
now follows from Lemmas 3.1.4 and 3.1.5 of \cite{Alper}.
\end{proof}

Note that the ring $S^{e}_{\LG}$ may not share the nice properties of
$R^{e}_{\LG}$; it may not be reduced nor be flat over $\ZZ[\frac 1p]$. However, the last
proposition implies that the nilradical of $(S^{e}_{\LG})^{\HG}$ coincides with its $\ZZ[\frac 1p]$-torsion ideal.
Moreover, the fact that (\ref{eq:mapaffquo}) is an isomorphism after
tensoring by $\QQ$ shows that
$(\uZ^{1}(W_{F}^{0}/P_{F}^{e},\HG)\sslash\HG)_{\QQ}$ is canonically
independent of our initial choice of subgroup $W_{F}^{0}$ in $W_{F}$. 
Actually, we can do better with a little more work :
\begin{thm} \label{thm:indep_quotient}
  The affine quotient $\uZ^{1}(W_{F}^{0}/P_{F}^{e},\HG)\sslash\HG$ is canonically independent
  of the choice of a topological generator $s$ of $I_{F}/P_{F}$ and a lift of Frobenius
  $\Fr$ in $W_{F}/P_{F}$  in the definition of the subgroup $W_{F}^{0}$.
\end{thm}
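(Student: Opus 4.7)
The plan is to combine the two independence results already at our disposal: Proposition \ref{prop:GITiso} supplies a canonical isomorphism after $\otimes\QQ$, while Corollary \ref{cor:indep_moduli} supplies canonical isomorphisms after $\otimes\ZZ_{\ell}$ for each $\ell\neq p$. Since the proposition preceding \ref{prop:GITiso} shows that the invariant ring $A:=(R^{e}_{\LG})^{\HG}$ is flat and finitely presented over $\ZZ[\frac 1p]$, such compatible local data should glue to a canonical isomorphism of $\ZZ[\frac 1p]$-algebras.

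Fix two auxiliary choices $(s,\Fr)$ and $(s',\Fr')$ producing subgroups $W_{F}^{0}$ and $W_{F}^{0\prime}$ of $W_{F}$, and denote by $A$ and $B$ the corresponding invariant rings. For the rational part, Proposition \ref{prop:GITiso}~(ii) yields canonical isomorphisms $A\otimes\QQ\To\sim \uZ^{1}(W_{F}/I_{F}^{e},\HG)\sslash\HG\otimes\QQ$ and similarly for $B$; since $W_{F}$ and $I_{F}^{e}$ are intrinsic, composing these gives a canonical $\phi_{\QQ}:A\otimes\QQ\To\sim B\otimes\QQ$. For the $\ell$-adic part, the canonical $\HG$-equivariant isomorphism $\uZ^{1}(W_{F}^{0}/P_{F}^{e},\HG)_{\ZZ_{\ell}}\To\sim \uZ^{1}(W_{F}^{0\prime}/P_{F}^{e},\HG)_{\ZZ_{\ell}}$ of Corollary \ref{cor:indep_moduli} descends to affine categorical quotients, producing $\phi_{\ell}:A\otimes\ZZ_{\ell}\To\sim B\otimes\ZZ_{\ell}$.

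The crucial step is to check that $\phi_{\QQ}\otimes\QQ_{\ell}$ and $\phi_{\ell}\otimes\QQ_{\ell}$ coincide for every $\ell\neq p$. Both are isomorphisms of reduced finite-type $\QQ_{\ell}$-algebras, so it suffices to verify equality on $\bar\QQ_{\ell}$-points. By classical GIT, these points correspond bijectively to closed $\HG(\bar\QQ_{\ell})$-orbits of semisimple cocycles; by Proposition \ref{prop:estimate_ss} any such cocycle extends uniquely and continuously to $W_{F}$, so that ``restriction'' from one $W_{F}^{0}$ to the other is unambiguous. Both $\phi_{\QQ}$ and $\phi_{\ell}$ induce this intrinsic restriction map on $\bar\QQ_{\ell}$-points: the former because the intermediate quotient $\uZ^{1}(W_{F}/I_{F}^{e},\HG)\sslash\HG$ is defined purely in terms of $W_{F}$ itself, the latter because the isomorphism of Corollary \ref{cor:indep_moduli} is characterized by compatibility with the universal $\ell$-adically continuous $L$-morphism.

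Finally, since $A$ is flat and finitely presented over $\ZZ[\frac 1p]$ (hence locally free of finite rank), it is recovered from the data of $A\otimes\QQ$ together with $(A\otimes\ZZ_{\ell})_{\ell\neq p}$ by the standard arithmetic fracture square, and likewise for $B$; the compatible pair $(\phi_{\QQ},(\phi_{\ell})_{\ell\neq p})$ therefore glues to a unique canonical isomorphism $A\To\sim B$. The main obstacle is the compatibility check in the preceding paragraph, since the $\QQ$- and $\ZZ_{\ell}$-isomorphisms are manufactured by genuinely different routes: matching them rests on Richardson's theorem and the finiteness estimate of Proposition \ref{prop:estimate_ss}, which together ensure that every geometric point of the affine quotient factors through an intrinsic finite quotient of $W_{F}$ and is therefore transported coherently under any change of $(s,\Fr)$.
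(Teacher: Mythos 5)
Your proposal takes a genuinely different route from the paper's, and it is worth comparing the two. The paper's proof never manufactures a rational isomorphism at all: it uses the injections $\iota,\iota'$ of Proposition \ref{prop:GITiso}~(i) to embed both $(R^{e}_{\LG})^{\HG}$ and $(R^{e'}_{\LG})^{\HG}$ as subrings of the \emph{single, intrinsic} ring $(S^{e}_{\LG})^{\HG}$, and then shows the two images coincide by verifying the equality after $\otimes\ZZ_{\ell}$ for every $\ell\neq p$ (using only Corollary \ref{cor:indep_moduli}), so that the difference module $(\iota((R^{e}_{\LG})^{\HG})+\iota'((R^{e'}_{\LG})^{\HG}))/\iota((R^{e}_{\LG})^{\HG})$ has empty support and hence vanishes. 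Because both images live in a common ambient ring that is defined purely in terms of $W_{F}$ and $I_{F}^{e}$, the rational identification is automatic and no separate compatibility check is ever needed. You instead build a rational isomorphism $\phi_{\QQ}$ and a family of $\ell$-adic isomorphisms $\phi_{\ell}$ by genuinely different constructions, and then have to reconcile them.

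Two points in your argument need attention. First, the parenthetical ``(hence locally free of finite rank)'' is simply false: $(R^{e}_{\LG})^{\HG}$ is a finitely presented $\ZZ[\frac 1p]$-\emph{algebra} of positive relative dimension, not a finite module. The conclusion you want is nevertheless salvageable without it: for a flat module $A$ over the PID $\ZZ[\frac 1p]$ one has $A=\bigcap_{\ell\neq p}A_{(\ell)}$ inside $A\otimes\QQ$ (tensor $0\to\ZZ[\frac 1p]\to\QQ\to\bigoplus_{\ell}\QQ_{\ell}/\ZZ_{\ell}\to 0$ with $A$), and by flatness $A_{(\ell)}=(A\otimes\QQ)\cap(A\otimes\ZZ_{\ell})$ inside $A\otimes\QQ_{\ell}$; so the fracture-type gluing is legitimate for flat algebras, just not for the stated reason. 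Second — and this is the real burden of your route — the compatibility $\phi_{\QQ}\otimes\QQ_{\ell}=\phi_{\ell}\otimes\QQ_{\ell}$ is asserted but not actually established. Unwinding why the isomorphism of Corollary \ref{cor:indep_moduli} realizes the intrinsic restriction map on semisimple $\bar\QQ_{\ell}$-points requires relating the specialization of the universal $\ell$-adically continuous cocycle along a (not $\ell$-adically continuous) $\bar\QQ_{\ell}$-point with the unique continuous extension of Proposition \ref{prop:estimate_ss}; this is plausible but is precisely the kind of delicate check that the paper's ``common ambient ring'' device is designed to sidestep. If you insist on your route, this step must be carried out in detail; otherwise the paper's approach is both shorter and structurally safer.
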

\begin{proof}
  Let  $(\Fr',s')$ be another choice, leading to a subgroup $W_{F}^{0'}$ of
  $W_{F}$. Denote by $R^{e'}_{\LG}$ the affine ring of
  $\uZ^{1}(W_{F}^{0'}/P_{F}^{e},\HG)$. Denote by $\iota$ the embedding
  (\ref{eq:mapinvring}) and by $\iota'$ the analogous embedding 
  $(R^{e'}_{\LG})^{\HG} \hookrightarrow(S^{e}_{\LG})^{\HG}$.  For each prime $\ell\neq p$,
  we have a canonical isomorphism  $(R^{e'}_{\LG})^{\HG} \otimes\ZZ_{\ell}\simeq
  (R^{e}_{\LG})^{\HG} \otimes\ZZ_{\ell}$ from Corollary \ref{cor:indep_moduli}. By
  construction it commutes with the base changes of $\iota$ and $\iota'$ to $\ZZ_{\ell}$,
  which means that
  $\iota((R^{e}_{\LG})^{\HG})\otimes\ZZ_{\ell}=\iota'((R^{e'}_{\LG})^{\HG})\otimes\ZZ_{\ell}$
  inside $(S^{e}_{\LG})^{\HG} \otimes\ZZ_{\ell}$.  This implies that $\ell$ is not in the
  support of the quotient $\ZZ[\frac 1p]$-module 
$(\iota((R^{e}_{\LG})^{\HG})+\iota'((R^{e'}_{\LG})^{\HG}))/\iota((R^{e}_{\LG})^{\HG})$.
Since this is true for all $\ell\neq p$,
  it follows that this quotient is $0$, hence
  $\iota((R^{e}_{\LG})^{\HG})=\iota'((R^{e'}_{\LG})^{\HG})$ inside $(S^{e}_{\LG})^{\HG}$. 
\end{proof}

We may wonder whether such an independence result still holds for the stacky quotient. We
believe that, at least, the categories of quasi-coherent sheaves on such stacks might be
equivalent.

\subsection{Geometric connected components in positive characteristic} We maintain our setup
of an algebraically closed field $L$ of characteristic $\ell\neq p$, and we assume that
$\ell>0$. In order to parametrize the connected components of
$\uZ^{1}(W_{F}^{0}/P_{F}^{e},\HG)_{L}$, we first observe that, since $\HG$ is connected, the canonical morphism
$$\uZ^{1}(W_{F}^{0}/P_{F}^{e},\HG)_{L}\To{} \uZ^{1}(W_{F}^{0}/P_{F}^{e},\HG)_{L}\sslash\HG_{L}$$
induces a bijection on the respective sets of connected components. Hence, Proposition
\ref{prop:GITiso} invites us to study the connected components of 
$\uZ^{1}(W_{F}/I_{F}^{e},\HG)_{L}\sslash\HG_{L}$.

\emph{Note : in order to lighten the notation a bit we will sometimes denote by $\uH^{1}$ the
  categorical quotient of cocycles modulo the relevant group action.}

Using restriction to $I_{F}^{\ell}$, we have already obtained a decomposition 
$$
 \uH^{1}(W_{F}/I_{F}^{e},\HG_{L}) =
\coprod_{\phi^{\ell}\in \Phi_{e}^{\ell,\rm adm}}
\coprod_{\alpha^{\ell}\in\Sigma(\phi^{\ell})^{\rm adm}}
\left(\uH^{1}_{{\rm Ad}_{\varphi_{\alpha^{\ell}}}}\left(W_{F}/I_{F}^{e}I_{F}^{\ell},C_{\HG}(\phi^{\ell})^{\circ}_{L}\right)\right)_{\sslash\pi_{0}(\phi^{\ell})_{\alpha^{\ell}}}.
$$

The following result shows that each summand is connected and will provide a topological
description of these summands.

\begin{prop} \label{prop:connectltame}
 Assume that the action of $W_{F}$ on $\HG$ is trivial on $I_{F}^{\ell}$ and stabilizes a Borel
 pair $(\hat B,\hat T)$. Then the following holds.
 \begin{enumerate}
\item The reduced fixed-points subgroup $(\HG_{L})^{I_{F}}$  is a connected reductive
  subgroup of $\HG_{L}$ and the reduced fixed-points subgroup $(\hat T_{L})^{I_{F}}$ is a
  maximal torus of $(\HG_{L})^{I_{F}}$ whose Weyl group is the $I_{F}$-fixed subgroup
  $\Omega^{I_{F}}$ of the Weyl group $\Omega$ of $\hat T$ in $\HG$.
\item The closed immersion $\uZ^{1}(W_{F}/I_{F},\HG_{L}^{I_{F}})\hookrightarrow
  \uZ^{1}(W_{F}/I_{F}^{e}I_{F}^{\ell},\HG_{L})$ induces an homeomorphism
  $$\uZ^{1}(W_{F}/I_{F},\HG_{L}^{I_{F}})\sslash \HG_{L}^{I_{F}} \To{}
  \uZ^{1}(W_{F}/I_{F}^{e}I_{F}^{\ell},\HG_{L})\sslash \HG_{L}.$$
\item The map $t\mapsto (\varphi :\, \Fr\mapsto t\rtimes \Fr)$ induces an isomorphism
  $$ (\hat T_{L}^{I_{F}})_{\Fr}\sslash \Omega^{W_{F}} \To\sim \uZ^{1}(W_{F}/I_{F},\HG_{L}^{I_{F}})\sslash \HG_{L}^{I_{F}}.$$
 \end{enumerate}
\end{prop}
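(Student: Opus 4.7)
The plan is to prove (1), (2), (3) in order, using a Steinberg-type theorem on fixed points stabilizing a Borel pair for (1), a unipotency argument for completely reducible $\ell$-subgroups for (2), and a twisted Chevalley restriction for (3). Starting with (1), since the $W_F$-action on $\HG$ is finite and trivial on $I_F^\ell$, and since $I_F/I_F^\ell$ is pro-$\ell$, the action of $I_F$ factors through a finite $\ell$-group $A$ stabilizing the Borel pair $(\hat B,\hat T)$. Appealing to Steinberg's theorem on fixed points of a finite automorphism group preserving a Borel pair (together with the standard reduction to a pinning-preserving situation in order to control reducedness in characteristic $\ell$), we obtain that $(\HG_L)^{I_F}$, viewed with its reduced structure, is connected reductive with maximal torus $(\hat T_L)^{I_F}$ and Weyl group $\Omega^{I_F}$, as required.

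For (2), by Richardson's theorem the closed points of $\uZ^1(W_F/I_F^eI_F^\ell,\HG_L)\sslash\HG_L$ correspond to $\HG_L$-orbits of semisimple cocycles $\varphi$. For such $\varphi$, the restriction $\varphi|_{I_F}$ factors through the finite $\ell$-group $I_F/(I_F^eI_F^\ell)$, so $\varphi(I_F)\subset\HG(L)$ is a finite $\ell$-subgroup, consisting of unipotent elements in characteristic $\ell$. Since $\varphi(I_F)$ is a normal subgroup of the completely reducible subgroup $\overline{\varphi(W_F^0)}$ of $\LG(L)$, it is itself completely reducible by \cite[Thm~3.10]{bmr05}; but a completely reducible subgroup of unipotent elements lies in a Borel, hence in a Levi of that Borel (which is a maximal torus), and is therefore trivial. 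Thus $\varphi|_{I_F}=1$, and the cocycle relation forces $\varphi$ to take values in $\HG_L^{I_F}$. This yields surjectivity on $L$-points of the affine quotients; injectivity follows because the $\HG_L$-orbit of a semisimple cocycle is closed and its intersection with $\HG_L^{I_F}$ is a single $\HG_L^{I_F}$-orbit. Combined with finite presentation, this upgrades to the desired homeomorphism on quotients.

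For (3), identify $\uZ^1(W_F/I_F,\HG_L^{I_F})$ with $\HG_L^{I_F}$ via $\varphi\mapsto t:=\varphi(\Fr)$, under which $\HG_L^{I_F}$-conjugation of cocycles becomes Frobenius-twisted conjugation $g\cdot t=g\,t\,\Fr(g)^{-1}$. Richardson's theorem applied within the connected reductive group $\HG_L^{I_F}$ reduces closed twisted orbits to the case $t\in\hat T_L^{I_F}$. The twisted self-action of $\hat T_L^{I_F}$ is the Lang map, whose categorical quotient is the coinvariant torus $(\hat T_L^{I_F})_\Fr$, and the residual action is by $(N_{\HG_L^{I_F}}(\hat T_L^{I_F})/\hat T_L^{I_F})^\Fr=(\Omega^{I_F})^\Fr=\Omega^{W_F}$, yielding the claimed isomorphism. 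The main difficulty is part (1): establishing connectedness and the root-theoretic description of the fixed points in characteristic $\ell$ under a finite $\ell$-group action requires a careful invocation of Steinberg-type theorems, and most likely a reduction to the pinning-preserving case in the spirit of Theorem~\ref{thm:pinning_preserving_ell}.
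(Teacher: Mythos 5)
Parts (1) and (3) are in line with the paper: for (1) the paper invokes the fact that a generator of the finite $\ell$-power quotient through which $I_F$ acts is a quasi-semisimple automorphism and cites \cite[Cor.\ 1.33]{DM94}, which is essentially the Steinberg-type result you have in mind; for (3) the paper simply cites \cite[Prop.\ 7.1]{DM15}, and your twisted-conjugation/Lang-map sketch is a reasonable unpacking of that reference.

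Your argument for (2), however, has a genuine gap. You write that $\varphi(I_F)\subset\HG(L)$ is a finite $\ell$-subgroup consisting of unipotent elements, then invoke normality in $\overline{\varphiL(W_F^0)}$ and \cite[Thm~3.10]{bmr05} to conclude it is completely reducible, hence trivial, hence $\varphi|_{I_F}=1$. There are two problems. First, $\varphi|_{I_F}$ is a \emph{cocycle} for a nontrivial action of $I_F$ on $\HG$ (the action is only assumed trivial on $I_F^\ell$, not on $I_F$), so $\varphi(I_F)$ is merely a subset of $\HG(L)$, not a subgroup, and its elements have no reason to be unipotent; the object that is a group is $\varphiL(I_F)\subset\LG(L)$, which has nontrivial image in the structure group $W$ and is therefore certainly not trivial. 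Second, even the conclusion is too strong: the correct statement is that $\varphi|_{I_F}$ is $\HG(L)$-\emph{conjugate} to the trivial cocycle, not equal to it (already for $\HG=\GG_m$ with $\ZZ/2$ acting by inversion in characteristic~$2$, every $1$-cocycle is cohomologically trivial but nonzero ones are not literally trivial). The paper's route is to use Steinberg's result \cite[7.2]{steinberg_endo} to conjugate so that $\varphiL(s)$ normalizes $\hat B$, deduce from complete reducibility that $\varphiL|_{I_F}$ factors through an $R$-Levi of $\hat B\rtimes C$, hence (after further conjugation) through $\hat T\rtimes C$, and then use $H^1(C,\hat T(L))=\{1\}$ --- valid because $\hat T(L)$ is uniquely $\ell$-divisible and $C$ is a finite $\ell$-group --- to conclude conjugacy to the trivial cocycle. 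Your ``unipotent, hence trivial'' shortcut elides exactly the twisted-cocycle structure and the torus-valued cohomology vanishing that make the argument work, so this step needs to be redone along the paper's lines. The remaining steps of (2) (closedness of the $I_F$-semisimple locus, identification with $\HG_L\times^{\HG_L^{I_F}}\uZ^1(W_F/I_F,\HG_L^{I_F})$, and the upgrade to a finite universal homeomorphism on GIT quotients as in Proposition~\ref{prop:GITiso}) then proceed as you indicate.
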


\begin{proof}
  (1) The group $I_{F}$ acts on $\HG$ through a cyclic $\ell$-power quotient, any generator
  of which is a quasi-semisimple automorphism of $\HG$ (in the sense of
  Steinberg). Therefore, the first assertion follows from Thm 1.8 i) (reductivity) and
  Cor. 1.33 (connectedness) of \cite{DM94}.

  (2) We first make the following observation.
 If $\phi:\, I_{F}\To{} \HG(L)$ is a semisimple cocycle trivial on $I_{F}^{\ell}$, then it is $\HG(L)$-conjugate to the
trivial cocycle (note that the latter is indeed semisimple by
the characterization given in \cite[Cor 3.5 (v)]{bmr05}).
To prove this, let us use the ``minimal'' $L$-group $\HG\rtimes \Gamma$, where $\Gamma$ is
the image of $W_{F}$ in ${\rm Aut}(\HG)$. Then  the image $C$ of $I_{F}$ in $\Gamma$ is a
cyclic $\ell$-group. Let $\bar s$ be the image of the pro-generator $s$ of $I_{F}/P_{F}$  in $C$.
By \cite[7.2]{steinberg_endo}, the element $\phiL(s):=(\sigma,\bar s)$ normalizes a Borel subroup of $\HG$. After conjugating by some element of $\HG(L)$ we
may assume that it normalizes $\hat B$, thus $\phiL$ factors trough  the minimal R-parabolic subgroup
$\hat B\rtimes C$ of $\hat G\rtimes C$. Since $\phi$ is assumed to be semisimple, $\phiL$
should factor through some  R-Levi subgroup of $\hat
B\rtimes C$. But these Levi subgroups are $\hat B$-conjugated  to $\hat T\rtimes
C$. Therefore we may conjugate again $\phiL$ so that it factors through $\hat T\rtimes
C$, which means that $\phi\in Z^{1}(C,\hat T(L))$. But since $\hat T(L)$ is a $\ell$-torsion free divisible group, we
have $H^{1}(C,\hat T(L))=\{1\}$, which means that $\phi$ is conjugate to the trivial cocycle.

We deduce that the subset
$Z^{1}(I_{F}/I_{F}^{e}I_{F}^{\ell},\HG(L))^{\rm ss}$ of
$Z^{1}(I_{F}/I_{F}^{e}I_{F}^{\ell},\HG(L))$ that  consists of
semisimple cocycles is closed, since it is a single orbit and this orbit is closed by definition of
semisimple. Moreover this closed subset identifies with $\HG(L)/\hat G(L)^{I_{F}}$.
By pull-back, we deduce that the subset $Z^{1}(W_{F}/I_{F}^{e}I_{F}^{\ell},\HG(L))^{I_{F}-\rm ss}$ of
  $Z^{1}(W_{F}/I_{F}^{e}I_{F}^{\ell},\HG(L))$ that consists of all cocycles $\varphi: W_{F}\To{}\HG(L)$ such that
  $\varphi_{|I_{F}}$ is semisimple, is closed and identifies with
  $\HG(L)\times^{\HG(L)^{I_{F}}} Z^{1}(W_{F}/I_{F},\HG(L)^{I_{F}})$.

 Now by \cite[Thm 3.10]{bmr05} we know that any semisimple $\varphi:W_{F}\To{}\HG(L)$ has
semisimple restriction to $I_{F}$, so that the above closed subset contains
all closed orbits of $Z^{1}(W_{F}/I_{F}^{e}I_{F}^{\ell},\HG(L))$.
 So denote  by $\uZ^{1}(W_{F}/I_{F}^{e}I_{F}^{\ell},\HG_{L})^{I_{F}-\rm ss}$ the (reduced) closed subscheme
of $Z^{1}(W_{F}/I_{F}^{e}I_{F}^{\ell},\HG_{L})$ associated to this closed subset. Then the same argument as
 in Proposition \ref{prop:GITiso} shows that the canonical morphism 
$$\uZ^{1}(W_{F}/I_{F}^{e}I_{F}^{\ell},\HG_{L})^{I_{F}-\rm ss}\sslash\HG_{L}
\To{} \uZ^{1}(W_{F}/I_{F}^{e}I_{F}^{\ell},\HG_{L})\sslash\HG_{L}$$
is a finite universal homeomorphism. Using that
$$\uZ^{1}(W_{F}/I_{F}^{e}I_{F}^{\ell},\HG)_{L}^{I_{F}-\rm ss}=
\HG_{L}\times^{\HG_{L}^{I_{F}}} \uZ^{1}(W_{F}/I_{F},\HG_{L}^{I_{F}})$$
we infer statement (2).

(3) This is \cite[Prop. 7.1]{DM15} applied with $\mathbf{G}^{1}=\HG^{I_{F}}\rtimes\Fr$
and $\mathbf{T}^{1}=\HT^{I_{F}}\rtimes\Fr$, and $\sigma=t\rtimes\Fr$ for any element
$t\in\HT^{I_{F}}$ such that $t\rtimes\Fr$ is quasi-central (note that
$(\HT^{I_{F}})_{\sigma}=(\HT^{I_{F}})_{\Fr}$ and $(\Omega^{I_{F}})^{\sigma}=(\Omega^{I_{F}})^{\Fr}$).
\end{proof}

We now use the results and notation of Subsection \ref{sec:decomp-at-ell} to spread out
this result.

\begin{cor}
Let $\phi^{\ell}\in \Phi_{e}^{\ell}$ and $\alpha^{\ell}\in\Sigma(\phi^{\ell})_{0}$ be
admissible, and fix $\varphi:=\varphi_{\alpha^{\ell}}\in Z^{1}(W_{F}^{0}/P_{F}^{e},\hat
G(\Lambda'_{e}))_{\phi^{\ell},\alpha^{\ell}}$ with finite image and such that
${\rm Ad}_{\varphi_{\alpha^{\ell}}}$ normalizes a Borel pair $( B_{\phi^{\ell}},
T_{\phi^{ \ell}})$ of $C_{\HG}(\phi^{\ell})^{\circ}$. We denote by $\Omega^{\circ}_{\phi^{\ell}}$
the Weyl group of $ T_{\phi^{ \ell}}$ in $C_{\HG}(\phi^{\ell})^{\circ}$ and by
$\Omega_{\phi^{\ell}}=\Omega_{\phi^{\ell}}^{\circ}\rtimes \pi_{0}(\phi^{\ell})$
its ``Weyl group'' in $C_{\HG}(\phi^{\ell})$.
 \begin{enumerate}
\item Let $C_{\HG_{L}}(\varphi_{|I_{F}})= (\HG_{L})^{\varphi(I_{F})}$ be the reduced  centralizer of $^{L}\varphi(I_{F})$ in
  $\HG_{L}$.
  \begin{enumerate}
  \item $C_{\HG_{L}}(\varphi_{|I_{F}})^{\circ}$  is reductive with  maximal torus
    $( T_{\phi^{\ell},L})^{\varphi(I_{F})}$ and Weyl group
    $(\Omega_{\phi^{\ell}}^{\circ})^{\varphi(I_{F})}$. 
  \item $\pi_{0}(C_{\HG_{L}}(\varphi_{|I_{F}}))=\pi_{0}(\phi^{\ell})^{\varphi(I_{F})}$ and
    the ``Weyl group'' of $( T_{\phi^{\ell},L})^{\varphi(I_{F})}$     in  $C_{\HG_{L}}(\varphi_{|I_{F}})$
 is    $(\Omega_{\phi^{\ell}})^{\varphi(I_{F})}\simeq
    (\Omega_{\phi^{\ell}}^{\circ})^{\varphi(I_{F})}\rtimes \pi_{0}(\phi^{\ell})^{\varphi(I_{F})}$.

  \end{enumerate}

\item The natural closed immersion
  $$\uZ^{1}_{\rm Ad_{\varphi}}(W_{F}/I_{F}, C_{\HG_{L}}(\varphi(I_{F}))^{\circ})\hookrightarrow
  \uZ^{1}_{\rm
  Ad_{\varphi}}(W_{F}/I_{F}^{e}I_{F}^{\ell},C_{\HG_{L}}(\phi^{\ell})^{\circ})$$
  induces an homeomorphism
  $$\uH^{1}_{\rm Ad_{\varphi}}(W_{F}/I_{F}, C_{\HG_{L}}(\varphi(I_{F}))^{\circ})
  \To{}
  \uH^{1}_{\rm Ad_{\varphi}}(W_{F}/I_{F}^{e}I_{F}^{\ell},C_{\HG_{L}}(\phi^{\ell})^{\circ})$$
  which is equivariant for the natural actions of $\pi_{0}(\phi^{\ell})_{\alpha^{\ell}}=\pi_{0}(\phi^{\ell})^{\varphi(W_{F})}$.
\item The map $t\mapsto (\varphi_{t} :\, \Fr\mapsto t\rtimes \Fr)$ induces an isomorphism
  $$ ( T_{\phi^{\ell},L}^{\varphi(I_{F})})_{\varphi(\Fr)}\sslash
  (\Omega_{\phi^{\ell}}^{\circ})^{W_{F}}
  \To\sim \uH^{1}_{\rm Ad_{\varphi}}(W_{F}/I_{F}, C_{\HG_{L}}(\varphi_{|I_{F}})^{\circ})$$
  and subsequently an isomorphism
   $$ ( T_{\phi^{\ell},L}^{\varphi(I_{F})})_{\varphi(\Fr)}\sslash
  (\Omega_{\phi^{\ell}})^{W_{F}}
  \To\sim \left(\uH^{1}_{\rm Ad_{\varphi}}(W_{F}/I_{F}, C_{\HG_{L}}(\varphi_{|I_{F}})^{\circ})\right)_{\sslash
  \pi_{0}(\phi^{\ell})_{\alpha^{\ell}}}.$$
   \end{enumerate}
\end{cor}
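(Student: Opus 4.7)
The plan is to deduce the entire corollary from Proposition \ref{prop:connectltame} applied to the reductive group $C_{\HG}(\phi^{\ell})^{\circ}$ endowed with the $W_{F}$-action $\Ad_{\varphi_{\alpha^{\ell}}}$, in place of $\HG$ with its given $W_F$-action. The first step will be to check that the hypotheses of that proposition are satisfied in this new context. Since $\varphi_{\alpha^{\ell}}$ has finite image and its restriction to $I_{F}^{\ell}$ coincides (up to $\HG$-conjugation) with the cocycle $\phi^{\ell}$, which centralizes $C_{\HG}(\phi^{\ell})^{\circ}$ by the very definition of that group, the action $\Ad_{\varphi_{\alpha^{\ell}}}$ on $C_{\HG}(\phi^{\ell})^{\circ}$ factors through $W_{F}/I_{F}^{\ell}$; and by the choice of $\varphi_{\alpha^{\ell}}$ it preserves the Borel pair $(\hat B_{\phi^{\ell}}, \hat T_{\phi^{\ell}})$. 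This puts us exactly in the setting of Proposition \ref{prop:connectltame}.

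With this reduction in hand, statement (1)(a) of the corollary will be exactly part (1) of Proposition \ref{prop:connectltame}, once we identify $(\HG_{L})^{\varphi(I_{F})}$ with the $I_{F}$-fixed subscheme of $C_{\HG}(\phi^{\ell})$ (again using that $\varphi(I_{F}^{\ell})$ equals $\phi^{\ell}$ up to conjugation). For (1)(b), I will use the semidirect product decomposition $\Omega_{\phi^{\ell}} = \Omega_{\phi^{\ell}}^{\circ} \rtimes \pi_{0}(\phi^{\ell})$ and the analogous decomposition on the level of centralizers to reduce the Weyl group description to identifying $\pi_{0}(C_{\HG_{L}}(\varphi_{|I_{F}}))$ with $\pi_{0}(\phi^{\ell})^{\varphi(I_{F})}$. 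This last point is the main obstacle: it amounts to showing that every $I_{F}$-invariant component of $C_{\HG}(\phi^{\ell})$ admits an $I_{F}$-fixed representative. I expect this to follow from a Steinberg-type argument, since the cyclic $\ell$-group image of $I_{F}$ acts quasi-semisimply on the reductive group $C_{\HG}(\phi^{\ell})^{\circ}$ (its fixed points being the connected reductive group provided by part (1)(a)).

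Part (2) will then be a direct application of part (2) of Proposition \ref{prop:connectltame} to $C_{\HG}(\phi^{\ell})^{\circ}$. The additional equivariance statement for $\pi_{0}(\phi^{\ell})_{\alpha^{\ell}}$ should follow by functoriality: lifts of this group to $C_{\HG}(\phi^{\ell})$ act by conjugation on $C_{\HG}(\phi^{\ell})^{\circ}$ and can be chosen to normalize the Borel pair by construction of $\varphi_{\alpha^{\ell}}$, so these actions commute with all the morphisms produced by the proof of the proposition, including the closed immersion in question.

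Finally, in Part (3), the first isomorphism will come directly from part (3) of Proposition \ref{prop:connectltame} applied to $C_{\HG}(\phi^{\ell})^{\circ}$. For the second isomorphism, I will take the further categorical quotient by $\pi_{0}(\phi^{\ell})_{\alpha^{\ell}} = \pi_{0}(\phi^{\ell})^{\varphi(W_{F})}$, noting that the combined action on $(\hat T_{\phi^{\ell},L}^{\varphi(I_{F})})_{\varphi(\Fr)}$ factors through $(\Omega_{\phi^{\ell}}^{\circ})^{W_{F}} \rtimes \pi_{0}(\phi^{\ell})^{W_{F}} = (\Omega_{\phi^{\ell}})^{W_{F}}$, where the last equality uses the semidirect decomposition recalled above.
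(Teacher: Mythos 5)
Your overall plan—apply Proposition \ref{prop:connectltame} to the reductive group $C_{\HG}(\phi^{\ell})^{\circ}$ equipped with the $W_{F}$-action $\mathrm{Ad}_{\varphi_{\alpha^{\ell}}}$, after checking that this action factors through $W_{F}/I_{F}^{\ell}$ and preserves the Borel pair $(\hat B_{\phi^{\ell}},\hat T_{\phi^{\ell}})$—is exactly the paper's strategy. Parts (1)(a), (2), and (3) then do follow along the lines you describe: (1)(a) and the first half of (3) are verbatim applications of parts (1) and (3) of the proposition, the equivariance in (2) is indeed routine, and the second isomorphism in (3) comes from the semidirect decomposition of $\Omega_{\phi^{\ell}}$ together with compatibility of categorical quotients with the further finite-group quotient by $\pi_{0}(\phi^{\ell})_{\alpha^{\ell}}$.

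The genuine gap is in (1)(b). You correctly reduce the claim to showing $\pi_{0}(C_{\HG_{L}}(\varphi_{|I_{F}}))=\pi_{0}(\phi^{\ell})^{\varphi(I_{F})}$, but then say you ``expect this to follow from a Steinberg-type argument,'' citing quasi-semisimplicity of the cyclic $\ell$-group action and the connected reductivity of fixed points from (1)(a). This does not suffice: a quasi-semisimplicity argument controls the identity component $(C_{\HG}(\phi^{\ell})^{\circ})^{I_{F}}$, whereas what is needed is surjectivity of the map from $\pi_{0}$ of the $\varphi(I_{F})$-fixed points onto the $\varphi(I_{F})$-fixed classes in $\pi_{0}(\phi^{\ell})$, i.e.\ that every $\varphi(I_{F})$-stable component of $C_{\HG}(\phi^{\ell})$ contains a $\varphi(I_{F})$-fixed point. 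The paper proves this by noting that each such component meets $N_{C_{\HG}(\phi^{\ell})}(\hat T_{\phi^{\ell}},\hat B_{\phi^{\ell}})$ (this is where the split exactness $\Omega_{\phi^{\ell}}=\Omega_{\phi^{\ell}}^{\circ}\rtimes\pi_{0}(\phi^{\ell})$ and its $\mathrm{Ad}_{\varphi}$-stability enter), and then establishing surjectivity of $N_{C_{\HG}(\phi^{\ell})}(\hat T_{\phi^{\ell}},\hat B_{\phi^{\ell}})^{\varphi(I_{F})}\to\pi_{0}(\phi^{\ell})^{\varphi(I_{F})}$ via the vanishing $H^{1}(I_{F},\hat T_{\phi^{\ell}}(L))=1$, which holds because $I_{F}$ acts through a cyclic $\ell$-group on the uniquely $\ell$-divisible abelian group $\hat T_{\phi^{\ell}}(L)$. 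This specific cohomological vanishing is the ingredient your proposal is missing; without it, or something equivalent, the claim about $\pi_{0}$ in (1)(b) is not established.
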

\begin{proof}
  (1)(a) we have
  $C_{\HG}(\varphi_{|I_{F}})^{\circ}=((C_{\HG}(\phi^{\ell})^{^{L}\varphi(I_{F})})^{\circ}
  =((C_{\HG}(\phi^{\ell})^{\circ})^{^{L}\varphi(I_{F})})^{\circ}$
  and (1) of the previous proposition applied to
  $C_{\HG}(\phi^{\ell})^{\circ}$ implies
  $((C_{\HG}(\phi^{\ell})^{\circ})^{^{L}\varphi(I_{F})})^{\circ}=(C_{\HG}(\phi^{\ell})^{\circ})^{^{L}\varphi(I_{F})}$.  For (1)(b), observe first
  that 
  the fact that $\Omega_{\phi^{\ell}}$ is a split extension
  $\Omega_{\phi^{\ell}}^{\circ}\rtimes\pi_{0}(\phi^{\ell})$ of
$\pi_{0}(\phi^{\ell})$ by $\Omega_{\phi^{\ell}}^{\circ}$ comes from the fact it contains
the subgroup
 $N_{C_{\HG}(\phi^{\ell})}( T_{\phi^{\ell}}, B_{\phi^{\ell}})/ T_{\phi^{\ell}}\simeq\pi_{0}(\phi^{\ell})$.
Since the action of $W_{F}$ through $\rm Ad_{\varphi}$ on $C_{\HG}(\phi^{\ell})$ stabilizes 
$ T_{\phi^{\ell}}$ and $ B_{\phi^{\ell}}$, the induced action on $\Omega_{\phi^{\ell}}$  preserves the
semi-direct product decomposition, hence in particular the $\varphi(I_{F})$-invariants are
given by $(\Omega_{\phi})^{\varphi(I_{F})}= (\Omega_{\phi}^{\circ})^{\varphi(I_{F})}\rtimes
    \pi_{0}(\phi^{\ell})^{\varphi(I_{F})}$. 
Moreover we have $H^{1}(I_{F}, T_{\phi^{\ell}}(L))=1$ since $I_{F}$ acts on 
the  uniquely $\ell$-divisible abelian group $ T_{\phi^{\ell}}(L)$ through a cyclic $\ell$-group, therefore the map
$N_{C_{\HG}(\phi^{\ell})}( T_{\phi^{\ell}}, B_{\phi^{\ell}})^{\varphi(I_{F})}\To{}
\pi_{0}(\phi^{\ell})^{\varphi(I_{F})}$ is surjective, which shows that
$\pi_{0}(C_{\HG_{L}}(\varphi(I_{F})))=\pi_{0}(\phi^{\ell})^{\varphi(I_{F})}$. 

(2) follows from (2) of the last proposition except for the equivariance under the group
$\pi_{0}(\phi^{\ell})_{\alpha^{\ell}}=\pi_{0}(\phi^{\ell})^{\varphi(W_{F})}$ which is straightforward.

The first statement of (3) follows directly from (3) of the last proposition, and we infer the
second statement from the equality $(\Omega_{\phi})^{\varphi(W_{F})}= (\Omega_{\phi}^{\circ})^{\varphi(W_{F})}\rtimes
    \pi_{0}(\phi^{\ell})^{\varphi(W_{F})}$, which we already explained above. 
\end{proof}

Applying this corollary to $L=\oFl$ we see that the special fiber of the $\Lambda_{e}$-scheme
$\uZ^{1}(W_{F}^{0}/P_{F}^{e},\HG)_{\Lambda_{e},\phi^{\ell},\alpha^{\ell}}$ of subsection
\ref{sec:decomp-at-ell} is
geometrically connected. This  finishes the proof of Theorem \ref{thm:connectedZl}.

\begin{cor}
  There are natural bijections between the following sets :
  \begin{enumerate}
  \item The set of connected components of
    $\uZ^{1}(W_{F}^{0}/P_{F}^{e},\HG)_{\Lambda_{e}}$
  \item The set of connected components of
    $\uZ^{1}(W_{F}^{0}/P_{F}^{e},\HG)_{\oFl}=\uZ^{1}(W_{F}/P_{F}^{e},\HG)_{\oFl}$
  \item The set of pairs $(\phi^{\ell},[\alpha^{\ell}])$ with
    $\phi^{\ell}\in\Phi^{\ell,\rm adm}_{e}$ and $[\alpha^{\ell}]$ a
    $\pi_{0}(\phi^{\ell})$-conjugacy class in $\Sigma(\phi^{\ell})^{\rm adm}$. 
  \item The set of $\HG(\oFl)$-conjugacy classes of admissible pairs $(\phi^{\ell},\alpha^{\ell})$ where
    $\phi^{\ell}\in Z^{1}(I_{F}^{\ell}/I_{F}^{\ell,e},\HG(\oFl))$ and  $\alpha^{\ell}\in
    \Sigma(\phi^{\ell})$.
  \item The set of $\HG(\oFl)$-conjugacy classes of admissible pairs $(\phi,\alpha)$ where
    $\phi\in Z^{1}(I_{F}/I_{F}^{e},\HG(\oFl))^{\rm ss}$ is $\LG$-semisimple and  $\alpha\in
    \Sigma(\phi)$.
  \item  The set of $\HG(\oFl)$-conjugacy classes of pairs $(\phi,\beta)$ where
    $\phi\in Z^{1}(I_{F}/I_{F}^{e},\HG(\oFl))^{\rm ss}$ is $\LG$-semisimple and
    $\beta\in \tilde\pi_{0}(\phi)$ is the image of some element in
    $C_{\LG}(\phi)\cap (\HG(\oFl)\rtimes \Fr)=\{\tilde\beta\in \HG(\oFl)\rtimes \Fr,
    \tilde\beta{^{L}\phi}(i)\tilde\beta^{-1}={^{L}\phi}(\Fr.i.\Fr^{-1})\}$.
  \item The set of equivalence classes in $Z^{1}(W_{F}/P_{F}^{e},\HG(\oFl))$ for the
    relation defined by $\varphi \sim\varphi'$ if and only if there is $\hat g\in\hat
    G(\oFl)$ such that  $\varphi_{|I_{F}^{\ell}}={^{\hat g}\varphi'_{|I_{F}^{\ell}}}$ and
    $\pi\circ\varphi= \pi\circ {^{\hat g}\varphi'}$ with $\pi$ the map 
$C_{\LG}(\varphi_{|I_{F}^{\ell}}) \twoheadrightarrow \pi_{0}(C_{\LG}(\varphi_{|I_{F}^{\ell}})).$
  \end{enumerate}
Moreover, one can replace $\oFl$ by any algebraically closed field $L$
of
characteristic $\ell$.
\end{cor}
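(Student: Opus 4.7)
The plan is to establish the bijections by going around the cycle, treating $(1)\leftrightarrow(3)\leftrightarrow(2)$ first (geometric content), then $(3)\to(4)\to(5)$ (the main obstacle), then $(5)\to(6)\to(7)$ (definitional). The ``replace $\oFl$ by any $L$'' clause follows for free because each connected summand will turn out to be geometrically connected already over $\oFl$.

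For $(1)\leftrightarrow(3)$ I would combine the two-step decomposition from Subsection~\ref{sec:decomp-at-ell},
$$\uZ^{1}(W_{F}^{0}/P_{F}^{e},\HG)_{\Lambda_{e}} = \coprod_{\phi^{\ell}\in \Phi_{e}^{\ell,\rm adm}} \HG \times^{C_{\HG}(\phi^{\ell})} \Bigl(\coprod_{\alpha^{\ell}\in \Sigma(\phi^{\ell})^{\rm adm}} \uZ^{1}(W_{F}^{0}/P_{F}^{e},\HG)_{\Lambda_{e},\phi^{\ell},\alpha^{\ell}}\Bigr),$$
with the observation that $C_{\HG}(\phi^{\ell})$ permutes the $\alpha^{\ell}$-summands through its component group $\pi_{0}(\phi^{\ell})$. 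The twisted quotient therefore yields one summand per pair $(\phi^{\ell},[\alpha^{\ell}])$, each connected by Theorem~\ref{thm:connectedZl}. For $(1)\leftrightarrow(2)$, the same theorem asserts geometric connectedness of the special fibre of each summand, and the $\ell$-adic separatedness of $R^{e}_{\LG}$ (Theorem~\ref{thm:geometry}~(ii)) ensures that $\pi_{0}$ is unchanged under reduction modulo $\ell$ and under further extension of the residue field.

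The pivotal step is $(3)\leftrightarrow(4)\leftrightarrow(5)$. The first half is a relabelling: $\Phi_e^{\ell,\rm adm}$ is by construction a set of representatives of $\HG$-orbits, and the $\pi_0(\phi^\ell)$-equivalence on $\Sigma(\phi^\ell)$ is exactly $\HG$-conjugacy at the level of pairs. The second half, $(4)\leftrightarrow(5)$, is the crux. Given an $\LG$-semisimple $\phi\in Z^{1}(I_{F}/I_{F}^{e},\HG(\oFl))$, Corollary~\ref{cor:existIe} shows it factors through a finite quotient; its restriction $\phi^\ell:=\phi_{|I_F^\ell}$ gives the $(4)$-datum. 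The claim to verify is that semisimple extensions of $\phi^\ell$ to a cocycle on all of $I_F$, up to $C_{\HG}(\phi^\ell)(\oFl)$-conjugation, are classified exactly by $\Sigma(\phi^\ell)^{\rm adm}$. The key input is Proposition~\ref{prop:connectltame}: such an extension is a cocycle of the pro-$\ell$ quotient $I_F/I_F^\ell$ into $C_{\HG}(\phi^\ell)^\circ(\oFl)$, and semisimplicity combined with the vanishing of $H^{1}$ of a cyclic $\ell$-group on an $\ell$-divisible torus forces it, after conjugation, into $\hat T_{\phi^\ell}$, where it reduces to the discrete datum parametrised by $\Sigma(\phi^\ell)$.

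The remaining bijections $(5)\leftrightarrow(6)\leftrightarrow(7)$ are essentially unpackings of the definitions of Subsection~\ref{sec:some-defin-constr}: $\Sigma(\phi)$ was set up so as to be in natural bijection with the image of $C_{\LG}(\phi)\cap(\HG\rtimes\Fr)$ inside $\tilde\pi_{0}(\phi)$, giving $(5)\leftrightarrow(6)$; and for $(6)\leftrightarrow(7)$ one sends an equivalence class represented by $\varphi\in Z^{1}(W_F/P_F^e,\HG(\oFl))$ to the pair $(\phi,\beta)$ where $\phi$ is a semisimplification of $\varphi_{|I_F}$ and $\beta$ is the $\pi_0$-class of $\varphiL(\Fr)$ in $C_{\LG}(\phi^\ell)$, the relation in~(7) being engineered precisely to identify cocycles with the same pair $(\phi^\ell,\beta)$. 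The main obstacle, as indicated, is $(4)\leftrightarrow(5)$: it demands reconciling the $\ell$-adic packaging (which splits $I_F$ into its prime-to-$\ell$ and pro-$\ell$ parts and treats them separately) with the intrinsic $\LG$-semisimple description on the special fibre; the bridge is the torus fixed-point analysis of Proposition~\ref{prop:connectltame}.
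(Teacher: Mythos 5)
Your overall plan matches the paper's: the $(1)$--$(2)$--$(3)$ block follows from Theorem~\ref{thm:connectedZl}, $(3)$--$(4)$ is definitional, and the key input for the $\ell$-adic vs.\ intrinsic comparison is the vanishing of $H^{1}$ of a cyclic $\ell$-group acting on a uniquely $\ell$-divisible torus. You have correctly isolated where the real content lies.

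However, your formulation of the $(4)\leftrightarrow(5)$ step is not quite right, and since you flagged this as the crux, the imprecision matters. You state the claim to verify as ``semisimple extensions of $\phi^{\ell}$ to a cocycle on all of $I_{F}$, up to $C_{\HG}(\phi^{\ell})(\oFl)$-conjugation, are classified exactly by $\Sigma(\phi^{\ell})^{\rm adm}$.'' This is not the statement that is needed (and is arguably false): $\Sigma(\phi^{\ell})$ records data involving the Frobenius, not just the extension over $I_{F}$, so it cannot by itself classify extensions of $\phi^{\ell}$ to $I_{F}$. What the paper actually does is \emph{not} a classification of extensions by $\Sigma(\phi^{\ell})$, but a well-definedness argument: given $(\phi^{\ell},\alpha^{\ell})$, choose an extension $\varphi$ of $\phi^{\ell}$ to all of $W_{F}$ preserving a Borel pair of $C_{\HG}(\phi^{\ell})^{\circ}$, and set $\phi:=\varphi_{|I_{F}}$, $\alpha:=\pi\circ\varphi$; the content is to show that a different choice $\varphi'$ leads to a conjugate pair. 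This is where the torus enters: two Borel-preserving extensions differ by $\eta\in Z^{1}_{\Ad_{\varphi}}(W_{F}/I_{F}^{\ell},\hat T_{\phi^{\ell}})$, and the vanishing $H^{1}_{\Ad_{\varphi}}(I_{F}/I_{F}^{\ell},\hat T_{\phi^{\ell}})=0$ lets one conjugate $\eta$ into $Z^{1}_{\Ad_{\varphi}}(W_{F}/I_{F},\hat T_{\phi^{\ell}}^{I_{F}})$, after which $(\eta\cdot\varphi)_{|I_{F}}=\varphi_{|I_{F}}$ and, by connectedness of $\hat T_{\phi^{\ell}}^{I_{F}}$, $\pi\circ(\eta\cdot\varphi)=\pi\circ\varphi$. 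Your invocation of Corollary~\ref{cor:existIe} at this point is also misplaced: an element of $Z^{1}(I_{F}/I_{F}^{e},\HG(\oFl))$ already factors through a finite quotient by definition, so that corollary gives nothing new there.

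A second, smaller point: the paper establishes bijectivity by running a cycle $(4)\to(5)\to(6)\to(4)$ and checking that the composite is the identity, while also noting that $(4)\leftrightarrow(7)$ is definitional. Your linear chain $(3)\to(4)\to(5)$ then $(5)\to(6)\to(7)$ would still need an argument that each arrow is a bijection (or some way to close the loop); the cyclic strategy sidesteps this with a single composite check. This is a matter of exposition rather than a missing idea, but it should be made explicit.
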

\begin{proof}
The bijections between (1), (2) and (3) follow from Theorem \ref{thm:connectedZl} which we
have just proved. The bijection between (3) and (4) follows from the definitions, and so
does the bijection between (4) and (7).
We now describe bijections between (4), (5) and (6) in a circular way.

(4)$\rightarrow$(5). Start with an admissible pair, $(\phi^{\ell},\alpha^{\ell})$. Choose
an extension  $\varphi$ of $\phi^{\ell}$ that preserves some chosen Borel pair of $C_{\HG}(\phi^{\ell})^{\circ}$. Then
$\phi:=\varphi_{|I_{F}}$ is certainly $\LG$-semisimple and $\alpha :=\pi\circ \varphi$ is
an element of $\Sigma(\phi)$ (here $\pi$ is the projection
$C_{\LG}(\phi)\To{}\tilde\pi_{0}(\phi)$ as usual). We need to check that any other choice $\varphi'$
leads to a conjugate of $(\phi,\alpha)$. Since all Borel pairs are conjugate, we may
assume that $\varphi'$ and $\varphi$  fix the same Borel pair,
and denote it by $( B_{\phi^{\ell}}, T_{\phi^{\ell}})$ . Then 
$\varphi'=\eta\cdot\varphi$ for some  $\eta\in 
Z^{1}_{\rm Ad_{\varphi}}(W_{F}/I_{F}^{\ell}, T_{\phi^{\ell}})$.  Since $H^{1}_{\rm Ad_{\varphi}}(I_{F}/I_{F}^{\ell},
T_{\phi^{\ell}})=0$ (because $ T_{\phi^{\ell}}$ is uniquely $\ell$-divisible), we have
$H^{1}_{\rm Ad_{\varphi}}(W_{F}/I_{F}^{\ell}, T_{\phi^{\ell}})=H^{1}_{\rm Ad_{\varphi}}(W_{F}/I_{F},(
T_{\phi^{\ell}})^{I_{F}})$, which means that 
we can ``$\Ad_{\varphi}$-conjugate''
$\eta$ by an element $t\in  T_{\phi^{\ell}}$ so that it
factors through a cocycle in
$Z^{1}_{\rm Ad_{\varphi}}(W_{F}/I_{F}, T_{\phi^{\ell}}^{I_{F}})$. So, after
conjugating $\varphi'$ by $t$, it has the form $\eta\cdot\varphi$ with
$\eta\in Z^{1}_{\rm Ad_{\varphi}}(W_{F}/I_{F}, T_{\phi^{\ell}}^{I_{F}})$.
We now certainly have $(\eta\cdot\varphi)_{|I_{F}}=\varphi_{|I_{F}}$ and, since
$ T_{\phi^{\ell}}^{I_{F}}$ is connected, we also have $\pi\circ (\eta\cdot\varphi)=\pi\circ\varphi$.

(5)$\rightarrow$(6). To a pair $(\phi,\alpha)$ we associate $(\phi,\beta)$ with $\beta:=\alpha(\Fr)$.

(6)$\rightarrow$(4).
Start with a pair $(\phi,\beta)$ and put $\phi^{\ell}:=\phi_{|I_{F}^{\ell}}$. Choose a
lift $\tilde\beta$ of $\beta$ in $C_{\LG}(\phi)\cap (\HG(\oFl)\rtimes \Fr)$. Then there is a
unique extension $\varphi$ of $\phi$ such that $\varphi(\Fr)= \tilde\beta$. This extension
certainly factors through $C_{\LG}(\phi^{\ell})$ and we put
$\alpha^{\ell}:=\pi^{\ell}\circ \varphi$ with $\pi^{\ell}:C_{\LG}(\phi^{\ell})\To{}\tilde\pi_{0}(\phi^{\ell})$.
Note that any other choice of lift of $\beta$ is of the form $c\tilde\beta$ with $c\in
C_{\HG}(\phi)^{\circ}$. Since $C_{\HG}(\phi)^{\circ}$ is contained in
$C_{\HG}(\phi^{\ell})^{\circ}$, such a choice defines the same $\alpha^{\ell}$.

The composition of these three applications, starting from any set  (4), (5) and (6) is
easily seen to be the identity.
\end{proof}

We finish this paragraph with another view on the  topological description of the affine
categorical quotient $\uZ^{1}(W_{F}^{0}/P_{F}^{e},\HG_{L})\sslash\HG_{L}$ that we have obtained above,
which makes it strikingly similar to what we will get over fields of characteristic $0$.


For a pair  $(\phi,\beta)$ as in (6) of the last corollary and a Borel pair $( B_{\phi},
T_{\phi})$ of the reductive algebraic group $C_{\HG_{L}}(\phi)^{\circ}$ we have an action
of $\beta$ on the torus
$ T_{\phi}$ and on its Weyl group $\Omega_{\phi}=\Omega^{\circ}_{\phi}\rtimes
\pi_{0}(\phi)$ in $C_{\HG}(\phi)$ (namely, the conjugation action of
 any lift $\tilde\beta$ of $\beta$ in $C_{\LG}(\phi)$ that preserves $( B_{\phi},
 T_{\phi})$).
Putting together the last corollary and the previous
proposition, we get :

\begin{cor}\label{cor:up_to_homeo}
  Let $\Psi_{e}(L)$ be a set of representatives of $\HG_{L}$-conjugacy classes of pairs
  $(\phi,\beta)$ as in (6) of the last corollary. For each such pair,
  choose a lift $\tilde\beta$ of $\beta$ in $C_{\LG}(\phi)$ that
  normalizes 
  a Borel pair $( B_{\phi}, T_{\phi})$ of $C_{\HG}(\phi)^{\circ}$, and
  denote by $\varphiL_{\tilde\beta}:\,W_{F}\To{}\LG(L)$ the corresponding
  extension of $\phiL$.
  Then the collection of morphisms 
  $\uZ^{1}_{\rm Ad_{\beta}}(W_{F}/I_{F}, T_{\phi})\To{}\uZ^{1}(W_{F}^{0}/P_{F}^{e},\HG_{L})$,
  $\eta\mapsto \eta\cdot \varphi_{\tilde\beta}$ induce an homeomorphism
  $$ \coprod_{(\phi,\beta)\in\Psi_{e}(L)} ( T_{\phi})_{\beta}\sslash
  (\Omega_{\phi})^{\beta} \To{\approx} \uZ^{1}(W_{F}^{0}/P_{F}^{e},\HG_{L})\sslash \HG_{L},$$
where $(T_{\phi})_{\beta}$ denotes the $\beta$-coinvariants of $T_{\phi}$ (i.e. the
cokernel of the morphism $T_{\phi}\To{}T_{\phi}, t\mapsto t^{-1}\beta(t)$).
\end{cor}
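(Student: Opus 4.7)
The plan is to assemble Corollary~\ref{cor:up_to_homeo} from three earlier results: Proposition~\ref{prop:GITiso}, the last corollary (giving the bijections between parametrizations (1)--(7)), and the unlabelled corollary immediately preceding it (describing fixed-point tori and Weyl groups). First, Proposition~\ref{prop:GITiso}(ii) yields a universal homeomorphism $\uZ^{1}(W_{F}/I_{F}^{e},\HG_{L})\sslash\HG_{L}\to \uZ^{1}(W_{F}^{0}/P_{F}^{e},\HG_{L})\sslash\HG_{L}$, so it suffices to describe the source topologically. Base-changing the decomposition of Subsection~\ref{sec:decomp-at-ell} to the algebraically closed field $L$ expresses the source as a coproduct
\[
\coprod_{(\phi^{\ell},\alpha^{\ell})}\bigl(\uH^{1}_{\Ad_{\varphi_{\alpha^{\ell}}}}(W_{F}/I_{F}^{e}I_{F}^{\ell},C_{\HG}(\phi^{\ell})^{\circ}_{L})\bigr)_{\sslash\pi_{0}(\phi^{\ell})_{\alpha^{\ell}}}
\]
indexed by admissible $\HG(L)$-conjugacy classes. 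Parts~(2) and~(3) of the corollary preceding the last then identify each such summand, via $\eta\mapsto \eta\cdot\varphi_{\alpha^{\ell}}$, with $(\hat T_{\phi^{\ell},L}^{\varphi(I_{F})})_{\varphi(\Fr)}\sslash(\Omega_{\phi^{\ell}})^{W_{F}}$.

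Next, I would reindex using the bijection $(4)\leftrightarrow(6)$ of the last corollary: a class of $(\phi^{\ell},\alpha^{\ell})$ corresponds to a class of $(\phi,\beta)=(\varphi_{|I_{F}},\alpha(\Fr))\in \Psi_{e}(L)$, and the prescribed lift $\tilde\beta$ of $\beta$ normalising a Borel pair $(\hat B_{\phi},\hat T_{\phi})$ of $C_{\HG}(\phi)^{\circ}_{L}$ produces the cocycle $\varphi_{\tilde\beta}$ appearing in the statement. By part~(1) of the corollary preceding the last, $\hat T_{\phi^{\ell},L}^{\varphi(I_{F})}$ is a maximal torus of $C_{\HG_{L}}(\phi)^{\circ}$, which one takes as $\hat T_{\phi}$, and $(\Omega_{\phi^{\ell}})^{\varphi(I_{F})}=\Omega_{\phi}$. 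Since $\varphi(I_{F})$ acts trivially on both, the residual $W_{F}/I_{F}$-action is induced by conjugation by $\tilde\beta$, so $(\Omega_{\phi^{\ell}})^{W_{F}}=(\Omega_{\phi})^{\beta}$ and $(\hat T_{\phi})_{\varphi(\Fr)}=(\hat T_{\phi})_{\beta}$.

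The main bookkeeping obstacle is to verify that, under these identifications, the resulting map on each summand coincides with $\eta\mapsto \eta\cdot\varphi_{\tilde\beta}$ on the nose, and that any two admissible choices of lift $\tilde\beta$ (or of Borel pair) produce the same homeomorphism after passing to the $\Omega_{\phi}$-quotient, both sides being manifestly independent of that choice. Once this compatibility is settled, Corollary~\ref{cor:up_to_homeo} follows by taking the disjoint union over $\Psi_{e}(L)$ of the summand-wise identifications and composing with the inverse of the universal homeomorphism of Proposition~\ref{prop:GITiso}(ii).
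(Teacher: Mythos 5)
Your proof is correct and follows the same route the paper intends: the paper's own justification is the single line ``putting together the last corollary and the previous proposition,'' and your reconstruction correctly identifies the three ingredients (the universal homeomorphism of Proposition~\ref{prop:GITiso}(ii) base-changed to $L$, the decomposition of $\uH^{1}(W_{F}/I_{F}^{e},\HG_{L})$ displayed at the start of the subsection, and parts (1)--(3) of the corollary following Proposition~\ref{prop:connectltame}), then reindexes via the $(4)\leftrightarrow(6)$ bijection. You are also right that the only remaining work is the routine check that the composite map on each summand is $\eta\mapsto\eta\cdot\varphi_{\tilde\beta}$ and is independent of the choice of lift $\tilde\beta$ normalizing a Borel pair (these lifts differ by an element of $\hat T_{\phi}$, so induce the same actions on $\hat T_{\phi}$ and $\Omega_{\phi}$), which is precisely what the paper leaves implicit.
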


In Theorem \ref{thm:structureGITfield} we will see that these homeomorphisms are actually isomorphisms
when $\ell$ is ``$\LG$-banal''.

\subsection{Connected components over $\ZM[\frac 1p]$.}
\label{sec:conn-comp-over}
\def\HB{{\hat B}}
\def\HU{{\hat U}}
\def\HT{{\hat T}}
\def\HH{{\hat H}}
\def\LH{{\tensor*[^L]H{}}}

In this subsection, we assume that the
action of $W_{F}$ on $\HG$ is trivial on $P_{F}$ and stabilizes a Borel pair 
$(\HB,\HT)$, and we study 
the connectedness of 
the depth $0$ scheme $\uZ^{1}(W_{F}^{0}/P_{F},\HG)$ considered in Section 2, and
of all its base changes to finite flat integral extensions of $\ZM[\frac 1p]$.
Our general strategy relies on what we already know about the connected components of the  base change to
$\overline\ZZ_{\ell}$ for all $\ell\neq p$.
The following result implements this strategy under some additional hypothesis, that are
fulfilled for example if the action of $W_{F}$ on $\HG$ is unramified.
After proving it, we will show that this additional hypothesis is more generally satisfied when the action of
$I_{F}$ stabilizes a pinning.

\begin{prop}\label{prop:connected}
  Assume that there is a prime $\ell_{0}\neq p$ such that, for each subgroup $I$ of finite
  index of $I_{F}$, the $\ZZ[\frac 1p]$-group scheme $\HG^{I}$ 
   has connected geometric fibers, and is smooth over $\ZZ[\frac  1{\ell_{0}p}]$. Then the $\ZZ[\frac 1p]$-scheme 
  $\uZ^{1}(W_{F}^{0}/P_{F},\HG)$ is connected, and so are all its base changes to finite
  flat integral extensions of $\ZZ[\frac 1p]$.
\end{prop}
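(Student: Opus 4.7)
The plan is to reduce the global connectedness statement to connectedness of the
geometric fiber at a single auxiliary prime $\ell\neq\ell_0,p$, exploiting Theorem
\ref{thm:connectedZl} together with the flatness of $R^e_\LG$ over $\ZM[\tfrac 1p]$
from Theorem \ref{thm:geometry}.

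The core step will be to show that the base change $\uZ^1(W_F^0/P_F,\HG)_{\bar\ZM_\ell}$
is connected for any prime $\ell\neq\ell_0,p$. By Theorem~\ref{thm:connectedZl}, this
base change decomposes into summands indexed by admissible pairs
$(\phi^\ell,\alpha^\ell)$, each having geometrically connected special fiber, so it
suffices to show that the decomposition has a single summand. The set
$\Phi_e^{\ell,\rm adm}$ of admissible orbits of $\phi^\ell$ is classified, up to
$\HG$-conjugation, by $H^1(I_F^\ell/I_F^{\ell,e},\HG(\bar\ZM_\ell))$, which vanishes by
Lang--Steinberg: the group $I_F^\ell/I_F^{\ell,e}$ is finite of order prime to $\ell$,
and by our hypothesis the fixed-point scheme $\HG^I$ is connected and smooth over
$\ZM_{(\ell)}[\tfrac 1p]$ for $I$ the kernel of the $I_F^\ell$-action on $\HG$. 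For the
resulting trivial $\phi^\ell$, $C_\HG(\phi^\ell)^\circ=\HG$, and the connectedness of
$\HG^I$ for all finite-index $I\subset I_F$ forces $\tilde\pi_0(\phi^\ell)$ to be
trivial, making $\Sigma(\phi^\ell)^{\rm adm}$ a singleton as well.

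With this in hand, connectedness of $\uZ^1(W_F^0/P_F,\HG)$ itself follows from the
flatness-based injection
$R^e_\LG\hookrightarrow R^e_\LG\otimes_{\ZM[\tfrac 1p]}\bar\ZM_\ell$, valid since
$R^e_\LG$ is $\ZM[\tfrac 1p]$-flat: this map factors through
$R^e_\LG\hookrightarrow R^e_\LG\otimes\QQ\hookrightarrow R^e_\LG\otimes\bar\QQ_\ell$ and
through $\bar\ZM_\ell\hookrightarrow\bar\QQ_\ell$, and any idempotent of $R^e_\LG$ maps
to $\{0,1\}$ in the connected ring $R^e_\LG\otimes\bar\ZM_\ell$, hence itself lies in
$\{0,1\}$. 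For the base change to a finite flat integral extension $A$ of
$\ZM[\tfrac 1p]$, the same principle reduces matters to connectedness of
$R^e_\LG\otimes\bar\QQ$, via the chain
$R^e_\LG\otimes A\hookrightarrow R^e_\LG\otimes\operatorname{Frac}(A)\hookrightarrow
R^e_\LG\otimes\bar\QQ$; this characteristic-zero connectedness is obtained by
running the argument of the second paragraph over $\bar\QQ$, where Lang--Steinberg
vanishing and triviality of $\tilde\pi_0$ are in fact easier since $\HG^I$ is
automatically smooth over $\QQ$.

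The main obstacle will be the uniqueness of the admissible pair
$(\phi^\ell,\alpha^\ell)$ in the second paragraph: pinning this down requires careful
use of both the connectedness and smoothness components of our hypothesis on $\HG^I$,
combined with the structural results of Subsection \ref{sec:decomp-at-ell}. Once
uniqueness is established, the remaining bookkeeping with flatness and injectivity is
routine.
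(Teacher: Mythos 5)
Your argument breaks down at the ``core step''. You claim that $\uZ^{1}(W_{F}^{0}/P_{F},\HG)_{\bar\ZZ_{\ell}}$ is connected for every prime $\ell\neq\ell_{0},p$, deducing this from a Lang--Steinberg vanishing of $H^{1}(I_{F}^{\ell}/I_{F}^{\ell,e},\HG(\bar\ZZ_{\ell}))$. But Lang--Steinberg requires a Frobenius endomorphism of the coefficient group; it says nothing about cohomology of an abstract finite group of order prime to $\ell$ acting algebraically on $\HG(\bar\ZZ_{\ell})$ or $\HG(\bar\FF_{\ell})$. The simplest counterexample already refutes the step: take $\HG=\GG_{m}$ with trivial $W_{F}$-action. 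The hypothesis of the Proposition is satisfied (with any $\ell_{0}\neq p$), yet a depth-$0$ cocycle is a pair $(t_{s},t_{\Fr})$ with $t_{s}^{q-1}=1$, so
$\uZ^{1}(W_{F}^{0}/P_{F},\GG_{m})\cong\mu_{q-1}\times\GG_{m}$. Over $\bar\ZZ_{\ell}$ with $\ell\nmid q-1$ this is a disjoint union of $q-1$ copies of $\GG_{m,\bar\ZZ_{\ell}}$, hence not connected; correspondingly $H^{1}(\ZZ/n,\bar\ZZ_{\ell}^{\times})=\mu_{n}(\bar\ZZ_{\ell})\neq 1$ for $n=(q-1)$ prime to $\ell$. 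Your further claims that $C_{\HG}(\phi^{\ell})^{\circ}=\HG$ and that $\tilde\pi_{0}(\phi^{\ell})$ is trivial do not follow from the hypothesis either. The subsequent reduction of the finite-flat-base-change assertion to connectedness over $\bar\QQ$ has the same problem, but worse: $\uZ^{1}(W_{F}^{0}/P_{F},\HG)_{\bar\QQ}$ is in general highly disconnected (in the $\GG_{m}$ example it has $q-1$ geometric components), since over $\bar\QQ$ there are no residue characteristics available to glue tame characters together.

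This is not a fixable bookkeeping issue; it is the central difficulty of the proposition. The scheme $\uZ^{1}(W_{F}^{0}/P_{F},\HG)$ is connected over $\ZZ[\frac 1p]$ precisely because its generic-fibre components get identified with one another via specialization at \emph{varying} primes $\ell$: a component carrying a tame character of order $n$ meets the identity component modulo every prime $\ell\mid n$. No single auxiliary prime sees all these identifications, so no argument confined to one $\bar\ZZ_{\ell}$ can work. The paper's proof is built around exactly this: it fixes a connected component $C$, considers the poset $\mathcal{I}$ of open subgroups $I\subset I_{F}$ with $C(\bar\QQ)\cap Z^{1}(W_{F}/I,\HG^{I})(\bar\QQ)\neq\emptyset$ (nonempty by Proposition \ref{prop:GITiso}), takes a maximal such $I$, and then uses Lemma \ref{lemma:lifting_reduction} to toggle between $\bar\QQ$-points and $\bar\FF_{\ell}$-points for primes $\ell$ dividing $[I_{F}:I]$. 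The interplay of parts (2) and (3) of that lemma forces $[I_{F}:I]$ to be an $\ell_{0}$-power, and then one last reduction modulo $\ell_{0}$ (using the smoothness hypothesis at $\ell_{0}$) shows $C$ contains a component of $\uZ^{1}(W_{F}/I_{F},\HG^{I_{F}})\cong\HG^{I_{F}}$, which is connected by the connectedness hypothesis. Both halves of the hypothesis (connectedness of fibres and smoothness over $\ZZ[\frac{1}{\ell_{0}p}]$) and the mobility between primes are essential, and none of this is recovered by your proposal.
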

\begin{proof}
  Let $C$ be a connected component of $\uZ^{1}(W_{F}^{0}/P_{F},\HG)$.
  Since $C$ is flat and of finite type over $\ZZ[\frac 1p]$, we certainly have $C(\overline\QQ)\neq\emptyset$.
  Let us
     consider the set $\mathcal{I}$ of open subgroups $I$ of $I_{F}$ that contain $P_{F}$
     and such that 
  $$C(\overline\QQ)\cap Z^{1}(W_{F}/I,\HG^{I})(\overline \QQ)\neq \emptyset.$$
  By Proposition \ref{prop:GITiso} (ii), the set $\mathcal{I}$ is not empty, 
  so we may pick a maximal  $I\in\mathcal{I}$.

  We claim that $I$ has $\ell_{0}$-power index in $I_{F}$. Indeed, suppose the contrary,
  let  $\ell\neq \ell_{0}$ be a prime that divides $[I_{F}:I]$ and let $I'\supset I$ be the
  unique subgroup of $I_{F}$ such that $I'/I$ is the $\ell$-primary part of $I_{F}/I$.
  Since $I\in\mathcal{I}$,   there is a connected
  component $C_{I}$ of $\uZ^{1}(W_{F}/I,\HG^{I})$ contained in $C$ and such that
  $C_{I}(\overline\QQ)\neq \emptyset$. Our hypothesis on $\HG$ and
  \cite[Thm 1.8]{DM94} imply that $\HG^{I}$ is
  reductive over $\ZZ[\frac 1{\ell_{0}p}]$, hence Lemma
  \ref{lemma:lifting_reduction} (2) applies and ensures that $C_{I}(\overline\FF_{\ell})$ is not empty.
  Moreover, by looking fibrewise and using again \cite[Thm 1.8]{DM94}, we see that the pair $(\HB^{I},\HT^{I})$ is a
  Borel pair of $\HG^{I}$ over $\ZZ[\frac 1{\ell_{0}p}]$. 
  Now, since $I'/I$ has $\ell$-power order,  we can repeat the argument of the proof of (2) of Proposition
  \ref{prop:connectltame} and deduce
  that the injective map
  $\uZ^{1}(W_{F}/I',\HG^{I'}(\oFl))^{I_{F}-\rm ss}\hookrightarrow
  \uZ^{1}(W_{F}/I,\HG^{I}(\oFl))^{I_{F}-\rm ss}$ induces a bijection between the respective
  sets of conjugacy classes, which in turn implies that the morphism
  $\uZ^{1}(W_{F}/I',\HG^{I'})_{\oFl}\sslash \HG^{I'}_{\oFl }\To{}
  \uZ^{1}(W_{F}/I,\HG^{I})_{\oFl}\sslash \HG^{I}_{\oFl }$ is a homeomorphism, and
  consequently that the morphism $\uZ^{1}(W_{F}/I',\HG^{I'})_{\oFl}\hookrightarrow
  \uZ^{1}(W_{F}/I,\HG^{I})_{\oFl}$ induces a bijection on $\pi_{0}$. Therefore, there is a
  component $C_{I'}$ of $\uZ^{1}(W_{F}/I',\HG^{I'})$ that maps into $C_{I}$, hence also into
  $C$, and such that $C_{I'}(\overline\FF_{\ell})\neq\emptyset$.  But since the index of $I'$
  in $I_{F}$ is prime to $\ell$, Lemma \ref{lemma:lifting_reduction} (3) ensures that $C_{I'}(\overline\QQ)\neq\emptyset$,
  which  contradicts the maximality of $I$ unless $I'=I$.

  Now that we know that $I$ has $\ell_{0}$-power index in $I_{F}$, we shrink it so that it
  still has $\ell_{0}$-power index in 
  $I_{F}$ and its image in $\Aut(\HG)$ has prime-to-$\ell_{0}$ order. For this new $I$,
  Lemma \ref{hom_smooth} ensures that the group scheme
  $\HG^{I}$ is also smooth at $\ell_{0}$, hence, by Lemma \ref{lemma:lifting_reduction} (2) again, we have
  $C_{I}(\overline\FF_{\ell_{0}})\neq\emptyset$.
  But  the map
  $\uZ^{1}(W_{F}/I_{F},\HG^{I_{F}})_{\overline\FF_{\ell_{0}}}\hookrightarrow
  \uZ^{1}(W_{F}/I,\HG^{I})_{\overline\FF_{\ell_{0}}}$ induces a bijection on $\pi_{0}$, by the same
  argument as above,
  hence $C_{I}$ contains a component
  $C_{I_{F}}$ of  $\uZ^{1}(W_{F}/I_{F},\HG^{I_{F}})$, and so does $C$.
  So we have shown that the closed immersion $\uZ^{1}(W_{F}/I_{F},\HG^{I_{F}})\hookrightarrow
  \uZ^{1}(W_{F}^{0}/P_{F},\HG)$ is surjective on $\pi_{0}$, 
  and our statement follows from the fact that
  $\uZ^{1}(W_{F}/I_{F},\HG^{I_{F}})\simeq \HG^{I_{F}}$ is connected, under our assumption.
  Moreover, the same argument works similarly after base change to any integral finite
  flat extension of $R$ of $\ZZ[\frac 1p]$ by reducing modulo prime ideals of $R$ rather than prime numbers.
\end{proof}


\begin{lemma} \label{lemma:lifting_reduction}
  Let $\ell\neq p$ be a prime, and let $I\subset I_{F}$ be a
 subgroup  of finite index that contains $P_{F}$ and such that the group scheme $\HG^{I}$ is reductive over
 $\ZM_{(\ell)}$. Then, for any connected component $C$ of $\uZ^{1}(W_{F}/I,\HG^{I})$, we
 have :
 \begin{enumerate}
 \item If $L$ is an algebraically closed field and a $\ZZ_{(\ell)}$-algebra   with $C(L)\neq
   \emptyset$, then $C(L)$ contains a semisimple cocycle valued in $N_{\HG^{I}}(\HT^{I})(L)$.
 \item $C(\overline \QQ)\neq \emptyset\Rightarrow C(\overline\FF_{\ell})\neq\emptyset$.
 \item If $\ell$ does not divide the index $[I_{F}:I]$, then
   $C(\overline\FF_{\ell})\neq\emptyset\Rightarrow C(\overline \QQ)\neq \emptyset$.
 \end{enumerate}
\end{lemma}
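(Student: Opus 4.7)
My plan addresses the three items in order. Items (1) and (2) are structural statements about the moduli space, while (3) contains the main content of the lemma.

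For (1): Since $\HG^I$ is connected, the categorical quotient morphism $\uZ^1(W_F/I,\HG^I)_L \to \uZ^1(W_F/I,\HG^I)_L\sslash\HG^I_L$ induces a bijection on connected components, so $C_L$ surjects onto a component of the affine quotient. Each point of the GIT quotient is the image of a unique closed orbit, so $C_L$ contains at least one closed $\HG^I(L)$-orbit, which by Richardson's theorem (recalled earlier in the section) is the orbit of a semisimple cocycle $\varphi$. To arrange values in $N_{\HG^I}(\HT^I)(L)$, I would invoke the structure theory of completely reducible subgroups from \cite{bmr05}: the image $\varphi^L(W_F/I)\subset \LG(L)$ is completely reducible, so after $\HG^I(L)$-conjugation it can be arranged to normalize the maximal torus $\HT^I_L$ of $\HG^I_L$ (using that $(\HB^I,\HT^I)$ is a Borel pair of $\HG^I$, which comes from the standing Borel-pair hypothesis on $\HG$ and the reductivity of $\HG^I$).

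For (2): The scheme $\uZ^1(W_F/I,\HG^I)$ is flat over $\ZZ_{(\ell)}$, by the syntomic statement of Theorem~\ref{thm:geometry}(i) applied to $\HG^I$ equipped with the induced $W_F/I$-action, after base change from $\ZZ[\frac{1}{p}]$. Consequently $C$ itself is flat, hence faithfully flat over the DVR $\ZZ_{(\ell)}$ as soon as it is non-empty, and therefore meets the special fiber, producing a point in $C(\bar\FF_\ell)$.

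For (3): Pick $\varphi_\ell\in C(\bar\FF_\ell)$ and use (1) to replace it by a semisimple cocycle valued in $N(\bar\FF_\ell)$, where $N:=N_{\HG^I}(\HT^I)$. The composition with the projection to the finite étale Weyl group scheme $W:=N/\HT^I$ yields a cocycle $\bar\varphi_\ell:W_F/I\to W(\bar\FF_\ell)$; by étaleness of $W$ over $\ZZ_{(\ell)}$ this lifts uniquely to $\bar\varphi:W_F/I\to W(\bar\ZZ_{(\ell)})$. It then remains to lift $\varphi_\ell$ itself, over $\bar\varphi$, to an $N(\bar\ZZ_{(\ell)})$-valued cocycle. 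Here the hypothesis $\ell\nmid [I_F:I]$ enters: the fiber of $\uZ^1(W_F/I,N)\to\uZ^1(W_F/I,W)$ over $\bar\varphi$, which is a torsor under the cocycle group scheme $\uZ^1_{\bar\varphi}(W_F/I,\HT^I)$, is smooth over $\ZZ_{(\ell)}$ because $I_F/I$ has order invertible in $\ZZ_{(\ell)}$ and acts on the torus $\HT^I$. A Hensel-type argument then lifts $\varphi_\ell$ to some $\varphi\in\uZ^1(W_F/I,N)(\bar\ZZ_{(\ell)})$, and since $\Spec\bar\ZZ_{(\ell)}$ is connected this entire lift lies in $C$; the generic fiber is the desired point of $C(\bar\QQ)$.

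The main obstacle I anticipate is the smoothness assertion at the heart of (3): one has to pin down the cocycle scheme $\uZ^1_{\bar\varphi}(W_F/I,\HT^I)$ precisely enough to apply Hensel's lemma at the reduction $\varphi_\ell$. The reduction to normalizer-valued cocycles in (1) is essential, since it converts a potentially obstructed non-abelian lifting problem into one controlled by a commutative torus cocycle group, where the hypothesis $\ell\nmid [I_F:I]$ becomes directly effective.
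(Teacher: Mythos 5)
Your item (2) contains a genuine gap. You argue: $C$ is flat over $\ZZ_{(\ell)}$, hence faithfully flat as soon as non-empty, hence its special fiber is non-empty. But flatness plus non-emptiness does \emph{not} give faithful flatness for a finitely generated \emph{algebra} over a DVR (as opposed to a finitely generated module): the $\ZZ_{(\ell)}$-algebra $\ZZ_{(\ell)}[1/\ell]=\QQ$ is flat, of finite type, and non-zero, yet its special fiber is empty. Since $C$ is affine and not finite over $\ZZ_{(\ell)}$, there is no a priori reason it should dominate $\Spec\ZZ_{(\ell)}$. The paper instead \emph{constructs} an integral point: starting from the $N_{\HG^I}(\HT^I)$-valued semisimple cocycle from (1), it first twists $\varphi(\Fr)$ by an element of the divisible group $(\HT^{I}(L)^{\varphiL(s)})^{\circ}$ to make the image of $\varphiL$ finite, then uses a Bruhat--Tits building argument (Proposition~\ref{prop:integral point}) to conjugate this finite-image cocycle into $\HG^I(\bar\ZZ_\ell)$, and finally reduces mod $\ell$. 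This really does use item (1) and cannot be replaced by a soft flatness argument.

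Your item (1) is also imprecise at the decisive step. You say that complete reducibility of the image of $\varphiL$ allows one to conjugate it into the normalizer of $\HT^{I}$. That is not true in general: a completely reducible subgroup lies in some $R$-Levi but need not normalize a maximal torus (e.g., the full group is completely reducible in itself). The actual argument is finer: one first uses Steinberg~\cite[7.2]{steinberg_endo} plus complete reducibility of the \emph{normal} subgroup $\overline{\langle\varphiL(s)\rangle}$ (invoking \cite[Thm~3.10]{bmr05}) to conjugate $\varphiL(s)$ into $\HT^{I}(L)\rtimes s$; then $\varphiL(\Fr)$ normalizes $(\HH^{\varphiL(s)})^{\circ}$ but not $\HT^{I}$, and one must \emph{twist} $\varphi(\Fr)$ by a suitable element $c$ of the connected group $(\HH(L)^{\varphiL(s)})^{\circ}$ --- not merely conjugate $\varphi$ --- so that $c\,\varphiL(\Fr)$ normalizes $(\HT^{I}(L)^{\varphiL(s)})^{\circ}$ and hence its centralizer $\HT^{I}(L)$. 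The fact that the resulting cocycle $\varphi^{c}$ remains in $C(L)$ is then not automatic; it follows because $c\mapsto\varphi^{c}$ is an algebraic map from a connected variety into $\uZ^{1}$. Your phrase ``after $\HG^{I}(L)$-conjugation it can be arranged'' hides both the false general claim and the twist.

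Your item (3) is essentially in the right spirit and packages the same idea as the paper (leverage $\ell\nmid[I_F:I]$ to make a torus-valued lifting problem unobstructed), though the paper phrases it as vanishing of $H^{2}(W_F/I,K)$ for the kernel $K$ of $N_{\HH}(\HT^{I})(\bar\ZZ_\ell)\to N_{\HH}(\HT^{I})(\bar\FF_\ell)$, which is a uniquely $\ell'$-divisible abelian group, rather than as a Hensel/smoothness argument for the cocycle scheme of $\HT^{I}$. Either route works, but note that your (3) depends on (1), whose proof you have not completed.
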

\begin{proof}  We lighten the notation a bit by putting $\HH:=\HG^{I}$, $B_{\HH}:=\HB^{I}$
  and $T_{\HH}:=\HT^{I}$. The action of $W_{F}/I$ on $\HH$ stabilizes the Borel pair
  $(B_{\HH},T_{\HH})$ and factors over some finite quotient $W$. We also put
  $\LH:=\HH\rtimes W$ and we still denote by $s$ the image of $s$ in $W$.

  (1) If $C(L)$ is not empty, then $C(L)$ certainly contains a semisimple $1$-cocycle
  $\varphi\in Z^{1}(W_{F}/I,\HH(L))$. As usual, we denote by $\varphiL$ the associated $L$-homomorphism
  $W_{F}/I\To{}\LH(L)$. 
   By \cite[7.2]{steinberg_endo} the element $\varphiL(s)$ of $\LH(L)$ normalizes a Borel subgroup of
 $\HH$. Since $C$ is stable under conjugation by $\HH$,  we may conjugate $\varphi$ so that
 $\varphiL(s)$  normalizes $B_{\HH}$. Then
 $\varphiL(s)$ belongs to the R-Borel 
 subgroup $B_{\HH}(L)\rtimes W$ of $\LH(L)$.  Since $\varphi$ is semisimple, $\varphiL(s)$ generates
 a completely reducible subgroup of $\LH(L)$, hence it belongs to a R-Levi subgroup of
 $B_{\HH}\rtimes W$. Since all these R-Levi subgroups are $B_{\HH}$-conjugate to $T_{\HH}\rtimes W$, we may
 conjugate further $\varphi$ 
 so that $\varphiL(s)\in T_{\HH}(L)\rtimes s$. In this situation,
 $(T_{\HH}(L)^{\varphiL(s)})^{\circ}$ is a  maximal torus of $(\HH(L)^{\varphiL(s)})^{\circ}$
 whose centralizer in $\HH(L)$ is $T_{\HH}(L)$, \cite[Thm 1.8 iv)]{DM94}. 
 Now, $\varphiL(\Fr)$ normalizes
 $(\HH(L)^{\varphiL(s)})^{\circ}=(\HH(L)^{\varphiL(s)^{q}})^{\circ}$, hence it conjugates
 $(T_{\HH}(L)^{\varphiL(s)})^{\circ}$ to another maximal torus therein. Pick  $c\in
 (\HH(L)^{\varphiL(s)})^{\circ}$ that conjugates back this torus
 to $(T_{\HH}(L)^{\varphiL(s)})^{\circ}$. So $c.\varphiL(\Fr)$ normalizes
 $(T_{\HH}(L)^{\varphiL(s)})^{\circ}$, hence also its centralizer
 $T_{\HH}(L)$ in $\HH(L)$.
Hence the unique  $1$-cocycle $\varphi^{c}:W_{F}/I\To{}\HH(L)$ such
that 
 $$
\left\{ \begin{array}{l}
   {^{L}(\varphi^{c})}(s):={^{L}\varphi}(s)\in
          T_{\HH}(L)\rtimes s  \\
   {^{L}(\varphi^{c})}(\Fr):= c.\varphiL (\Fr) \in N_{\HH}(T_{\HH})(L)\rtimes \Fr.       
 \end{array}\right.
$$
is valued in $N_{\HH}(T_{\HH})(L)$ as desired, and it remains to prove that $\varphi^{c}$ belongs to $C(L)$.
But the cocycle $\varphi^{c}$ makes sense for any $c\in
(\HH(L)^{\varphiL(s)})^{\circ}$, so it is an element in the image of an algebraic morphism
$(\HH(L)^{\varphiL(s)})^{\circ}\To{}\uZ^{1}(W_{F}/I,\LH(L))$. Since the source of this
morphism is connected, its image is contained in $C(L)$.

(2) Let us first prove that $C(\overline\QQ)$ contains a $1$-cocycle $\varphi$ such that  $\varphiL$ has finite image, which is here equivalent to $\varphiL(\Fr)$ having
finite order. By (1), we may start with $\varphi$ valued in  $N_{\HH}(T_{\HH})(\overline\QQ)$. Then, a convenient power $\varphiL(\Fr)^{r}$ of $\varphiL(\Fr)$
belongs to $(T_{\HH}(\overline\QQ)^{\varphiL(s)})^{\circ}$. But the latter is a divisible group, so
it contains an element $t$ such that $t^{-r}=\varphiL(\Fr)^{r}$. Then, the cocycle
$\varphi^{t}$ defined as above has finite image.
Now, we argue as in Proposition \ref{prop:integral point} with the building of $\HH(\overline\QQ_{\ell})$ to see that
$\varphi^{t}$ can be $\HH(\overline\QQ_{\ell})$-conjugated so that it becomes
$\HH(\overline\ZZ_{\ell})$-valued. Then its image in $Z^{1}(W_{F}/I,\HH(\overline\FF_{\ell}))$
belongs to $C(\overline\FF_{\ell})$. 

(3)  By (1) we may start with $\varphi\in C(\overline\FF_{\ell})$ taking values in
$N_{\HH}(T_{\HH})(\overline\FF_{\ell})$, and we will show that it can be lifted to a $1$-cocycle
$W_{F}/I\To{} N_{\HH}(T_{\HH})(\overline\ZZ_{\ell})$. As in the proof of Theorem \ref{thm_reduction},  the obstruction to lifting
$\varphi$ belongs to $H^{2}(W_{F}/I,K)$ where $K$ is the kernel of the reduction map
$N_{\HH}(T_{\HH})(\overline\ZZ_{\ell})\To{} N_{\HH}(T_{\HH})(\overline\FF_{\ell})$, which is also the kernel
of the reduction map $T_{\HH}(\overline\ZZ_{\ell})\To{} T_{\HH}(\overline\FF_{\ell})$. In particular, $K$ is
a uniquely $\ell'$-divisible abelian group. Since $I_{F}/I$ has prime to $\ell$ order, it
follows that $H^{1}(I_{F}/I,K)=H^{2}(I_{F}/I,K)=\{0\}$. Since we also have
$H^{2}(W_{F}/I_{F}, K^{I_{F}})=\{0\}$, we see that $H^{2}(W_{F}/I,K)=0$ and there is no
obstruction to lift $\varphi$.
\end{proof}

Proposition \ref{prop:connected} shows in particular that
$\uZ^{1}(W_{F}^{0}/P_{F},\HG)$ is connected in the case where $I_{F}$ acts trivially on
$\HG$. We will now show this connectedness property in the more general case where $I_{F}$
preserves a pinning of $\HG$.
The next lemma starts with a particular subcase.

\begin{lemma} \label{lemma:sssc}
  Assume that $\HG$ is semi-simple and simply connected, and that $I_{F}$
  stabilizes a pinning of $\HG$. Then, for any
  subgroup $I$ of finite index of $I_{F}$,
  the closed subgroup scheme $\HG^{I}$ has connected geometric fibers and is smooth over $\Spec(\ZZ[\frac 1{2p}])$.
\end{lemma}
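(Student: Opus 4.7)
The plan is to reduce to a case-by-case analysis on simple simply connected groups with a faithful pinning-preserving finite group action, and then appeal to classical identifications of fixed-point subgroup schemes with ``folded'' Chevalley group schemes.

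First, since the action of $I_{F}$ on $\HG$ factors through a finite quotient, for any finite-index subgroup $I\subset I_{F}$ the action of $I$ on $\HG$ factors through a finite group $\Gamma$ of pinning-preserving automorphisms, and $\HG^{I}=\HG^{\Gamma}$. Next, decompose $\HG$ into its simple simply connected factors; $\Gamma$ permutes these factors, and for each $\Gamma$-orbit the fixed-point subscheme of the diagonal action is isomorphic, via projection to a chosen orbit representative, to the fixed-point subscheme of the corresponding stabilizer acting on a single simple factor. This reduces the statement to the case where $\HG$ is simple and simply connected with a faithful action of a finite group $\Gamma$ preserving a pinning.

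In this reduced situation, $\Gamma$ embeds into the outer automorphism group of $\HG$, which is trivial, $\ZZ/2\ZZ$, or $S_{3}$ (the latter only in type $D_{4}$). In each non-trivial case the fixed-point subgroup is a classical ``folded'' Chevalley group scheme: $\Sp_{2n}\subset\SL_{2n}$, $\Spin_{2n+1}\subset\SL_{2n+1}$, $\Spin_{2n-1}\subset\Spin_{2n}$, $F_{4}\subset E_{6}$, or (for triality) $G_{2}\subset \Spin_{8}$. Connectedness of the geometric fibers then follows from Steinberg's theorem on fixed points of pinning-preserving (equivalently, quasi-semisimple) automorphisms of simply connected groups, applied to a generator of the cyclic cases and inductively to the layers of the composition series $1\subset \ZZ/3\ZZ\subset S_{3}$ in the full triality case. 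For smoothness over $\ZZ[\frac{1}{2p}]$, the order of $\Gamma$ divides $6$; inverting $2$ handles the characteristic-$2$ issues for all order-$2$ diagram foldings, which are the only non-smoothness source in types $A$, $D$ and $E_{6}$. In the $D_{4}$ triality case one verifies separately that the inclusion $G_{2}\hookrightarrow\Spin_{8}$ identifying the $\ZZ/3\ZZ$-fixed scheme is smooth over all of $\ZZ$, so that no further primes need to be inverted.

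The main obstacle is the characteristic-$3$ verification in the $D_{4}$ triality case: one must check that the closed subscheme of fixed points is reduced and coincides on-the-nose with the Chevalley $\ZZ$-scheme of type $G_{2}$, since the easy averaging argument that produces smoothness from $|\Gamma|$ being invertible breaks down when $3$ is not invertible. All other cases reduce to classical fact about the pinned Chevalley forms of classical groups over $\ZZ[\frac{1}{2}]$.
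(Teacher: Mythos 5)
Your reduction to the simple case by decomposing $\HG$ into simple factors and considering the stabilizer of a factor is the same as the paper's, and the invocation of Steinberg's fixed-point connectedness theorem is the same. However, there are two genuine problems.

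First, the discussion of the full $S_3$ acting by triality, including the proposed induction along the composition series $1\subset\ZZ/3\ZZ\subset S_3$, is vacuous here: the action of $I_F$ on $\HG$ factors through the procyclic quotient $I_F/P_F$, so the image of any finite-index subgroup $I$ in $\Aut(\HG)$ is always \emph{cyclic}. The only orders that can occur for a simple factor are $1$, $2$, or $3$, and the order-$3$ case forces type $D_4$.

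Second, and more seriously, you explicitly flag the characteristic-$3$ verification for $D_4$ triality as ``the main obstacle'' and leave it unfilled, pointing out correctly that the averaging argument fails when $3$ is not inverted and that identifying $(\Spin_8)^{\ZZ/3}$ on the nose with the Chevalley $\ZZ$-form of $G_2$ is nontrivial. The paper does not attempt that identification; instead it argues directly. It restricts to the big cell $\HU^-\HT\HU$, notes that $\HT^I$ is a torus because $\HG$ is simply connected, and proves smoothness of $\HU^I$ using the key combinatorial observation, specific to the triality action on $D_4$, that no two $I$-conjugate positive roots sum to a root. This gives $\HU^I=\prod_{\bar\alpha\in\Phi^+/I}\HU_{\bar\alpha}^I$ with each factor isomorphic to $\GG_a$ (after extending the $I$-invariant pinning to all positive roots), and smoothness over the whole base follows. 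This is exactly the argument your proposal is missing, so as written your proof has a gap precisely where the paper does nontrivial work. (Note also that this combinatorial property fails for the $A_{2n}$ involution — there two $I$-conjugate simple roots do sum to a root — which is why that case genuinely needs $2$ inverted.)

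Finally, a small factual error: the fixed subgroup of $\SL_{2n+1}$ under the pinning-preserving involution (over a field of characteristic $\neq2$) is $\mathrm{SO}_{2n+1}$, adjoint of type $B_n$, not $\Spin_{2n+1}$; Steinberg's theorem guarantees connectedness of the fixed subgroup of a simply connected group, not that the fixed subgroup is itself simply connected.
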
  
\begin{proof}
The connectedness of geometric fibers is Steinberg's  theorem in \cite[Thm
8.2]{steinberg_endo}, so we focus on the  smoothness of $\HG^{I}$.
 Consider the action of $I$ on the set of simple factors of $\HG$. By treating distinct
 $I$-orbits seperately, we may assume that
$I$ has a single orbit, so that $\HG= {\rm ind}_{I'}^{I} \HG'$ where $\HG'$ is simple and
$I'$ has finite index in $I$.  Then $\HG^{I}=(\HG')^{I'}$, so we are reduced to the case where $\HG$ is
simple. In this case, the image of $I$  in $\Aut(\HG)$, which is cyclic since $P_{F}$
acts trivially, has order either $2$ or $3$.  
If it
has order $2$, the smoothness of $\HG^{I}$ over $\ZM[\frac 1{2p}]$ follows from Lemma
\ref{hom_smooth}.
If it has order $3$ then $\HG={\rm Spin}_{8}$. 
Over fields of characteristic $0$, it is
known that the subgroup of ${\rm Spin}_{8}$ fixed by the triality automorphism is
$G_{2}$. It may be true that $\HG^{I}=G_{2}$ in our context too, but we find it easier to argue as follows.
The big cell $C=\hat U^{-}\HT\hat U$ associated to the Borel pair $(\HT,\HB)$ is
stable under $I$, and it suffices to prove smoothness of $C^{I}= (\hat U^{-})^{I} \HT^{I}
\hat U^{I}$. Since $\HG$ is simply connected, $\HT^{I}$ is a torus by Steinberg's theorem,
hence it is smooth. By
symmetry, it remains to prove smoothness of $\hat U^{I}$. Choose an ordering
of the set $\Phi^{+}/I$ of $I$-orbits of positive roots and, for each orbit $\bar\alpha\in
\Phi^{+}/I$, choose an ordering of this orbit.
To these choices is associated a decomposition $\HU=
\prod_{\bar \alpha\in \Phi^{+}/I} \HU_{\bar \alpha}$ with $\HU_{\bar \alpha}=\prod_{\alpha\in
  \bar\alpha} \HU_{\alpha}$.
Now, the point here is that there is
no pair of $I$-conjugate positive roots whose sum is again a root.
This implies that $\HU_{\alpha}$ and $\HU_{\alpha'}$ commute with each other if
$\alpha,\alpha'\in\bar\alpha$, and it follows that $\HU^{I}= \prod_{\bar \alpha}
\HU_{\bar\alpha}^{I}$. This also implies that the $I$-invariant pinning
$(X_{\alpha})_{\alpha\in\Delta}$ (with $X_{\alpha}$ a basis of $\Lie(\HU_{\alpha})$)
can be extended  to an $I$-invariant pinning $(X_{\alpha})_{\alpha\in \Phi^{+}}$ for all
positive roots. Then, to each $X_{\alpha}$ corresponds an isomorphism
$\GG_{a}\To\sim\HU_{\alpha}$ and the product of these isomorphisms induces 
 $(\GG_{a})_{\rm diag} \To\sim\HU_{\bar\alpha}^{I}$. Whence the smoothness of $\HU_{\bar\alpha}^{I}$.
\end{proof}

\begin{rem} (1) The same lemma holds with ``adjoint'' instead of ``simply connected''. The
  reference to Steinberg's result has to be replaced by a reference to
  \cite[Remarque 1.30]{DM94} for example (note that a pinning-preserving automorphism is
  quasi-central in the sense of \cite{DM94}).
  
(2)  If $\HG=\SL_{2n+1}$ with a topological generator of $I_{F}$ acting by the non-trivial automorphism
  that preserves the standard pinning, then $(\SL_{2n+1})^{I_{F}}$ is
  \emph{not} smooth over $\ZZ_{(2)}$. For example, with standard
  coordinates $x=x_{12},y=x_{23},z=x_{13}$ for the upper unipotent
  subgroup $\HU$ of $\SL_{3}$, the invariants $\HU^{I}$ are given by
  equations $x=y$ and $xy=2z$. However, it is likely that in any
  simple simply connected case not of type $A_{2n}$, the
  $I$-invariants are smooth over $\ZZ$. During the reviewing process of this paper, this
  expectation was indeed proved (and the above lemma reproved and generalized) in  \cite[Thm 1.1 (3)]{ALRR}.
  \end{rem}

This lemma, together with Proposition \ref{prop:connected}, shows that
$\uZ^{1}(W_{F}^{0}/P_{F},\HG)_{\overline\ZZ[\frac 1p]}$ is connected for $\HG$ as in the lemma. In order to
spread a bit this result, we will use the next two lemmas.

\begin{lemma}
  Assume given
 another split reductive group $\HG'$ over $\ZM[\frac 1p]$ equipped with an action of
  $W_{F}/P_{F}$  and with an equivariant surjective  morphism $\HG'\To{f}\HG$ whose  kernel is a
  torus.
Then the morphism  $\uZ^{1}(W_{F}^{0}/P_{F},\HG')\To{f_{*}}\uZ^{1}(W_{F}^{0}/P_{F},\HG)$
induces a surjection on $\pi_{0}$.
\end{lemma}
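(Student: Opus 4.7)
The plan is to show that $f_*$ is surjective on $L$-points for every algebraically closed field $L$ in which $p$ is invertible. This will imply surjectivity on $\pi_0$, because by Theorem~\ref{thm:geometry} the scheme $\uZ^1(W_F^0/P_F, \HG)$ is flat of finite type over $\ZZ[\frac{1}{p}]$, so each of its connected components is open above the generic point and in particular contains a $\bar\QQ$-point. The argument rests on the presentation $W_F^0/P_F = \langle \tilde s, \tilde\Fr \mid \tilde\Fr\tilde s\tilde\Fr^{-1} = \tilde s^q\rangle$ recalled in the proof of Proposition~\ref{prop:estimate_ss}, which identifies a cocycle $\varphi : W_F^0/P_F \to \HG(L)$ with a pair $(g, h) := (\varphi(\tilde s), \varphi(\tilde\Fr)) \in \HG(L)^2$ subject to one relation.

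Given such $(g,h)$, I would pick arbitrary lifts $(g',h') \in \HG'(L)^2$ (possible since $L$ is algebraically closed, so $f$ is surjective on $L$-points). Because $T := \ker f$ is central in $\HG'$, the relation holds in $\HG'(L)$ modulo some $t \in T(L)$, and a direct Fox-calculus computation will show that replacing $(g', h')$ by $(vg', uh')$ for $(v, u) \in T(L)^2$ translates $t$ by $\phi(v, u) := (\tilde\Fr - N) v + (1 - \tilde s^q) u$, where $N := 1 + \tilde s + \cdots + \tilde s^{q-1}$. Thus the lemma reduces to showing that the morphism of tori $\phi : T^2 \to T$ is surjective, equivalently that $\phi_* \otimes \QQ$ is surjective on the $\QQ$-vector space $V := X_*(T) \otimes \QQ$.

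The action of $\langle \tilde s, \tilde\Fr\rangle$ on $V$ factors through a finite quotient $G_0 \subset \GL(V)$, so I would decompose $V$ into $\QQ G_0$-irreducible summands $V_i$, each preserved by $\phi$. If $\tilde s^q$ acts non-trivially on a given $V_i$, then $(1 - \tilde s^q) V_i$ is nonzero; furthermore it is $G_0$-invariant, thanks to the identity $\tilde\Fr(1 - \tilde s^q) = (1 - \tilde s^{q^2})\tilde\Fr = (1 - \tilde s^q)(1 + \tilde s^q + \cdots + \tilde s^{(q-1)q})\tilde\Fr$, so by irreducibility one gets $(1 - \tilde s^q) V_i = V_i$. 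If instead $\tilde s^q$ acts trivially on $V_i$, then the relation $\tilde\Fr\tilde s = \tilde s^q\tilde\Fr$ combined with invertibility of $\tilde\Fr$ forces $\tilde s$ itself to act trivially on $V_i$ (any eigenvector of $\tilde s$ with eigenvalue $\zeta$ would satisfy $\zeta\tilde\Fr v = \tilde\Fr v$, forcing $\zeta = 1$); then $N$ acts as $q$ on $V_i$ and $(\tilde\Fr - q)$ is invertible, its eigenvalues being $\zeta - q \neq 0$ for $\zeta$ a root of unity. Either way $\phi|_{V_i}$ surjects onto $V_i$. The main subtlety is in the first case: since neither summand of $\phi$ is individually $G_0$-equivariant, Schur's lemma does not apply directly, and the key algebraic identity rewriting $\tilde\Fr(1-\tilde s^q)$ as a right multiple of $(1-\tilde s^q)$ is what makes the irreducibility argument go through.
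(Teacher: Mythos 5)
Your proof is correct, but it takes a more explicit, computational route than the paper. The paper's own proof is very short: it observes that for a cocycle $\varphi \in Z^1(W_F/P_F, \HG(\oFl))$, the obstruction to lifting along $f$ lies in $H^2(W_F/P_F, \hat S(\oFl))$ with $\hat S = \ker f$, and then cites a general cohomological vanishing lemma for $H^2$ of $W_F/P_F$ with coefficients in a divisible abelian group. Combined with the fact that every component of the target has $\oFl$-points for some $\ell$, this gives the surjection on $\pi_0$. You instead work over $\bar\QQ$ (which also works, by flatness of the moduli scheme and Theorem~\ref{thm:geometry}), and you replace the black-box cohomological input by a direct computation: you use the Baumslag--Solitar presentation $\langle \tilde s, \tilde\Fr \mid \tilde\Fr\tilde s\tilde\Fr^{-1} = \tilde s^q\rangle$ of $W_F^0/P_F$ to write the obstruction map explicitly as $\phi(v,u) = (\tilde\Fr - N)v + (1 - \tilde s^q)u$ on the torus kernel, and then prove surjectivity of $\phi$ by decomposing $X_*(\ker f)\otimes\QQ$ into $\QQ G_0$-irreducibles and analyzing the two cases according to whether $\tilde s^q$ acts trivially or not. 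Your identification of the subtlety (neither summand of $\phi$ is $G_0$-equivariant, but the identity $\tilde\Fr(1-\tilde s^q) = (1-\tilde s^q)N'\tilde\Fr$ shows the image is still $G_0$-invariant) is exactly the point that makes the irreducibility reduction valid, and the case analysis is correct. In effect, you have unwound the proof of the cohomological lemma in this specific setting: your $\phi$ is the last differential in a complex computing $H^2(W_F^0/P_F, \hat S(L))$ via the presentation, and your surjectivity statement is the vanishing of that $H^2$. The paper's approach is shorter and modular; yours is self-contained and makes the mechanism transparent. Both are valid.
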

\begin{proof}
Put $\hat S:=\ker f$. For any prime $\ell\neq p$ and
  $\varphi\in Z^{1}(W_{F}/P_{F},\HG(\oFl))$, the obstruction to lifting $\varphi$ to an
  element of $Z^{1}(W_{F}/P_{F},\HG'(\oFl))$ lies in the group
  $H^{2}(W_{F}/P_{F},\hat S(\oFl))$, which vanishes by Lemma \ref{cohomological_lemma}
  since $\hat S(\oFl)$ is a divisible group. Therefore the map
$$f_{*}: \, Z^{1}(W_{F}/P_{F},\HG'(\oFl))\To{}
Z^{1}(W_{F}/P_{F},\HG(\oFl))$$ is surjective. Since any connected component of
$\uZ^{1}(W_{F}^{0}/P_{F},\HG)$ has $\oFl$-points for some $\ell$ (and even for all $\ell$),
 this implies the lemma.
\end{proof}

\begin{lemma}
  There exists a split reductive group $\HG'$ over $\ZM[\frac 1p]$ equipped with an action of
  $W_{F}/P_{F}$ and an equivariant surjective morphism $\HG'\To{}\HG$ whose
  kernel is a torus, and such that, for all open subgroups $I$ of $I_{F}$, we have :
  \begin{enumerate}
  \item $(\HG')^{I}$ has geometrically connected fibers, and
  \item $(\HG')^{I}$ is smooth over $\ZZ[\frac 1{2p}]$ if $I_{F}$ preserves a
    pinning of $\HG$.
  \end{enumerate}
\end{lemma}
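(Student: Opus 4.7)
The plan is to construct $\HG'$ as a $z$-extension of $\HG$: a central extension by a quasi-trivial (induced) $W_F/P_F$-torus, arranged so that the derived group of $\HG'$ is the simply connected cover $\HG^{\rm sc}$ of $\HG^{\rm der}$. This combines two complementary features: a permutation torus has smooth connected invariants under any closed subgroup action, while a semisimple simply connected group falls directly under Lemma~\ref{lemma:sssc}.

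Concretely, let $\pi\colon\HG^{\rm sc}\to\HG^{\rm der}$ be the simply connected cover, with central kernel $\mu$, a finite multiplicative $W_F/P_F$-group scheme over $\ZM[\frac 1p]$; the $W_F/P_F$-action lifts uniquely to $\HG^{\rm sc}$, and when a pinning of $\HG$ is stabilized, so is the lifted pinning of $\HG^{\rm sc}$. Choose an equivariant embedding $\iota\colon\mu\hookrightarrow Z$ into a quasi-trivial torus $Z$, and arrange that $Z$ also carries an equivariant homomorphism $\alpha\colon Z\to\HG$ whose image contains $Z(\HG)^\circ$ and satisfies $\alpha\circ\iota=1$; such $(Z,\iota,\alpha)$ exist by standard $z$-extension arguments, since any finitely generated $W_F/P_F$-character module is dominated by a permutation module. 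Then set
$$\HG':=(\HG^{\rm sc}\times Z)/\mu,$$
with $\mu$ embedded anti-diagonally as $m\mapsto(m,\iota(m)^{-1})$. The morphism $(g,z)\mapsto\pi(g)\alpha(z)$ realizes $\HG'$ as a central extension of $\HG$ by a subtorus of $Z/\iota(\mu)$, and the image of $\HG^{\rm sc}$ is the derived subgroup of $\HG'$, which is therefore semisimple simply connected.

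To verify (1) and (2), fix an open subgroup $I\subset I_F$. Since $Z$ is a permutation torus, $Z^I$ is again a quasi-trivial torus, hence smooth over $\ZM[\frac 1p]$ and connected; by Lemma~\ref{lemma:sssc}, $(\HG^{\rm sc})^I$ is geometrically connected and, in the pinning-stable case, smooth over $\ZM[\frac 1{2p}]$. So the product $(\HG^{\rm sc}\times Z)^I=(\HG^{\rm sc})^I\times Z^I$ has the desired properties. The long exact fppf cohomology sequence
$$1\to\mu^I\to(\HG^{\rm sc}\times Z)^I\to(\HG')^I\to H^1(I,\mu)\to H^1(I,Z)$$
then shows that the cokernel of the middle arrow embeds into the kernel of $H^1(I,\mu)\to H^1(I,Z)$. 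The main obstacle is to arrange, by a uniform choice of $(Z,\iota)$, that this kernel vanishes for every open $I\subset I_F$ simultaneously; this is the cohomological core of the integral $z$-extension construction, and is secured by taking $Z$ to be a sufficiently rich product of Weil restrictions $\mathrm{Res}_{\ZM_e/\ZM}\GG_m$ adapted to the orbit structure of $\mu$, after which Shapiro's lemma and Hilbert~90 provide the required injectivity. Once this is arranged, $(\HG')^I$ coincides with the quotient of the smooth connected group $(\HG^{\rm sc})^I\times Z^I$ by the finite flat subgroup $\mu^I$, hence is connected and (in the pinning case) smooth over $\ZM[\frac 1{2p}]$.
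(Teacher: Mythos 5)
Your plan — replace $\HG$ by a $z$-extension $(\HG^{\rm sc}\times Z)/\mu$ with $Z$ a permutation torus, so that Lemma~\ref{lemma:sssc} handles the semisimple part and the permutation-module structure handles the toral part — is the right idea and is essentially what the paper does. But several steps that you wave at as "standard" are precisely where the content lies, and at least one of the claims you invoke to dispatch them is false.

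First, the existence of $(Z,\iota,\alpha)$ making $\HG'=(\HG^{\rm sc}\times Z)/\mu$ surject onto $\HG$ with kernel a torus is not free. The map $(g,z)\mapsto\pi(g)\alpha(z)$ is well-defined, but for it to hit the radical $R(\HG)$ and to have toral kernel you need a concrete construction of $\alpha$ and $Z$. The paper does this in two explicit stages: it first surjects $\hat S\times\HG_{\rm sc}\twoheadrightarrow\HG$ with $\hat S$ a permutation torus mapping onto $R(\HG)$ (constructed so that $X^*(R(\HG))\hookrightarrow\ZZ[W]^{\dim R(\HG)}$ has torsion-free coinvariants for all $I$), noting that the kernel $D$ is diagonalizable but need not be a torus; then it embeds $D\hookrightarrow\hat S'$ into a second permutation torus and pushes out, $\HG'=\hat S'\times^D\HG''$, so that the kernel of $\HG'\to\HG$ is the torus $\hat S'$. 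Your sketch conflates these into a single step, which obscures exactly the arrangement you need to make later.

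Second, and most seriously, your justification for the key cohomological vanishing is wrong. You assert that a "sufficiently rich product of Weil restrictions" plus "Shapiro's lemma and Hilbert~90" give injectivity of $H^1(I,\mu)\to H^1(I,Z)$ for all open $I$. Shapiro and Hilbert~90 would (at best) be relevant to vanishing of $H^1(I,Z)$ — and even that fails here, because the $\GG_m$-factors of $Z$ are acted on trivially so the inner cohomology group is $\Hom(-,\oFl^\times)$, which is a nontrivial finite group, not zero. More fundamentally, vanishing of $H^1(I,Z)$ would be the wrong thing to want: if $H^1(I,Z)=0$, then the map $H^1(I,\mu)\to H^1(I,Z)$ has kernel equal to all of $H^1(I,\mu)$, exactly the opposite of injectivity. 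What the paper actually does is identify $H^1(I,D(\oFl))\to H^1(I,\hat S'(\oFl))$ with a dual map of $\Hom$-groups and prove injectivity by constructing $X^*(\hat S')$ (as $\bigoplus_I\ZZ[W/I]\otimes Y_I$) so that $X^*(\hat S')^I\to X^*(D)^I$ is surjective for every open $I$ simultaneously. The permutation-module shape of $Z$ is used to make this surjectivity hold, not to produce vanishing via Shapiro.

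Third, your argument for (2) is incomplete. You claim $(\HG')^I$ is smooth because it is the quotient of the smooth group $(\HG^{\rm sc})^I\times Z^I$ by the "finite flat" subgroup $\mu^I$. But the $I$-fixed points of a finite flat group scheme over $\ZZ[\frac 1p]$ need not be flat, and the quotient of a smooth group scheme by a non-smooth finite subgroup need not be smooth over a general base. The paper avoids this by reducing to the big cell: $(\HU'_-)^I(\HT')^I(\HU')^I$, where $(\HU'_\pm)^I$ are smooth by the pinning argument of Lemma~\ref{lemma:sssc} and $(\HT')^I$ is a diagonalizable group with geometrically connected fibers — hence a torus, hence smooth — because the base has points of at least two residue characteristics.

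So the architecture of your proposal matches the paper, but the three crucial verifications — surjectivity onto $\HG$ with toral kernel, the uniform $H^1$-injectivity, and the smoothness of $(\HG')^I$ — are either not supplied or justified by reasoning that does not hold up.
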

\begin{proof}
  By Theorem 5.3.1 of \cite{conrad_luminy} (or \cite[Exp XXII, \S 6.2]{MR0274460}), 
there is a unique closed semi-simple subgroup scheme
$\HG_{\rm der}$ of $\HG$ over $\ZM[\frac 1p]$ that represents the fppf
sheafification of the set-theoretical derived subgroup presheaf. 
Further, by Exercise 6.5.2 of \cite{conrad_luminy}, there is a canonical central isogeny
 $\HG_{\rm sc}\To{} \HG_{\rm der}$ over $\ZM[\frac 1p]$, such that all the geometric fibers of $\HG_{\rm sc}$ are
 simply connected semi-simple groups. Being canonical, the action of $W_{F}/P_{F}$ on $\HG_{\rm
   der}$ lifts uniquely to $\HG_{\rm sc}$ and still preserves a Borel pair or a
 pinning, depending on the case. 
Now denote by $R(\HG)$ the radical of $\HG$, which is a split torus.
Then the natural morphism $R(\HG)\times \HG_{\rm sc}\To{} \HG$ is 
a $W_{F}$-equivariant central isogeny. We already know that $(\HG_{\rm sc})^{I}$ satisfies
properties (1) and (2) for finite index any subgroup $I\subset I_{F}$, by Lemma \ref{lemma:sssc}.
On the other hand, $R(\HG)^{I}$ is the diagonalisable group 
associated to the abelian group $X^{*}(R(\HG))_{I}$ of $I$-coinvariants in
$X^{*}(R(\HG))$, which may have torsion. Let $W$ be a finite quotient of $W_{F}/P_{F}$
through which the action of $W_{F}$ on $\HG$ factors. Choosing a dual basis of the lattice 
$X^{*}(R(\HG))$, we get a $W_{F}$-equivariant embedding $X^{*}(R(\HG))\hookrightarrow \ZM[W]^{\dim
  R(\HG)}$ where the target has torsion-free $I$-coinvariants for all $I$. Dually we get a surjective
$W_{F}$-equivariant morphism of tori $\hat S\twoheadrightarrow R(G)$ such that $(\hat
S)^{I}$ is a torus,  hence is smooth with connected geometric fibers. Thus we have a
$W_{F}$-equivariant surjective morphism  $\HG'':=\hat S\times \HG_{\rm sc}\twoheadrightarrow
\HG$  whose source satisfies both properties (1) and (2), but
 whose kernel $D$, a diagonisable subgroup, is not necessarily a torus.

Now let us choose a surjective $W_{F}$-equivariant morphism $X\To{} X^{*}(D)$ such that
$X$ is a permutation module (i.e. $W_{F}$ permutes a $\ZM$-basis of $X$) and such that for
every  finite index subgroup $I\subset I_{F}$, the map on $I$-invariants
$X^{I}\To{}X^{*}(D)^{I}$ is surjective. 
For example, one can take $X=\bigoplus_{I} \ZM[W/I]\otimes Y_{I}$ where $I$ runs
over subgroups of the image of $I_{F}$ inside $W$ and $Y_{I}$ is any free
abelian group mapping surjectively to  $X^{*}(D)^{I}$. Dually, we have
a $W_{F}$-equivariant  embedding
$D\hookrightarrow \hat S'$ of $D$ into the split torus $\hat S'$ over
$\ZM[\frac 1p]$ with character group $X^{*}(\hat S')=X$. Since its
character group is a permutation module, the torus $\hat S'$ satisfies
both properties (1) and (2). Namely, $(\hat S')^{I}$ is a torus for any finite index
subgroup $I\subset I_{F}$.
Now, by \cite[Exp. XXII \S 4.3]{MR0274460} the quotient
$\HG' := \hat S' \times^{D} \HG''$ is representable by a split reductive group
scheme, which by construction  is a $W_{F}$-equivariant extension of $\HG$ by the torus $\hat S'$.
Let us prove that $\HG'$ has property (1).
Fix a prime $\ell\neq p$ and look at the exact sequence
$$ (\hat S'(\oFl) \times \HG''(\oFl))^{I}\To{}  \HG'(\oFl)^{I} \To{} H^{1}(I,D(\oFl))
\To{} H^{1}(I,\hat S'(\oFl) \times \HG''(\oFl)).$$ 
Since $I$ is procyclic, say with topological generator $t$, we have $H^{1}(I,D(\oFl))\simeq
D(\oFl)_{I}= (D/(\id-t)D)(\oFl)=\Hom(X^{*}(D)^{I},\oFl^{\times})$.
Therefore, it follows that the map $ H^{1}(I,D(\oFl)) \To{} H^{1}(I,\hat S'(\oFl))$ identifies with the map
$$ \Hom(X^{*}(D)^{I},\oFl^{\times})\To{} \Hom(X^{*}(\hat S')^{I},\oFl^{\times}),$$
hence it is injective
since the map $X^{*}(S')^{I}\To{}X^{*}(D)^{I}$ is surjective. It follows that the map
$$(\hat S'(\oFl) \times \HG''(\oFl))^{I}\To{}  \HG'(\oFl)^{I}$$
is surjective. But the source of this map is a connected variety since $X^{*}(S')$ is a
permutation module, so the target is also connected as desired. This
proves the connectedness of the closed geometric fibers of
$(\HG')^{I}$, and that of the generic geometric fiber follows since
$(\HG')^{I}$ is smooth with reductive neutral component, hence \'etale
component group, after restriction to a suitable open subset of ${\rm
  Spec} \ZZ[\frac 1p]$.
Let us now prove that
$\HG'$ has property (2). So we assume that $I_{F}$ preserves a pinning of $\HG$, which
implies that it also preserves a pinning of $\HG''$ and $\HG'$, and we shall prove
smoothness of $(\HG')^{I}$ over $\ZZ[\frac 1{2p}]$.  Since the big cell
$(\HU'^{-}\HT'\HU')^{I} = (\HU'^{-})^{I}(\HT')^{I}(\HU')^{I}$ is non-empty and
$(\HG')^{I}$ is connected, it suffices to
prove its smoothness. We already know from Lemma \ref{lemma:sssc} that
$(\HU'^{-})^{I}$ and $(\HU')^{I}$ are smooth over $\ZZ[\frac 1{2p}]$ so we may concentrate on the diagonalisable
subgroup $(\HT')^I$. But, by the same argument as for property (1) above, this
diagonalisable group has geometrically connected fibers, so this is a torus (because the
base is connected and has fibers of at least two distinct residual characteristics), and in
particular it is smooth.
\end{proof}

Putting the last two lemmas and Proposition \ref{prop:connected} together, we get the following result.
\begin{thm} \label{thm:connected_pinning_preserved}
  If $I_{F}$ preserves a pinning of $\HG$, the scheme
  $\uZ^{1}(W_{F}^{0}/P_{F},\HG)_{\overline\ZZ[\frac 1p]}$ is connected.
\end{thm}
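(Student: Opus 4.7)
The plan is to assemble the three preceding results: the construction of a central torus cover $\HG' \twoheadrightarrow \HG$ with well-behaved invariants (the second lemma above), the connectedness criterion via smoothness of invariants away from a single auxiliary prime (Proposition \ref{prop:connected}), and the $\pi_0$-surjectivity for morphisms with torus kernel (the first of the two intermediate lemmas).

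First, I would apply the second lemma to $\HG$, whose hypothesis ``$I_F$ preserves a pinning of $\HG$'' is exactly what we are assuming. This produces a $W_F$-equivariant surjection $f : \HG' \twoheadrightarrow \HG$ with torus kernel such that, for every open subgroup $I \subset I_F$, the fixed-point subgroup $(\HG')^I$ has geometrically connected fibers over $\ZM[\frac 1p]$ and is smooth over $\ZM[\frac{1}{2p}]$.

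Next, I would apply Proposition \ref{prop:connected} to $\HG'$ with $\ell_0 = 2$: its hypotheses are precisely what the previous step delivered. The conclusion is that $\uZ^1(W_F^0/P_F, \HG')$ is connected and remains so after base change to any finite flat integral extension of $\ZM[\frac 1p]$. Since $\bar\ZM[\frac 1p]$ is a filtered colimit of such extensions, and formation of $\pi_0$ of an affine scheme commutes with filtered colimits of rings (non-trivial idempotents in a colimit descend to a finite level), this forces $\uZ^1(W_F^0/P_F, \HG')_{\bar\ZM[\frac 1p]}$ to be connected as well.

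Finally, I would invoke the first intermediate lemma, which asserts that $f_* : \uZ^1(W_F^0/P_F, \HG') \to \uZ^1(W_F^0/P_F, \HG)$ induces a surjection on $\pi_0$. Base-changing this statement to $\bar\ZM[\frac 1p]$ (the argument in that lemma goes through verbatim, since it relied only on the vanishing of $H^2(W_F/P_F, \hat S(\oFl))$ and on each component of the target having $\oFl$-points, both of which persist after the base change) and combining with the connectedness of the source finishes the proof. There is no genuine obstacle here; the only non-formal ingredient has already been assembled, and what remains is bookkeeping about descent of connectedness along filtered colimits and along $\pi_0$-surjective morphisms.
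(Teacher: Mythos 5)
Your proposal correctly assembles the three results (the two lemmas and Proposition \ref{prop:connected}) exactly as the paper intends — the theorem is preceded by precisely the instruction ``putting the last two lemmas and Proposition \ref{prop:connected} together.'' The one organizational point worth noting: rather than invoking the $\pi_{0}$-surjectivity lemma directly over the non-Noetherian ring $\bar\ZZ[\frac1p]$ (where one must pause to check that the possibly non-open connected components still have $\oFl$-points), it is slightly cleaner to apply both the lemma and Proposition \ref{prop:connected} over each finite flat integral extension $R$ of $\ZZ[\frac1p]$, conclude connectedness of $\uZ^{1}(W_{F}^{0}/P_{F},\HG)_{R}$ there, and only then pass to the filtered colimit — but this is indeed just the bookkeeping you acknowledge.
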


\begin{cor}  \label{cor:connected_smooth_center}
  If the center of $\HG$ is smooth, then
  $\uZ^{1}(W_{F}^{0}/P_{F},\HG)_{\overline\ZZ[\frac 1p]}$ is connected.
\end{cor}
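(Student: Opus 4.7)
The plan is to reduce the smooth-center case to the pinning-preserving case already handled by Theorem \ref{thm:connected_pinning_preserved}, via the twisting trick behind the isomorphism (\ref{eq:iso}).

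First I would invoke Theorem \ref{prop:pinning_preserving} in our depth-zero situation (where $\phi$ is trivial and $C_{\HG}(\phi)^{\circ} = \HG$): since $Z(\HG)$ is smooth, there exist an integral finite flat extension $R$ of $\ZM[\frac 1p]$ and a cocycle $\varphi \in Z^{1}(W_{F}^{0}/P_{F}, \HG(R))$ with finite image such that ${\rm Ad}_{\varphi}$ twists the original Borel-preserving action of $W_{F}$ on $\HG$ into an action that fixes a pinning of $\HG$.

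Next, the map $\rho \mapsto \rho \cdot \varphi$, exactly as in (\ref{eq:iso}), yields an isomorphism of $R$-schemes
\[
\uZ^{1}_{{\rm Ad}_{\varphi}}(W_{F}^{0}/P_{F}, \HG)_{R} \To\sim \uZ^{1}(W_{F}^{0}/P_{F}, \HG)_{R},
\]
where on the left-hand side $\HG$ is viewed with the twisted (pinning-preserving) action. Applying Theorem \ref{thm:connected_pinning_preserved} to this twisted group shows that the left-hand side, base-changed to $\bar\ZZ[\frac 1p]$ along any embedding $R \hookrightarrow \bar\ZZ[\frac 1p]$, is connected. Transporting this connectedness through the above isomorphism gives the corollary.

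The essential step, and the main obstacle if one proceeded from scratch, is the production of the cocycle $\varphi$ in the first paragraph: this is exactly where the smooth-center hypothesis enters, in complete parallel with Remark \ref{rk_epinglage} and Theorem \ref{thm:pinning_preserving_ell}. Once such a $\varphi$ is at hand, the reduction is formal.
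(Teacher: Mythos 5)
Your proposal is correct and follows the same strategy as the paper's own proof: produce a finite-image cocycle $\varphi$ whose adjoint action fixes a pinning (possible precisely because of the smooth-center hypothesis, as recorded in Remark \ref{rk_epinglage}/Theorem \ref{prop:pinning_preserving}), twist by right multiplication $\rho\mapsto\rho\cdot\varphi$ to identify $\uZ^{1}$ for the original action with $\uZ^{1}_{\Ad_{\varphi}}$ for the pinning-preserving action, and then invoke Theorem \ref{thm:connected_pinning_preserved}. The only cosmetic difference is that you produce $\varphi$ over a finite flat extension $R$ and then base change, whereas the paper cites a cocycle defined directly over $\bar\ZZ[\frac 1p]$; since the statement concerns the base change to $\bar\ZZ[\frac 1p]$ anyway, this is immaterial.
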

\begin{proof}
  Indeed, in this case there is
  $\varphi\in \uZ^{1}(W_{F}^{0}/P_{F},\HG)(\overline\ZZ[\frac 1p])$ such that $\Ad_{\varphi}$
  preserves a pinning (see Remark \ref{rk_epinglage}), and right  multiplication by $\varphi$ provides an
  isomorphism $\uZ^{1}_{\Ad_{\varphi}}(W_{F}^{0}/P_{F},\HG)_{\overline\ZZ[\frac
    1p]}\To\sim\uZ^{1}(W_{F}^{0}/P_{F},\HG)_{\overline\ZZ[\frac 1p]}$. 
\end{proof}

\section{Unobstructed points}


In this section, we fix an algebraically closed field $L$ of
characteristic  $\ell\neq p$. From \ref{sec:exist-unobstr-points} on, we will further
assume that $\ell$ is finite.

\subsection{Deformation Theory}
Here, for an $L$-point $x$ of
$\uZ^1(W_F^0/P_F^e,{\hat G})$, we are interested in  the tangent space
$T_x \uZ^1(W_F^0/P_F^e,{\hat G}_L)$ and, in particular, we wish to
compute its dimension.
We will need the $L$-linear  continuous representation
$\Ad\varphi_{x}$ of $W_{F}^{0}$ on the Lie algebra $\Lie(\HG_{L})$
obtained by composing $\varphiL_{x}$ with the adjoint representation
of $\LG$. 

Recall that an element of $T_x \uZ^1(W_F^0/P_F^e,{\hat G}_L)$ is given
by a map ${\tilde x}: \Spec L[\epsilon]/\epsilon^2 \rightarrow
\uZ^1(W_F^0/P_F^e,{\hat G})$ 
whose composition with the natural map $\Spec L \rightarrow \Spec
L[\epsilon]/\epsilon^2$ is equal to $x$.   In particular, the zero element ${\tilde x}_0$
of $T_x \uZ^1(W_F^0/P_F^e,{\hat G}_L)$ is given by the composition of
$x$ with the natural map $\Spec L[\epsilon]/\epsilon^2 \rightarrow
\Spec L$.
Given such a $\tilde x$ we form a cocycle for $\Ad\varphi_{x}$  as follows:
for each $w \in W_F^{0}$, the element
$\varphiL_{\tilde x}(w) \varphiL_{{\tilde x}_0}(w)^{-1}$ is a tangent
vector to $\hat G$ at the identity element of ${\hat G}(L)$; that is, 
an element of $\Lie({\hat G}_{L})$.  In this way one obtains a
continuous $1$-cocycle $W_{F}^{0}\To{}\Lie({\hat G}_{L})$ that lives in $Z^1(W_F^{0}, \Ad \varphi_x)$, and this sets
up an isomorphism 
\begin{equation}
T_x \uZ^1(W_F^0/P_F^e,{\hat G}_L) \simeq  Z^1(W_F^{0}, \Ad \varphi_x).\label{eq:tangent}
\end{equation}
To compute the dimension of this tangent space, we use the following
familiar-looking cohomological lemma.
\begin{lemma} 
  For any finite dimensional $L$-vector space $V$ with a
  continuous linear action of  $W_{F}^{0}$,  we have  :
  \begin{enumerate}
  \item $H^{i}(W_{F}^{0},V)=0$ for $i>2$,
  \item $\dim H^2(W_F^{0},V) - \dim H^1(W_F^{0},V) + \dim H^0(W_F^{0},V) = 0$
  \item $H^{2}(W_{F}^{0},V)^{*} \simeq H^{0}(W_{F}^{0},V^{*}\otimes\omega)$ where
$\omega$ is the cyclotomic character of $W_F$ and $^{*}$ denotes the
$L$-linear dual.
  \end{enumerate}
\end{lemma}
\begin{proof}
  The open compact subgroup $P_{F}$ of $W^{0}_{F}$ is a pro-$p$ group,
  hence the functor of $P_{F}$-invariants on continuous
  $L$-representations is exact and commutes with taking $L$-linear duals. Hence it suffices to prove (1), (2) and
  (3) for $L$-representations of the discrete group
  $W^{0}:=W_{F}^{0}/P_{F}=\langle s,\Fr\rangle$. To this aim, observe that the equality
  $$ (1-s^{q})(1-\Fr)=(t_{q}-\Fr)(1-s) \hbox{ with } t_q=1+s+\cdots+s^{q-1}$$
in $L[W^{0}]$, enables us to define the following complex :
 $$0\To{} L[W^{0}]\To{\delta} L[W^{0}]\oplus
L[W^{0}]\To{\gamma}L[W^{0}]\To{\varepsilon}L\To{} 0, $$
$$ \hbox{ where}\left\{\begin{array}[c]{l}
    \varepsilon \hbox{   is the augmentation map,} \\
   \gamma(f,g)=f(1-\Fr)-g(1-s) \\
   \delta(h)=\left(h(1-s^{q}),h(t_q-\Fr)\right) 
         \end{array}\right.
       $$
       We claim that this complex is exact.
       Admitting this for now, this gives us a projective resolution of the trivial
  representation, and shows that $H^{*}(W^{0},V)$ is the cohomology of a complex of the form
  $V\To{}V^{\oplus 2}\To{}V$. This implies (1) and (2).  Moreover, this shows that
  $H^{2}(W^{0},V)= V/((1-s^{q})V+(t_q-\Fr)V)$. Observe that the inclusion $(1-s^{q})V\subset
  (1-s)V$ has to be an equality for dimension reasons, since the action of $\Fr$ induces
  an isomorphism $(1-s)V\To\sim (1-s^{q})V$. Similarly, we have
  $(1-s^{q})V=(1-s^{q^{-r}})V$ for all $r\in\NM$.  Denoting $I^{0}:=s^{\ZM[\frac 1q]}$, this
  means that the canonical map $V/(1-s^{q})V\To{} V_{I^{0}}={\rm colim}_{r}V/(1-s^{q^{-r}})V$
  is an isomorphism. Since $t_q$ acts as multiplication by $q$ on $V_{I^{0}}$ this induces in turn an isomorphism
  $$ H^{2}(W^{0},V)\To\sim V_{I^{0}}/(q-\Fr)V_{I^{0}}= (V\otimes\omega^{-1})_{W^{0}},$$
from which we deduce (3).

  Let us now prove the exactness of the above complex. Note first that
  $\delta$ is injective since multiplication by $1-s^{q}$ is injective, and $\varepsilon$ is clearly
  surjective. To see that $\ker\varepsilon=\im\gamma$, it suffices to see that $1-w\in
  L[W^{0}](1-\Fr)+L[W^{0}](1-s)$ for all $w\in W^{0}$, which follows from the fact that $s$ and $\Fr$ generate $W^{0}$.
  It remains to check  that $\ker\gamma=\im\delta$. So let $(f,g)\in \ker\gamma$. If we
  can prove that $f$ has the form $f=h(1-s^{q})$, then $g(1-s)=f(1-\Fr)=h(t_{q}-\Fr)(1-s)$,
  hence $g=h(t_{q}-\Fr)$ since $1-s$ is not a zero divisor, and $(f,g)\in\im\delta$.   Writing
 $f=\sum_{i,j}a_{i,j}\Fr^{i}s^{j}$ with $i\in \ZM$ and $j\in\ZM[\frac 1q]$, it thus
 suffices to prove that $\sum_{k\in\ZM}a_{i,j+qk}=0$ 
 for all $(i,j)$. In the expansion $f(1-\Fr)=\sum_{i,j}b_{i,j}\Fr^{i}s^{j}$, we have
 $b_{i,j}=a_{i,j}-a_{i-1,qj}$. The fact that $f(1-\Fr)\in L[W^{0}](1-s)$ translates into
 $\sum_{k\in\ZM}b_{i,j+k}=0$ for all $i,j$, that is
 $\sum_{k\in\ZM}a_{i,j+k}=\sum_{k\in\ZM}a_{i-1,qj+qk}$ for all $i,j$,
 which we can rewrite as
 $$\forall i,j, \,\, \sum_{k\in\ZM}a_{i,j+qk}=\sum_{k\in\ZM}a_{i+1,j/q+k}.$$
 Writing $k=r+qk'$ in the right hand sum, we get
 $$\sum_{k\in\ZM}a_{i+1,j/q+k}=\sum_{r=0}^{q-1}\sum_{k'\in\ZM}a_{i+1,j/q+r+qk'}=\sum_{r=0}^{q-1}
 \sum_{k'\in\ZM}a_{i+2,j/q^{2}+r/q+k'} =\sum_{k\in\ZM}a_{i+2,j/q^{2}+k/q}.$$
Proceeding by induction, we get for any $s\in\NM$ :
 $$ \forall i,j, \,\, \sum_{k\in\ZM}a_{i,j+qk}=\sum_{k\in\ZM}a_{i+s,(j+qk)/q^{s}} .$$
  But for $s>>0$, the right hand side vanishes, hence so does the left hand side. This
  implies that $\ker\gamma=\im\delta$ and completes the proof that the complex above is exact.
\end{proof}

From this lemma and (\ref{eq:tangent}), we get : 
\begin{prop} \label{prop:tangent}
For an $L$-valued point $x: \Spec L \rightarrow \uZ^1(W_F^0/P_F^e,{\hat G})$, the
dimension of $T_x \uZ^1(W_F^0/P_F^e,{\hat G}_L)$ over 
$L$ is equal to $\dim {\hat G}_{L} + \dim H^0(W_{F}^{0}, (\Ad \varphi_x)^{*} \otimes \omega)$, where
$\omega$ is the cyclotomic character of $W_F$.  
\end{prop}
\begin{proof}
We have seen that the dimension of $T_x \uZ^1(W_F^0/P_F^e,{\hat G}_L)$ is equal to that of
$Z^1(W_{F}^{0}, \Ad \varphi_{x})$.   The  latter is equal to
the dimension of $H^1(W_{F}^{0}, \Ad \varphi_x)$ plus the dimension of the space of coboundaries (principal crossed homomorphisms).  These are
all of the form $w \mapsto wy - y$, where $y$ is an element of $\Ad \varphi_x$.  The dimension of $\Ad \varphi_x$ is equal to $\dim \hat G_{L}$,
and those $y$ that give the zero element of $Z^1(W_{F}^{0},\Ad \varphi_x)$ are precisely those fixed by $W_{F}^{0}$.  Thus we have
$$\dim Z^1(W_{F}^{0}, \Ad \varphi_x) = \dim H^1(W_{F}^{0}, \Ad \varphi_x) + \dim {\hat G}_{L} - \dim H^0(W_{F}^{0}, \Ad \varphi_x).$$
Hence the proposition follows from the last lemma applied with $V=\Ad\varphi_{x}$.
\end{proof}

\begin{cor}
  The point $x$ is a smooth point of
$\uZ^1(W_F^0/P_F^e,{\hat G}_L)$ if and only if $H^0(W_{F}^{0}, (\Ad \varphi_x)^{*} \otimes \omega)=0$.
\end{cor}
\begin{proof}
  We know that the algebraic $L$-scheme $\uZ^1(W_F^0/P_F^e,{\hat G}_L)$ has pure dimension
  $\dim\HG_{L}$. Therefore the local ring at the closed point $x$ has dimension $\dim\HG_{L}$, while
  its tangent space has dimension $\dim\HG_{L}+\dim H^0(W_{F}^{0}, (\Ad \varphi_x)^{*} \otimes \omega)$
  by the last proposition.
\end{proof}

\begin{rem}
  It is interesting to note that the obstruction theory naturally suggested by the moduli
  problem is ``optimal'', in that it faithfully detects smoothness of points.
Namely, let $A$ be a finite length local $L$-algebra with residue field $L$, $\tilde x: \Spec A \rightarrow \uZ^1(W_F^0/P_F^e,{\hat G}_L)$
a map whose composition with the map $\Spec L \rightarrow \Spec A$ is equal to $x$, and let $A'$ be a small extension of $A$; that is, a finite length local $L$-algebra
with residue field $L$, and a principal ideal $I \subseteq A'$ such that $I$ is annihilated by the maximal ideal of $A'$, and an isomorphism $A'/I \cong A$. The
problem of lifting $\tilde x$ to a map $\tilde x': \Spec A' \rightarrow
\uZ^1(W_F^0/P_F^e,{\hat G}_L)$ is equivalent to lifting  the  $1$-cocycle
$\varphi_{\tilde x}:W_{F}^{0}/P_{F}^{e}\To{} \HG(A)$ to a $1$-cocycle
$\varphi_{\tilde x'}:W_{F}^{0}/P_{F}^{e}\To{} \HG(A')$.
This problem is standard: let $\varphi'$ be any lift of $\varphi_{\tilde x}$ to a
continuous {\em function} (not necessarily 
a cocycle): $W_{F}^{0} \rightarrow \HG(A')$.  Then the map taking $w_1,w_2 \in W_{F}^{0}$ to $\varphiL'(w_1w_2)^{-1}\varphiL'(w_1)\varphiL'(w_2)$ is a $2$-cocycle with values in
$(\Ad \varphi_x) \otimes I$, and we can adjust our choice of $\varphi'$ to yield a
$1$-cocycle $\varphi_{\tilde x'}$ lifting $\varphi_{\tilde x}$ if, and only if, this
$2$-cocycle is 
a coboundary.  We thus obtain an obstruction theory for $\uZ^1(W_F^0/P_F^e,{\hat G}_{L})$ in a
formal neighborhood of $x$ with values in $H^2(W_{F}^{0}, \Ad
\varphi_x)=H^{0}(W_{F}^{0},(\Ad\varphi_{x})^{*}\otimes\omega)$.  
Now, since an unobstructed point is smooth, the last corollary says that the 
obstruction to lifting vanishes if and only if the space which it
naturally belongs to vanishes. \emph{For this reason, we will indifferently
use the words ``unobstructed'' or ``smooth''  to denote
these points in the rest of this section.}
\end{rem}


\subsection{Existence of unobstructed points} \label{sec:exist-unobstr-points}


The primary goal of this section is to show that if the characteristic $\ell$ of $L$ does not lie in
an explicit finite set (depending only on $\HG$ and its $W_{F}$-action), the fiber $\uZ^1(W_F^0/P_F^e,{\hat G}_L)$ is generically smooth.  We have already established this smoothness in characteristic zero,
\emph{so we assume henceforth that $L$ has finite characteristic $\ell$}. In this case,
the restriction map $\uZ^1(W_F/P_F^e,{\hat G}_L)\To\sim\uZ^1(W_F^0/P_F^e,{\hat G}_L)$ is
an isomorphism, so there is no need to distinguish between $W_{F}^{0}$ and $W_{F}$.

\begin{notn}
  Let $\varphi: W_F \rightarrow \HG(L)$ be a continuous $1$-cocycle.
 For any $g \in C_{\HG}(\varphi_{|I_{F}})(L)$ there is a unique continuous  $1$-cocycle
 $\varphi^g$, whose restriction to $I_F$ is 
  $\varphi_{|I_{F}}$, and such that $\varphi^g(\Fr) = g\varphi(\Fr)$.
\end{notn}
The rough version of our main result here is :

\begin{thm} \label{thm:unobstructed}
 There is a finite set of primes $S$, depending only on $\HG$ and the image of $W_{F}$ in
 ${\rm Out}(\HG)$, such that, if
$\ell\notin S$, then for any 
continuous $1$-cocycle  $\varphi: W_F \rightarrow \HG(L)$, there exists a $g \in
C_{\HG}(\varphi_{|I_{F}})^\circ(L)$ such that $\varphi^g$ is unobstructed.
\end{thm}
\def\XX{{\mathbb X}}
 \def\MM{{\mathbb M}}
 In order to state a more precise version, we need notations
  (\ref{eq:defchi}) and (\ref{eq:defchistar}) of
 the appendix.
 In particular, $h_{\HG,1}$ is the Coxeter number of the root system
 of $\HG$.

 \begin{thm} \label{thm:unobstructed_explicit}
    Let $\Fr$ be a lift of Frobenius in $W_{F}$, and denote by $e$ the tame
   ramification index of the finite extension of $F$ whose Weil group is 
 the kernel of $W_{F}\To{}{\rm Out}(\HG)$. Then the set $S$ in Theorem
   \ref{thm:unobstructed} can be taken as 

   (1) $S=\{\hbox{primes } \ell$ dividing $e. \chi_{\HG,\Fr}^{*}(q)\}$, whatever $\HG$ is.

   (2) $S=\{\hbox{primes } \ell$ dividing $e. \chi_{\HG,\Fr}(q) .(h_{\HG,1})!\}$
   if $\HG$ has no exceptional factor.
 \end{thm}

 Here, ``exceptional'' includes triality forms of $D_{4}$. Note also that $\ell$ not
 dividing $\chi_{\HG,\Fr}^{*}(q)$ is equivalent to
  $q$ having order greater than $h_{\HG,\Fr}$ in $\FF_{\ell}^{\times}$, which implies $\ell>h_{\HG,\Fr}$ hence also
  $\ell> h_{\HG,1}$.  We will also prove that, in the case where $\HG$ has no exceptional
  factor and the action of $W_{F}$ is unramified  and $\ell>h_{\HG,1}$,  
the condition $\chi_{\HG,\Fr}(q)\neq 0$ in $\FF_{\ell}$ is also necessary to have generic smoothness.

\medskip

We now start the proofs of Theorems \ref{thm:unobstructed} and
\ref{thm:unobstructed_explicit}.  Fix a $\varphi$  as in Theorem \ref{thm:unobstructed}; our
first step will be to reduce to a setting in which the action of $W_{F}$ on $\HG$ is
unramified and stabilizes a pinning, and the image of $\varphi_{|I_{F}}$ is unipotent.

Denote by $\phi^{\ell}$ the restriction of $\varphi$ to $I_{F}^{\ell}$ and by
$\alpha^{\ell}$ the composition
$W_{F}\To{\varphi}C_{\LG}(\phi^{\ell})\To{}\tilde\pi_{0}(\phi^{\ell})$, so that $\varphi$
lies in the closed subscheme
$\uZ^{1}(W_{F}/P_{F}^{e},\HG_{L})_{\phi^{\ell},\alpha^{\ell}}$, as defined in subsection \ref{sec:decomp-at-ell}.
Then, according to Theorem \ref{thm:connectedZl}, the connected
component of $\uZ^{1}(W_{F}/P_{F}^{e},\HG_{L})$ that contains $\varphi$ has the form
$$ \HG\times^{C_{\HG}(\phi^{\ell})^{\circ}} \uZ^{1}(W_{F}^{0}/P_{F}^{e},\HG_{L})_{\phi^{\ell},\alpha^{\ell}}.$$
Thus we see that $\varphi$ is a smooth point of $\uZ^{1}(W_{F}^{0}/P_{F}^{e},\HG_{L})$ if and only if
it is a smooth point of
$\uZ^{1}(W_{F}/P_{F}^{e},\HG_{L})_{\phi^{\ell},\alpha^{\ell}}$.

By Theorem \ref{Borel_preserving_ell}, there exists
$\varphi'\in Z^{1}(W_{F}/P_{F}^{e},\HG(L))_{\phi^{\ell},\alpha^{\ell}}$ such that the
action of $W_{F}$ on $C_{\HG}(\phi^{\ell})^{\circ}$ via 
$\Ad_{\varphi'}$   preserves a Borel pair. Actually, we a have better result in this setting :
\begin{prop} \label{prop:pinning_preserving_acf}
  We can choose $\varphi'\in Z^{1}(W_{F}/P_{F}^{e},\HG(L))_{\phi^{\ell},\alpha^{\ell}}$ so
  that the action of $W_{F}$ on $C_{\HG}(\phi^{\ell})^{\circ}$ via  
  $\Ad_{\varphi'}$   preserves a pinning.
\end{prop}
\begin{proof}
 We take up the  proof of Proposition \ref{Borel_preserving_acf}, replacing $P_{F}$ by
 $I_{F}^{\ell}$ and ``Borel pair'' by ``pinning''. Since the fixator of a pinning of 
$C_{\HG}(\phi^{\ell})^{\circ}$ under conjugation is the center 
 $Z$ of $C_{\HG}(\phi^{\ell})^{\circ}$, the argument of that proof shows that the
 obstruction to the existence of a cocycle $\varphi'$ as in this proposition lies in the
 group $H^{2}(W_{F}/I_{F}^{\ell},Z(L))$. On the other hand, repeating the proof of Lemma
 \ref{cohomological_lemma} for  $W_{F}/I_{F}^{\ell}$ instead of $W_{F}/P_{F}$ shows that
 this cohomology group vanishes if $Z(L)$ can be proved to be  $\ell$-divisible.

To prove this, recall that the group scheme $Z$ is diagonalisable and let $M$ be its
character group. Then we have a
(non canonical) decomposition $M\simeq M_{\ell-\rm tors}\times M_{\ell'-\rm tors}\times
M_{\rm free}$ which induces a decomposition $Z\simeq Z_{\ell} \times Z^{\ell} \times T_{Z}$, where
$T_{Z}$ is a torus, $Z^{\ell}$ is finite smooth and $Z_{\ell}$ is finite infinitesimal.
Correspondingly we get $Z(L)\simeq  Z^{\ell}(L)\times T_{Z}(L)$.
Now, $T_{Z}(L)$ is clearly $\ell$-divisible since $L$ is
algebraically closed and $Z^{\ell}(L)$ has prime-to-$\ell$ order hence is also $\ell$-divisible.
\end{proof}

Choose $\varphi'$ as in this proposition and recall that the action $\Ad_{\varphi'}$ on $C_{\HG}(\phi^{\ell})$
factors over the quotient $W_{F}/I_{F}^{\ell}$. Then we have an isomorphism $\eta\mapsto \eta\cdot\varphi'$
$$\uZ^{1}_{\Ad_{\varphi'}}(W_{F}/I_{F}^{\ell},C_{\HG}(\phi^{\ell})^{\circ}) \To\sim
\uZ^{1}(W_{F}/P_{F}^{e},\HG_{L})_{\phi^{\ell},\alpha^{\ell}},
$$
The isomorphism above shows that $\varphi$ is an unobstructed point of
$\uZ^{1}(W_{F}/P_{F}^{e},\HG_{L})$ if, and only if, $\varphi\cdot{\varphi'}^{-1}$ is
an unobstructed point of
$\uZ^{1}_{\Ad_{\varphi'}}(W_{F}/I_{F}^{\ell},C_{\HG}(\phi^{\ell})^{\circ})$.
So we
are reduced to study unobstructedness in a much simpler case, but in order to make this reduction step effective, we
need some control on  $\Ad_{\varphi'}$.

\begin{lemma}\label{lemma:control_ad}
  Fix $\varphi'$ as in Proposition \ref{prop:pinning_preserving_acf}, let $w\in W_{F}$
  and denote by  $o_{w}$ its order in ${\rm Out}(\HG)$. Then
 $\Ad_{\varphi'}(w)$ has order dividing $o_{w}|\Omega_{\HG}|$ in ${\rm
   Aut}(C_{\HG}(\phi^{\ell})^{\circ})$, where $\Omega_{\HG}$ denotes the Weyl group of $\HG$. 
\end{lemma}
\begin{proof}
  Put $\hat H:=C_{\HG}(\phi^{\ell})^{\circ}$ and let $T_{\hat H}$ be a maximal torus of
  $\hat H$ that is
part of a pinning stable under $\Ad_{\varphi'}$. Pick a maximal torus $\HT$ of $\HG$ that
contains $T_{\hat H}$. Then there is an element $m$ of the centralizer of $T_{\hat H}$ in $\HG$ such
that $g_{w}:=m  {^{L}\varphi'}(w)$ normalizes $\HT$.
The action of  $(g_{w})^{o_{w}}$ on $X^{*}(\HT)$ is the action of an element of $\HG$ that normalizes
$\HT$, so its order divides $|\Omega_{\HG}|$. Hence,
$\Ad_{\varphi'}(w)^{o_{w}|\Omega_{\HG}|}$ acts trivially on $T_{\hat H}$ and therefore also on
$\hat H$, since it stabilizes a pinning and fixes the maximal torus of this pinning.
\end{proof}

As in Theorem \ref{thm:unobstructed_explicit}, denote by $e$ the tame ramification index
of the finite extension of $F$ whose Weil group is 
the kernel of $W_{F}\To{}{\rm Out}(\HG)$. Applying this lemma to a suitable lift of our
generator $s$ of tame inertia, we see that,
\emph{if we assume that $\ell$ is prime to $e |\Omega_{\HG}|$} (which is satisfied if
$\ell$ is prime to $e$ and $\ell>h_{\HG,1}$), then
the action $\Ad_{\varphi'}$ is \emph{unramified}.
For this reason, we will now focus on the following particular setting :
\begin{equation}
  \label{eq:setting}
  \left\{
    \begin{array}[c]{l}
      \hbox{-- the action of $W_{F}$ on $\HG$ is unramified and stabilizes a pinning,}\\
      \hbox{-- the restriction of $\varphi$ to ${I_{F}^{\ell}}$ is trivial.}
    \end{array}
  \right.  
\end{equation}
In this setting, $\LG$ is the Langlands dual group of a uniquely determined quasi-split
unramified reductive group $G$ over $F$, and the restriction  $\varphi_{|I_{F}}$ is
determined by $u:=\varphi(s)$ which is a unipotent element of $\HG(L)$. By definition, the
cocycle $\varphi$ corresponds to an  unobstructed point if, and only if, $q^{-1}$ is not
an eigenvalue of $(\Ad_{\varphi})^{*}(\Fr)$ on $(\Lie\HG_{L})^{*,\Ad^{*} u}$.

\begin{lemma} \label{lemma:isogenies}
  Let $\HG$ and $\varphi$ be as in (\ref{eq:setting}), put $u:=\varphi(s)$, and assume
  that $\ell$ is prime to $|\pi_{1}(\HG_{\rm ad})|$. Endow the
  reductive group $\HG':=\HG_{\rm ad}\times\HG_{\rm ab}$ with the unique action of $W_{F}$
  that makes the isogeny $\pi:\HG\To{}\HG'$ equivariant, and put  $\varphi':=\pi\circ\varphi$.
  Then there is $g\in C_{\HG}(u)^{\circ}(L)$ such that $\varphi^{g}$ is
  unobstructed if, and only if, there is $g'\in C_{\HG'}(\pi(u))^{\circ}(L)$ such that ${\varphi'}^{g'}$
  is unobstructed.
\end{lemma}
\begin{proof}
As with any isogeny, $\pi$ induces an isomorphism from the unipotent subvariety of $\HG$ to
that of $\HG'$. In particular, the map $C_{\HG}(u)(L)\To{\pi}C_{\HG'}(\pi(u))(L)$ is surjective for
all unipotent $u\in\HG(L)$, and so is the map $C_{\HG}(u)^{\circ}(L)\To{\pi}C_{\HG'}(\pi(u))^{\circ}(L)$.
Therefore, it suffices to prove that $\varphi$ is unobstructed if and only if $\varphi'$
is unobstructed. Note that $\ker\pi=\ker(\HG_{\rm der}\To{}\HG_{\rm ad})$  is a finite
diagonalisable group scheme
whose order divides the order
of $\pi_{1}(\HG_{\rm ad})$, hence is prime to $\ell$ by our assumption.
So $\pi$ is a separable isogeny and $d\pi_{L}$ induces a
$(\Ad{\varphi},\Ad\varphi')$-equivariant isomorphism 
$\Lie(\HG_{L})\To\sim \Lie(\HG'_{L})$, and the desired property follows.
\end{proof}

 \begin{rem}\label{rem:lift}
   Let $\pi : \HG\To{}\HG'$ be as in the lemma or, more generally, any surjective morphism
   with central $\Fr$-stable kernel. Then any $\varphi'\in
   Z^{1}(W_{F}/I_{F}^{\ell},\HG')$ lifts through $\pi$, in the sense that there is some
   $\varphi \in   Z^{1}(W_{F}/I_{F}^{\ell},\HG)$ such that $\varphi'=\pi\circ\varphi$.
   Indeed, let $\varphi(s)\in \HG(L)$ be the unique unipotent element above $\varphi'(s)$,
   and let $\varphi(\Fr)\in \HG(L)$ be any  lift of  $\varphi'(\Fr)$. Then
   $(\varphi(\Fr)\rtimes\Fr)\varphi(s)(\varphi(\Fr)\rtimes\Fr)^{-1}$ is unipotent and above
     $\varphi'(s)^{q}$, so it is equal to $\varphi(s)^{q}$.
 \end{rem}


This lemma allows us to further reduce the 
setting (\ref{eq:setting}) to the cases where $\HG$ is a torus or an adjoint group.
 Dealing with tori is quite easy :
 \begin{lemma} \label{lemma:torus}
   If $\HG$ as in (\ref{eq:setting}) is a torus, then the following are equivalent :
   \begin{enumerate}
   \item there is an unobstructed $\varphi$ in $\uZ^{1}(W_{F}/I_{F}^{\ell},\HG_{L})$,
   \item any $\varphi$ in $\uZ^{1}(W_{F}/I_{F}^{\ell},\HG_{L})$ is unobstructed,
   \item $\chi_{\HG,\Fr}(q)\neq 0$ in $L$.
   \end{enumerate}

 \end{lemma}
 \begin{proof}
   For any $\varphi$ in $Z^{1}(W_{F}/I_{F}^{\ell},\HG(L))$, we have $\varphi(s)=1$ and
   $^{L}\varphi(\Fr)\in \HG(L)\rtimes\Fr$, so the condition for $\varphi$ to be unobstructed is
   that $H^{0}(\langle\Fr\rangle,\omega\otimes\Lie(\HG_{L})^{*})=0$, which is independent of $\varphi$, and 
   equivalent to $q^{-1}$ not being an eigenvalue of $(\Ad_{\Fr})^{*}$ on $\Lie(\HG_{L})^{*}$. Since
   $\Lie(\HG_{L})^{*}=X^{*}(\HG)\otimes L$, we have
   $$\det\left(q(\Ad_{\Fr})^{*}-\id | \Lie(\HG_{L})^{*}\right)=
   \det\left(q\Fr-\id | X^{*}(\HG)\right)_{L}= \chi_{\HG,\Fr}(q^{-1})_{L}$$
   where the subscripts $L$ denote the image of an integer in $L$.
   Hence we see that $q^{-1}$ is an eigenvalue of $\Ad_{\Fr}$ if and only if
   $\chi_{\HG,\Fr}(q^{-1})=0$ in $L$, which is equivalent to $\chi_{\HG,\Fr}(q)=0$ in $L$
   since $\chi_{\HG,\Fr}$ is a product of cyclotomic polynomials.
\end{proof}

Let us now deal with the adjoint part. We have a $\Fr$-equivariant decomposition as a
product of simple adjoint groups 
\begin{equation}
\HG_{\rm ad} = \underbrace{\HG_{11}\times\cdots \times  \HG_{1f_{1}}}_{\HG_{1}}\times
\cdots \times \underbrace{\HG_{r1}\times\cdots \times \HG_{rf_{r}}}_{\HG_{r}}\label{eq:decompsimple}
\end{equation}
where $\Fr$ permutes cyclically  $\HG_{i1} \to \HG_{i2}\to\cdots\to \HG_{if_{i}}$ and
$\Fr^{f_{i}}$ restricts to an outer automorphism of $\HG_{i1}$. Accordingly, $\varphi$
decomposes as a product $\varphi_{1}\times\cdots\times\varphi_{r}$ with $\varphi_{i}\in
Z^{1}(W_{F},\HG_{i}(L))$
and we see that $\varphi$ is unobstructed if and only if each $\varphi_{i}$ is unobstructed.
Denote by $F_{f_{i}}$  the unramified extension of degree $f_{i}$ of $F$.  Then we have a
Shapiro morphism $\uZ^{1}(W_{F},\HG_{i})\To{} \uZ^{1}(W_{F_{f_{i}}},\HG_{i1})$   
 given by $\varphi_{i}\mapsto \varphi'_{i}:=\pi_{i1}\circ (\varphi_{i})_{|W_{F_{f_{i}}}}$, where
 $\pi_{i1}$ is the projection onto $\HG_{i1}$.
 \begin{lemma}
  The Shapiro morphism $\uZ^{1}(W_{F},\HG_{i})\To{} \uZ^{1}(W_{F_{f_{i}}},\HG_{i1})$ is smooth.
\end{lemma}
\begin{proof}
  Denote  by $\varphi_{ij}:=\pi_{ij}\circ
  \varphi_{i}$ the $j$-th component of $\varphi_{i}$ and define $\uZ_{i1}$ to be the
  affine scheme whose $R$-points are given  by
  $$ \uZ_{1i}(R):=\left\{f: W_{F}\To{} \HG_{i1}(R),\forall w'\in
    W_{F_{f_{i}}}, f(w'w)=f(w')\cdot{^{w'}f}(w)\right\}$$
  for any $\ZM[\frac 1p]$-algebra $R$.  We claim that the map
  $\varphi_{i}\mapsto \varphi_{i1}$ induces an isomorphism
  $ \uZ^{1}(W_{F},\HG_{i})\To\sim \uZ_{i1}$.
Indeed, denoting by $\tilde\Fr$ a lift of $\Fr$ in $W_{F}$, the cocycle condition on $\varphi_{i}$
  implies that for any integer $j$ we have
  $$ \varphi_{i}(w)= {^{\Fr^{j}}}
  \left(\varphi_{i}(\tilde\Fr^{-j})^{-1}\varphi_{i}(\tilde\Fr^{-j}w)\right).$$
  Taking the $j$-th component, this shows that  $\varphi_{ij}$ is determined by
  $\varphi_{i1}$ and this gives a formula for the putative inverse to $\varphi_{i}\mapsto \varphi_{i1}$.
  Namely, given $f\in \uZ_{i1}(R)$, define $\varphi_{i} :
  W\To{}\HG_{i}(R)$ component-wise by $\varphi_{ij}(w):={^{\Fr^{j}}}
  \left(f(\tilde\Fr^{-j})^{-1}f(\tilde\Fr^{-j}w)\right)$. Then a computation shows that
  $\varphi_{i}\in Z^{1}(W_{F},\HG_{i}(R))$ and that this defines the desired inverse
  isomorphism.
  But now, the map $\varphi_{i1} \mapsto ( (\varphi_{i1})_{|W_{F_{f_{i}}}},
  \varphi_{i1}(\tilde\Fr),\cdots, \varphi_{i1}(\tilde\Fr^{f_{i}-1}))$
  defines an isomorphism
  $\uZ_{i1}\To{}\uZ^{1}(W_{F_{f_{i}}},\HG_{i1})\times (\HG_{i1})^{f_{i}-1}$
  and the Shapiro morphism becomes the projection on the first factor.
\end{proof}

  Resuming the discussion above the lemma,  we see that $\varphi$ is unobstructed if,
and only if, for each $i=1,\cdots,r$, the cocycle
$\varphi'_{i}:\, W_{F_{f_{i}}}\To{}\HG_{i1}(L)$ is unobstructed.
On the other hand, it follows from the definitions that
$\chi_{\HG_{i},\Fr}(T)=\chi_{\HG_{i1},\Fr}(T^{f_{i}})$ so that, coming
back to a general $\HG$, we have the following equality :
\begin{equation}
  \label{eq:chi_decomp}
  \chi_{\HG,\Fr}(T)= 
  \chi_{\HG_{\rm ab},\Fr}(T) \chi_{\HG_{11},\Fr^{f_{1}}}(T^{f_{1}})\cdots \chi_{\HG_{r1},\Fr^{f_{r}}}(T^{f_{r}}). 
\end{equation}
In this way,  we are reduced to study the case where $\HG$ is \emph{simple} and \emph{adjoint}.

\subsection{The simple adjoint case}
In light of the above discussion, \emph{we will now focus on the case
  where $\HG$ is simple adjoint in the  setting  (\ref{eq:setting}).}

In this case, it will come in handy to express the unobstructedness condition on the adjoint
representation, as opposed to the coadjoint one. Recall
that a prime $\ell$ is \emph{good} for $\HG$ if it does not divide the
coefficient of any root of $\HG$ when expressed as a linear combination of simple
roots. Moreover, $\ell$ is called \emph{very good} if it is good and does
not divide the order of the fundamental group of the root system of $\HG$. 

\begin{thm}[Springer-Steinberg]
  Suppose $\HG$ is simple adjoint and $\ell$ is very good for $\HG$. Then a suitable rational multiple of the  Killing
  form on $\Lie\HG_{\rm sc}$ induces a non-degenerate bilinear form on $\Lie \HG_{L}$.
\end{thm}
\begin{proof}
According to \cite[p.180]{Spr_St} (see also \cite[\S 5]{Gross_Nebe}), the discriminant of
the Killing form on $\Lie\HG_{\rm sc}$ divided by $2$ times the dual Coxeter number of $\HG$ is
prime to $\ell$. Moreover, since $\ell$ does not divide the degree of the isogeny
$\HG_{\rm sc}\To{}\HG$, this isogeny induces an isomorphism  $\Lie(\HG_{\rm sc})_{L}\To\sim
\Lie\HG_{L}$.
\end{proof}
Since the Killing form is invariant under the automorphism group of $\HG$, we see that
a cocycle $\varphi$ corresponds to an  unobstructed point if, and only if, $q^{-1}$ is not
an eigenvalue of $\Ad_{\varphi}(\Fr)$ on $(\Lie\HG_{L})^{\Ad u}$.

Note that
$(\Lie\HG_{L})^{\Ad u}$ is the Lie algebra of the scheme-theoretic centralizer $C_{\HG_{L}}(u)$ of $u$ in
$\HG_{L}$, which may not be reduced. Following standard notation, we denote by $\HG_{u}$
the \emph{reduced} centralizer of $u$, which is a closed smooth algebraic subgroup of
$\HG_{L}$.  The following result of Slodowy will be useful in our discussion below. 
\begin{thm}\cite[p.38]{Slodowy} \label{thm:slodowy}
 If $\ell$ is very good for $\HG$, then $C_{\HG_{L}}(u)$ is smooth for all unipotent
 elements $u\in\HG(L)$, so that $C_{\HG_{L}}(u)=\HG_{u}$ and
 $(\Lie\HG_{L})^{\Ad u}=\Lie \HG_{u}$.
\end{thm}

Our arguments below will extensively make use of
the following tool to construct points in $\uZ^{1}(W_{F}/I_{F}^{\ell},\HG)$. 
Assume that we are given 
\begin{itemize}
\item a homomorphism $\lambda: \SL_2 \rightarrow {\hat G}_{L}$ and
\item an element $\CF\in (\HG(L)\rtimes\Fr)_{\lambda}$, i.e. an element of $\LG(L)$ that centralizes $\lambda$
  and projects to $\Fr$.
\end{itemize}
Then there is a unique $1$-cocycle 
$\varphi:\,W_{F}/I_{F}^{\ell}\To{}\HG(L)$ such that
\begin{equation}
 \varphi(s)=\lambda(U) \hbox{ and } \varphiL(\Fr)=\lambda(S)\CF,\label{eq:nice_form}
 \end{equation}
where $S$ and $U$ denote the matrices $\left(\begin{smallmatrix}
  q^{\frac{1}{2}}&0\\0&q^{-\frac{1}{2}}\end{smallmatrix}\right)$ and 
$\left(\begin{smallmatrix} 1&1\\0&1\end{smallmatrix}\right)$ in
$\SL_2(L)$, respectively, and where $q^{\frac 12}$ is a choice of a square root of $q$ in
$L$.

However, we will need a condition to ensure  exhaustivity of this construction.
Recall that over characteristic zero fields, for any unipotent element $u$ in $\HG(L)$ there is a
homomorphism $\lambda: \SL_2 \rightarrow {\hat G}_{L}$ such that $\lambda(U) = u$
and, moreover, $\lambda$ is
unique up to ${\hat G}_{L}$-conjugacy.
In finite characteristic $\ell$, the situation  is more subtle. An obvious necessary
condition for the existence of $\lambda$ is that $u$ have order $\ell$. When $\ell$ is good
for $\HG$, this was proven to
be sufficient by Testerman in \cite{testerman}.
In order to study uniqueness, Seitz~\cite{seitz} has introduced the following notion :
 a morphism $\lambda: \SL_2 \rightarrow {\hat G}_L$ over $L$ is a ``good $\SL_{2}$''
if the weights of the conjugation action of the maximal torus
$T_2 \subset \SL_{2}$ on $\Lie({\hat G})$ are bounded above by $2\ell - 2$ (here we identify
$T_{2}$ to $\mathbb{G}_{m}$ via the map $\left(\begin{smallmatrix} z
    &0\\ 0&z^{-1 }\end{smallmatrix}\right)\mapsto z$
). 

\begin{thm}[\cite{seitz}, Theorems 1.1 and 1.2]
Suppose $\HG$ is simple adjoint and $\ell$ is a good prime for $\hat
G$, and let $u$ be a unipotent element of ${\hat G}(L)$ of order
$\ell$.  Then there is a ``good $\SL_2$'' $\lambda: \SL_2 \rightarrow
{\hat G}_{L}$ such that $\lambda(U) = u$.  Moreover, any two such $\lambda$ are conjugate  by an $L$-point of the unipotent radical $R_u({\hat G}_u)$.  Finally, the centralizer ${\hat G}_{\lambda}$
of $\lambda$ in ${\hat G}$ is reductive, and $\hat G_u = {\hat G}_{\lambda} R_u({\hat G}_u)$.
\end{thm}

In order to ensure that all non-trivial unipotent elements of $\HG(L)$ have order $\ell$, we will
henceforth assume that
\begin{center}
  \emph{$\ell> h$, where $h=h_{\HG,1}$ is the Coxeter number of $\HG$.}
\end{center}
Indeed, since $h$ is one plus the height of the highest positive root of $\HG$, it follows  from 
Proposition 3.5 of~\cite{seitz} and the Bala-Carter classification, that any nontrivial
unipotent element of ${\hat G}(L)$ has order $\ell$ under this hypothesis. Moreover, such
an $\ell$ is also automatically good for $\HG$, so that Seitz'
theorem applies to any $u$ under this hypothesis, and even very
good for $\HG$, so that Slodowy's theorem \ref{thm:slodowy} also holds.

\begin{cor} \label{cor:reduc-to-nice-form}
  Let $\HG$ and $\varphi$ be as in (\ref{eq:setting}) with $\HG$
  simple adjoint, and
  suppose that $\ell>h_{\HG,1}$. Then there is $g\in (\HG_{u})^{\circ}(L)$ such that
  $\varphi^{g}$ is of the form (\ref{eq:nice_form}) associated to a pair
  $(\lambda,\CF)$ such that $\CF$ normalizes a Borel pair (or even a pinning) of $(\HG_{\lambda})^{\circ}$. 
\end{cor}
\begin{proof}
      Let us choose a ``good $\SL_2$'' $\lambda: \SL_2 \rightarrow {\hat G}_{L}$ with $\lambda(U) = u:=\varphi(s)$.
 Set $\CF_1: = \lambda(S)^{-1} . {^{L}\varphi}(\Fr) $. 
Then $\CF_1\in \HG\rtimes\Fr$ centralizes $u$, so $^{\CF_1}\lambda$ is a second ``good $\SL_2$'' that takes
$U$ to $u$.  
Since any two such are conjugate by an element centralizing $u$, 
we have a unipotent element $u' \in R_u({\hat G}_{u})$ such that
$^{u'}\lambda={^{\CF_1}\lambda}$; then $\CF_2 = {u'}^{-1} \CF_1$ centralizes $\lambda$ and, in
particular, normalizes $(\HG_{\lambda})^{\circ}$. 
Choose a pinning $\varepsilon$
in $({\hat G}_{\lambda})^\circ$; then there
exists $h \in ({\hat G}_{\lambda})^\circ(L)$ such that $^{h}\varepsilon={^{\CF_{2}}\varepsilon}$.
Then $\CF := h^{-1} \CF_2$ 
still centralizes $\lambda$, and preserves $\varepsilon$.
Now,
$$ {^{L}\varphi}(\Fr)=  \lambda(S) u' h\CF = (\lambda(S) u'h \lambda(S)^{-1})(\lambda(S)\CF)$$
with $u'$ in the unipotent radical of ${\hat G}_{u}$ and $h$ in $({\hat G}_{\lambda})^\circ(L)$.
Thus $hu'$ lies in $(\HG_{u})^\circ(L)$, and since $\lambda(S)$ normalizes $({\hat G}_u)^{\circ}$, it
follows that $ \lambda(S)\CF \in ({\hat G}_{u})^\circ(L). {^{L}\varphi}(\Fr)$.
\end{proof}

We now consider a particular case, which shows that the condition $\chi_{\HG,\Fr}(q)\neq 0$
in $L$ is necessary for the existence of unobstructed translates.

\begin{prop}\label{prop:regular-unip}
  Let $\HG$ be simple adjoint,
  and assume that $\ell>h_{\HG,1}$. Then there exists $\varphi$ as in
  (\ref{eq:setting}) such that $\varphi(s)$ is regular unipotent. Moreover, the
  following properties are equivalent :
  \begin{enumerate}
  \item There is an unobstructed $\varphi$ such that $\varphi(s)$ is regular unipotent.
  \item Any $\varphi$ with $\varphi(s)$ regular unipotent is unobstructed.
  \item $\chi_{\HG,\Fr}(q)\neq 0$ in $L$.
  \end{enumerate}
\end{prop}
\begin{proof} Fix a pinning $(\hat T, \hat B, (X_{\alpha})_{\alpha\in\Delta})$ stable
  under $\Fr$. The sum  $E=\sum_{\alpha\in\Delta }X_{\alpha}$ is a regular nilpotent
  element of $\Lie(\HG)$, which is fixed by $\Fr$. Moreover,
  the sum $H=\sum_{\beta\in\Phi^{+}}
  \beta^{\vee}\in \Lie(\hat T_{L})$ is also fixed by $\Fr$ (here $\Phi^{+}$
  denotes the set of positive roots and we denote by $\check\beta$ the
  image of the associated coroot in $\Lie(\HT_{L})\simeq
  X_{*}(\HT)\otimes L$). Then the pair $(H,E)$ is part of a unique
  principal $\mathfrak{s}\mathfrak{l}_{2}$-triple, which is also fixed under $\Fr$.
  Now, pick a regular  unipotent $u\in\HG(L)$ and a good $\SL_{2}$, say $\lambda:\,\SL_{2}\To{}\HG_{L}$,
  such that  $\lambda(U)=u$. Then, evaluating $d\lambda$ on the
  standard basis $\mathfrak{s}\mathfrak{l}_{2}$ yields another
  principal $\mathfrak{s}\mathfrak{l}_{2}$-triple. The latter has to
  be  conjugate  to $(F,H,E)$ by some element $g\in\HG(L)$, which  means that, after
  conjugating by $g$, we may assume that $\lambda$ (and therefore $u$) is
  fixed by $\Fr$. Then we can construct $\varphi$ as desired by putting
  $\varphi(s):={\lambda(U)}$ and $\varphiL(\Fr):={\lambda(S)}\rtimes \Fr$.

  If $\varphi'$ is another cocycle with $\varphi'(s)$ regular unipotent, then we
  may conjugate it so that $\varphi'(s)=\varphi(s)=u$, and this does not affect the property
  of being unobstructed. Then $^{L}\varphi'(\Fr)={^{L}\varphi}(\Fr)g$ for some $g\in\HG_{u}(L)$, and
  $\varphi'$ is unobstructed if and only if $q^{-1}$ is not an eigenvalue of
  $(\Ad\varphi')(\Fr)$ on $\Lie(\HG)^{\Ad_{u}}$. But under our
running assumption $\ell>h_{\HG}$, which implies that $\ell$ is very
good for $\HG$, Theorem \ref{thm:slodowy} implies
that $\Lie(\HG)^{\Ad u}=\Lie(\HG_{u})$. Moreover $\HG_{u}$ is known to be commutative,  hence
  $(\Ad\varphi')(\Fr)=(\Ad\varphi)(\Fr)$ on $\Lie(\HG)^{\Ad_{u}}$ and we have the equivalence of (1)
  and (2).

  It remains to study when $q^{-1}$ is an eigenvalue of
  $(\Ad\varphi)(\Fr)$. Observe that
  $\Lie(\HG)^{\Ad_{u}}$ coincides with the centralizer $\Lie(\HG)_{E}$
  of $E$ in $\Lie(\HG)$. Moreover, our hypothesis $\ell> h_{\HG,1}$
  implies that $\ell$ does not divide the order of the Weyl group $\Omega_{\HG}$. Therefore, we
  can use Kostant's section theorem as in subsection \ref{sec:kost-sect-theor}. In particular,
  Proposition \ref{prop:kostant} tells us that  
$$\det\left(q\Ad_{\lambda(S)\Fr}-\id\,|\Lie(\HG)^{\Ad_{u}} \right) =
  \pm \chi_{\HG,\Fr}(q),$$
  which shows that $\varphi$ is unobstructed if and only if
$\chi_{\HG,\Fr}(q)\neq 0$ in $L$. 
\end{proof}

\begin{rem}
  Let $\CF$ be any automorphism of $\HG$ and suppose $\lambda$ is a $\CF$-invariant good
  $\SL_{2}$ such that $u=\lambda(U)$ is regular in $\HG$. Then the same proof shows that
  $\det\left(q\Ad_{\lambda(S)\CF}-\id\,|\Lie(\HG)^{\Ad_{u}} \right) = \pm\chi_{\HG,\CF}(q).$
\end{rem}

In order to study more general unipotent classes, the following lemma will allow us to
use inductive arguments.

\begin{lemma}\label{lemma:red_to_disting}
  Let $\HG$ and $\varphi$ be as in (\ref{eq:setting}), and let $\hat S$ be a torus in the
  centralizer $C_{\HG}(\varphi)$ of $\varphi$. Then :
  \begin{enumerate}
  \item  $\exists h\in \HG(L)$ such that $\hat M:=hC_{\HG}(\hat S)h^{-1}$ is a $\Fr$-stable
    Levi subgroup of $\HG$.
  \item If the $h$-conjugate $^{h}\varphi$ is unobstructed in $\uZ^{1}(W_{F}/I_{F}^{\ell},\hat M)$,
    then there is $g\in (\HG_{u})^{\circ}(L)$ such that $\varphi^{g}$ is unobstructed. 
  \end{enumerate}
\end{lemma}
\begin{proof}
(1)  The centralizer $C_{\LG}(\hat S)$ contains $\varphiL(\Fr)$, hence it surjects onto
$\pi_{0}(\LG)$. By \cite[Lemma 3.5]{borel_corvallis}, it is a ``Levi subgroup'' of $\LG$
in Borel's sense. It is thus conjugate by some $h\in\HG(L)$ to the standard Levi subgroup of
a standard parabolic subgroup of $\LG$. Such standard Levi subgroups are of the form
${^{L}M}=\hat M\rtimes\langle\Fr\rangle$.

(2) Since unobstructedness is invariant by conjugacy, we may and will assume that $h=1$.
Then observe that ${\varphiL}$ factors indeed through $^{L}M$, and also that $\Lie(\hat
M)$ is the weight $0$ subspace of $\Lie(\HG)$ in the decomposition
$\Lie(\HG)=\bigoplus_{\kappa\in X^{*}(\hat S)}\Lie(\HG)_{\kappa}$ of $\Lie(\HG)$ as a  sum
of weight spaces for the adjoint action of $\hat S$.
So, for any element $s\in \hat S(L)$, unobstructedness of $\varphi^{s}$ is equivalent to $q^{-1}$
not being an eigenvalue of $\varphiL(\Fr)s$ on each $\Lie(\HG_{u})_{\kappa}$. For
$\kappa=0$, this property is fulfilled by our hypothesis, since $s$ acts trivially on
$\Lie(\HG_{u})_{0}$. For any other $\kappa$, this property is fulfilled for $s$ outside a
proper Zariski closed subset of $\hat S$, because $s$ commutes with $\varphiL(\Fr)$. Therefore we can find $s$ that works for all
 $\kappa$.
\end{proof}

Following a standard terminology, we  will say that a $1$-cocycle $\varphi$ is \emph{discrete} if
$C_{\HG}(\varphi)$ contains no non-central torus of $\HG$. 
In the case where $\varphi$ is given by a pair $(\lambda,\CF)$ as in Lemma
\ref{cor:reduc-to-nice-form},  this
is equivalent to $C_{\HG}(\lambda)^{\CF}$ not containing any
non-central torus of $\HG$, since $C_{\HG}(\lambda)=\HG_{\lambda}$ is a Levi factor
of $C_{\HG}(\varphi(s))=\HG_{u}$. In this case we will also say that the pair
$(\lambda,\CF)$ is discrete.
If $\varphi(s)=\lambda(U)$ is a \emph{distinguished unipotent} element
(meaning that its centralizer does not contain any non-central torus), then $\varphi$ is certainly
discrete. The converse is not always true, but we note that
if $C_{\HG}(\lambda)$ has \emph{positive} semisimple rank, then $\varphi$ is \emph{not} discrete. 


In the next proposition, we include triality forms of $D_{4}$ (i.e. any group of type
$D_{4}$ with action of $\Fr$ of order $3$) in the ``exceptional types''.

\begin{prop} \label{prop:classical}
  Let $\HG$ be as in (\ref{eq:setting}) with no simple factor of exceptional type. Assume
  that $\ell> h_{\HG,1}$, i.e. $\ell$ is greater than the Coxeter numbers of the simple factors of $\HG$. Then the
  following are equivalent :
  \begin{enumerate}
  \item For all $\varphi$ as in (\ref{eq:setting}), there exists $g\in
    (\HG_{\varphi(s)})^{\circ}$ such that $\varphi^{g}$ is unobstructed.
  \item $\chi_{\HG,\Fr}(q)\neq 0$ in $L$.
  \end{enumerate}
\end{prop}
\begin{proof}
 By Lemma \ref{lemma:isogenies}, Lemma \ref{lemma:torus},
  decomposition (\ref{eq:decompsimple}) and equality (\ref{eq:chi_decomp}), we may assume
  that $\HG$ is simple and adjoint. In this case,
  the implication (1)$\Rightarrow$(2) follows from Proposition \ref{prop:regular-unip}.
  So we  now focus on the  other implication.   Using again Lemma \ref{lemma:isogenies}
  together with Remark \ref{rem:lift}  and the fact that $\chi_{\HG,\Fr}(q)$ is insensitive 
  to isogenies, we may assume that $\HG$ is either   ${\rm PGL}_{n}$, $\Sp_{2n}$ or $\SO_{N}$.
  Actually, the ${\rm PGL}_{n}$ case can be treated on $\GL_{n}$, since Remark
  \ref{rem:lift} also applies to the central morphism $\GL_{n}\To{\pi}{\rm PGL}_{n}$, while
  $\chi_{\GL_{n},\Fr}(T)=\chi_{{\rm PGL}_{n},\Fr}(T)\chi_{{Z(\GL_{n})},\Fr}(T)$ and
  $\chi_{{Z(\GL_{n})},\Fr}(T)$ divides $\chi_{{\rm PGL_{n}},\Fr}(T)$ for $n>1$.

  So let $\HG$ be either $\GL_{n}$, $\Sp_{2n}$ or $\SO_{N}$. Then $\Fr$ acts on $\HG$ by
  an automorphism of order at most $2$, and we will let $\LG$ denote the minimal form of
  the $L$-group. Now let $\varphi$ be as in (\ref{eq:setting}).
  By Corollary \ref{cor:reduc-to-nice-form}, we may assume that $\varphi$ is given by a pair $(\lambda,\CF)$.
  Moreover, Lemma \ref{lemma:red_to_disting},
  Proposition \ref{prop:char_pol} (1) and an inductive argument allow us to restrict attention to
  \emph{discrete} pairs $(\lambda,\CF)$.

  \medskip
  \emph{Case $\HG=\GL_{N}$ with $\Fr=\id$.}  Let $V$ be an $L$-vector space of dimension
  $N$, and $\lambda:\SL_{2}\To{}\GL(V)$ a morphism.
  Since $\ell>N$,  the  $\SL_{2}$-module $V$ is semi-simple and, for any $d\leq N$,
  the $d$-dimensional representation $S_{d}=\Sym^{d-1}(L^{2})$ of $\SL_{2}(L)$ is irreducible. Let $V_{d}$ be the
  $S_{d}$-isotypic part of $V$.  We then have decompositions $V=\bigoplus_{d\geq 0}V_{d}$
  and $S_{d}\otimes W_{d}\To\sim V_{d}$, where $W_{d}:=\Hom_{\SL_{2}}(S_{d},V_{d})$.
  In particular, we get that  $\GL(V)_{\lambda}=\prod_{d}\GL(W_{d})$.  Since this is a
  connected group, we may assume that $\CF=1$. Then we see that $(\lambda,1)$ is discrete if
  and only $\lambda$ is principal, i.e.  $u=\lambda(U)$ is regular. In this case we conclude
  thanks to Proposition \ref{prop:regular-unip} (note that this proposition applies
  directly to  ${\rm PGL}_{n}$ and extends to $\GL_{n}$ thanks to Lemma \ref{lemma:torus}
  and Remark \ref{rem:lift}).


  \medskip
  We now assume that $V$ is endowed
  with a non-degenerate bilinear form of sign $\varepsilon$ and we denote by $I(V)$ the
  isometry group, so that $I(V)\simeq \Sp_{N}$ if $\varepsilon=-1$ and $I(V)\simeq {\rm O}_{N}$
  if $\varepsilon=1$. We take up the above notations, assuming that $\lambda$ factors through
  $I(V)$. Then each $V_{d}$ is a non-degenerate subspace of $V$ and the decomposition
  $V=\bigoplus_{d}V_{d}$ is orthogonal. Further, each $S_{d}$ carries a natural non-degenerate
  bilinear form of sign $(-1)^{d-1}$ such that $\SL_{2}$ acts through $I(S_{d})$. Then
  $W_{d}$ inherits a non-degenerate form of sign $(-1)^{d-1}\varepsilon$ such that the isomorphism
  $S_{d}\otimes W_{d}\To\sim V_{d}$ is compatible with the tensor product form. It follows
  in particular that 
  $ I(V)_{\lambda} = \prod_{d} I(W_{d})$.  Writing $r_{d}:=\dim(W_{d})$, we have
  \begin{enumerate}
  \item $I(W_{d})\simeq {\rm O}_{r_{d}}\simeq \SO_{r_{d}}\rtimes\ZZ/2\ZZ$ if $(-1)^{d-1}\varepsilon=1$.
  \item $I(W_d)\simeq \Sp_{r_{d}}$ if $(-1)^{d-1}\varepsilon=-1$.
  \end{enumerate}
  In particular, $\pi_{0}(I(V)_{\lambda})$ admits a section into  $I(V)_{\lambda}$, and we may
  take $\CF$ in the image of such a section, so that $\CF$ has order at most $2$.
  Moreover, we see that $(I(V)_{\lambda})^{\CF}$ contains a
non-trivial torus whenever there is a symplectic factor (associated to some $d$ such that
$(-1)^{d-1}\varepsilon=-1$ and $W_{d}\neq 0$).
Since we may restrict attention to discrete $(\lambda,\CF)$, we will assume that
$I(V)_{\lambda}$ has  no symplectic factor.
In particular, this fixes the parity of the $d$'s such that $V_{d}\neq 0$.

We now need to investigate the eigenvalues of $q\Ad_{\CF}\Ad_{\lambda(S)}$ on
$\Lie(\HG)^{\Ad_{u}}$, where $u=\lambda(U)$. 
We have  decompositions
$$ \End_{u}(V) =\prod_{d,d'}\Hom_{u}(V_{d},V_{d'}) \simeq \prod_{d,d'}
    \Hom_{U}(S_{d},S_{d'})\otimes\Hom_{L}(W_{d},W_{d'}).$$
 The weights of $\lambda(T_{2})$ on $V_{d}$ are the weights of $S_{d}$, i.e. $d-1,d-3,\cdots, 1-d$,
  hence the weights of $\Ad_{\lambda(T_{2})}$ on $\Hom_{u}(V_{d},V_{d'})$ are the same as those on
  $\Hom_{U}(S_{d},S_{d'})$, i.e. $d+d'-2i$ for $1\leq i\leq{\rm min}(d,d')$, each one occurring with
  multiplicity $r_{d}r_{d'}$. In particular, these weights
are bounded above by $N-2$ if  $d\neq d'$ (because then $d+d'\leq N$) or if 
 $d=d'\leq \frac N2$. On the other hand, there is at most one $d> \frac
 N2$ with $W_{d}\neq 0$ and in this case $r_{d}=1$. So any weight $k> N-2$ of
 $\lambda(T_{2})$ on $\End_{u}(V)$ is even and occurs with
 multiplicity $1$. Actually, it is easy to exhibit a weight vector. Namely, put
 $e:=d(\lambda_{|\GG_{a}})(1)$, which is a nilpotent endomorphism of $V$. We also have
 $e=\log(u)$ since the logarithm is well defined under our hypothesis $\ell>h_{\HG,1}$.
 Then $e$ is a
 weight $2$ element of $\End_{u}(V)=\End_{e}(V)$ and for any $k=2k'> N-2$, the element
 $e^{k'}$ generates the subspace of weight $k$ whenever it is non-zero. In other words, we have a decomposition
 $$ \End_{u}(V) = \left\langle e^{k'}\right\rangle_{k'\geq \left\lfloor\frac N2\right\rfloor} \oplus \End_{u}(V)_{\leq N-2}$$
 where the last term is the sum of weight spaces of weight $\leq N-2$.

Now, let $\tau$ denote the involution $\psi\mapsto -\psi^{*}$  of $\End(V)$ associated with the
bilinear form on $V$. We have $\Lie(I(V))^{\Ad_{u}} = \End_{u}(V)^{\tau}$ and $\tau(\Hom(V_{d},V_{d'}))=\Hom(V_{d'},V_{d})$. 
Using the fact that $\tau(e^{k'})=(-1)^{k'+1}e^{k'}$, we get :
$$ \Lie(I(V))^{\Ad_{u}}=\End_{u}(V)^{\tau} = \left\langle e^{k'}\right\rangle_{k'\geq
  \left\lfloor\frac N2\right\rfloor,\,\rm odd} \oplus \End_{u}(V)^{\tau}_{\leq N-2}.$$

 \medskip
 \emph{Case $\HG$ is symplectic or odd orthogonal.} In this case, $\Fr$ acts trivially and
 we have
 $\chi_{\HG,\Fr}(T)=\prod_{d=1}^{\lfloor \frac N2\rfloor}(T^{2d}-1)$. The eigenvalues of $q\Ad_{\varphi(\Fr)}=q\Ad_{\CF}\Ad_{\lambda(S)}$ on
 $\End_{u}(V)_{\leq N-2}^{\tau}$ are of the form $\pm q^{k}$ for $k$ such that $0< 2k
\leq N$. For such an eigenvalue to be equal to $1$, we need that $q$ be a root of
$T^{2k}-1$, which is a factor of $\chi_{\HG,\Fr}(T)$. On the other 
hand each non-zero $e^{k}$ is an eigenvector of $q\Ad_{\varphi(\Fr)}$ with eigenvalue
$q^{k+1}$. Of course $e^{N}=0$, so $k\leq N-1$ and we have seen that $k$ must be odd. So
$k+1$ is even, between $2$ and $N$. Therefore, an eigenvalue $q^{k+1}$ is $1$
in $L$ only if $q$ is a root of $\chi_{\HG,\Fr}(T)$, as desired.

\medskip
\emph{Case $\HG$ is even orthogonal.} Here we set $\HG=\SO(V)$,
endowed with an outer action of $\Fr$ of order $f=1$ or $2$.
In these cases, setting $N=2n$, we have
$$\chi_{\HG,\Fr}(T)=(T^{n}+(-1)^{f})\prod_{d=1}^{n-1}(T^{2d}-1).$$ 
We will take advantage 
of the fact that, when $f=2$, we have  ${\rm O}(V)\simeq {^{L}\SO(V)}$. 
A pair $(\lambda,\CF)$ thus defines a $L$-homomorphism $^{L}\varphi$ for $\SO(V)$ endowed with
a trivial, resp. quadratic, action of $\Fr$ if  $\det\CF=1$, resp. if $\det\CF=-1$.

As above, each $e^{k}$ is an
eigenvector of $q\Ad_{\varphi(\Fr)}$ with eigenvalue $q^{k+1}$ with $k+1$ even. Moreover
we have $e^{N-1}=0$ (no Jordan matrix of rank $N$ is orthogonal), so $k+1\leq N-2$ and we
see that $q^{k+1}=1$ only if $q$ is a root of $\chi_{\HG,\Fr}(T)$.

Next, the weight spaces
with weight $<N-2$ are treated exactly as in the previous case, but the weight space
$\End_{u}(V)^{\tau}_{N-2}$ of weight $N-2$
needs more attention. Indeed, we already know that the eigenvalues of $q\Ad_{\varphi(\Fr)}$ on this weight
space are of the form $\pm q^{n}$, but we need more precise information since, for example,
$T^{n}+1$ does not divide $\chi_{\HG,\Fr}(T)$ when
$f=1$,  and $T^{n}-1$ may not divide $\chi_{\HG,\Fr}(T)$ when $f=2$. 
Since $\Hom_{u}(V_{d},V_{d'})_{N-2}$ is zero unless
$d+d'=N$, we have to consider two cases.

(1)  $V=V_{d}\oplus V_{d'}\simeq S_{d}\oplus S_{d'}$, with (necessarily) $d$ and $d'$ odd
and, say $d>d'$. In this setting, $\CF$ belongs
to the center $\{\pm 1\}\times\{\pm 1\}$ of ${\rm O}(V_{d})\times{\rm O}(V_{d'})$. Writing
$\CF=(\varepsilon_{d},\varepsilon_{d'})$, we see that $\CF$ acts on
$\Hom_{u}(V_{d},V_{d'})$ and $\Hom_{u}(V_{d'},V_{d})$ by multiplication by
$\varepsilon_{d}\varepsilon_{d'}$, and since $d$ and $d'$ are odd, we have $\varepsilon_{d}\varepsilon_{d'}=\det\CF$.
Now we have
$$ \End_{u}(V)^{\tau}_{N-2} = \langle e^{n-1}\rangle^{\tau} \oplus
\left(\Hom_{u}(V_{d},V_{d'}) \oplus \Hom_{u}(V_{d'},V_{d})\right)^{\tau}_{N-2}.$$
So if $\det\CF=1$, the action of $\CF$ on $\End_{u}(V)^{\tau}_{N-2}$ is trivial, hence the
eigenvalue of $q\Ad_{\varphi(\Fr)}$ is $q^{n}$ and we are done, since $T^{n}-1$ divides
$\chi_{\HG,\Fr}(T)$ when $f=1$.  Suppose now $\det\CF=-1$. Then $\CF$ acts on the second summand
of $\End_{u}(V)^{\tau}_{N-2}$ by $-1$, so the
eigenvalue of $q\Ad_{\varphi(\Fr)}$ is $-q^{n}$, which is fine since $T^{n}+1$ divides
$\chi_{\HG,\Fr}(T)$ when $f=2$. On the other hand, $\CF$ acts trivially on the first summand,
but the latter is non-zero only if $n$ is even, in which case $T^{n}-1$ also divides
$\chi_{\HG,\Fr}(T)$.

(2) $V=V_{n}$. Then we may decompose $V$ as an orthogonal sum of two $\lambda(\SL_{2})$-stable non-degenerate subspaces
$V=V_{n}^{1}\oplus V_{n}^{2}$. Moreover, since $n$ has to be odd, $(e_{|V_{n}^{i}})^{n-1}$ is not in
$\End_{u}(V)^{\tau}$ so we have
$$ \End_{u}(V)^{\tau}_{N-2} = 
\left(\Hom_{u}(V_{n}^{1},V_{n}^{2}) \oplus \Hom_{u}(V_{n}^{2},V_{n}^{1})\right)^{\tau}_{N-2}.$$
On the other hand, ${\rm O}(V)_{\lambda}\simeq {\rm O}_{2}$ acts on this space through its
component group $\{\pm 1\}$ with the non trivial element acting as $\psi\mapsto
\psi^{*}$. So, in particular, $\CF$ acts by multiplication by $\det\CF$. The eigenvalue
of $q\Ad_{\varphi(\Fr)}$ is thus $\det\CF. q^{n}$ and it equals $1$ only if
$q^{n}-\det\CF=0$, hence also only if $\chi_{\HG,\Fr}(q)=0$. 


\medskip
\emph{Case $\HG=\GL_{N}$ and $\Fr\neq \id$.} Here we have $\chi_{\HG,\Fr}(T)=\prod_{d=1}^{N}(T^{d}-(-1)^{d})$.
We continue with the same notations $V,\lambda,u$ etc, and we assume that there is $\CF\in
(\HG\rtimes\Fr)_{\lambda}$ that fixes a Borel pair of
$\HG_{\lambda}$.
Using the explicit description
$\HG_{\lambda}=\prod_{d}\GL(W_{d})$, we see that $(\lambda,\CF)$ is discrete if and only if
$r_{d}=1$ for all $d$  (so that
$\HG_{\lambda}=\GG_{m}\times\cdots\times\GG_{m}$ is the center of $\prod_{d}\GL(V_{d})$) and 
 $(\HG_{\lambda})^{\CF} =\{\pm 1\}\times\cdots\times\{\pm 1\}$.  This implies that $\CF$
 normalizes each $\GL(V_{d})$ and induces the non-trivial element $\alpha_{d}$ of
 ${\rm Out}(\GL(V_{d}))$.  Since $u_{|V_{d}}$ is regular, it follows from Proposition
 \ref{prop:regular-unip} and the subsequent remark that no  eigenvalue of
 $q\Ad_{\CF\lambda(S)}$ on $\End_{u}(V_{d})$ equals $1$ unless  $\chi_{\GL(V_{d}),\alpha_{d}}(q)=0$ in $L$, in which case
 we also have $\chi_{\HG,\Fr}(q)=0$ by Proposition \ref{prop:char_pol} (1).
Let us now focus on the eigenvalues of $q\Ad_{\CF\lambda(S)}$ on each
$\Hom_{u}(V_{d},V_{d'})$ for $d\neq d'$. As we have already seen, the eigenvalues of $q\Ad_{\lambda(S)}$ are
of the form $q^{\frac 12(d+d')-i}$ with $0\leq i <{\rm min}(d,d')$. So it remains to
understand how $\CF$ acts. Note that $\CF^{2}\in(\HG_{\lambda})^{\CF}$ so at least we know
that $\CF^{4}=1$.
We distinguish two cases.

(1) Suppose that all the $d$'s occuring  have the same parity. Then
  there is a non-degenerate bilinear form on $V$ (symplectic if the $d$'s are even,
 orthogonal if they are odd) such that $u\in I(V)$, see \cite[Cor. 3.6 (2)]{Liebeck} for
 example.
 We may then conjugate $\lambda$ so that it
 factors through $I(V)$. But $I(V)$ is the fixed-point subgroup of an involution given by
 conjugation by an element of the  form $g\rtimes \Fr$. So we may set $\CF$ to this element and we have
 achieved $(\Ad_{\CF})^{2}=1$.  It follows that the eigenvalues of $q\Ad_{\CF\lambda(S)}$ on each
 $\Hom_{u}(V_{d},V_{d'})$ are of the form $\pm q^{k}$ for some integer
 $k\leq \frac N2$. Should such an eigenvalue be equal to $1$, we would have $q^{2k}-1=0$, hence a
 fortiori $\chi_{\HG,\Fr}(q)=0$.

(2) Suppose there are both even and odd $d$'s. Write $(\CF^{2})_{d}$ for the component of
$\CF^{2}$ in $\GL(V_{d})$. This is a central element of $\GL(V_{d})$ equal to $\pm 1$. We
then decompose $V=V_{+}\oplus V_{-}$ where $V_{\pm}=\bigoplus_{d,\,(\CF^{2})_{d}=\pm 1}
V_{d}$. We have $\GL(V)^{\CF^{2}}=\GL(V_{+})\times\GL(V_{-})$, hence also
$\GL(V)^{\CF}=\GL(V_{+})^{\CF}\times\GL(V_{-})^{\CF}$. But $\CF$ acts on both $\GL(V_{-})$
and $\GL(V_{+})$ as an involution that induces the non trivial outer automorphism. So 
each $\GL(V_{\pm})^{\CF}$ is an orthogonal or symplectic group. This implies that all
$d$'s occurring in the decomposition of $V_{+}$, resp. $V_{-}$, have the same parity
(because all multiplicities  $r_{d}$ are $1$). As a
consequence, we see that $\CF^{2}$ acts on $\Hom_{u}(V_{d},V_{d'})$ by multiplication by
$(-1)^{d+d'}$. So we now have two subcases :
\begin{itemize}
\item if $d,d'$ have the same parity, the eigenvalues of $q\Ad_{\CF\lambda(S)}$ on 
 $\Hom_{u}(V_{d},V_{d'})$ are of the form $\pm q^{\frac 12 k}$ for some \emph{even} integer
 $k\leq  d+d' \leq N$. As before, should such an eigenvalue be equal to $1$, we
 would have $q^{k}-1=0$, hence a  fortiori $\chi_{\HG,\Fr}(q)=0$. 
\item if $d,d'$ have different parities, the eigenvalues of $q\Ad_{\CF\lambda(S)}$ on 
 $\Hom_{u}(V_{d},V_{d'})$ are of the form $\zeta q^{\frac 12 k}$ for some \emph{odd} integer $k\leq
 d+d'\leq N$ and a primitive $4^{th}$-root of unity $\zeta$ in $L$. This time, should such
 an eigenvalue be equal to $1$, we 
 would have $q^{k}+1=0$ hence, again, $\chi_{\HG,\Fr}(q)=0$.
\end{itemize}
\end{proof}

It may be tempting to believe that the nice equivalence of Proposition \ref{prop:classical}
holds in general. However, it fails in the case of triality, i.e. a group of type $D_{4}$
with Frobenius acting with order $3$. In this case,  the irreducible factors of
$\chi_{\HG,\Fr}(T)=(T^{2}-1)(T^{6}-1)(T^{8}+T^{4}+1)$ are $\Phi_{n}(T)$ for
$n=1,2,3,6,12$. But to get an equivalence, we need also $\Phi_{4}(T)$ :

\begin{lemma} \label{lemma:triality}
  Assume that  $\HG={\rm PSO}_{8}$ with  $\Fr$ of order $3$ (triality), and
  $\ell>h_{\HG}=6$. Then the following are equivalent :
  \begin{enumerate}
   \item For all $\varphi$ as in (\ref{eq:setting}), there exists $g\in
    (\HG_{\varphi(s)})^{\circ}$ such that $\varphi^{g}$ is unobstructed.
  \item $\chi_{\HG,\Fr}'(q)\neq 0$ in $L$, where $\chi_{\HG,\Fr}'(T)=T^{12}-1$.
  \end{enumerate}
\end{lemma}

\begin{proof}
As in the proof of Proposition \ref{prop:classical}, we may focus on discrete pairs
$(\lambda,\CF)$ with $\lambda:\SL_{2}\To{}\HG$ and $\CF\in (\HG\rtimes\Fr)_{\lambda}$
(where $\LG$ is the minimal L-group, so that $\pi_{0}(\LG)=\ZM/3\ZM$). We still denote by
$\lambda$ the unique lift $\SL_{2}\To{}\SO_{8}$ and see $\SO_{8}$ as $\SL(V)\cap I(V)$ for
an $8$-dimensional vector space with a non-degenerate symmetric bilinear form. With the
notation of  
the proof of Proposition \ref{prop:classical},
there are only three possible types of decomposition of $V$ associated to such a
$\lambda$. Either $V=V_{7}\oplus V_{1}$, or
$V=V_{5}\oplus V_{3}$ or, $V=V_{3}\oplus V_{1}$ with $V_{3}=S_{3}^{2}$ and $V_{1}=S_{1}^{2}$.
  
(1) Type $(7,1)$. This is the regular orbit, so it is covered by Proposition
\ref{prop:regular-unip}.

(2) Type $(5,3)$. This is the only distinguished non-regular orbit, so it is stable under
$\Fr$. Therefore, for $\lambda$ of type $(5,3)$, there exists
$\CF\in (\HG\rtimes\Fr)_{\lambda}$. Since $\HG_{\lambda}=Z(\HG)=\{1\}$, we have
$(\Ad_{\CF})^{3}=\id$. On the other hand, the $\lambda(T_{2})$-weights on
$\Lie(\HG)^{\Ad_{u}}=\End_{u}(V)^{\tau}$ are $2$, $4$ and $6$,
so the eigenvalues of
$q\Ad_{\CF\lambda(S)}$ are respectively of the form $\zeta q^{2}$, $q^{3}$ or $\zeta q^{4}$
for some $3^{rd}$-root of unity $\zeta$.
If any of these numbers equals $1$ in $L$, 
then $q^{12}=1£$, hence $\chi_{\HG,\Fr}'(q)=0$.
However, it is actually possible to prove that $q^{4}$ is not an eigenvalue, so that the
polynomial $\chi_{\HG,\Fr}$ is still good for this orbit.

(3) Type $(3,3,1,1)$. This orbit intersects $G_{2}=\HG^{\Fr}$ along its non-regular
distinguished orbit. So we may pick a relevant $\lambda$ that is centralized by $\Fr$. Then
$\pi_{0}(\LG_{\lambda})$ is isomorphic to $\ZM/6\ZM$ and contains two elements such that
$(\lambda,\CF)$ is discrete : $\Fr$ of order $3$, and $c\Fr$ of order $6$, where $c$ is
the image in $\HG_{\lambda}$ of a
reflection that generates $\pi_{0}(I(V)_{\lambda}\cap \SL(V))$.
The weights are $0,2$ and $4$. Hence the eigenvalues of $q\Ad_{\CF\lambda(S)}$ on weight $0$ and $2$ spaces are of the
form $\zeta q^{2}$ or $\zeta q$ for a sixth root of unity $\zeta$, so they are different
from $1$ unless $q^{12}=1$. On the other hand, the weight $4$ space has dimension
$1$ and comes from $G_{2}$. So $\Fr$ acts trivially on it, and $c\Fr$ acts by $\pm 1$.
Hence the corresponding eigenvalue is $\pm q^{3}$ and is also different from $1$ unless $q^{12}=1$.
Now, the computation in $G_{2}$ of Remark \ref{rem:G2} shows that $-1$ is an  eigenvalue of $\CF=c\Fr$ 
on the weight $2$ space, so $q\Ad_{\CF\lambda(S)}$ has eigenvalue $-q^{2}$, and it is
different from $1$ if and only if $\Phi_{4}(q)\neq 0$.
\end{proof}

We now turn to the exceptional groups. Recall the polynomials $\chi_{\HG,\Fr}^{*}$ from (\ref{eq:defchistar}).

\begin{prop} \label{prop:exceptional}
  Suppose that  $\HG$ is simple of exceptional type. If
  $\chi_{\HG,\Fr}^{*}(q)\neq 0$ in $L$, then for all $\varphi$ as in (\ref{eq:setting}), there exists $g\in
    (\HG_{\varphi(s)})^{\circ}$ such that $\varphi^{g}$ is unobstructed.
\end{prop}
Recall that $\chi_{\HG,\Fr}^{*}(q)\neq 0$ is equivalent to ``$q$ has
order greater than $h_{\HG,\Fr}$ in $L^{\times}$'', which implies  $\ell>h_{\HG,\Fr}$,
hence also $\ell>h_{\HG,1}$.

\begin{proof}
  Thanks to Lemma \ref{lemma:isogenies} and equality (\ref{eq:chi_decomp}), we may assume
  $\HG$ is adjoint.
  We will use the tables in Chapter 11 of \cite{lawther-testerman}. These tables cover all the
  nilpotent classes of exceptional groups, including a description of the reductive
  quotient $C$ of the centralizers (both the neutral component, denoted there by $C^{\circ}$
  and the $\pi_{0}$, denoted there by $C/C^{\circ}$), and the weights of an associated
  cocharacter $\tau$ on the Lie algebra centralizer (denoted by $m$ there).  Actually,
  they even describe the weights on each subquotient of the central series of the
  nilpotent part of the Lie algebra centralizer (with integer $n$ denoting the $n^{th}$
  step of the central series).

  Using a Springer isomorphism $e\leftrightarrow u$ between  the nilpotent cone and the unipotent variety, we
  get a table of unipotent classes, and we may identify the centralizers
  $\HG_{u}=\HG_{e}$. Then for any good $\lambda$ associated to $u$, we have  identifications
  $\HG_{\lambda}\simeq C$, hence the table provides us with descriptions of $(\HG_{\lambda})^{\circ}=C^{\circ}$,
  $\pi_{0}(\HG_{\lambda})=C/C^{\circ}$ and the weights $m$ of $\lambda(T_{2})$ on
  $\Lie(R_{u}(\HG_{u}))$.

  Thanks to Corollary \ref{cor:reduc-to-nice-form} we may focus on $\varphi$ associated to a
  pair $(\lambda,\CF)$. Then, using Lemma \ref{lemma:red_to_disting} (together with
  Proposition \ref{prop:classical} an inductive argument for
  the $E$ series), we may restrict attention to \emph{discrete} $(\lambda,\CF)$. In the
  case where $\Fr$ acts trivially on $\HG$, this means
  that, in the tables of \emph{loc. cit.}, we may restrict to classes such that
  $C^{\circ}$ is a torus and consider all $\CF\in C$  such that $(C^{\circ})^{\CF}$ is finite.
  In this setting, the exponent of $(C^{\circ})^{\CF}$ divides the
  order $f$ of
  the image $\bar\CF$ of $\CF$ in the component group $C/C^{\circ}$
  (since the endomorphism $t\mapsto t({^{\bar\CF}t})\cdots({^{\bar\CF^{f-1}}t})$ of $C$
    vanishes), hence  $\CF$ has  finite order dividing $f^{2}$,
   and this order does
  only depend on the connected component of $C$ that contains $\CF$. So the
  eigenvalues of $q\Ad_{\CF\lambda(S)}=q\Ad_{\varphi(\Fr)}$ on $\Lie(\HG)^{\Ad_{u}}$ are of
  the form $\zeta q^{\frac m2+1}$ for some root of unity $\zeta$ whose order $t$ divides the
  order of $\CF$. Therefore, what we have to check is that, in all cases, we have  $t(\frac
  m2+1)\leq h_{\HG,\Fr}$ when $t(\frac  m2+1)$ is an integer, or $t(m+2)\leq h_{\HG,\Fr}$
  else.
  In the only twisted case $^{2}E_{6}$ where $\Fr$ has order $2$, we will apply the same
  strategy except that here  $\CF\in\HG\rtimes\Fr$.


 Below we list all ``discrete'' orbits except  
 the regular ones,
 which are treated in Proposition \ref{prop:regular-unip}. The
 numbering is that of \cite[\S 11]{lawther-testerman}.

  \medskip
  $G_{2}$, orbit $3$.  Here $h=6$, $m=2$ or $4$, and $C=S_{3}$, so
  that $t=1,2$ or $3$. Hence the desired inequalities  $t(\frac
  m2+1)\leq h=6$ always hold
  except  if $t=3$ and $m=4$. But this case doesn't happen since the weight $4$ subspace
  is $1$ dimensional, so $C=S_{3}$ acts on it via a character, hence via an
  element of order $2$.  

  \medskip
  $F_{4}$, orbit $10$. Here $h=12$, $C=S_{4}$ hence $t\leq
  4$, and the desired inequality holds trivially
  for all weights except possibly for weight 6. But the weight 6 subspace has dimension
  $2$, so the action of $C=S_{4}$ factors over a quotient isomorphic to $S_{3}$, hence
  $t\leq 3$ on this subspace.

    \medskip
  $F_{4}$, orbit $13$. Here $h=12$, $C=S_{2}$ hence $t\leq
  2$ and weights $m$  are even and $\leq 10$, hence the desired inequality holds.
  
    \medskip
  $F_{4}$, orbit $14$. Here $h=12$ and $C=S_{2}$, so only the weight $m= 14$ space might contradict the
  desired inequality, but the columns $\mathcal{Z}^{\natural}$ and $\mathcal{Z}$ of the
  table show that $C$ acts trivially on this 
  space, so $t=1$ and $t(\frac m2+1)=8\leq 12$.

  \medskip
  $E_{6}$, orbits 17 and 19. Here again, $h=12$ and $C=S_{2}$ or $\{1\}$. In each case,
  the desired inequalities follow directly from the list of weights.

  \medskip
  $E_{6}$, orbit 11. Our source here is  section 9.3.4 of \emph{loc.cit}.
  This orbit comes from the distinguished non regular orbit of a Levi subgroup $H$ of root system $D_{4}$ and
  $C^{\circ}$ is the two-dimensional connected center of $H$, while
  $C$ normalizes a Borel pair of  $H$
  and has
  component group $C/C^{\circ}=S_{3}$. The action of $S_{3}$ on $X^{*}(C^{\circ})$ is
  the  standard representation, as can be seen by embedding $E_{6}$ as
  a Levi subgroup of $E_{7}$ and using the description of the
  reductive centralizer $C_{7}$ of this orbit from \emph{loc.cit}.
  Therefore,
  an element of order $1$ or $2$ of $C/C^{\circ}$ fixes
  a subtorus, and we see that if $(\lambda,\CF)$ is to be discrete, then $\bar \CF$ should
  have order $3$ in $C/C^{\circ}$. Then $\CF$ itself has order $3$ or $9$ in $C$. To prove
  it has order $3$, we  embed $E_{6}$ as a Levi subgroup of $E_{8}$ and consider
  the reductive centralizer $C_{8}$ there.  Then
  section 9.3.4 of \emph{loc.cit} exhibits two elements $c_{1}$ and $c_{2}$ of order $2$ in $C$, whose
  images generate $C/C^{\circ}=C_{8}/C_{8}^{\circ}$,  and that  act on $C_{8}^{\circ}$ by fixing a
  pinning. But  $C_{8}^{\circ}$  is a simple group of type $D_{4}$, so its center has
  exponent dividing $2$. Hence the element  $(c_{1}c_{2})^{3}$, which belongs to $C^{\circ}$ and
  fixes a pinning of $C_{8}^{\circ}$ is central in $C_{8}^{\circ}$, hence has order
  dividing $2$. Since we have seen that $c_{1}c_{2}$  has order $3$ or $9$, we conclude it has order
  $3$. Hence $\CF$ has order $3$, and all desired inequalities follow from the list of weights.



    \medskip
    $^{2}E_{6}$, orbit 11. Here, $h_{\HG,\Fr}=18$. Again, we refer to section 9.3.4 of
    \emph{loc.cit.}, except that  we find it easier to argue with the nilpotent
    representative $e':=\Ad_{g}(e)$ in their notation (and the same
    cocharacter $\tau$ described in table 3 of their chapter
    6). Indeed, for the $\lambda$ corresponding to $(e',\tau)$, we easily
    see that $h_{2}(-1)\rtimes \Fr \in \LG_{\lambda}$.
    Then,  $\LG_{\lambda}/(\LG_{\lambda})^{\circ}$ is an extension of
  $\ZZ/2\ZZ$ by $\HG_{\lambda}/(\HG_{\lambda})^{\circ}=C/C^{\circ}=S_{3}$. Such an extension has to be
  split and contains a central element of order $2$. In the present
  case,   using notations $c'_{2}:= g c_{2} g^{-1}$, the element
  $\CF_{0}:=c'_{2}h_{2}(-1)\rtimes\Fr$ belongs to
  $(\HG\rtimes \Fr)_{\lambda}$ and its image in $\LG_{\lambda}/(\LG_{\lambda})^{\circ}$ is the central element of order
  $2$. A computation shows that  $\CF_{0}$ acts
  on $(\LG_{\lambda})^{\circ}=C^{\circ}$ by inversion.
  It follows that the pair $(\lambda,\CF_{0})$ is discrete  and that,
  more generally, a pair   $(\lambda,\CF)$ is discrete if, and only
  if, writing $\CF=c\CF_{0}$ for some $c\in C$, the image of $c$ in $C/C^{\circ}$ has
  order $1$ or $3$. 
  Putting $\CF_{1}:=c'_{1}c'_{2}\CF_{0}$, this means that any
  $\CF$ such that the pair $(\lambda,\CF)$ is discrete is a
  $C^{\circ}$-translate of $\CF_{0}$ or $\CF_{1}^{\pm 1}$. Let us
  compute their orders. Since $\CF_{0}^{2}\in C^{\circ}$ is  fixed by inversion, we have
  $\CF_{0}^{4}=1$. On the other hand, since $\CF_{0}$ is central in
  $\pi_{0}(\LG_{\lambda})$, there is some $c\in C^{0}$ such that
  $(c'_{1}c'_{2})\CF_{0}(c'_{1}c'_{2})^{-1}=\CF_{0}c$, which yields
  $(c'_{1}c'_{2})\CF_{0}^{2}(c'_{1}c'_{2})^{-1}=\CF_{0}c\CF_{0}c=\CF_{0}^{2}c^{-1}c=\CF_{0}^{2}$. Hence
  $\CF_{0}^{2}$ belongs to the $c'_{1}c'_{2}$-fixed subgroup of $C^{\circ}$, which has order
  $3$. This implies $\CF_{0}^{2}=1$.  On the other hand, there is some $c'\in C^{\circ}$
  such that $\CF_{1}^{3}=\CF_{0}c'$, which, as above, implies
  $\CF_{1}^{6}=(\CF_{0}c')^{2}=\CF_{0}^{2}=1$.
  Having computed the orders $2$ and $6$, we now see that the desired inequalities  follow from the list of
   weights, except maybe for weight $6$ when $\CF=\CF_{1}^{\pm 1}$. 
  However,an easy computation shows that the action of $\CF_{0}$ on the weight $6$ space is trivial, so that
  the action of $\CF_{1}$ on it actually has order $3$, and the desired inequalities
  hold too.


    \medskip
  $^{2}E_{6}$, orbits 14, 16, 18. Here $\HG_{\lambda}=C=\GG_{m}$, and $\CF$ should
  map to the non-trivial element of $\pi_{0}(\LG_{\lambda})=\pi_{0}(\LG)$, so $\CF$ has
  order dividing $4$. 
  The desired
  inequalities are then straightforward for weights $\leq 7$ since $h=18$. For the weight
  8, 10 or 14 spaces, we use the fact that $C^{\circ}$ acts trivially on them, so
  $\Ad_{\CF}$ has order $t\leq 2$ there, whence the wanted inequality.

  \medskip
  $^{2}E_{6}$, orbits 17. Here $\HG_{\lambda}=C=\{\pm 1\}$, hence $\CF^{2}=\pm 1$, and $\CF$
  has a priori order $2$ or $4$. But the representative $e$ of the table is visibly invariant
  under $\Fr$, which means that we can pick $u$ and $\lambda$ invariant under $\Fr$, and set
  $\CF=\Fr$ or $\CF= (- 1).\Fr$. In each case, $\CF$  has order $2$ and the desired
  inequalities follow from the list of weights. 
  
  \medskip
  $^{2}E_{6}$, orbit 19. 
  Here $\HG_{\lambda}=C=\{1\}$, so $\CF^{2}=1$, and the desired inequalities follow 
  from the list of weights (recall $h=18$).

    \medskip
  $E_{7}$, orbit 24. Here $h=18$, $C^{\circ}$ is a torus and $\pi_{0}(C)=S_{2}$. So 
  $\CF$ has order dividing $4$. The desired inequality $4(\frac m2+1)\leq h=18$ holds for
  all weights, except weight $8$, but $C^{\circ}$ acts trivially on this weight space, so
    $\Ad_{\CF}$ has order $t=2$ there and the inequality holds too.

    \medskip
  $E_{7}$, orbits 33, 37,41,42,43. In these cases $C=S_{3}$ or $S_{2}$ or $\{1\}$, and the
  inequalities are straightforward, except for the weight 18 space in  orbit 41, where we
  need to use the column $\mathcal{Z}$ to ensure $C$ acts
  trivially on this weight space.

    \medskip
    $E_{7}$, orbit 39. Here $C^{\circ}=\GG_{m}$ and  $C/C^{\circ}=S_{2}$ acts non
    trivially on $C^{\circ}$, but $C$ is not a semi-direct product of
    $C^{\circ}$ by $S_{2}$, so  $\CF$ has order $4$ with
    $\CF^{2}=-1\in (C^{\circ})^{\CF}$. However, the
    explicit form of $C^{\circ}$ given in the table shows that it acts with even weights on all root
    subgroups (trivially on the simple roots of the Levi subsystem $E_{6}$ and with weight
    2 on the remaining simple root).
    Hence  $\Ad_{\CF}$ has order $2$, and the desired inequalities follow
    since all weights are even and less than $16$.

    \medskip
  $E_{8}$, orbit 41. Here $h=30$ and $C=S_{5}$, so $t\leq 6$. Hence the
  desired inequalities are at least satisfied for all weights $\leq 8$. This leaves us
  with the $4$-dimensional weight $10$ space $Z_{10}$, which is stable under $C=S_{5}$. We claim
  that $Z_{10}$ is isomorphic to the standard representation of
  $S_{5}$. This implies that the eigenvalues of the elements of order $6$ of 
  $S_{5}$ have order $1,2$ or $3$, and not $6$. So $t\leq 5$ on this space, and the desired
  inequalities still hold.  To justify the claim, we use the notation of 9.3.17 of
  \cite{lawther-testerman}. There, the authors exhibit three elements $c_{1}$, $c_{2}$ and
  $c_{3}$ that generate $C=S_{5}$, as well as  a basis $z_{10}^{1},\cdots, z_{10}^{4}$ of
$Z_{10}$. The element $c_{1}$ is a $5$-cycle, and all $z_{10}^{i}$ are
eigenvectors of $\Ad(c_{1})$, with respective eigenvalues
$\zeta,\zeta^{3},\zeta^{4},\zeta^{2}$ where $\zeta$ is a primitive $5^{th}$-root of unity. This implies 
that $Z_{10}$ is either the standard representation or its twist by the sign character. To
show it is the untwisted standard representation, it suffices to show that the trace of a
transposition is $2$.  According to \emph{loc.cit.} the element $c_{2}c_{3}$ is a
transposition. The action of $\Ad(c_{2})$ and $\Ad(c_{3})$ on $Z_{10}$ is not made
explicit in \emph{loc.cit.} but according to the authors (private communication), they are
given by matrices
$$\Ad(c_2) = \left(\begin{array}{cccc}
  0 & 0 & 0 & 1 \\
  1 & 0 & 0 & 0 \\
  0 & 1 & 0 & 0 \\
  0 & 0 & 1 & 0 \\
\end{array}
\right),
\Ad(c_3) = {\textstyle\frac{1 + 2\phi}{5}}
\left(\begin{array}{cccc}
  -1 & \phi & 1 & 1 + \phi \\
  \phi & 1 & 1 + \phi & -1 \\
  1 & 1 + \phi & -1 & \phi \\
  1 + \phi & -1 & \phi & 1 \\
\end{array}
\right),
$$
where $\phi=\zeta^{2}+\zeta^{3}$. Thus the trace of $\Ad(c_2 c_3)$ is $\frac{(1 + 2\phi)(2 + 4\phi)}{5} = 2$, as desired. 

  \medskip
     $E_{8}$, orbits 47, 50, 52. Here $h=30$ and $C=\GG_{m}$ and $C/C^{\circ}=S_{2}$. So $\CF$
     has order dividing $4$, which makes directly all desired inequalities hold except for
     weights $14$ and $16$ subspaces, but the latter are fixed by $C^{\circ}$ according to
     column $\mathcal{Z}^{\sharp}$, so   $\Ad_{\CF}$ has order $t\leq 2$ there, and the inequalities hold too. 

  \medskip
  $E_{8}$, orbits 54, 58, 60, 62, 63, 65, 66, 67, 68. Here $C=S_{3}$, $S_{2}$ or $\{1\}$,  and all
  inequalities are straightforward, except for the one dimensional weight 22 space in
  orbit 60 and weight 34 space in orbit 66. But the latter are fixed by $C$ in each case
  according to column $\mathcal{Z}$,
  so $t=1$ there and the inequalities still hold.

  \medskip
  $E_{8}$, orbit 55. Here $C^{\circ}=\GG_{m}$ and $C/C^{\circ}=S_{2}$. The table features
  a lift $c$ in $C$ of the non-trivial element of $C/C^{\circ}$. One can  compute that $c^{2}=1$
  (e.g. by using the list of positive roots of $E_{8}$ in Bourbaki). So we can take
  $\CF=c$ and the desired inequalities follow from the list of weights.

\end{proof}

\begin{rem} \label{rem:G2}
  Consider the orbit 3 of $G_{2}$, on page 73 of \cite{lawther-testerman}. The reflection
  $c_{2}$ exchanges the two root vectors  $e_{11}$ and $e_{21}$, which have both weight
  $2$. So $-1$ is an eigenvalue of $\CF:=c_{2}$ on the space generated by these vectors,
  hence $-q^{2}$ is an eigenvalue of  $q\Ad_{\CF\lambda(S)}$. But $\Phi_{4}(T)$ does not
  divide $\chi_{G_{2},1}(T)=(T^{2}-1)(T^{6}-1)$,
  so we see that in this case the equivalence of Proposition \ref{prop:classical} with the
  polynomial $\chi_{\HG,\Fr}$ really fails, just as for $^{3}D_{4}$.
 It fails also for $F_{4}$ due to the weight $8$ space of orbit 13, which requires
  $\Phi_{5}$  and  $\Phi_{10}$ (depending on $\CF$) although none of these polynomials divides $\chi_{F_{4},1}$. The same
  orbit and the same weight space viewed in $E_{6}$ and $^{2}E_{6}$
  through the identification of $F_{4}$ with the fixed points of the
  outer involution  (orbit 17 in \emph{loc.cit.}) again
  requires $\Phi_{5}$ and $\Phi_{10}$, although $\Phi_{10}$  does not divide
  $\chi_{E_{6},1}$ and $\Phi_{5}$ does not divide $\chi_{^{2}E_{6},\Fr}$. In orbit 33 of
  $E_{7}$, taking $\CF=c_{1}$, the weight $8$ space requires $\Phi_{15}$, which does not
  divide $\chi_{E_{7},1}$. Finally, in orbit 66 of $E_{8}$, taking $\CF=c$, the weight 26
  space requires $\Phi_{28}$, which does not divide $\chi_{E_{8},1}$.
  Note it is certainly possible in each case 
  to compute explicitly a
  polynomial $\chi'$ dividing $\chi^{*}$ for which equivalence between $\chi'(q)\neq 0$
  and generic smoothness holds. 

  For convenience of the reader, we include a table showing
  the prime factors of $\chi_{\HG,\beta}$ in $\ZM[T]$ for the exceptional types.

\begin{table}[h]
  \resizebox{\columnwidth}{!}{
$
\begin{array}{c|c|c|c|c|c|c|c|}
 \hbox{type of }\HG,\beta & ^{3}D_{4}  & G_{2} & F_{4} & E_{6} & ^{2}E_{6} & E_{7} & E_{8} \\
 \hline  \left\{n, \Phi_{n}|\chi_{\HG,\beta}\right\}
                          &
\left\{\begin{array}[c]{l}1,2,3\\6,12\end{array}\right\} 
                                       &
\left\{\begin{array}[c]{l}1,2\\3,6\end{array}\right\} 
&
\left\{\begin{array}[c]{l} 1,\ldots, 4, \\ 6,8,12 \end{array}\right\}
                          &
\left\{\begin{array}[c]{l} 1,\ldots, 6,\\ 8,9,12\end{array} \right\} 
&
\left\{\begin{array}[c]{l} 1,\ldots, 4\\6,8,10\\12,18\end{array} \right\} 
&
\left\{\begin{array}[c]{l} 1,\ldots, 10,\\ 12,14,18\end{array}\right\}
&
\left\{\begin{array}[c]{l} 1,\ldots, 10,12\\ 14,15,18\\20,24,30\end{array}\right\}

\end{array}
$
}
\end{table}

\end{rem}

\begin{cor} 
  Let $G$ be as in (\ref{eq:setting}). If $\chi_{\HG,\Fr}^{*}(q)\neq 0$ in $L$, then for
  any $\varphi$ as in (\ref{eq:setting}), there is $g\in \HG_{\varphi(s)}^{\circ}$ such
  that $\varphi^{g}$ is unobstructed.
\end{cor}
\begin{proof}
  This follows from  Lemma \ref{lemma:isogenies}, Lemma \ref{lemma:torus},
  decomposition (\ref{eq:decompsimple}), Proposition \ref{prop:classical}, Lemma \ref{lemma:triality} and Proposition
  \ref{prop:exceptional}. Note again that $\chi_{\HG,\Fr}^{*}(q)\neq 0$ is equivalent to
  $q$ having order greater than $h_{\HG,\Fr}$, which implies $\ell>h_{\HG,\Fr}$ hence also
  $\ell> h_{\HG}$. It also implies that $\chi_{\HG,\Fr}(q)\neq 0$.
\end{proof}

\begin{proof}[Proof of Theorem \ref{thm:unobstructed_explicit}]

(1) We assume that $\ell$ does not divide $e\chi_{\HG,\Fr}^{*}(q)$.  Fix $\varphi\in
Z^{1}(W_{F},\HG(L))$ and choose $\varphi'$ as in Proposition
\ref{prop:pinning_preserving_acf}. By Lemma \ref{lemma:control_ad}, the action
$\Ad_{\varphi'}$ of $W_{F}$ on $\HH:=C_{\HG}(\varphi(I_{F}^{\ell}))^{\circ}$ is unramified and
$\eta:=\varphi\cdot(\varphi')^{-1}\in Z^{1}(W_{F}/I_{F}^{\ell},\HH(L))$.  By Proposition
\ref{prop:char_pol}, we have $\chi^{*}_{\HH,\Ad_{\varphi(\Fr)}}(q)\neq 0$ in $L$, so the last
Corollary gives us an element $h\in(\HH_{\eta(s)})^{\circ}$
such that $\eta^{h}$ is unobstructed in $Z^{1}(W_{F}/I_{F}^{\ell},\HH(L))$. We have
explained after Proposition \ref{prop:pinning_preserving_acf} that $\eta^{h}\cdot\varphi'$ is then unobstructed  in
$Z^{1}(W_{F},\HG(L))$,
but we have $(\HH_{\eta(s)})^{\circ}= (\HG_{\tau})^{\circ}$ and $\eta^{h}\cdot\varphi'=(\eta\cdot\varphi')^{h}=\varphi^{h}$.

(2) We assume here that $\HG$ has no exceptional factor, that $\ell>h_{\HG,1}$, and that
$\ell$ does not divide $\chi_{\HG,\Fr}(q)$. Then we repeat the above argument, observing
that $\HH=C_{\HG}(\varphi(I_{F}^{\ell}))$ is again a group with no exceptional
component. Indeed, it suffices to check this in a classical group where it is fairly
standard.
However, the action of $\varphi'(\Fr)$ on $\HH$ may feature instances of
triality. Fortunately, this is harmless because the modified polynomial $\chi'_{\HH,\Fr}$
still divides $\chi_{\HG,\Fr}$. Indeed, if $\Phi_{12}(T)$ divides $\chi_{\HG,\Fr}$ for
$\HG$ a classical group, then so does $\Phi_{4}(T)$.

\end{proof}

\subsection{$\LG$-banal primes}\label{subsection:LGbanal} We keep the general setup of this section.

\begin{prop} \label{prop:reduced fibers}
  Let $\ell\neq p$ be a prime.  Then the  following are equivalent :

  (1) For every algebraically closed field of characteristic $\ell$, and every continuous
  $L$-homomorphism $\varphi: W_F \rightarrow \LG(L),$ there is $g\in
  C_{\HG}(\varphi(I_{F}))^{\circ}$ such that $\varphi^{g}$ is unobstructed.
  
  (2) For any $e\in\NN$ and any finite place $v$ of $\OO_{K_{e}}[\frac 1p]$ such that the residue field $k_v$ has characteristic $\ell$, the fiber $\uZ^1(W_F^0/P_F^e,{\hat G}_{k_v})$ of $\uZ^1(W_F^0/P_F^e,{\hat G})$
is reduced. 
\end{prop}
\begin{proof}
  Assume (1). It suffices to prove reducedness  for $k_v$ replaced by
its algebraic closure $L$.  Let $x$ be an $L$-point of $\uZ^1(W_F^0/P_F^e,{\hat G}_L)$
contained on exactly one 
irreducible component of $\uZ^1(W_F^0/P_F^e,{\hat G}_L)$.  Let $\tau$ be the restriction of $\varphi_x$ to $I_F$; there then exists a $g \in {\hat G}_{\tau}^\circ(L)$
such that $\varphi_x^g$ is unobstructed.  The corresponding $L$-point
$y$ of $\uZ^1(W_F^0/P_F^e,{\hat G}_L)$ is smooth and
lies in the same irreducible component of $\uZ^1(W_F^0/P_F^e,{\hat G}_L)$
as $x$, so that irreducible component is generically reduced.  Since $x$ was arbitrary, we deduce that $\uZ^1(W_F^0/P_F^e,{\hat G}_L)$ is generically reduced; since it is
also a local complete intersection, it must be reduced.

Now assume (2). Following the same reduction process as above Proposition
\ref{prop:pinning_preserving_acf}, we may assume that $\LG$ and $\varphi$ are as in
(\ref{eq:setting}). Recall from the discussion above Lemma \ref{lemma:jordan} the map
$$ \HG_{L}\times \HG_{\varphi(s)}^{\circ} \To{} \uZ^{1}(W_{F}/I_{F}^{\ell},\HG)_{L},\,\,
(h,g)\mapsto {^{h}(\varphi^{g})}.$$
We have shown there that $\uZ^{1}(W_{F}/I_{F}^{\ell},\HG)_{L}$ is covered by the images of
finitely many of these maps, and that these images all have the same dimension. This
implies that the closure of these images are the irreducible components of 
$\uZ^{1}(W_{F}/I_{F}^{\ell},\HG)_{L}$. In particular,  the image of
the above map is dense in one of the components that contain $\varphi$.
Therefore, since reducedness implies generic smoothness of all components, we get (1).
\end{proof}

\begin{defn} A prime $\ell \neq p$ is $\LG$-banal if the properties of the last
  proposition hold for $\ell$.
\end{defn}



One way to view this reducedness of fibers from a philosophical standpoint is to say that there are no nontrivial ``congruences'' between Langlands parameters
modulo an $\LG$-banal prime $\ell$: the closures 
of distinct irreducible components in
characteristic zero remain distinct modulo $\ell$.  One expects that this should 
correspond, on the other side of the local Langlands correpondence, to a lack of nontrivial congruences between admissible smooth representations of the reductive group
$G$ over $F$ whose $L$-group is $\LG$. So this should be related to the representation
theoretic notion of ``banal''.
Recall indeed that, for a reductive group $G$ over $F$, a prime $\ell\neq p$ is called \emph{banal} if
it does not divide the order of a torsion element of $G(F)$. For the sake of precision, we
will reterm this as ``$G$-banal''.

\begin{lemma}
  Suppose that $G$ is a reductive group over $\mathcal{O}_{F}$.
  \begin{enumerate}
  \item A prime $\ell\neq p$ is $G$-banal if and only if it does not divide the order of $G(k_{F})$.
  \item The set of $G$-banal primes only depends on the isogeny class of $G$.
  \item We have $|G(k_{F})|=q^{N}\cdot\chi_{G,\Fr}(q)$ where $N$ is
    the dimension of a maximal unipotent subgroup of $G$.
  \end{enumerate}
\end{lemma}
\begin{proof}
(1) Let $g\in G(F)$ have finite order prime to $p$. Then it stabilizes a facet of the Bruhat-Tits
building of $G(F)$, and fixes its barycenter. This barycenter becomes a
hyperspecial
point in the building of $G(F')$ for some  totally ramified extension of $F$. So the order
of $g$ divides $|G(k_{F'})|$, but $k_{F'}=k_{F}$.
Conversely, let $\ell$ be a prime that divides $|G(k_{F})|$ and pick
an element $\bar g\in G(k_{F})$ with order $\ell$. Choose a
lift  $g\in G({\mathcal O}_{F})$ of $\bar g$ and consider
the topological Jordan decomposition  $g=g_{as}g_{tu}$ of $g$ as in \cite[Thm
2.38]{Spice}. Then $g_{as}\in G({\mathcal O}_{F})$ and it has order $\ell$.

(2) This follows from (1), see the proof of Theorem \ref{thm:chevallet-steinberg}.

(3) This is the Chevalley-Steinberg formula, see Theorem \ref{thm:chevallet-steinberg}.

\end{proof}

\begin{cor}\label{cor:banalvsbanal}
  Suppose $G$ is an unramified group over $F$ with no exceptional factor, denote by
  $\LG=\HG\rtimes\langle\Fr\rangle$ its Langlands dual group, and let $\ell$ be a prime greater than the
  Coxeter number of $G$. 
  Then $\ell$ is $\LG$-banal if and only if it is $G$-banal. 
\end{cor}
\begin{proof}
  This follows from the above lemma together with Proposition
  \ref{prop:classical}, and the equality $\chi_{\HG,\Fr}(T)=\chi_{G,\Fr}(T)$.
\end{proof}

It is a bit surprising that our results in Lemma \ref{lemma:triality} and Remark
\ref{rem:G2} show that this equivalence does not hold for exceptional groups.



\section{The GIT quotient in the banal case}

Our aim in this section  is to get a complete description of the affine quotient
$\uZ^{1}(W_{F}^{0}/P_{F}^{e},\HG)\sslash\HG$ after base change to $\overline\ZM[\frac 1N]$ for
some sufficiently well controlled integer $N$. Our strategy rests on the universal
homeomorphism  (\ref{eq:mapaffquo}) 
$$\uZ^{1}(W_{F}/I_{F}^{e},\HG)\sslash\HG 
\To{} \uZ^{1}(W_{F}^{0}/P_{F}^{e},\HG)\sslash\HG. $$
We have already singled out the so-called $\LG$-banal primes, which are
particularly well behaved for the RHS. On the other hand, the integer
$N_{\HG}$ defined above Corollary \ref{cor:existIe} plays a particular role regarding the LHS:

\begin{lemma}
  The structural morphism $\uZ^{1}(W_{F}/I_{F}^{e},\HG)\To{}\Spec(\ZZ[\frac 1p])$
  is smooth over $\Spec(\ZZ[\frac 1{pN_{\HG}}])$.
\end{lemma}
\begin{proof}  Since the finite group $I_{F}/I_{F}^{e}$
  has invertible order in $\ZZ[\frac 1{pN_{\HG}}]$, Lemma \ref{hom_smooth} tells us that
  $\uZ^{1}(I_{F}/I_{F}^{e},\HG)$ is smooth over $\Spec(\ZZ[\frac  1{pN_{\HG}}])$.
  Let $\phi_{\rm univ}$ denote the universal $1$-cocycle
  $I_{F}/I_{F}^{e}\To{}\HG({\mathcal O}_{\uZ^{1}(I_{F}/I_{F}^{e},\HG)})$.
  Then  the map $\varphi \mapsto \varphi(\Fr)$ identifies $\uZ^{1}(W_{F}/I_{F}^{e},\HG)$
  with the $\HG$-transporter from $^{\Fr}\phi_{\rm univ}$ to $\phi_{\rm univ}$, as a
  scheme over $\uZ^{1}(I_{F}/I_{F}^{e},\HG)$.
  Hence, by Lemma \ref{hom_smooth} again, the restriction
  map $\uZ^{1}(W_{F}/I_{F}^{e},\HG)\To{}\uZ^{1}(I_{F}/I_{F}^{e},\HG)$ is also smooth, and the lemma follows. 
\end{proof}

Recall that the universal homeomorphism  (\ref{eq:mapaffquo}) becomes an isomorphism after tensoring by $\QQ$.
The next result gives a bound on the set of integers that actually need to be inverted.

 \begin{prop} \label{prop:homeisombanal}
  The morphism  $\uZ^{1}(W_{F}/I_{F}^{e},\HG)\sslash\HG 
  \To{} \uZ^{1}(W_{F}^{0}/P_{F}^{e},\HG)\sslash\HG$ of (\ref{eq:mapaffquo}) is an isomorphism after inverting
$N_{\HG}$ and the  non $\LG$-banal primes.
\end{prop}
\begin{proof}
  Consider the dual map  (\ref{eq:mapinvring}) on rings of functions $  (R^{e}_{\LG})^{\HG}
  \To{} (S^{e}_{\LG})^{\HG}$. We already know it is injective and its cokernel is a torsion
  abelian group. Let $\ell\neq p$ be an associated prime of this cokernel. If $\ell$ does not
  divide $N_{\HG}$, there is no $\ell$-torsion in $S^{e}_{\LG}$ (by the last lemma), hence the reduced map 
$  (R^{e}_{\LG})^{\HG}\otimes\FM_{\ell} \To{} (S^{e}_{\LG})^{\HG}\otimes \FM_{\ell}$ is
not injective. But this map induces a bijection on $\overline\FM_{\ell}$-points, so its kernel
lies in the Jacobson radical, and we deduce that $(R^{e}_{\LG})^{\HG}\otimes\FM_{\ell}$ is
not reduced. On the other hand, $(R^{e}_{\LG})^{\HG}$ is an $\ell$-adically saturated submodule of
$R^{e}_{\LG}$, so that the map $(R^{e}_{\LG})^{\HG}\otimes\FM_{\ell}\To{}
(R^{e}_{\LG}\otimes\FM_{\ell})^{\HG}$ is actually injective. So we infer that
$R^{e}_{\LG}\otimes\FM_{\ell}$ is not reduced, hence $\ell$ is not $\LG$-banal.
\end{proof}

\begin{rem}
  When $\LG$ is the Langlands dual group of an unramified group, Proposition \ref{prop:regular-unip} and the
  estimate of Proposition \ref{prop:estimate_ss} show that the prime divisors of $N_{\HG}$ are non $\LG$-banal. We
  believe this is true in general. 
\end{rem}

In view of the last proposition, we focus in the next subsection
on the explicit description of $\uZ^{1}(W_{F}/I_{F}^{e},\HG)\sslash\HG$,
  over  $\overline\ZM[\frac 1{pN_{\HG}}]$.
The description that we obtain in Theorem \ref{thm:structureGIT} bears a striking analogy
with the usual description of the  Bernstein center.
 Actually, in Subsection \ref{subsec:comp-with-hain},  we extend scalars to $\CM$
   and we show that our description gives back Haines' definition of a structure of algebraic
   variety on the set of semisimple complex Langlands parameters.

\subsection{Description of $\uZ^{1}(W_{F}/I_{F}^{e},\HG)\sslash \HG$ over
  $\overline\ZM[\frac 1{pN_{\HG}}]$} \label{subsec:description}
Since the order of  $I_{F}/I_{F}^{e}$ is invertible in
$\ZZ[\frac 1{pN_{\HG}}]$, we
can obtain decompositions of
$\uZ^{1}(W_{F}/I_{F}^{e},\HG)_{\overline\ZZ[\frac 1{pN_{\HG}}]}$ similar to (\ref{eq:dec}) and
(\ref{eq:dec2}) by restricting cocycles to $I_{F}$ instead of restricting to $P_{F}$.
Indeed, we first infer the following results from Theorems \ref{h1_finite_etale},
\ref{representatives}, \ref{pi0fini} and  \ref{split_red_gp} in the appendix.
\begin{prop}
 There is a finite extension $\tilde{K}_{e}$ of $K_{e}$ and a set
 $$\tilde\Phi_{e} \subset Z^{1}\left(I_{F}/I_{F}^{e},\HG\left(\mathcal{O}_{\tilde{K}_{e}}
     {\textstyle[\frac1{pN_{\HG}}]}\right)\right), \hbox{ such that}$$
  \begin{itemize}
  \item for each $\phit\in\tilde\Phi_e$, the group scheme
    $C_{\HG}(\phit)^{\circ}$ is split reductive and
    $\pi_{0}(\phit):=\pi_{0}(C_{\HG}(\phit))$ is constant over
    $\mathcal{O}_{\tilde{K}_{e}}[\frac 1{pN_{\HG}}]$ and
  \item we have an orbit decomposition
$$\uZ^{1}(I_{F}/I_{F}^{e},\HG)_{\mathcal{O}_{\tilde{K}_{e}}[\frac
  1{pN_{\HG}}]}=\coprod_{\phit\in\tilde\Phi_e} \HG\cdot\phit \simeq
\coprod_{\phit\in\tilde\Phi_e} \HG/C_{\HG}(\phit)$$
where each summand represents the corresponding \'etale sheaf quotient.
\end{itemize}
\end{prop}
The above  decomposition induces in turn  the following ones :
$$
\uZ^{1}(W_{F}/I_{F}^{e},\HG)_{\mathcal{O}_{\tilde{K}_{e}}[\frac 1{pN_{\HG}}]} = 
\coprod_{\phit\in \tilde\Phi_e^{\rm adm}}  \HG \times^{C_{\HG}(\phit)}
\uZ^{1}(W_{F},\HG)_{\phit}.
$$

\begin{equation}
(\uZ^{1}(W_{F}/I_{F}^{e},\HG)\sslash\HG)_{\mathcal{O}_{\tilde{K}_{e}}[\frac 1{pN_{\HG}}]} = 
\coprod_{\phit\in \tilde\Phi_e^{\rm adm}}  
\uZ^{1}(W_{F},\HG)_{\phit}\sslash C_{\HG}(\phit).\label{eq:decompquot}
\end{equation}
Here, $\uZ^{1}(W_{F},\HG)_{\phit}$ is the affine scheme over
$\mathcal{O}_{\tilde{K}_{e}}[\frac 1{pN_{\HG}}]$ that classifies all $1$-cocycles
$\varphi:\, W_{F}\To{}\HG$ such that $\varphi_{|I_{F}}=\phit$ and, as usual,
we say that $\phit$ is admissible if this scheme is not empty.

Define the $\Fr$-twist of $\phit$ by
${^{\Fr}\phit}(i):= \Fr(\phit(\Fr^{-1}i\Fr))$. Then we have an
isomorphism $\varphi\mapsto \varphi(\Fr)$
$$\uZ^{1}(W_{F},\HG)_{\phit} \To\sim  T_{\HG}({^{\Fr}\phit},\phit)$$
where the RHS denotes the transporter in $\HG$ from 
${^{\Fr}\phit}$ to $\phit$ for the natural action of $\HG$
on $\uZ^{1}(I_{F},\HG)$. This isomorphism is $C_{\HG}(\phit)$-equivariant if we let
$C_{\HG}(\phit)$ act on the transporter by $\Fr$-twisted conjugation
$c\cdot t:= c t \Fr(c)^{-1}$. 
On the other hand, $T_{\HG}({^{\Fr}\phit},\phit)$ is also a left pseudo-torsor over $C_{\HG}(\phit)$
under composition $(c,t)\mapsto c  t$.
When $\phi$ is admissible, $T_{\HG}({^{\Fr}\phit},\phit)$ is actually a 
$C_{\HG}(\phit)$-torsor for the \'etale topology, and the \'etale sheaf quotient
$\pi_{0}({^{\Fr}\phit},\phit):=T_{\HG}({^{\Fr}\phit},\phit)/C_{\HG}(\phit)^{\circ}$
is a 
$\pi_{0}(\phit)$-torsor. Therefore $\pi_{0}({^{\Fr}\phit},\phit)$ is
representable by a finite \'etale $\mathcal{O}_{\tilde{K}_{e}}[\frac 1{pN_{\HG}}]$-scheme and,
after maybe enlarging $\tilde K_{e}$, we may and will assume that
it is  \emph{constant}. Then we get a further decomposition
$$T_{\HG}({^{\Fr}\phit},\phit) =\coprod_{\beta\in \pi_{0}({^{\Fr}\phit},\phit)}
T_{\HG}({^{\Fr}\phit},\phit)_{\beta}$$
which is nothing but the decomposition into connected components, and where each component
is a left $C_{\HG}(\phit)^{\circ}$-torsor. Moreover, the  $\Fr$-twisted
conjugation action of $C_{\HG}(\phit)$ on $T_{\HG}({^{\Fr}\phit},\phit)$
induces an action of
$\pi_{0}(\phit)$ on $\pi_{0}({^{\Fr}\phit},\phit)$. Denote by
$\pi_{0}(\phit)_{\beta}$ the stabilizer of $\beta$ for this action, and by
$\pi_{0}({^{\Fr}\phit},\phit)_{0}$ a set of representatives of orbits.
Then we get
\begin{equation}
T_{\HG}({^{\Fr}\phit},\phit)\sslash C_{\HG}(\phit) =\coprod_{\beta\in \pi_{0}({^{\Fr}\phit},\phit)_{0}}
 \left(T_{\HG}({^{\Fr}\phit},\phit)_{\beta}\sslash
   C_{\HG}(\phit)^{\circ}\right)_{/\pi_{0}(\phit)_{\beta}}.\label{eq:decompquot2}
\end{equation}
 Our next result will allow us to compute each term of this decomposition.
Before we can state it, note that if $R$ is an
$\mathcal{O}_{\tilde{K}_{e}}[\frac 1{pN_{\HG}}]$-algebra and $\tilde\beta\in
T_{\HG(R)}({^{\Fr}\phit},\phit)$, then conjugation by $\tilde\beta\rtimes \Fr$ in
$\HG(R)\rtimes W_{F}$ normalizes $C_{\HG(R)}(\phi)$. We denote the automorphism thus
induced by ${\rm Ad}_{\tilde\beta}$.

\begin{thm} \label{thm:fix-epinglage}
  Fix a pinning $\varepsilon_{\phit}=(B_{\phit}, 
  T_{\phit}, (X_{\alpha})_{\alpha})$ of
  $C_{\HG}(\phit)^{\circ}$ over $\mathcal{O}_{\tilde K_{e}}[\frac 1 {pN_{\HG}}]$. Then,  after
maybe enlarging the finite extension  $\tilde{K}_{e}$,  we can find  for each
$\beta\in\pi_{0}({^{\Fr}\phit},\phit)$,  a lift $\tilde\beta\in
T_{\HG(\mathcal{O}_{\tilde K_{e}}[\frac 1 {pN_{\HG}}])}({^{\Fr}\phit},\phit)$
of $\beta$ such that ${\rm Ad}_{\tilde\beta}$ normalizes $\varepsilon_{\phit}$.
\end{thm}

\begin{proof} The proof goes along the same argument as for Theorem
  \ref{Borel_preserving}. Let us first do the translation to the notation of Subsection
  \ref{sec:some-defin-constr}. 
  To this aim,  choose an
$L$-group such that $I_{F}/I_{F}^{e}$ embeds into $\pi_{0}(\LG)$. Then we can
form the subgroup scheme $C_{\LG}(\phit)$ as in Subsection
\ref{sec:some-defin-constr} and, 
denoting  by $\overline\Fr$ the image of $\Fr$ in $\pi_{0}(\LG)$, we see
that the map $\varphi\mapsto \varphiL(\Fr)$ defines an isomorphism  
$$\uZ^{1}(W_{F},\HG)_{\phit} \To\sim  C_{\LG}(\phit)\cap (\HG\rtimes
\overline\Fr),$$ 
and that the map $\tilde\beta\mapsto \tilde\beta\rtimes \overline\Fr$ defines a second
isomorphism
$$ T_{\HG}({^{\Fr}\phit},\phit) \To\sim C_{\LG}(\phit)\cap (\HG\rtimes
\overline\Fr)$$
whose composition with the previous one is the isomorphism introduced just above.
Now, define $\tilde\pi_{0}(\phit)$ and $\Sigma(\phit)$ as 
above Definition \ref{sigma_admissible}, so that we have for each admissible $\phit$ a further decomposition
$$\uZ^{1}(W_{F},\HG)_{\phit} =\coprod_{\alphat\in \Sigma(\phit)}
\uZ^{1}(W_{F},\HG)_{\phit,\alphat}.$$
Then we have a bijection $\alphat\mapsto \alphat(\Fr)$ between
$\Sigma(\phit)$ and the fiber of  the map $\tilde\pi_{0}(\phit)\to\pi_{0}(\LG)$
over $\overline\Fr$. On the other hand, we have a natural injection
$\pi_{0}({^{\Fr}\phit},\phit) \hookrightarrow \tilde\pi_{0}(\phit)$ whose
image is precisely the said fiber. So we get a bijection $\beta\leftrightarrow
\alphat$ between $ \pi_{0}({^{\Fr}\phit},\phit)$ and $\Sigma(\phit)$,
and it is easily checked that the map $\varphi\mapsto\varphi(\Fr)$ identifies 
$\uZ^{1}(W_{F},\HG)_{\phit,\alphat}$ with $T_{\HG}({^{\Fr}\phit},\phit)_{\beta}$.

Now the same proof as that of Theorem  \ref{Borel_preserving} applies, and actually 
 the stronger variant of  Remark \ref{rk_epinglage} applies too, because what is needed from
Lemma \ref{cohomological_lemma} in the proof of this variant is now
trivial : since $W_{F}/I_{F}\simeq \ZM$, we have $H^{2}(W_{F}/I_{F},A)=\{0\}$ for any
abelian group $A$ with action of $W_{F}/I_{F}$.

So we get the existence of a finite extension  $\tilde{K}_{e}$ and, for each $\alphat$,  a cocycle
\begin{equation}
\varphi_{\alphat}:W_{F}\To{}\HG\left(\mathcal{O}_{\tilde{K}_{e}}{\textstyle[\frac 1{pN_{\HG}}]}\right)\label{eq:extending_cocycle}
\end{equation}
that restricts to $\phit$, induces $\alphat$, normalizes
$\varepsilon_{\phit}$  and has finite image.
Writing $\varphi_{\alphat}(\Fr)=\tilde\beta\rtimes\overline\Fr$ provides us with the
desired element $\tilde\beta$.
\end{proof}

With the notation of this theorem we now have an identification $ c\mapsto c\tilde\beta$
$$ C_{\HG}(\phi)^{\circ} \To\sim T_{\HG}({^{\Fr}\phit},\phit)_{\beta} $$
  and the $\Fr$-twisted conjugation action of  $C_{\HG}(\phit)^{\circ}$ on
  $ T_{\HG}({^{\Fr}\phit},\phit)_{\beta} $ corresponds to  the
    ${\rm  Ad}_{\tilde\beta}$-twisted conjugation action of $C_{\HG}(\phit)^{\circ}$ on itself.
    We thus get
$$ T_{\HG}({^{\Fr}\phit},\phit)_{\beta}\sslash C_{\HG}(\phit)^{\circ} 
 = (C_{\HG}(\phit)^{\circ}\rtimes {\rm Ad}_{\tilde\beta})\sslash C_{\HG}(\phit)^{\circ}$$
where the notation on the right hand side is meant to emphasize that
$C_{\HG}(\phi)^{\circ}$ acts via $\Ad_{\tilde\beta}$-twisted conjugation.
 
Now, denote by
$\Omega_{\phit}^{\circ}$ the Weyl group of the maximal torus
$T_{\phit}$ of $C_{\HG}(\phit)^{\circ}$, and denote by  
$\Omega_{\phit}:=N_{C_{\HG}(\phit)}(T_{\phit})/T_{\phit}$ its ``Weyl
group'' in  $C_{\HG}(\phit)$. The natural map
$N_{C_{\HG}(\phit)}(T_{\phit},B_{\phit})\To{}\pi_{0}(\phit)$ induces an isomorphism 
$N_{C_{\HG}(\phit)}(T_{\phit},B_{\phit})/T_{\phit}\simeq\pi_{0}(\phit)$, 
hence  $\Omega_{\phit}=\Omega_{\phit}^{\circ}\rtimes\pi_{0}(\phit)$ is a split extension of
$\pi_{0}(\phit)$ by $\Omega_{\phit}^{\circ}$. Since the automorphism
${\rm Ad}_{\tilde\beta}$ of $C_{\HG}(\phit)$ stabilizes 
$T_{\phit}$ and $B_{\phit}$, it acts on $\Omega_{\phit}$ and preserves the
semi-direct product decomposition. Note that the actions of ${\rm Ad}_{\tilde\beta}$ on
$T_{\phit}$  and $\Omega_{\phit}$ only depend on $\beta$ and not
on the choice of $\tilde\beta$ as in the theorem. We will thus denote these actions
 simply by ${\rm Ad}_{\beta}$.
 Observe that the invariant subgroup
$(\Omega_{\phit})^{{\rm Ad}_{\beta}}$ of $\Omega_{\phit}$ decomposes as
$$(\Omega_{\phit})^{{\rm Ad}_{\beta}}
= (\Omega_{\phit}^{\circ})^{{\rm Ad}_{\beta}}\rtimes \pi_{0}(\phit)_{\beta}$$
and acts naturally on the coinvariant torus 
$(T_{\phit})_{{\rm Ad}_{\beta}}$.

\begin{prop}
  The inclusion
 $T_{\phit}\hookrightarrow C_{\HG}(\phit)^{\circ}$  induces
 an isomorphism 
$$ (T_{\phit})_{{\rm Ad}_{\beta}}\sslash (\Omega_{\phit}^{\circ})^{{\rm Ad}_{\beta}} \To\sim 
\left(C_{\HG}(\phit)^{\circ}\rtimes 
{\rm Ad}_{\tilde\beta}\right){\sslash C_{\HG}(\phit)^{\circ}}$$
\end{prop}
\begin{proof}
  Consider the inclusion
  $T_{\phit}\rtimes {\rm Ad}_{\tilde\beta} \hookrightarrow C_{\HG}(\phit)^{\circ}\rtimes
  {\rm Ad}_{\tilde\beta}$. Under the conjugation action of $C_{\HG}(\phi)^{\circ}$ on the RHS, the
  LHS is stable by  the subgroup scheme $N_{C_{\HG}(\phit)^{\circ}}(T_{\phit})_{\beta}$ of
$N_{C_{\HG}(\phit)^{\circ}}(T_{\phit})$ given as the inverse image of $(\Omega_{\phit}^{\circ})^{{\rm
    Ad}_{\beta}}$. Whence a morphism
$$\left(T_{\phit}\rtimes 
{\rm Ad}_{\tilde\beta}\right)\sslash N_{C_{\HG}(\phit)^{\circ}}(T_{\phit})_{\beta} \To{}
\left(C_{\HG}(\phit)^{\circ}\rtimes 
{\rm Ad}_{\tilde\beta}\right){\sslash C_{\HG}(\phit)^{\circ}}.$$
Now observe that $(T_{\phit})_{{\rm Ad}_{\beta}} = \left(T_{\phit}\rtimes 
{\rm Ad}_{\tilde\beta}\right)\sslash T_{\phit}$, so that
the above morphism induces in turn
a  morphism
$$  (T_{\phit})_{{\rm Ad}_{\beta}}\sslash (\Omega_{\phit}^{\circ})^{{\rm Ad}_{\beta}} \To{} 
(C_{\HG}(\phit)^{\circ}\rtimes 
{\rm Ad}_{\tilde\beta}){\sslash C_{\HG}(\phit)^{\circ}}.$$
Now, for any algebraically closed field $L$ over
$\mathcal{O}_{\tilde{K}_{e}}[\frac 1{pN_{\HG}}]$, Lemma 6.5 of  \cite{borel_corvallis} tells us that 
this morphism induces a bijection on $L$-points. In particular the
corresponding map on rings of functions is injective since the source is reduced.
Its surjectivity can be proved as in \cite[Prop. 6.7]{borel_corvallis}, which deals with complex
coefficients.  Namely, put $R:=\mathcal{O}_{\tilde{K}_{e}}[\frac 1{pN_{\HG}}]$ and let
$X$ denote the character group of $T_{\phi}$. Then the ring of functions of 
$(T_{\phit})_{{\rm Ad}_{\beta}}\sslash (\Omega_{\phit}^{\circ})^{{\rm Ad}_{\beta}} $
is $R[X^{\Ad_{\beta}}]^{(\Omega_{\phi}^{\circ})^{\Ad_{\beta}}}$, hence has a natural
$R$-basis given by $(\Omega_{\phi}^{\circ})^{\Ad_{\beta}}$-orbits in
$X^{\Ad_{\beta}}$. Any such orbit has a unique representative in the antidominant cone of
$X$ with respect to $B_{\phi}$. So let $\lambda \in X^{\Ad_{\beta}}$ be antidominant in
$X$ and let $\mathcal{L}_{\lambda}$ be the corresponding invertible sheaf on the flag
variety $C_{\HG}(\phi)^{\circ}/B_{\phi}$. Then
$M_{\lambda}:=H^{0}(C_{\HG}(\phi)^{\circ}/B_{\phi},\mathcal{L}_{\lambda})$ is a free
$R$-module of finite rank with an
algebraic action of $C_{\HG}(\phi)^{\circ}$. Actually, since $\lambda$ is
$\Ad_{\beta}$-invariant, it defines a character of the group scheme $T_{\phi}\rtimes
\langle \Ad_{\beta}\rangle$, and since
$C_{\HG}(\phi)^{\circ}/B_{\phi}=(C_{\HG}(\phi)^{\circ}\rtimes\langle \Ad_{\beta}\rangle)/(B_{\phi}\rtimes\langle \Ad_{\beta}\rangle)$,
we see that $M_{\lambda}$ is actually a
$C_{\HG}(\phi)^{\circ}\rtimes\langle \Ad_{\beta}\rangle$-module. 
In particular, the map $g\mapsto \tr(g\rtimes\Ad_{\beta}|M_{\lambda})$ is in the ring of
functions of $(C_{\HG}(\phit)^{\circ}\rtimes  {\rm Ad}_{\tilde\beta}){\sslash C_{\HG}(\phit)^{\circ}}$.
Its restriction to $T_{\phi}$ factors over $(T_{\phi})_{\Ad_{\beta}}$ and is of the
form
$$ c\left(\sum_{\lambda'\in (\Omega_{\phi}^{\circ})^{\Ad_{\beta}}.\lambda} \lambda'\right)  +
\sum_{\mu>\lambda} a_{\mu} \mu,\,\,\, a_{\mu}\in \NN,$$
where $c$ denotes the eigenvalue of $\Ad_{\beta}$ on the $\lambda$-eigenspace of $T_{\phi}$ in
$M_{\lambda}$ (which is a free direct factor of rank $1$).
So we deduce inductively the desired surjectivity.
 \end{proof}


Eventually, after choosing a pinning $\varepsilon_{\phi}$ for each $\phi\in \tilde\Phi_{e}$
and inserting the result of the above proposition inside decompositions (\ref{eq:decompquot}) and 
 (\ref{eq:decompquot2}), 
we get our desired description of the affine quotient over $\mathcal{O}_{\tilde{K}_{e}}[\frac 1{pN_{\HG}}]$.
\begin{thm} \label{thm:structureGIT}
The collection of embeddings $T_{\phi}\hookrightarrow C_{\HG}(\phi)$ induce an isomorphism of
$\mathcal{O}_{\tilde{K}_{e}}[\frac 1{pN_{\HG}}]$-schemes 
$$\coprod_{\phit\in \tilde\Phi_e^{\rm adm}}  
\coprod_{\beta\in \pi_{0}({^{\Fr}\phit},\phit)_{0}}
(T_{\phit})_{{\rm Ad}_{\beta}}\sslash (\Omega_{\phit})^{{\rm Ad}_{\beta}}  \To\sim
(\uZ^{1}(W_{F}/I_{F}^{e},\HG)\sslash\HG)_{\mathcal{O}_{\tilde{K}_{e}}[\frac 1{pN_{\HG}}]}  
.
$$
\end{thm}

We note that the LHS does not depend on the choices of elements $\tilde\beta$ as in
Theorem \ref{thm:fix-epinglage}. But the maps from the LHS to the RHS a priori depend on
these choices.

 \subsection{The GIT quotient over a banal algebraically closed field}
With Theorem \ref{thm:structureGIT} and Proposition \ref{prop:homeisombanal}, we now have a
description of the affine quotient $\uZ^{1}(W_{F}^{0}/P_{F}^{e},\HG)\sslash\HG$ after inverting
$N_{\HG}$ and the non $\LG$-banal primes. Let us now consider affine quotients over algebraically
closed fields.


\begin{thm} \label{thm:structureGITfield}
 Let $L$ be an algebraically closed field over $\mathcal{O}_{\tilde{K}_{e}}[\frac
 1{pN_{\HG}}]$ and of $\LG$-banal characteristic. Then the natural maps induce
 isomorphisms
 \begin{eqnarray*}
\coprod_{\phit\in \tilde\Phi_e^{\rm adm}}  
\coprod_{\beta\in \pi_{0}({^{\Fr}\phit},\phit)_{0}}
   (T_{\phit,L})_{{\rm Ad}_{\beta}}\sslash (\Omega_{\phit})^{{\rm Ad}_{\beta}}
   & \To\sim&
              \uZ^{1}(W_{F}/I_{F}^{e},\HG_{L})\sslash\HG_{L}\\
   \uZ^{1}(W_{F}/I_{F}^{e},\HG_{L})\sslash\HG_{L}
   & \To\sim&
   \uZ^{1}(W_{F}^{0}/P_{F}^{e},\HG_{L})\sslash\HG_{L}\\
   \uZ^{1}(W_{F}^{0}/P_{F}^{e},\HG_{L})\sslash\HG_{L} &\To\sim& (\uZ^{1}(W_{F}^{0}/P_{F}^{e},\HG)\sslash\HG)_{L}.
 \end{eqnarray*}
 \end{thm}
 \begin{proof}
 The first isomorphism holds  without the $\LG$-banal hypothesis, and is proved exactly
 as the isomorphism of Theorem \ref{thm:structureGIT}. We then have a commutative diagram
 $$\xymatrix{
{\coprod}_{(\phit,\beta)}
   (T_{\phit,L})_{{\rm Ad}_{\beta}}\sslash (\Omega_{\phit})^{{\rm Ad}_{\beta}}
\ar[d]  \ar[r]^-{\sim} & 
              \uZ^{1}(W_{F}/I_{F}^{e},\HG_{L})\sslash\HG_{L} \ar[d] \\
{\coprod}_{(\phit,\beta)}
   \left((T_{\phit})_{{\rm Ad}_{\beta}}\sslash (\Omega_{\phit})^{{\rm Ad}_{\beta}}\right)_{L}
   \ar[r]^-{\sim} &   \left(\uZ^{1}(W_{F}/I_{F}^{e},\HG)\sslash\HG\right)_{L}
   }$$
 If, in addition, the order of each
 $(\Omega_{\phi})^{\rm Ad_{\beta}}$ is invertible in $L$, which is certainly the case if
 ${\rm char}(L)$ is $\LG$-banal, then
 the left vertical map is an isomorphism, and it follows that the right vertical map is
 also an isomorphism.
So we now have a commutative square involving the analogous map for $W_{F}^{0}/P_{F}^{e}$,
  which, in terms of rings, reads
$$\xymatrix{
  (R_{\LG}^{e})^{\HG}\otimes L  \ar[r] \ar[d]^{\sim} &
  (R_{\LG}^{e}\otimes L)^{\HG} \ar[d] \\
  (S_{\LG}^{e})^{\HG}\otimes L \ar[r]^{\sim} &
  (S_{\LG}^{e}\otimes L)^{\HG} 
}$$
The right vertical map is surjective since the bottom map is surjective, and it is also
injective since it induces a bijection on $L$-points and  $(R_{\LG}^{e}\otimes L)^{\HG}$
is reduced.  Therefore it is an isomorphism, and so is the upper map.
 \end{proof}

\begin{rem}\label{rem:structureGIT}

 Let $L$ be an algebraically closed field as in the theorem.

i) The index set in the first isomorphism can be replaced by any set
$\Psi_{e}(L)$ of representatives of 
$\HG(L)$-conjugacy classes of pairs $(\phit,\beta)$ consisting of a
cocycle $\phit : I_{F}/I_{F}^{e}\To{}\LG(L)$ and an element $\beta\in\pi_{0}({^{\Fr}\phit},\phit)$.
Since any $\phit$ as above is automatically semisimple, this is the same set as in Corollary \ref{cor:up_to_homeo}.

ii) As usual, the set of $L$-points of $\uZ^{1}(W_{F}/I_{F}^{e},\HG_{L})\sslash\HG_{L}$
is the set of closed $\HG_{L}$-orbits in $Z^{1}(W_{F}/I_{F}^{e},\HG(L))$. For a cocycle
$\varphi : W_{F}/I_{F}^{e}\To{} \HG(L)$ in some
$\uZ^{1}(W_{F}/I_{F}^{e},\HG_{L})_{\phi}$, we claim that the following statements are
equivalent, \emph{provided $L$ has characteristic $0$} :
\begin{enumerate}
\item its $\HG_{L}$-orbit is closed in $\uZ^{1}(W_{F}/I_{F}^{e},\HG_{L})$, 
\item its  $C_{\HG}(\phi)_{L}$-orbit is closed in
  $\uZ^{1}(W_{F}/I_{F}^{e},\HG_{L})_{\phi}$,
\item $\varphiL(\Fr)$  is a semisimple element of $\LG(L)$.
\item $\varphiL(W_{F})$ consists of semisimple elements.
\end{enumerate}
Indeed, $(1)\Rightarrow (2)$ since the small orbit is the intersection of the big one with
the closed subset $\uZ^{1}(W_{F}/I_{F}^{e},\HG_{L})_{\phi}$. Moreover,
$(2)$ is equivalent to the orbit of $\varphiL(\Fr)$ being closed in $C_{\LG}(\phi)(L)$, which
in turn is equivalent to $\varphiL(\Fr)$ being a semisimple element of $C_{\LG}(\phi)(L)$,
hence also of $\LG(L)$. Further, $(3)$, being equivalent to $(2)$, applies to any lift of Frobenius, so implies
$(4)$. Eventually, $(4)$ implies that the $\HG(L)$-orbits of a finite set of generators of
$\varphiL(W_{F})$ are closed, which implies $(1)$.


The cocycles that satisfy property (3) are often called ``Frobenius semi-simple'' in the literature.
When $L$ has positive characteristic,  a cocycle with closed orbit may not be Frobenius
semi-simple.
For example suppose $q=q_{F}$ has prime order $\ell\neq p$ in some
$(\ZM/n\ZM)^{\times}$ with $n$ prime to both $p$ and $\ell$,
and consider the character $\theta :\,I_{F}\twoheadrightarrow \mu_{n}\hookrightarrow\overline\FM_{\ell}^{\times}.$
Extend this character to $I_{F}\cdot \Fr^{\ell\ZM}$ by setting $\theta(\Fr^{\ell})=1$ and
induce to $W_{F}$. We obtain an irreducible representation
$\varphi:\, W_{F}\To{}\GL_{\ell}(\overline\FM_{\ell})$ such that $\varphi(\Fr)$ has order $\ell$. 
\end{rem}

\subsection{Comparison with the Haines variety}\label{subsec:comp-with-hain}
In this subsection, we assume that the action of $W_{F}$ stabilizes a pinning
of $\HG$, so that $\LG$ is an $L$-group associated to
some reductive group $G$ over $F$. 
As noted  in point ii) of the last remark, the set of $\mathbb{C}$-points of the affine
categorical quotient
$$\uZ^{1}(W_{F}/I_{F}^{e},\HG)_{\mathbb{C}}\sslash \HG_{\mathbb{C}}$$ is the set of
$\HG(\mathbb{C})$-conjugacy classes of Frobenius semisimple $L$-homomorphisms
$W_{F}/I_{F}^{e}\To{} \LG(\mathbb{C})$.  
In \cite{haines}, Haines  endows this set with the
structure of a complex affine variety that mimics Bernstein's description of the center of the
category of complex representations of $G(F)$.  We will denote by $\Omega_{e}(\HG)$ the Haines
variety and we wish to compare his construction to ours.
Note that, in the notation of \cite{haines}, $\Omega_{e}(\HG)$ is a summand of
$\mathfrak{Y}$ and is the union of all components $\mathfrak Y_{\mathfrak t}$
corresponding to inertial classes of parameters that are trivial on $P_{F}^{e}$.
In a rather
abstract form, the main result of this section is the following.
\begin{thm}\label{thm:comparison}
The set-theoretic  identification between
    $(\uZ^{1}(W_{F}/I_{F}^{e},\HG)\sslash \HG)(\mathbb{C})$  
and $\Omega_{e}(\HG)$ is induced by an  isomorphism of varieties
$$  \Omega_{e}(\HG) \simeq \uZ^{1}(W_{F}/I_{F}^{e},\HG)_{\mathbb{C}}\sslash \HG_{\mathbb{C}} .$$
\end{thm}



We need to recall some features of Haines' construction in Section 5
of \cite{haines}.
Let $\varphiL : W_{F}/I_{F}^{e}\to\LG(\mathbb{C})$ be a Frobenius-semisimple $L$-morphism
(called an ``infinitesimal character'' by Haines and Vogan), and choose a Levi subgroup
$\mathcal{M}$ of $\LG$ that contains $\varphiL(W_{F})$ and is minimal for this
property. Here we consider Levi  subgroups in the sense of Borel 
\cite[\S 3]{borel_corvallis}. In particular, $\mathcal{M}^{\circ}=\mathcal{M}\cap \HG$ 
is a Levi subgroup of $\HG$ and $\pi_{0}(\mathcal{M})\To\sim \pi_{0}(\LG)$ is a
quotient of $W_{F}$. As a
consequence, the action of $\mathcal{M}$ by conjugation on the center
$Z(\mathcal{M}^{\circ})$ of $\mathcal{M}^{\circ}$ factors through an action of
$\pi_{0}(\LG)$, and provides thus a canonical action of $W_{F}$ on
$Z(\mathcal{M}^{\circ})$. 
We may then consider the torus
$(Z(\mathcal{M}^{\circ})^{I_{F}})^{\circ}$ given by  the neutral component of the
$I_{F}$-invariants, and which still carries an action of $W_{F}/I_{F}=\langle\Fr\rangle$.
To any $z\in (Z(\mathcal{M}^{\circ})^{I_{F}})^{\circ}$, Haines associates a new parameter
$z\cdot\varphiL$ defined by
$$ (z\cdot \varphiL)(w):=z^{\nu(w)}\varphiL(w), 
\,\,\hbox{ with }
\nu: W_{F}\twoheadrightarrow \ZZ\hbox{ defined by }w\Fr^{-\nu(w)}\in I_{F}.$$

The conjugacy class $(z\cdot \varphiL)_{\HG}$ only depends on the image of $z$ in the 
 $\Fr$-coinvariants $(Z(\mathcal{M}^{\circ})^{I_{F}})^{\circ}_{\Fr}$, hence we get a map
 \begin{equation}
   (Z(\mathcal{M}^{\circ})^{I_{F}})^{\circ}_{\Fr} \To{} \Omega_{e}(\HG)=
   (\uZ^{1}(W_{F}/I_{F}^{e},\LG)\sslash \HG)(\mathbb{C})
   \label{eq:morphism_haines}
 \end{equation}
By Haines' definition of the variety structure on $\Omega_{e}(\HG)$, this map is a
morphism of algebraic varieties $ (Z(\mathcal{M}^{\circ})^{I_{F}})^{\circ}_{\Fr} \To{}
\Omega_{e}(\HG)$.
Even better, there is a finite group
$W_{\mathcal{M},\varphi}$ of algebraic automorphisms of
$(Z(\mathcal{M}^{\circ})^{I_{F}})^{\circ}_{\Fr}$, whose precise definition is not needed
here, such that the map (\ref{eq:morphism_haines}) factors over an injective map
$(Z(\mathcal{M}^{\circ})^{I_{F}})^{\circ}_{\Fr}/W_{\mathcal{M},\varphi}\hookrightarrow
\Omega_{e}(\HG)$. Then, the corresponding morphism of varieties
$ (Z(\mathcal{M}^{\circ})^{I_{F}})^{\circ}_{\Fr}\sslash
W_{\mathcal{M},\varphi} \To{}\Omega_{e}(\HG)$ is an isomorphism
onto a connected component of $\Omega_{e}(\HG)$, by Haines' construction. Moreover, all
connected components are obtained in this way. 

At this point, we have recalled enough to prove one direction.
\begin{lemma}
The set-theoretic  identification between 
  $\Omega_{e}(\HG)$ and
  $(\uZ^{1}(W_{F}/I_{F}^{e},\HG)\sslash\HG)(\mathbb{C})$  is induced
  by a  morphism of varieties 
$$  \Omega_{e}(\HG) \To{} \uZ^{1}(W_{F}/I_{F}^{e},\HG)_{\mathbb{C}}\sslash
\HG_{\mathbb{C}} .$$ 
\end{lemma}
\begin{proof}
By the foregoing discussion, it now suffices to prove that 
 each  map (\ref{eq:morphism_haines}) is induced by a
  morphism of schemes $$ (Z(\mathcal{M}^{\circ})^{I_{F}})^{\circ}_{\Fr}
  \To{} \uZ^{1}(W_{F}/I_{F}^{e},\HG)_{\mathbb{C}}\sslash \HG_{\mathbb{C}}.$$

 By construction, the map (\ref{eq:morphism_haines}) is part of a
  commutative diagram
$$\xymatrix{
  (Z(\mathcal{M}^{\circ})^{I_{F}})^{\circ} \ar[d] \ar[r] &
  \uZ^{1}(W_{F}/I_{F}^{e},\HG)(\mathbb{C}) \ar[d] \\
  (Z(\mathcal{M}^{\circ})^{I_{F}})^{\circ}_{\Fr} \ar[r] &
  (\uZ^{1}(W_{F}/I_{F}^{e},\HG)\sslash \HG)(\mathbb{C}) }$$
where the top map is given by $z\mapsto z\cdot \varphiL$. Denote by
$\zeta \in \LG(\mathbb{C}[(Z(\mathcal{M}^{\circ})^{I_{F}})^{\circ}])$
the element corresponding to the closed immersion
$(Z(\mathcal{M}^{\circ})^{I_{F}})^{\circ}\hookrightarrow
\LG$. Then $\zeta\cdot\varphiL$ is an element of
$Z^{1}(W_{F}/I_{F}^{e},\HG(\mathbb{C}[(Z(\mathcal{M}^{\circ})^{I_{F}})^{\circ}]))$, hence
corresponds to a morphism
$(Z(\mathcal{M}^{\circ})^{I_{F}})^{\circ}\To{}
\uZ^{1}(W_{F}/I_{F}^{e},\HG)$. By definition, this morphism induces
the top map of the above diagram on the respective sets of
$\mathbb{C}$-points.  Moreover, the composition of this morphism with
the morphism underlying the right vertical map of the diagram is
$\Fr$-equivariant for the trivial action of $\Fr$ on the target, so it
has to factor over a morphism which induces the bottom map of the
diagram, as desired.
\end{proof}




We now go in the other direction.

\begin{lemma}
The set-theoretic  identification between 
  $(\uZ^{1}(W_{F}/I_{F}^{e},\HG)\sslash\HG)(\mathbb{C})$ and  $\Omega_{e}(\HG)$ 
 is induced
  by a  morphism of varieties 
$$  \uZ^{1}(W_{F}/I_{F}^{e},\HG)_{\mathbb{C}}\sslash \HG_{\mathbb{C}} \To{}  \Omega_{e}(\HG) .$$ 
\end{lemma}
 \begin{proof}
The description of
Theorem \ref{thm:structureGIT} shows that it suffices  to prove that
for any pair $(\phit,\beta)$ as in Theorem \ref{thm:structureGIT}
and any choice of $\tilde\beta\in T_{\HG}({^{\Fr}\phit},\phit)_{\beta}$ as in Theorem \ref{thm:fix-epinglage},
   the map
   \begin{equation}\label{eq:morphism_git}
     (T_{\phit})_{\rm Ad_{\beta}}(\mathbb{C}) \To{}    
     (\uZ^{1}(W_{F}/I_{F}^{e},\HG)\sslash \HG)(\mathbb{C}) =\Omega_{e}(\HG)
   \end{equation}
   is induced by a morphism of algebraic varieties $ (T_{\phit})_{\rm Ad_{\beta}} \To{} \Omega_{e}(\HG)$.

   To prove this, we will identify the maps (\ref{eq:morphism_git}) to
   instances of maps (\ref{eq:morphism_haines}).
   Let us thus fix a pair $(\phit,\beta)$ and $\tilde\beta$ as in the statement, so that
   ${\rm Ad}_{\tilde\beta}$ fixes a pinning    $\varepsilon_{\phit}$ of $C_{\HG}(\phit)^{\circ}$ with maximal
   torus $T_{\phit}$.
   We denote by    $\varphi_{\tilde\beta}$ the unique extension of $\phi$ such that
   $\varphi_{\tilde\beta}(\Fr)=\tilde\beta$.
  Consider the torus $(T_{\phit})^{\rm
     Ad_{\tilde\beta},\circ}$.  Its centralizer
   $\mathcal{M}$ in $\LG$  contains $\varphiL_{\tilde\beta}(W_{F})$, hence maps
   onto $\pi_{0}(\LG)$ and is thus a Levi subgroup in the sense of
   Borel. Moreover, the canonical action of $W_{F}$ on
   $Z(\mathcal{M}^{\circ})$ is induced by ${\rm
     Ad}_{\varphi_{\tilde\beta}}$.  We claim that $\mathcal{M}$ is
   minimal among Levi subgroups of $\LG$ that contain
   $\varphiL_{\tilde\beta}(W_{F})$. Indeed, if
   $\varphiL_{\tilde\beta}(W_{F})\subset \mathcal{M}'\subset
   \mathcal{M}$, then $(T_{\phit})^{\rm
     Ad_{\tilde\beta},\circ} \subset
   Z(\mathcal{M})^{\circ} \subset Z(\mathcal{M'})^{\circ} \subset
   C_{\HG}(\varphi_{\tilde\beta})^{\circ}.$ But
   $(T_{\phit})^{\rm Ad_{\tilde\beta},\circ}$ is  a maximal torus of 
$C_{\HG}(\varphi_{\tilde\beta})^{\circ}= C_{\HG}(\phit)^{\rm Ad_{\tilde\beta},\circ}$ 
by \cite[Thm 1.8
   iii)]{DM94}, so all inclusions above have to be equalities and in
   particular $\mathcal{M}'= \mathcal{M}$ since a Levi subgroup of
   $\LG$ is the centralizer in $\LG$ of its connected center by
   \cite[Lem. 3.5]{borel_corvallis}.  Now, observe that
$$(Z(\mathcal{M}^{\circ})^{I_{F}})^{\circ} = Z(\mathcal{M}^{\circ})^{{\rm
    Ad}_{\phit(I_{F})},\circ}
\subset  \mathcal{M}^{\circ}\cap C_{\HG}(\phit)^{\circ}=
T_{\phit}.$$ Indeed, the last equality comes from \cite[Thm 1.8
iv)]{DM94} which implies that the centralizer of
$(T_{\phit})^{\rm Ad_{\tilde\beta},\circ}$ in
$C_{\HG}(\phit)^{\circ}$ is $T_{\phit}$. Taking $^{L}\varphi_{\tilde\beta}(\Fr)$-invariants,
we get 
$$ Z(\mathcal{M})^{\circ}=(Z(\mathcal{M}^{\circ})^{I_{F}})^{\rm
  Ad_{\tilde\beta},\circ} \subset (T_{\phit})^{\rm Ad_{\tilde\beta},\circ},$$
from which we deduce that $(Z(\mathcal{M}^{\circ})^{I_{F}})^{\rm
  Ad_{\tilde\beta},\circ} = (T_{\phit})^{\rm Ad_{\tilde\beta},\circ}$ since
$ (T_{\phit})^{\rm Ad_{\tilde\beta},\circ}\subset Z(\mathcal{M})$. 
Since the order of $\rm Ad_{\tilde\beta}$ on $T_{\phi}$ is finite, it follows that the inclusion of tori
$(Z(\mathcal{M}^{\circ})^{I_{F}})^{\circ} \subset T_{\phit}$ induces an isogeny 
\begin{equation}
 (Z(\mathcal{M}^{\circ})^{I_{F}})^{\circ}_{\Fr} \twoheadrightarrow  (T_{\phit})_{\rm
   Ad_{\tilde\beta}}.\label{eq:isogeny}
 \end{equation}
Here, note that the action of $\varphi_{\tilde\beta}(\Fr)$ on
$Z(\mathcal{M}^{\circ})$ is just the action of $\Fr$ since $\tilde\beta\in\mathcal{M}^{\circ}$.
Now, by construction, the map (\ref{eq:morphism_git}) is part of a
commutative diagram
$$\xymatrix{
  T_{\phit}(\mathbb{C}) \ar[d] \ar[r] &
  \uZ^{1}(W_{F}/I_{F}^{e},\HG)(\mathbb{C}) \ar[d] \\
  (T_{\phit})_{\rm Ad_{\tilde\beta}}(\mathbb{C}) \ar[r] &
  (\uZ^{1}(W_{F}/I_{F}^{e},\HG)\sslash \HG)(\mathbb{C}) }$$
where the top map is 
given by
$$
t\mapsto \left( w\mapsto   t^{\nu(w)}\varphi_{\tilde\beta}(w)\right),
$$
where $\nu$ is the projection $W_{F}\To{}W_{F}/I_{F}=\Fr^{\ZM}$.
This shows that the composition of the bottom map of the diagram (i.e. the map of the
lemma) with (\ref{eq:isogeny}) 
is an instance of (\ref{eq:morphism_haines}), which is a
morphism of algebraic varieties
$(Z(\mathcal{M}^{\circ})^{I_{F}})^{\circ}_{\Fr} \To{}\Omega_{e}(\HG)$
according to Haines' construction. This morphism is constant along the fibers of
(\ref{eq:isogeny}), so it has to factor over  (\ref{eq:isogeny}) since
the latter  is a quotient morphism. Hence the bottom map
of the diagram is a morphism of varieties $(T_{\phit})_{\rm Ad_{\tilde\beta}}\To{}\Omega_{e}(\HG)$.
\end{proof}

\begin{proof}[Proof of Theorem \ref{thm:comparison}] It follows from
  the two above lemmas.
\end{proof}

\begin{rem}
  The isomorphism of Theorem \ref{thm:comparison} induces of course a
  bijection between the sets of connected components on both
  sides. This bijection is easily described as follows :
  \begin{itemize}
  \item $\pi_{0}(\Omega_{e}(\HG))$ is the set of ``inertial classes''
    of Frobenius-semisimple cocycles $\varphi$,  as defined in
    \cite[\S 5.3, Def. 4.15]{haines}, that are trivial on $I_{F}^{e}$.
  \item
    $\pi_{0}(\uZ^{1}(W_{F}/I_{F}^{e},\HG)_{\mathbb{C}}\sslash\HG_{\mathbb{C}})$
    is the set of conjugacy classes of pairs
    $(\phit,\beta)$ as in ii) of Remark \ref{rem:structureGIT}.
  \item The bijection takes $\varphi$ to
    $(\varphi_{|I_{F}},p(\varphi(\Fr)))$ with 
    $p$ the projection $T_{\HG}({^{\Fr}\phit},\phit)\To{}\pi_{0}({^{\Fr}\phit},\phit)$.
\end{itemize}
\end{rem}

Now, we wish to compare more explicitly Haines' construction with our description. This can
be done component-wise, so let us fix data $(\phit,\beta,\varepsilon_{\phit},\tilde\beta)$ 
and put $\varphi=\varphi_{\tilde\beta}$ as in the last proof. Recall also the Levi subgroup
$\mathcal{M}=C_{\LG}((T_{\phit})^{\Ad_{\tilde\beta},\circ})$ of $\LG$  that
appeared in the last proof. So we have
an inclusion $Z(\mathcal{M}^{\circ})^{I_{F},\circ} \subset T_{\phit}$
that induces an isogeny 
$\pi:\, (Z(\mathcal{M}^{\circ})^{I_{F},\circ})_{\Fr}\twoheadrightarrow
(T_{\phit})_{\Ad_{\tilde\beta}}$ as in  (\ref{eq:isogeny}). The associated
connected component in Theorem \ref{thm:structureGIT} is
$(T_{\phit})_{\Ad_{\tilde\beta}}\sslash (\Omega_{\phit})^{{\rm Ad}_{\tilde\beta}}$, which
we may also write as a two-steps quotient :
$$ \left((Z(\mathcal{M}^{\circ})^{I_{F},\circ})_{\Fr}/\ker(\pi)\right)/(\Omega_{\phit})^{{\rm Ad}_{\tilde\beta}},$$
while the same component  is described as a two-steps quotient 
$$ \left((Z(\mathcal{M}^{\circ})^{I_{F},\circ})_{\Fr}/{\rm Stab}(\varphi)\right)/W_{\varphi,\mathcal{M}^{\circ}}$$
in Lemmas 4.19 and 4.20  of Section 5.3 in Haines' paper
\cite{haines}. The relation between  these two presentations can be summarized as follows :

\begin{lemma} Using the notation right above,
  \begin{enumerate}
    \item we have $\ker(\pi)\subset {\rm Stab}(\varphi)$ as subgroups
      of $(Z(\mathcal{M}^{\circ})^{I_{F},\circ})_{\Fr}$.
    \item there is a normal subgroup
      $K\subset (\Omega_{\phi})^{\Ad_{\beta}}$ whose action on
      $(T_{\phi})_{\Ad_{\beta}}$ factors over that of
      ${\rm Stab}(\varphi)/\ker(\pi)$  through 
      a surjective map
      $K\twoheadrightarrow {\rm Stab}(\varphi)/\ker(\pi)$.
    \item there is a natural isomorphism
      $(\Omega_{\phi})^{\Ad_{\beta}}/K\To\sim
      W_{\varphi,\mathcal{M}^{\circ}}$ compatible with the respective
      actions on
      $(Z(\mathcal{M}^{\circ})^{I_{F},\circ})_{\Fr}/{\rm
        Stab}(\varphi)$.
    \end{enumerate}
    Moreover,  when $C_{\HG}(\phi)$ is
    connected,  we have $\ker(\pi)= {\rm Stab}(\varphi)$,
    while $(\Omega_{\phit})^{{\rm Ad}_{\tilde\beta}}$ identifies with
    $W_{\varphi,\mathcal{M}^{\circ}}$ compatibly with the action on
    $(T_{\phi})_{\Ad_{\beta}}=(Z(\mathcal{M}^{\circ})^{I_{F},\circ})_{\Fr}/{\rm
      Stab}(\varphi)$ (i.e., the group $K$ above is trivial).  
\end{lemma}



\begin{proof}
 Let us simplify the notation by putting
  $Z:=(Z(\mathcal{M}^{\circ})^{I_{F},\circ})_{\Fr}$ (denoted
  $Y(\mathcal{M}^{\circ})$ in Haines' paper) and
  $\tilde Z := Z(\mathcal{M}^{\circ})^{I_{F},\circ}$.  Then Haines'
  definition of ${\rm Stab}(\varphi)$ (denoted ${\rm stab}_{\lambda}$
  there) is
$$ {\rm Stab}(\varphi)=\left\{z\in Z,\, \exists \tilde z \mapsto z, \exists
  m\in \mathcal{M}^{\circ}, \tilde z\cdot\varphi= {\rm
    Ad}_{m}(\varphi)\right\}.$$ Here, we use our notation
$c\cdot \varphi$ for the unique $1$-cocycle that restricts to $\phi$
on inertia and takes value $c\tilde\beta$ on $\Fr$ (this makes sense
for any $c\in C_{\HG}(\phi)$).
Note that if $\tilde z\cdot\varphi= {\rm Ad}_{m}(\varphi)$, then in
particular $\phi={\rm Ad}_{m}(\phi)$, i.e. $m\in C_{\HG}(\phit)$,
hence we also have
${\rm Ad}_{m}(\varphi)=(m{\rm Ad}_{\tilde\beta}(m)^{-1})\cdot
\varphi$. Moreover, since $m$ centralizes
$(T_{\phit})^{\rm Ad_{\tilde\beta},\circ}$, it also normalizes the
centralizer of this torus in $C_{\HG}(\phi)^{\circ}$, which is
$T_{\phi}$. So we see that
$$ {\rm Stab}(\varphi)=\left\{z\in Z,\, \exists \tilde z\mapsto z, \exists
  m\in \mathcal{M}^{\circ}\cap N_{C_{\HG}(\phit)}(T_{\phit}), \tilde z
  = m{\rm Ad}_{\tilde\beta}(m)^{-1}\right\}.$$ This certainly contains
 $$ \ker(\pi)=\left\{z\in Z,\, \exists \tilde
   z\mapsto z, \exists t\in T_{\phit}, \tilde z = t{\rm
     Ad}_{\tilde\beta}(t)^{-1}\right\}.$$ We have seen in the last
 proof that $\mathcal{M}^{\circ}\cap C_{\HG}(\phi)^{\circ}=T_{\phi}$,
 therefore, when $C_{\HG}(\phi)$ is connected, we have
 $\ker(\pi)={\rm Stab}(\varphi)$.  In general, we have
 \begin{eqnarray*} {\rm Stab}(\varphi)/\ker(\pi)&\simeq& \left\{
                                                         \begin{array}[c]{ll}
                                                           t\in (T_{\phi})_{\Ad_{\beta}}, & \exists \tilde
                                                                                            z \in \tilde Z \mapsto t, \exists m\in \mathcal{M}^{\circ}\cap N_{C_{\HG}(\phit)}(T_{\phit})
                                                           \\ & \tilde z=   m{\rm  Ad}_{\tilde\beta}(m)^{-1}
                                                         \end{array}
                                                                \right\}
   \\    &=&
             \left\{
             \begin{array}[c]{ll}
               t\in (T_{\phi})_{\Ad_{\beta}}, & \exists \tilde
                                                t \in T_{\phi} \mapsto t, \exists m\in \mathcal{M}^{\circ}\cap N_{C_{\HG}(\phit)}(T_{\phit})
               \\ & \tilde t=   m{\rm  Ad}_{\tilde\beta}(m)^{-1}
             \end{array}
                    \right\}.
 \end{eqnarray*}
 To see the last equality, start with $t$ in the last set and pick
 $(\tilde t,m)$ with $\tilde t\mapsto t$ and
 $\tilde t= m{\rm Ad}_{\tilde\beta}(m)^{-1}$. By surjectivity of
 (\ref{eq:isogeny}), there is $s\in T_{\phi}$ such that
 $s\tilde t \Ad_{\beta}(s)^{-1}=:\tilde z\in\tilde Z$. Then we have
 $\tilde z\mapsto t$ and $\tilde z = (sm)\Ad_{\beta}(sm)^{-1}$ with
 $sm\in \mathcal{M}^{\circ}\cap N_{C_{\HG}(\phit)}(T_{\phit})$.

 Now recall that $\Omega_{\phi}^{\Ad_{\beta}}=N_{\beta}/T_{\phi}$
 where
$$ N_{\beta}=\left\{n\in N_{C_{\HG}(\phi)}(T_{\phi}), n\Ad_{\tilde\beta}(n)^{-1}\in T_{\phi}\right\}.$$
From the description above, we see that
${\rm Stab}(\varphi)/\ker(\pi)$ is the image of the map
$N_{\beta}\cap \mathcal{M}^{\circ}\To{}(T_{\phi})_{\Ad_{\beta}}$ given
by $n\mapsto n\Ad_{\tilde\beta}(n)^{-1}$, and that this map factors
over the subgroup $K:=(N_{\beta}\cap\mathcal{M}^{\circ})/T_{\phi}$ of
$(\Omega_{\phi})^{\Ad_{\beta}}$.  Moreover, the action of
$n\in N_{\beta}\cap\mathcal{M}^{\circ}$ on $(T_{\phi})_{\Ad_{\beta}}$
is given by
$t\mapsto nt\Ad_{\beta}(n)^{-1}=ntn^{-1}(n\Ad_{\beta}(n)^{-1}) =
t(n\Ad_{\beta}(n)^{-1})$ because $n$ centralizes
$(T_{\phi})^{\Ad_{\beta},\circ}$. So this action factors through the
above morphism.

On the other hand, Haines' definition of
$W_{\varphi,\mathcal{M}^{\circ}}$ (denoted
$W^{\HG}_{[\lambda]_{\mathcal{M}^{\circ}}}$ there) is of the form
$W_{\varphi,\mathcal{M}^{\circ}}= N/\mathcal{M}^{\circ}$ with
$$ N
=\left\{n\in N_{\HG}(\mathcal{M}),\exists m\in \mathcal{M}^{\circ},
  \exists \tilde z\in \tilde Z, {\rm Ad}_{n}(\varphi)={\Ad_{m}}(\tilde
  z\cdot\varphi)\right\}.$$
We claim that $N_{\beta}\subset N$.  Indeed, note first that the
conjugation action of an element $n\in N_{\beta}$ on $T_{\phi}$
commutes with $\Ad_{\beta}$, so $n$ normalizes
$T_{\phi}^{\Ad_{\beta},\circ}$ hence it normalizes also
$\mathcal{M}$. Moreover, writing
$\tilde t:=n\Ad_{\tilde\beta}(n)^{-1}\in T_{\phi}$, we have
${\rm Ad}_{n}(\varphi)=\tilde t\cdot\varphi$. Finally, since
$\tilde Z$ surjects onto $(T_{\phi})_{\Ad_{\beta}}$, there are
$\tilde z\in \tilde Z$ and $m\in T_{\phi}$ such that
$\tilde t= m \Ad_{\beta}(m)^{-1}\tilde z$, hence also
$\tilde t\cdot\varphi= \Ad_{m}(\tilde z\cdot\varphi)$.  So we have
$N_{\beta}\subset N$, and since $T_{\phi}\subset\mathcal{M}^{\circ}$,
we get a map
\begin{equation}
  (\Omega_{\phi})^{\Ad_{\beta}} \To{} W_{\varphi,\mathcal{M}^{\circ}}.\label{eq:omegaW}
\end{equation}
We now claim that this map is surjective. Indeed, let $n\in N$ and
pick $\tilde z$ and $m$ such that
${\rm Ad}_{n}(\varphi)={\Ad_{m}}(\tilde z\cdot\varphi).$ The element
$n':=m^{-1}n$ has the same image as $n$ in
$W_{\varphi,\mathcal{M}^{\circ}}$, and we have
${\rm Ad}_{n'}(\varphi)=\tilde z\cdot\varphi$, hence also
${\rm Ad}_{n'}(\phi)=\phi$, i.e. $n'\in C_{\HG}(\phi)$. Moreover, $n'$
normalizes $Z(\mathcal{M})^{\circ}=(T_{\phi})^{\Ad_{\beta},\circ}$
(see the proof of the last lemma above), hence it normalizes the
connected centralizer of $(T_{\phi})^{\Ad_{\beta},\circ}$ in $C_{\HG}(\phi)$,
which is $T_{\phi}$. Hence we see that $n'\in N_{\beta}$ and we get
the surjectivity of (\ref{eq:omegaW}). We also see that
 $$ \ker ((\Omega_{\phi})^{\Ad_{\beta}} \To{} W_{\varphi,\mathcal{M}^{\circ}})
 =\im(N_{\beta}\cap \mathcal{M}^{\circ}\To{}
 (\Omega_{\phi})^{\Ad_{\beta}})=K.$$ In particular, when
 $C_{\HG}(\phi)$ is connected, we have
 $N_{\beta}\cap \mathcal{M}^{\circ}\subset C_{\HG}(\phi)^{\circ}\cap
 \mathcal{M}^{\circ}=T_{\phi}$, so the map (\ref{eq:omegaW}) is
 bijective in this case.
\end{proof}

\appendix
\section{Moduli of cocycles}

\subsection{Schemes of  cocycles}
  Let $H$ be an affine  group scheme over a noetherian ring $R$ and let $\Gamma$ be a finite
  group. Consider the functor $\underline\Hom(\Gamma,H)$, which to any $R$-algebra
$R'$ associates the set of homomorphisms $\Hom(\Gamma,H(R'))$.  It is represented  by a
closed and finitely presented $R$-subscheme of the affine $R$-scheme $H^{(\Gamma)}$, since
it is the inverse image of the closed subscheme
  $\{1_{H}\}^{(\Gamma\times\Gamma)}$ of $H^{(\Gamma\times\Gamma)}$ by the $R$-morphism
  $H^{(\Gamma)}\To{} H^{(\Gamma\times\Gamma)}$ defined by $(h_{\gamma})_{\gamma\in
    \Gamma}\mapsto
  (h_{\gamma}h_{\gamma'}h_{\gamma\gamma'}^{-1})_{(\gamma,\gamma')\in\Gamma\times\Gamma}$.

The group scheme $H$ acts by conjugation on $\underline\Hom(\Gamma,H)$.
 Given an $R$-algebra $R'$ and an homomorphism
$\phi\in \Hom(\Gamma,H(R'))$, the orbit maps $g\mapsto {\rm Ad}_{g}\circ \phi$,
$H(R'')\To{}\Hom(\Gamma,H(R''))$  define an $R'$-morphism
$H_{R'}\To{}\underline\Hom(\Gamma,H)_{R'}$ of finite presentation,
that we call an orbit morphism (here $R''$ runs over $R'$-algebras). The fiber over any other homomorphism $\phi'\in\Hom(\Gamma,H(R'))$ of this morphism
is the transporter $T_{H}(\phi,\phi')$ of $\phi$, which to any $R''$
over $R'$
associates the set-theoretic transporter from $\phi$ to $\phi'$ in $H(R'')$.

\begin{lemma}\label{hom_smooth}
  Assume that $H$ is smooth and that  $\Gamma$ has order invertible in $R$. 
Then $\underline\Hom(\Gamma,H)$ is smooth over $R$, all the orbit
morphisms are smooth and all transporters are smooth.
\end{lemma}
\begin{proof}
  By finite presentation, to
  prove smoothness it suffices to prove formal smoothness. Let $R'$ be
  an $R$-algebra and let $I$ be an ideal of $R'$
    of square $0$. We need to show that the map
    $\Hom(\Gamma,H(R'))\To{}\Hom(\Gamma,H(R'/I))$ is surjective. So let
    $\phi_{0}:\Gamma\To{}H(R'/I)$ be a group homomorphism. By smoothness of $H$ we may lift
    $\phi_{0}$ to a map $h:\Gamma\To{}H(R')$. 
Consider the map     $\Gamma\times\Gamma\To{} \ker(H(R')\To{}H(R'/I))$ that takes
$(\gamma,\gamma')\in \Gamma\times\Gamma$ to 
    $h(\gamma)h(\gamma')h(\gamma\gamma')^{-1}$. Note that conjugation by $h(\gamma)$
    endows the abelian group $\ker(H(R')\To{}H(R'/I))$ with an action of $\Gamma$ that actually
    only depends on $\phi_{0}$. In fact, if we identify $\ker(H(R')\To{}H(R'/I))$
    with the $R'/I$-module ${\rm Lie}(H)\otimes_{R}I$ then this action is induced by the
    $R'/I$-linear action of $\Gamma$ on ${\rm Lie}(H)\otimes_{R}R'/I$
 given by the adjoint representation composed with the homomorphism $\phi_{0}$. Now the map
 defined above is a $2$-cocycle, hence since
    $|\Gamma|$ is invertible in $R$, it has to be  cohomologically trivial, so there
    is a map $k :\, \Gamma\To{} \ker(H(R')\To{}H(R'/I))$ such that
    $h(\gamma)h(\gamma')h(\gamma\gamma')^{-1}=k(\gamma)({^{h(\gamma)}k}(\gamma'))k(\gamma\gamma')^{-1}$. Then
    the map $\gamma\mapsto \phi(\gamma):=k(\gamma)^{-1}h(\gamma)$ is a group homomorphism $\phi:\,\Gamma\To{}
    H(R')$ that lifts $\phi_{0}$, and the smoothness of $\underline\Hom(\Gamma,H)$ follows.

Now fix a homomorphism $\phi:\Gamma\To{}H(R)$
and let us show that the corresponding orbit morphism is smooth, by
using the infinitesimal criterion. Let again  $R'$ be an $R$-algebra together with an
ideal $I$ of square $0$, and let $\phi'$ be another homomorphism $\Gamma\To{} H(R')$ 
whose image $\phi'_{0}$ in $\Hom(\Gamma,H(R'/I))$ is conjugate to $\phi_{0}$ by an element 
$h_{0}$ in $H(R'/I)$. We must find an element $h\in H(R')$ that conjugates $\phi'$ to $\phi$. By
smoothness of $H$ we can pick an element $h'\in H(R)$ that maps to $h_{0}$.
Then the map $\gamma\mapsto  \phi(\gamma) (h'\phi'(\gamma)^{-1} {h'}^{-1})$
defines a $1$-cocycle of $\Gamma$ in  $\ker(H(R')\To{}H(R'/I))$ endowed with the action associated with
$\phi_{0}$ as above. By the same argument as above, this cocycle is a coboundary, so there is
some $k\in \ker(H(R')\To{}H(R'/I))$ such that $\phi(\gamma) h'\phi'(\gamma)^{-1} {h'}^{-1}=  
(\phi(\gamma)k \phi(\gamma)^{-1})k^{-1}$, from which we get an element $h=k^{-1}h'$ as desired.
Hence the orbit morphism is smooth. By base change,
the
centralizers and the transporters are therefore smooth too.
\end{proof}

Suppose now that we are given an action of $\Gamma$ on $H$ by automorphisms of group
schemes over $R$. Identifying $1$-cocycles $\Gamma\To{}H(R')$ with cross-section
homomorphisms $\Gamma\To{}H\rtimes\Gamma$ (i.e. homomorphisms whose
composition with the projection to $\Gamma$ is the identity), we see that the functor $R'\mapsto
Z^{1}(\Gamma,H(R'))$ is represented by an $R$-scheme that is a direct summand of
$\underline\Hom(\Gamma,H\rtimes \Gamma)$. We denote this scheme by $\uZ^{1}(\Gamma,H)$.
It is stable under the conjugation action of $H\rtimes \Gamma$ \emph{restricted} to $H$. 
 
When $H$ is smooth, so is $H\rtimes \Gamma$, hence the above lemma implies :
\begin{cor}
  Assume that $H$ is smooth and that  $\Gamma$ has order invertible in $R$. 
Then $\uZ^{1}(\Gamma,H)$ is smooth over $R$, all the $H$-orbit morphisms are smooth and all transporters are smooth.
\end{cor}

\subsection{The sheafy quotient} 
We henceforth assume that \emph{$H$ is smooth and $\Gamma$ has order
invertible in $R$}, and we are now  interested
in the quotient object $\underline{H}^{1}(\Gamma,H)$ of
$\uZ^{1}(\Gamma,H)$ by the conjugation action of
$H$. As for now, we define it as the quotient sheaf, say for the \'etale
topology, that is, the sheaf associated to $R\mapsto H^{1}(\Gamma,H(R))$.

\begin{cor}\label{cor_h1}
  Assume 
  that $R$ is a local 
  Henselian ring, and denote by $k$ its residue field. Then the map
  $H^{1}(\Gamma,H(R))\To{}H^{1}(\Gamma,H(k))$ is a bijection.
\end{cor}
\begin{proof}
By smoothness of $\uZ^{1}(\Gamma,H)$ and \cite[Thm 18.5.17]{EGAIV4}, any $k$-point of
$\uZ^{1}(\Gamma,H)$ extends to a section over $R$, that is, the
map $Z^{1}(\Gamma,H(R))\To{}Z^{1}(\Gamma,H(k))$ is surjective. Hence the
map of the lemma is surjective too. To prove injectivity, let $\phi$,
$\phi':\,\Gamma\To{} H(R)$ be two $1$-cocycles whose images
$\phi_{0}$, $\phi'_{0}$ are $H$-conjugate in $\Hom(\Gamma,H(k)\rtimes\Gamma)$ by some
$h_{0}\in H(k)$. By the
previous lemma, the transporter scheme $T_{H}(\phi,\phi')$ is smooth
over $R$. Hence, by \cite[Thm 18.5.17]{EGAIV4} again, its $k$-point $h_{0}$  extends to an
$R$-section $h$ that conjugates $\phi$ to $\phi'$.
\end{proof}

\begin{lemma}\label{lemma_h1}
  Assume that $H$ is reductive, that $R$ is  a  strictly
  Henselian local ring, and denote by $R'$ any non-zero $R$-algebra.

(i) the map   $H^{1}(\Gamma,H(R))\To{} H^{1}(\Gamma,H(R'))$ is
injective.

(ii) it is surjective if $R$ is a d.v.r. or a field and $R'$ is local  strictly Henselian.
\end{lemma}
\begin{proof} We adapt the proof of Thm 4.8 of \cite{BHKT}. 

(i) 
We need to prove that if two cocycles $\phi$, $\phi'$ in $Z^{1}(\Gamma,H(R))$ get
$H(R')$-conjugate in $\Hom(\Gamma,H(R')\rtimes\Gamma)$, then they are $H(R)$-conjugate. By the last
corollary, it suffices to prove that their images $\phi_{0}$,
$\phi'_{0}\in Z^{1}(\Gamma,H(k))$ are $H(k)$-conjugate.
We will need V. Lafforgue's theory of pseudocharacters for the group $H\rtimes \Gamma$.
This notion is introduced without name nor
formal definition  in the preamble of Proposition 11.7 of \cite{Lafforgue}. A formal
definition is given in \cite[Def 4.1]{BHKT} where the name
``pseudocharacter'' is also introduced. Unfortunately, unlike Lafforgue, these authors restrict attention to connected (split reductive)
groups. However,
one has merely to replace $\ZM[\hat G^{n}]^{\hat G}$ by
$\ZM[(H\rtimes\Gamma)^{n}]^{H}$ in \cite[Def 4.1]{BHKT} to get the correct definition for
the non-connected group $H\rtimes\Gamma$ (note that $H$ is a split
reductive group over $R$, since $R$ is strictly Henselian). Then, as in
\cite[Lemma 4.3]{BHKT}, it follows from the definition
that any homomorphism $\phi:\Gamma\To{}H(R)\rtimes\Gamma$ defines a
 ``$H\rtimes\Gamma$-pseudocharacter of $\Gamma$ over $R$'' denoted by
 $\Theta_{\phi}$. Moreover, if $\phi$, $\phi'$ in $Z^{1}(\Gamma,H(R))$ become $H(R')$-conjugate in
$\Hom(\Gamma,H(R')\rtimes\Gamma)$, then $\Theta_{\phi}\equiv\Theta_{\phi'}
[{\rm mod } I]$ where $I=\ker(R\To{} R')$ (as in lemmas 4.3 and 4.4.i of \cite{BHKT}).
Therefore we get  $\Theta_{\phi_{0}}=\Theta_{\phi'_{0}}$. 
Then, the main result on pseudocharacters asserts 
that \emph{the semi-simplifications} of $\phi_{0}$ and
$\phi'_{0}$ are conjugate under $H(k)$. Here, the notion of semi-simplicity is the
notion of $H\rtimes\Gamma$-complete reducibility of \cite[\S 6]{bmr05}. We note actually that this result is
proven in \cite[Prop. 11.7]{Lafforgue} when $k$ has characteristic $0$ and in \cite[Thm
4.5]{BHKT} in any characteristic, but in the \emph{connected} case. We leave it to the reader to
convince themselves that their argument can be adapted to the non-connected case in any characteristic.

It suffices now to show that
$\phi_{0}$ and $\phi'_{0}$ are actually $H\rtimes\Gamma$-completely reducible.
Choose an $R$-parabolic subgroup $P$ of $H\rtimes \Gamma$ containing
$\phi_{0}(\Gamma)$ and minimal for this property. Let $P\To{\pi}L_{P}$ be its
Levi quotient and let $L_{P}\To{\iota} P$ be a Levi section of $P$. Then
$\phi_{0}^{ss}:=\iota\circ\pi\circ\phi_{0}$ is by definition a
semisimplification of $\phi_{0}$. If we denote by $U_{P}$ the
unipotent radical of $P$, the map $\Gamma\To{} U_{P}$, $\gamma\mapsto
\phi^{ss}_{0}(\gamma)\phi_{0}(\gamma)^{-1}$ is a $1$-cocycle for the
action of $\Gamma$ by conjugation on $U_{P}$ through $\phi_{0}$. The
descending central series of $U_{P}$ is a $\Gamma$-stable descending
filtration of $U_{P}$ by smooth unipotent subgroup schemes whose
successive quotients are $k$-vector space schemes. Since  
 $|\Gamma|$ is invertible in
$k$, we know that  $H^{1}(\Gamma,V)$ is trivial for any
$k\Gamma$-module $V$. Therefore the above $1$-cocycle is a coboundary, and we can find some
$u\in U_{P}$ such that $\phi^{ss}_{0}(\gamma)\phi_{0}(\gamma)^{-1}=
u^{-1} \phi_{0}(\gamma) u \phi_{0}(\gamma)^{-1}$. So $u^{-1}$
conjugates $\phi_{0}$ to $\phi_{0}^{ss}$ and $\phi_{0}$ is semisimple (ie
$H\rtimes\Gamma$-completely reducible)
as claimed.

(ii) Denote by $k'$ the residue field of $R'$ and by $\bar K$ an
algebraic closure of the fraction
field of $R$. In the case where $R$ is a d.v.r, either the composition
$R\To{}k'$ factors as $R\To{} k\To{} k'$ or as $R\To{} \bar K \To{}
k'$. Applying the last corollary to both $R$ and $R'$, we are thus reduced to
showing the special case (a) of statement (ii) where $R'=\bar K$, and its variant
(b) where $R$ and $R'$ are algebraically closed fields.

(a) The case where $R'=\bar K$ is an algebraic closure of the fraction field
$K$ of $R$.
This case will follow from the following facts of Bruhat-Tits theory : 
\begin{enumerate}
\item[BT1.]  any vertex of the semi-simple building $B(H,K)$ and, more
  generally, the barycenter of any facet of $B(H,K)$ becomes a
  hyperspecial point in $B(H,K')$ for a suitable finite extension $K'$
  of $K$
\item[BT2.]  two hyperspecial points in $B(H,K')$ become
  $H(K'')$-conjugate in $B(H,K'')$ for some further finite extension
  $K''$.
\end{enumerate}
Note that, here,
$H$ is split over $K$ (since $R$ is strictly henselian), so these
facts are quite elementary, even in our setting where the discretely
valued field $K$ is Henselian but not necessarily
complete. For example, BT2 follows from
Corollary 7.11.5 of the forthcoming book \cite{kaletha-prasad}.
As for BT1, here is a sketch of the argument.  Choose a splitting
$(B,T,X)$ of $H$ over $R$ 
and denote by $A$ the appartment of $B(H,K)$ associated to $T$. It
contains the hyperspecial point $o$ corresponding to the integral model $H$ over
$R$. The pinning defines a ``Chevalley valuation'' of the root system
of $T$ in $H$, and then an affine root system on $A$. By \cite[Prop
6.4.1]{kaletha-prasad}, we know that,  taking $o$ as an origin of the $\mathbb R$-affine space
$A$, the affine roots on $A$ are translates of ordinary roots by
integers. It then follows that a point $y\in A$ is (hyper)special if
$\alpha(y)\in v(K^{\times})$ for all roots $\alpha$, and where $v$ is
the valuation of $K$. 
Now, let $x$ be the barycenter of some facet of $B(H,K)$.
After translating $x$  by  an
element of $G(K)$ we may assume that $x$ lies in $A$. Being the
barycenter of a facet, there is an integer $N$ such that $\alpha(x)\in
\frac 1N v(K^{\times})$ for all roots $\alpha$. So, if $K'$ is any
extension whose ramification index is a multiple of $N$, then $x$
becomes (hyper)special in $B(H,K')$.

Observe also that $\Gamma$ acts on the building $B(H,K)$ and fixes 
the hyperspecial point $o$. 
Now let  $\phi\in Z^{1}(\Gamma,H(\bar K))$. Then $\phi$ belongs to $Z^{1}(\Gamma,H(K_{1}))$
for some finite extension $K_{1}$ of $K$.
Pick a point $x$ of $B(H,K_{1})$ fixed by $^{L}\phi(\Gamma)\subset
H(K_{1})\rtimes \Gamma$. Up to replacing $x$ by the barycenter of the
facet that contains $x$, we may assume $x$ is the barycenter of this facet. 
So it becomes hyperspecial over some finite extension $K_{2}$ of $K_{1}$ and we may even assume that
there is some $h\in H(K_{2})$  such that $hx=o$. Then ${^{h}({^{L}\phi})}(\Gamma)$ fixes $o$ so,
writing $^{h}({^{L}\phi})(\gamma)=(^{h}\phi(\gamma),\gamma)\in H(K_{2})\rtimes \Gamma$, we see that
$^{h}\phi(\gamma)$ fixes $o$ hence belongs to $H(K_{2})_{o}=H(R_{2})Z_{H}(K_{2})$ for all $\gamma\in \Gamma$,
\emph{i.e.} $^{h}\phi\in Z^{1}(\Gamma,H(R_{2})Z_{H}(K_{2}))$, where $R_{2}$ is the normalization of $R$ in
$K_{2}$. Now, note that $H^{1}(\Gamma, Z_{H}(K_{2})/Z_{H}(R_{2}))$ may not be trivial, but 
maps trivially in $H^{1}(\Gamma, Z_{H}(K_{3})/Z_{H}(R_{3}))$ for any further finite extension
$K_{3}$ such that $|\Gamma|$ divides the exponent of $Z_{H}(K_{3})/Z_{H}(R_{3})Z_{H}(K_{2})$. This means that
there is $z\in Z_{H}(K_{3})$ such that  $^{zh}\phi\in Z^{1}(\Gamma,H(R_{3}))$.
But $R_{3}$ is an Henselian local $R$-algebra with the same residue field as $R$, so by the
previous corollary there is $h'\in H(R_{3})$ such that  $^{h'zh}\phi \in
Z^{1}(\Gamma,H(R))$. So the class $[\phi]$ in $H^{1}(\Gamma,H(\bar K))$ is the image of
$[^{h'zh}\phi]\in H^{1}(\Gamma,H(R))$, as desired.
   
(b) The case where $R$ and $R'$ are algebraically closed fields.
This case can certainly be handled
via pseudocharacters. Namely, using \cite[Thm 4.5]{BHKT} and the fact
that all morphisms $\Gamma\To{}H(R)\rtimes\Gamma$ are $H\rtimes\Gamma$-semisimple (as proved above), we see that
it suffices to prove that any $H\rtimes\Gamma$-pseudocharacter of $\Gamma$ over $R'$ is actually $R$-valued.
However, the result is true under the much more general  assumption that $H$ is smooth
over $R$.
 Indeed, since the orbit morphisms are smooth, the $H(R)$-orbits in
$Z^{1}(\Gamma,H(R))$ are open for the Zariski topology. Since two orbits are either equal
or disjoint, there are only finitely many of them. Let $\phi_{1},\cdots, \phi_{n}$ be
representatives. The orbit morphisms yield  a smooth surjective morphism
$(\sqcup_{i=1}^{n} H) \To{}\uZ^{1}(\Gamma,H)$ which induces in turn a surjection on $R'$-points 
$(\sqcup_{i=1}^{n} H(R')) \To{}\uZ^{1}(\Gamma,H)(R')$ since $R'$ is algebraically closed.
So we see that each $H(R')$-orbit in $Z^{1}(\Gamma,H(R'))$ comes from an $H(R)$-orbit in  $Z^{1}(\Gamma,H(R))$.
\end{proof}


Recall now the \'etale sheafification $\underline H^{1}(\Gamma,H)$ of the
functor $R'\mapsto H^{1}(\Gamma,H(R'))$ on $R$-algebras. Here we consider the ``big''
site of affine schemes of finite presentation over $R$ with the \'etale topology.
The maps $H^{1}(\Gamma,H(R))\To{}H^{1}(\Gamma,H(R'))$ define a morphism from
the constant presheaf associated to the set $H^{1}(\Gamma,H(R))$ to
the presheaf $R'\mapsto H^{1}(\Gamma,H(R'))$. It induces in turn a
morphism of sheaves
$$ \underline{H^{1}(\Gamma,H(R))} \To{} \underline H^{1}(\Gamma,H)$$
where the left hand side is a ``constant'' sheaf.

\begin{prop}
  Suppose that $H$ is reductive over a strictly Henselian discrete
  valuation ring $R$ in which the order of $\Gamma$ is invertible. 
  Then the above morphism of sheaves is an isomorphism. In particular,
  $\underline H^{1}(\Gamma,H)$ is representable by a product  of
  finitely many copies of $R$.
\end{prop}
\begin{proof}
  We first note that the functor $R'\mapsto H^{1}(\Gamma,H(R'))$ defined over
  all $R$-algebras commutes with filtered colimits. Indeed, this property is certainly
  true for the functors $R'\mapsto Z^{1}(\Gamma,H(R'))$ and $R'\mapsto H(R')$ since both
  these functors are represented by finitely presented $R$-algebras. Elementary
  formal nonsense shows that this property holds in turn for the quotient functor $R'\mapsto
  H^{1}(\Gamma,H(R'))$.


  Therefore, if $A$ is any $R$-algebra and $x$ is a geometric point of $\Spec(A)$ then,
  writing $A_{x}^{sh}$ for the strict henselization of $A$ at $x$, the set
  $H^{1}(\Gamma,H(A_{x}^{sh}))$ is the stalk of the sheaf $\underline{H}^{1}(\Gamma,H)$ at
  $x$. 
  So by the last lemma, the map
  $H^{1}(\Gamma,H(R))\To{}H^{1}(\Gamma,H(A_{x}^{sh}))$ is bijective. This means that the
  morphism of sheaves under consideration is an isomorphism on
  stalks. Thus it is an isomorphism.

  It remains to justify the finiteness of the set
  $H^{1}(\Gamma,H(R))$. But it follows from Corollary \ref{cor_h1} and
  the last paragraph of the proof of Lemma \ref{lemma_h1}.
\end{proof}

\begin{rem}
  Here is a concrete paraphrase of the proposition. First note that
  the map $Z^{1}(\Gamma,H(R))\To{} \underline H^{1}(\Gamma,H)(R)$ is
  surjective since $R$ is strictly Henselian, so that we can pick a
  finite subset $\Phi_{0}\subset  Z^{1}(\Gamma,H(R))$ mapping bijectively
  to $\uH^{1}(\Gamma,H)(R)$. Now, suppose that $A$ is an integral finitely
  presented $R$-algebra and let $\phi$ be a $1$-cocycle $\Gamma\To{} H(A)$. Then there is
  a unique cocycle $\phi_{0}\in \Phi_{0}$ and a faithfully \'etale map $A\To{} A'$
  such that $\phi$ is $H(A')$-conjugate to the ``constant'' cocycle $\phi_{0}$.
\end{rem}

We now globalize a bit the previous proposition.

\begin{thm} \label{h1_finite_etale}
Suppose that $H$ is reductive over a Dedekind $G$-ring
   $R$ in which the order of $\Gamma$ is invertible. 
  Then 
  $\underline H^{1}(\Gamma,H)$ is representable by a finite \'etale
  $R$-algebra.  
\end{thm}
\begin{proof}
  Let $\bar K$ be an algebraic closure of the fraction field $K$ of
  $R$. For a closed point $s$ of $\Spec(R)$, denote by $R_{s}^{sh}$  a strict henselization
  of $R$ at $s$ (depending on a choice of geometric point over $s$) and by $\bar K_{s}$ an algebraic closure of its fraction field. Let us 
  choose a set of representatives $\Phi_{s}\subset
  Z^{1}(\Gamma,H(R_{s}^{sh}))$ of $H^{1}(\Gamma,H(R_{s}^{sh}))$. Since
  $\uZ^{1}(\Gamma,H)$ is finitely presented, these representatives are defined over
  some \'etale $R$-domain $R'$, so that $\Phi_{s}$ comes from a subset $\Phi_{R'}\subset Z^{1}(\Gamma,H(R'))$.
  Now if $s'$ is another closed point of $\Spec(R)$ in the
  image of $\Spec(R')$ and if we choose an $R$-morphism $R'\To{} R_{s'}^{sh}$, then we
  claim that the natural map $\Phi_{R'}\to H^{1}(\Gamma,H(R_{s'}^{sh}))$ is also a bijection.
  Indeed, this follows from the  following commutative diagram
  $$\xymatrix{
    & H^{1}(\Gamma,H(R_{s}^{sh})) \ar[r]^{\sim} & H^{1}(\Gamma,H(\bar K_{s})) &  \\
    \Phi_{R'} \ar[ru] \ar[rd] \ar[rrr] & & & H^{1}(\Gamma,H(\bar K)) \ar[lu]_{\sim} \ar[ld]_{\sim} \\
    & H^{1}(\Gamma,H(R_{s'}^{sh})) \ar[r]^{\sim} & H^{1}(\Gamma,H(\bar K_{s'})) & 
  }$$
  where we have chosen an $R$-embedding $R'\hookrightarrow \bar K$ and two $R'$-embeddings $\bar
  K \hookrightarrow \bar K_{s}$ and $\bar K \hookrightarrow \bar K_{s'}$, and where the
  $\sim$ denote bijections granted by Lemma \ref{lemma_h1}.
  As a consequence, denoting by $\underline\Phi_{R'}$ the constant sheaf on $R'$-algebras
  associated to the set $\Phi_{R'}$, we see as in the last proof that the natural morphism of
  sheaves $\underline\Phi_{R'}\To{}\underline H^{1}(\Gamma,H)_{R'}$ is an isomorphism.

  Now, varying the point $s$ and using the quasicompacity of $\Spec(R)$ we get a
  faithfully \'etale morphism $R\hookrightarrow R''=R'_{1}\times\cdots\times R'_{n}$ and a
  set $\Phi_{R''}=\Phi_{R'_{1}}\times\cdots\times \Phi_{R'_{n}}$ such that the natural
  morphism of sheaves on $R''$-algebras
  $\underline\Phi_{R''}\To{}\underline H^{1}(\Gamma,H)_{R''}$ is an isomorphism. In
  particular, the sheaf $\underline H^{1}(\Gamma,H)$ is representable after base change to
  $R''$ by a sum of copies of $R''$. Since the map of $(\Spec R)_{\et}$-sheaves
  $\underline H^{1}(\Gamma,H)\times_{\Spec R}\Spec R'' \To{} \underline H^{1}(\Gamma,H)$ is
  visibly representable, \'etale and surjective, it follows that $\underline
  H^{1}(\Gamma,H)$ is an algebraic space over $(\Spec R)_{\et}$. This algebraic space has
  to be  finite \'etale (and in particular separated) over $R$ since it is so after base
  change to $R''$. Hence by Corollary II.6.17 of \cite{MR0302647}, this algebraic space is actually a
  scheme, and it is finite \'etale over $R$.
\end{proof}

\subsection{Relation with  the affine GIT quotient}
\def\OC{{\mathcal O}}
Let us investigate the relationship between $\underline
H^{1}(\Gamma,H)$ and another natural quotient
of $\uZ^{1}(\Gamma,H)$ by $H$. Namely, denote by $\OC$ the
$R$-algebra such that $\uZ^{1}(\Gamma,H)=\Spec(\OC)$. The action of $H$ on
$\uZ^{1}(\Gamma,H)$ translates into a comodule structure
$\OC\To{\rho}\OC\otimes_{R}R[H]$ on $\OC$ under the Hopf $R$-algebra $R[H]$ corresponding to
$H$. As usual, put
$$\OC^{H}:= \ker( \rho-\id\otimes\varepsilon)$$
where $\varepsilon$ is the unit of $R[H]$. Then the morphism
$\Spec(\OC)\To{}\Spec(\OC^H)$ is a categorical quotient of $\uZ^1(\Gamma,H)$ by
$H$ in the category of \emph{affine} $R$-schemes.

Note that $\underline H^1 (\Gamma,H)$ is a categorical quotient in the
much larger category of sheaves on 
the big \'etale site of $\Spec(R)$.
However, under suitable assumptions, Theorem \ref{h1_finite_etale}
shows that it is actually represented by an affine 
$R$-scheme. So, by  uniqueness of categorical quotients, we conclude that up to a unique
isomorphism, we have
$$ \underline H^1(\Gamma,H) = \Spec(\OC^H),$$
which we summarize in the following corollary.

\begin{cor} \label{h1_and_git}
  Suppose that $H$ is reductive over a Dedekind $G$-ring $R$ in which the order of $\Gamma$ is
  invertible. Then $\OC^{H}$ is a finite \'etale $R$-algebra and represents the sheaf
  $\underline H^{1}(\Gamma,H)$. In particular, its formation commutes
  with any change of rings $R\To{} R'$. 
\end{cor}

\subsection{Representatives} 


Suppose that $H$ is reductive over a Dedekind $G$-ring
   $R$ in which the order of $\Gamma$ is invertible. 
Theorem \ref{h1_finite_etale} ensures that after replacing $R$ by a finite \'etale
extension, $\underline H^{1}(\Gamma,H)$ is a constant sheaf (associated
to the set $\underline H^{1}(\Gamma,H)(R)$). 
The map $Z^{1}(\Gamma,H(R))\To{} \underline H^{1}(\Gamma,H)(R)$ need not be
surjective, but  if
$R_{0}$ is any $R$-algebra such that $\underline
H^{1}(\Gamma,H)(R)$ is in the image of the map
$Z^{1}(\Gamma,H(R_{0}))\To{}\underline H^{1}(\Gamma,H)(R_{0})$, then
 for any
finite set  $\Phi_{0}\subset Z^{1}(\Gamma,H(R_{0}))$ mapping
bijectively to  $\underline H^{1}(\Gamma,H)(R)$, the constant sheaf
property ensures that :
\emph{for any  connected $R_{0}$-algebra $A$ and 
  any $\phi\in Z^{1}(\Gamma,H(A))$, there is a unique $\phi_{0}\in \Phi_{0}$ such that $\phi$
 and $\phi_{0}$ become $H(A')$-conjugate in $Z^{1}(\Gamma,H(A'))$ for some faithfully \'etale
 $A$-algebra $A'$.}

By definition of $\underline H^{1}(\Gamma,H)$, we certainly can find a
$R_{0}$ as above that is faithfully \'etale over $R$.
However in general, it is not clear whether we can find
$R_{0}$ \emph{finite} \'etale over $R$. The following result uses the strong
approximation property to prove that, if $R$ is a localization of a
ring of integers in a number field, then we can at least find $R_{0}$ finite
(not necessarily \'etale) over $R$.

\begin{thm} \label{representatives}
Assume that $H$ is reductive over
a normal subring $R$ 
 of some number field $K$, and that $\Gamma$ has
invertible order in $R$. Then there is a finite extension $K_{0}$ of $K$
and a finite set $\Phi_{0}\subset Z^{1}(\Gamma,H(R_{0}))$ (with $R_{0}$
the normalization of $R$ in $K_{0}$)  such that for
any  connected $R_{0}$-algebra $A$ and  
  any $\phi\in Z^{1}(\Gamma,H(A))$, there is a unique $\phi_{0}\in \Phi_{0}$ such that $\phi$
 and $\phi_{0}$ becomes $H(A')$-conjugate in $Z^{1}(\Gamma,H(A'))$ for some faithfully \'etale
 $A$-algebra $A'$.
\end{thm}

\begin{proof} As we have just argued, we may assume that $\underline
  H^{1}(\Gamma,H)$ is a constant sheaf, and the problem boils down to
  finding $K_{0}$ such that the map 
$$ Z^{1}(\Gamma, H(R_{0}))\To{} \underline H^{1}(\Gamma,H)(R_{0})=\underline H^{1}(\Gamma,H)(R)$$ is
surjective. We certainly can find a faithfully \'etale $R'$ over $R$
such that any $[\phi]\in\uH^{1}(\Gamma,H)(R)$ has a representative
$\phi\in Z^{1}(\Gamma,H(R'))$. Let us choose such data, and assume
further that $H$ is split over $R'$.
Let $R'=\prod_{i=1}^{n}R'_{i}$ be the
decomposition of $R'$ in connected components and let
$\phi=(\phi_{i})_{i=1,\cdots, n}$ be the corresponding decomposition
of $\phi$. Replacing $R$ and all $R'_{i}$ by their respective normalizations in the residue field at
some generic point of $R'_{1}\otimes_{R}\cdots\otimes_{R}R'_{n}$, we may
assume that each $R'_{i}$ is a localization of $R$
(i.e. $\Spec(R')\To{}\Spec(R)$ is a Zariski cover). Then we simplify
the notation and write $R_{i}:=R'_{i}$. Since  all 
$\phi_{i}$ map to the same element $[\phi]\in H^{1}(\Gamma,H(K))$, 
they become pairwise $H$-conjugate over some  finite extension of $K$. Replacing $R$ by its
normalization in this finite extension, we may thus assume that they are
$H(K)$-conjugate in $Z^{1}(\Gamma,H(K))$. Actually we may, and we will,
even assume that they are pairwise $Z(H)^{\circ}(K)\times H_{\rm sc}(K)$-conjugate through the canonical isogeny
$Z(H)^{\circ}\times H_{\rm sc}\To{} H$, where 
$H_{\rm sc}$ denotes the simply connected covering group of the adjoint
group $H_{\rm ad}$.
We now try to construct a $\phi\in Z^{1}(\Gamma,H(K))$ that is
$H(K)$-conjugate to each $\phi_{i}$, and such that $\phi(\Gamma)\subset H(R)$.

\def\pG{{\mathfrak p}}
If $n=1$, we are obviously done. Otherwise, start with $\phi_{1}$ and pick elements
$(z_{i},h_{i})\in Z(H)^{\circ}(K)\times H_{\rm sc}(K)$ such that $^{z_{i} h_{i}}\phi_{1}=\phi_{i}$ in
$Z^{1}(\Gamma,H(K))$, for all $i=2,\cdots, n$. For any prime
$\pG\in S:=\Spec(R)\setminus \Spec(R_{1})$ there is some $i\geq 2$ such
that $\pG\in\Spec(R_{i})$. Pick such an $i$ and put $(z_{\pG},h_{\pG}):=(z_{i},h_{i})$.
Since $H_{\rm sc}$ is a split simply connected semisimple group over $K$, the
strong approximation theorem with respect to the finite set of archimedean places
ensures the existence of an element $h\in
H_{\rm sc}(R_{1})$ such that $h\in  H_{\rm sc}(R_{\pG}) h_{\pG}$ for
all $\pG\in S$. Then we have
$(^{h}\phi_{1})(\Gamma) \subset H(R_{1})$ and  
$(^{z_{\pG}h}\phi_{1})(\Gamma) \subset H(R_{\pG})$ for all $\pG\in S$. Now, since
$Z(H)^{\circ}$ is a split torus, say of dimension $d$, the
obstruction to finding $z\in Z(H)^{\circ}(R_{1})\cap\bigcap_{\pG}Z(H)^{\circ}(R_{\pG})z_{\pG}$
lies in the $d^{th}$ self-product of the ideal class group $\mathcal{C}\ell(K)^{d}$. Hence it vanishes over the Hilbert
class field $K^{h}$ of $K$ and we can at least  find
$z\in Z(H)^{\circ}(R_{1}^{h})\cap\bigcap_{\pG}Z(H)^{\circ}(R_{\pG}^{h})z_{\pG}$, where the
superscript $h$ denotes normalization in $K^{h}$.
Then we see that $^{zh}\phi_{1}(\Gamma)\subset H(R_{1}^{h})$ and
$^{zh}\phi_{1}(\Gamma)\subset H(R_{\pG}^{h})$ for all $\pG$.
Therefore we have $(^{zh}\phi_{1})(\Gamma) \subset H(R^{h})$ as desired.
\end{proof}

\begin{rem}[Orbits] \label{rem_orbits}
  With the notation of the theorem, the morphism
  $$\uZ^{1}(\Gamma,H)_{R_{0}}\To{\pi}\uH^{1}(\Gamma,H)_{R_{0}}=\{\pi(\phi),\phi\in \Phi_{0}\}$$
  provides a decomposition as a disjoint union of affine $R_{0}$-schemes
$$ \uZ^{1}(\Gamma,H)_{R_{0}}= \bigsqcup_{\phi\in \Phi_{0}} \pi^{-1}(\pi(\phi))
$$
Moreover, the action  $h\mapsto h\cdot\phi$ of $H_{R_{0}}$  provides a surjective morphism
of $R_{0}$-schemes 
$H_{R_{0}} \To{} \pi^{-1}(\pi(\phi))$, which at the level of \'etale
sheaves identifies  $ \pi^{-1}(\pi(\phi))$ with the quotient
$H_{R_{0}}/C_{H}(\phi)$ with $C_{H}(\phi)$ denoting the centralizer of $\phi$. In
particular, we see that this quotient sheaf is representable by an affine
scheme which identifies with the orbit $H\cdot\phi :=
\pi^{-1}(\pi(\phi))$ of $\phi$. To put it in different words, the
natural map $H\To{} H\cdot\phi$, $h\mapsto h\cdot\phi$ is a
$C_{H}(\phi)$-torsor for the \'etale topology.
\end{rem}

\subsection{Centralizers} Our next task is to study the centralizer $C_{H}(\phi)$ of
a cocycle $\phi\in Z^{1}(\Gamma,H(R))$. We have seen in Lemma
\ref{hom_smooth} that this is a smooth group scheme over
$R$. Moreover, by \cite[Thm 2.1]{PY}, its geometric fibers have
reductive neutral components. In other words, the ``neutral''
component $C_{H}(\phi)^{\circ}$ is a reductive group scheme over
$R$. Thus it follows from Prop 3.1.3 of \cite{conrad_luminy} that  the quotient sheaf
$\pi_{0}(C_{H}(\phi)):=C_{H}(\phi)/C_{H}(\phi)^{\circ}$ is
representable by a separated \'etale group scheme over $R$. Our aim
here is to prove that $\pi_{0}(C_{H}(\phi))$ is actually \emph{finite} over
$R$, at least when $R$ is a Dedekind $G$-ring and $\Gamma$ is a solvable group.


Note that $C_{H}(\phi)$ is also the subgroup of $\Gamma$-fixed points in $H$ for the
 ${\rm Ad}_{\phi}$-twisted action of $\Gamma$ on $H$. So, up to changing the action of
 $\Gamma$ on $H$, it suffices to study the finiteness of $\pi_{0}(H^\Gamma)$ as an $R$-scheme. 

 \begin{lemma} As above, assume $\Gamma$ has invertible order in $R$. 
   
i)  Let $H'\To{} H$ be a $\Gamma$-equivariant central isogeny of reductive group schemes
  over $R$. If $\pi_{0}(H^{\prime\Gamma})$ is finite over $R$, then so is $\pi_{0}(H^{\Gamma})$.

ii) Let $\Gamma'$ be a normal subgroup of $\Gamma$. If $\pi_{0}(H^{\Gamma'})$ and
$\pi_{0}((H^{\Gamma',\circ})^{\Gamma/\Gamma'})$ are finite, then so is $\pi_{0}(H^{\Gamma})$.
\end{lemma}
\begin{proof}
i)    Let $Z$ be the kernel of the isogeny, which is a finite central subgroup scheme of $H'$ of
    multiplicative type over $R$.  \emph{We claim that the sheaf $\underline
    H^{1}(\Gamma,Z)$ is representable by  a finite \'etale group scheme over $R$.} 
 Indeed, 
since the category of finite group schemes of multiplicative type over $R$ is abelian (\cite[IX.2.8]{MR0274459}), the sheaves $\underline Z^{1}(\Gamma,Z)$, $\underline B^{1}(\Gamma,Z)$ and, consequently, $\underline
H^{1}(\Gamma,Z)$ are finite group schemes of multiplicative type over $R$. Let us decompose
$Z=\prod_{p}Z_{p}$  into a finite product of its $p$-primary components. Then $\underline
H^{1}(\Gamma,Z)$ decomposes accordingly as a product of  $\underline
H^{1}(\Gamma,Z_{p})$. But $\underline H^{1}(\Gamma,Z_{p})$ is trivial unless $p$ divides
the order of $\Gamma$. Since this order is invertible in $R$, so is the rank of
$\underline H^{1}(\Gamma,Z)$, which is therefore \'etale over $R$.

Let us now look at  the following exact sequence of sheaves  of groups on
    the big \'etale site of $\Spec(R)$.  
  $$1 \To{} Z^{\Gamma} \To{} H^{\prime\Gamma} \To{} H^{\Gamma} \To{}
  \underline H^{1}(\Gamma,Z)\To{} \underline H^{1}(\Gamma, H').$$
In this sequence, we now know that all terms are $R$-schemes. 
Since $\underline H^{1}(\Gamma,Z)$ is  finite \'etale, the morphism $ H^{\Gamma} \To{}
  \underline H^{1}(\Gamma,Z)$ has to be trivial on the reductive subgroups
  $(H^{\Gamma})^{\circ}$, so that we deduce the following exact sequence :
  $$ Z^{\Gamma} \To{} \pi_{0}(H^{\prime\Gamma}) \To{} \pi_{0}(H^{\Gamma}) \To{}
  \underline H^{1}(\Gamma,Z)\To{} \underline H^{1}(\Gamma, H').$$
Now assume that $\pi_{0}(H^{\prime\Gamma})$ is finite over $R$, and therefore finite
\'etale. Since $Z^{\Gamma}$ is finite, its image in $\pi_{0}(H^{\prime\Gamma})$ is closed,
hence is finite \'etale. Therefore $\pi_{0}(H^{\Gamma})$ appears as the middle term of a five terms exact
sequence in which all the four remaining terms are finite \'etale group schemes (the
last one is only a pointed scheme and is \'etale by theorem \ref{h1_finite_etale}).
Going to a finite \'etale covering $R'$ of $R$ over which all these \'etale groups become
constant, we see that $\pi_{0}(H^{\Gamma})$  also becomes constant and finite over $R'$, hence
is already finite over $R$. 

ii) Put $H':=(H^{\Gamma'})^{\circ}$. Applying the $\Gamma/\Gamma'$-invariants functors to the
exact sequence $H'\hookrightarrow H^{\Gamma'}\twoheadrightarrow \pi_{0}(H^{\Gamma'})$, we get an exact sequence
$$ 1\To{} (H')^{\Gamma/\Gamma'} \To{} H^{\Gamma} \To{}
\pi_{0}(H^{\Gamma'})^{\Gamma/\Gamma'}\To{} \underline H^{1}(\Gamma/\Gamma',H').$$
By assumption, $\pi_{0}(H^{\Gamma'})$ is finite \'etale, so the invariant subgroup
$\pi_{0}(H^{\Gamma'})^{\Gamma/\Gamma'}$ is also \'etale and finite since it is closed. Therefore
the map from $H^{\Gamma}$ factors over $\pi_{0}(H^{\Gamma})$. Since
$((H')^{\Gamma})^{\circ}=(H^{\Gamma})^{\circ}$, we thus get an exact sequence
$$ 1\To{} \pi_{0}((H')^{\Gamma/\Gamma'}) \To{} \pi_{0}(H^{\Gamma}) \To{}
\pi_{0}(H^{\Gamma'})^{\Gamma/\Gamma'}\To{} \underline H^{1}(\Gamma/\Gamma',H').$$
All terms but possibly the middle one are finite \'etale (by  Theorem
\ref{h1_finite_etale} for the last one). Therefore, the middle one is also finite
\'etale, as desired.
\end{proof}

\begin{thm}\label{pi0fini}
  Assume that $H$ is reductive over a Dedekind $G$-ring $R$ and is acted upon by  a solvable
  finite group  $\Gamma$ with invertible order in $R$. Then $\pi_{0}(H^{\Gamma})$ is a finite
  \'etale group scheme over $R$.
\end{thm}
\begin{proof}
  As already mentioned in the beginning of this subsection, the problem is to prove
  finiteness. 
Thanks to item ii) of the last lemma, we can use induction to reduce the case of a solvable $\Gamma$ to the
case of an abelian $\Gamma$, and then further reduce to the case of a cyclic $\Gamma$. So
let us assume that $\Gamma$ is cyclic.

By Theorem 5.3.1 of \cite{conrad_luminy}, there is a unique closed semi-simple subgroup scheme
$H_{\rm der}$ of $H$ over $R$ that represents the sheafification of the set-theoretical
derived subgroup and such that the quotient $H/H_{\rm der}$ is a torus.
Then the natural morphism $Z(H)^{\circ}\times H_{\rm der}\To{} H$ is a central isogeny by the
fibrewise criterion, and moreover is $\Gamma$-equivariant (here
$Z(H)^{\circ}$ denotes the maximal central torus of $H$).
Further, by Exercise 6.5.2 of \cite{conrad_luminy}, there is a canonical central isogeny
 $H_{\rm sc}\To{} H_{\rm der}$ over $R$, such that all the geometric fibers of $H_{\rm sc}$ are
 simply connected semi-simple groups. Being canonical, the action of $\Gamma$ on $H_{\rm
   der}$ lifts uniquely to $H_{\rm sc}$.
Let us now consider the $\Gamma$-equivariant central isogeny $Z(H)^{\circ}\times H_{\rm sc}\To{} H$.
By item i) of  the previous lemma, it suffices to prove the finiteness of
$\pi_{0}((Z(H)^{\circ})^{\Gamma})$ and that of $\pi_{0}((H_{\rm sc})^{\Gamma})$. The first
one is clear since $(Z(H)^{\circ})^{\Gamma}$ is smooth and of multiplicative type. For the
second one, we use Steinberg's theorem \cite[Thm 8.2]{steinberg_endo}, which can be
applied here since a generator of $\Gamma$ induces a semisimple automorphism of each
geometric fiber of $H_{\rm sc}$, and which ensures  that
$(H_{\rm sc})^{\Gamma}$ has connected fibers, so that $\pi_{0}((H_{\rm sc})^{\Gamma})$ is
even the trivial group.
\end{proof}

\subsection{Splitting a reductive group scheme over a finite flat
  extension}
A reductive group scheme over any ring $R$ is known to split over a
faithfully \'etale extension of $R$. However, in general it won't split over a
finite \'etale extension. Already over $R=\ZM$, there are
examples where a non-trivial Zariski localization is needed. 
Here we use a similar argument as in the proof of Theorem \ref{representatives} in order to
prove that if $R$ is a localization of a ring of integers, then a
reductive group scheme over $R$ splits over a suitable finite flat extension of
$R$.
\def\pG{{\mathfrak p}}
\begin{prop}\label{split_red_gp}
  Assume that $H$ is reductive over  a normal subring $R$ of a number field
  $K$. Then there is a finite extension $K_{0}$ of $K$ such that $H$
  splits over the normalization $R_{0}$ of $R$ in $K_{0}$. 
\end{prop}
\begin{proof}
Pick a faithfully \'etale $R'$ over $R$
such that $H$ splits over $R'$. Let $R'=\prod_{i=1}^{n}R'_{i}$ be the
decomposition of $R'$ in connected components. Of course, if $n=1$ we
are done, so we assume $n>1$. Replacing $R$ and all $R'_{i}$ by their
normalization in the residue field at 
some generic point of $R'_{1}\otimes_{R}\cdots\otimes_{R}R'_{n}$, we may
assume that each $R'_{i}$ is a localization of $R$
(i.e. $\Spec(R')\To{}\Spec(R)$ is a Zariski cover).
Let $T_{i}\subset H_{R'_{i}}$ be a split maximal torus defined over $R'_{i}$. 
The generic fibers $T_{i,K}$ are split maximal tori in $H_{K}$, hence
are conjugate under $H(K)$. After replacing $K$ by a finite extension,
we may assume that they are conjugate under $H_{\rm sc}(K)$. So there
are elements $h_{i}\in H_{\rm sc}(K)$, $i>1$, such that $^{h_{i}}T_{1,K}=T_{i,K}$.
Put $S:=\Spec(R)\setminus\Spec(R'_{1})$ (a finite set) and for $\pG\in
S$, pick a $i\geq 2$ such that $\pG\in\Spec(R'_{i})$ and put
$h_{\pG}=h_{i}$. Then by the strong approximation theorem, there is
some $h\in H_{\rm sc}(R'_{1})$ such that 
$h\in H_{\rm  sc}(R_{\pG})h_{\pG}$ for all $\pG\in S$. We claim that
the $K$-torus $T_{K}:={^{h}T}_{1,K}$ of $H_{K}$ extends (canonically) to a an
$R$-subtorus of $H$. Indeed, recall that the functor ${\rm Tor}_{H/R}$ which to any $R$-algebra $R'$
associates the set  of maximal subtori of $H_{R'}$ is known to be representable by a
smooth quasi-affine, hence in particular separated, scheme over $R$,
see e.g. \cite[Thm 3.2.6]{conrad_luminy}. By construction, $T_{K}$
comes from a $R'_{1}$-torus $T'_{1}$ of $H_{R'_{1}}$, which is 
unique by separateness of ${\rm Tor}_{H/R}$. Similarly for each $\pG\in S$, there is a
unique extension of $T_{K}$ to a $R_{\pG}$-torus $T_{\pG}$ of
$H_{R_{\pG}}$ and the latter is actually defined over a Zariski open
neighbourhood of $\pG$. This means
that the $K$-section of ${\rm Tor}_{H/R}$ given by $T_{K}$ extends uniquely to a Zariski
covering of $\Spec R$, hence extends to $\Spec R$ itself, whence a maximal torus $T_{R}$ of $H$
extending $T_{K}$. Since $T_{K}$ is split and since tori are known to split over finite
\'etale coverings of the base, $T_{R}$ is split too.

Now, the root subspaces of $T_{R}$ in ${\rm Lie}(H)$ are rank $1$ locally free $R$-modules.
Replacing $K$ by its  Hilbert class field, we may assume that they are actually
free. Since $R$ is connected, this is enough for $H$ to split over $R$,
\emph{cf} the paragraph below Definition 5.1.1 of \cite{conrad_luminy}.
\end{proof}

\begin{rem}
Exercise 7.3.9 of \cite{conrad_luminy} provides another proof that
does not use strong approximation. Namely, start by enlarging $R$ so that
$H_{K}$ splits. So $H_{K}$ contains a Borel subgroup $B_{K}$, which
extends uniquely to a Borel subgroup scheme $B$ of $H$ by  the
properness of the scheme of Borel subgroups. Let $(H',B')$ be the
constant split pair over $R$ that extends $(H_{K},B_{K})$. Then the
functor $\mathcal{I}$ of isomorphisms between $(H',B')$ and $(H,B)$ is a torsor over
the automorphism group $\mathcal{A}=B'_{\rm ad}\rtimes{\rm Out}(H)$ of the pair $(H',B')$. Its class
in $H^{1}_{\et}(\Spec R,\mathcal{A})$ has trivial image in 
$H^{1}_{\et}(\Spec R,{\rm Out}(H))$ since $H$ is split over $K$. On the other hand 
$H^{1}_{\et}(\Spec R, B'_{\rm ad})$ is isomorphic to a sum of copies of 
$\Pic(R)=H^{1}_{\et}(\Spec R, \mathbb{G}_{m})$. 
So let $K_{0}$ be the Hilbert class field of $K$. Since
$\Pic(R)\To{}\Pic(R_{0})$ has trivial image, $\mathcal{I}$ becomes a trivial
$\mathcal{A}$-torsor over $R_{0}$, hence $H$ splits over $R_{0}$.
\end{rem}

\section{Twisted Poincar\'e polynomials}

\def\Sym{{\operatorname{Sym}}}
\def\XX{{\mathbb X}}
 \def\MM{{\mathbb M}}
\def\HH{{\hat H}}
 
 \subsection{Some characteristic polynomials attached to root data}
Let $\Sigma=(\XX,\XX^{\vee},\Delta,\Delta^{\vee})$ be a based root
datum with Weyl group $\Omega$ and group of automorphisms ${\rm Aut}(\Sigma)$.
 Both $\Omega$ and  ${\rm Aut}(\Sigma)$ embed as groups of linear automorphisms of
 $\XX$ and $\XX^{\vee}$, and ${\rm Aut}(\Sigma)$ normalizes $\Omega$. In particular
 ${\rm Aut}(\Sigma)$ acts on the ring of $\Omega$-invariant polynomials
 $\Sym^{\bullet}(\XX)^{\Omega}$ on $\XX^{\vee}$ and on the conormal module
 $\MM^{\bullet}$ of
 $\XX^{\vee}/\Omega$ along the zero section
 $$ \MM^{\bullet}:= \Sym^{\bullet>0}(\XX)^{\Omega} / (\Sym^{\bullet>0}(\XX)^{\Omega})^{2}.$$
 For any $\alpha\in{\rm Aut}(\Sigma)$ we consider its weighted
 characteristic polynomial on $\MM^{\bullet}_{\QQ}$
 \begin{equation}
   \label{eq:defchisigma}
 \chi_{\alpha|\MM^{\bullet}}(T):=\prod_{d>0} \det\left(T^{d}-\alpha\,|\MM^{d}_{\QQ}\right) \,\,\in \ZZ[T].   
 \end{equation}
A priori $\MM^{\bullet}$ may have torsion, but a result of Demazure \cite[Thm  3]{demazure} shows that
$\MM^{\bullet}\otimes \ZZ[\frac 1{|\Omega|}]$ is torsion
 free, so we deduce the following 
 \begin{rem} \label{rk_Demazure}
   If $\ell$ does not divide the order of $\Omega$, the image of
   $\chi_{\alpha|\MM^{\bullet}}$ in $\FF_{\ell}[T]$ is the weighted
   characteristic polynomial of $\alpha$ on
   $\MM^{\bullet}_{\FF_{\ell}}$.
 \end{rem}

 Since  $\Omega$ is a reflection subgroup of  $\Aut(\XX^{\vee})$,
 the $\bar\QQ$-algebra $\Sym^{\bullet}(\XX)^{\Omega}_{\bar\QQ}$ is known to be a
weighted polynomial algebra. More precisely,  any graded  section
$\MM^{\bullet}_{\bar\QQ} \hookrightarrow \Sym^{\bullet>0}(\XX)^{\Omega}_{\bar\QQ}$
induces a graded isomorphism
$ \Sym(\MM^{\bullet}_{\bar\QQ}) \To\sim \Sym^{\bullet}(\XX)^{\Omega}_{\bar\QQ}$ for the
unique ring grading on  $\Sym(\MM^{\bullet}_{\bar\QQ})$ such that $\MM^{d}$ is in degree $d$
for all $d$.
In particular, for $\alpha=1$, we have $\chi_{1|\MM^{\bullet}}(T)=\prod_{i=1}^{r}(T^{d_{i}}-1)$
where $r={\rm rk}_{\ZZ}(\XX)$ and $d_{1}\leq\cdots \leq d_{r}$ are
 the so-called \emph{fundamental degrees} of $\Omega$ acting on $\XX^{\vee}$. 
Here $d_{r}$ is known as the \emph{Coxeter number} of $\Sigma$ and is
the maximal $n\in\NM$ such that $\Phi_{n}(T)$ divides  $\chi_{1|\MM^{\bullet}}(T)$.

More generally, using  an ${\rm Aut}(\Sigma)$-equivariant  section $\MM^{\bullet}_{\bar\QQ}
\hookrightarrow \Sym^{\bullet>0}(\XX)^{\Omega}_{\bar\QQ}$, we see that, at least when $\alpha$ has finite order,
  $\chi_{\alpha|\MM^{\bullet}}(T)=(T^{d_{1}}-\varepsilon_{1,\alpha})\cdots(T^{d_{r}}-\varepsilon_{r,\alpha})$ 
  where the  $\varepsilon_{i,\alpha}$ are as  in Lemma 6.1 of
  \cite{springer-regular}.
Note that in this case, $\chi_{\alpha|\MM^{\bullet}}$ is a product of cyclotomic
polynomials  
and we have 
$\chi_{\alpha|\MM^{\bullet}}=\chi_{\alpha^{-1}|\MM^{\bullet}}$. The maximal $n\in\NM$ such that $\Phi_{n}(T)$ divides
  $\chi_{\alpha|\MM^{\bullet}}(T)$ has been known in the literature as
  the \emph{twisted Coxeter number} associated to $\alpha$. Now, a
  fundamental consequence of Springer's work in this setup is the
  following result.

\begin{prop} \label{prop_Springer}
  $\chi_{\alpha|\MM^{\bullet}}(T)$ is the lowest common multiple in $\bar\QQ[T]$ of the
      characteristic polynomials $\chi_{\omega\alpha|\XX}(T)$ of $\omega\alpha$ on
      $\XX_{\bar\QQ}$, where $\omega$ runs over $\Omega$.
\end{prop}
\begin{proof}
When $\alpha$ has finite order, this is a reformulation of Theorem 6.2 (i) of
\cite{springer-regular}. In general, this follows from the decompositions
$\XX_{\QQ}=\XX_{\QQ}^{\Omega}\oplus \QQ\langle\Delta\rangle$ and
$\MM^{\bullet}_{\QQ}=\XX_{\QQ}^{\Omega}\oplus
\Sym^{\bullet>0}(\QQ\langle\Delta\rangle)^{\Omega} /
(\Sym^{\bullet>0}(\QQ\langle\Delta\rangle)^{\Omega})^{2} $ and the fact that
$\alpha_{|\QQ\langle\Delta\rangle}$ has finite order, while 
$\omega\alpha_{|\XX_{\QQ}^{\Omega}}=\alpha_{|\XX_{\QQ}^{\Omega}}$ for all
$\omega\in\Omega$.
\end{proof}

\subsection{Application to reductive groups} Let $\HG$ be a connected reductive
group  over an algebraically closed field $L$ of characteristic
$\ell$. Attached to $\HG$ is a root datum
$\Sigma=(\XX_{\HG},\XX_{\HG}^{\vee},\Delta_{\HG},\Delta_{\HG}^{\vee})$ as above, that
comes with an 
identification ${\rm Aut}(\Sigma)={\rm Out}(\HG)$. Here $\Sigma$ denotes the limit over all
Borel pairs $(\HB,\HT)$ of $\HG$ of the
root data $(X^{*}(\HT),X_{*}(\HT),\Delta(\HB),\Delta(\HB)^{\vee})$.
Now, let $\beta$ be
an automorphism of $\HG$ with image $\alpha$ in ${\rm Out}(\HG)$. Using the
notation of the last subsection, we put
 \begin{equation}
   \label{eq:defchi}
 \chi_{\HG,\beta}(T):=\chi_{\alpha|\MM^{\bullet}}(T)\in \ZZ[T].   
 \end{equation}
Further, we denote by $h_{\HG,\beta}$ the twisted Coxeter number of
$\Sigma$ associated to $\alpha$  and we put 
 \begin{equation}
   \label{eq:defchistar}
 \chi_{\HG,\beta}^{*}(T):=\prod_{n\leq h_{\HG,\beta}} \Phi_{n}(T) \,\,\in \ZZ[T].   
 \end{equation}
The following result is crucial to track the ``banal'' primes in this paper.

\begin{prop} \label{prop:char_pol}
  Let $\beta$ be an automorphism of $\HG$.
      
      (1) If $\HH$ is a reductive subgroup of $\HG$ stable under $\beta$, then $\chi_{\HH,\beta}$
      divides $\chi_{\HG,\beta}$, $h_{\HH,\beta}\leq h_{\HG,\beta}$ and $\chi_{\HH,\beta}^{*}$
      divides $\chi_{\HG,\beta}^{*}$.
     
      (2) Let $t$ be a semi-simple element of $\HG(L)$ such that
      $\beta(t)=t^{q}$. Then $t$ has finite order, and this order
      divides $\chi_{\HG,\beta}(q)$. 
\end{prop}
\begin{proof}
  (1) As above, we denote by $\alpha$ the image of $\beta$ in ${\rm Out}(\HG)={\rm
    Aut}(\Sigma_{\HG})$, which acts on the ``abstract root lattice'' $\XX_{\HG}$.
  Similarly, we denote by $\alpha_{\HH}$
  the image of $\beta$ in ${\rm Aut}(\Sigma_{\HH})$, which acts on $\XX_{\HH}$.
  Let $(B_{\HH},T_{\HH})$ be a Borel pair of $\HH$, so that we have an identification
  $\Sigma_{\HH}=\Sigma(B_{\HH},T_{\HH})$, and in particular $\XX_{\HH}=X^{*}(T_{\HH})$.
  Through this identification, the action of $\alpha_{\HH}$ on $\XX_{\HH}$ corresponds to the action of
  $\Ad_{h}\circ \beta$ on $X^{*}(T_{\HH})$ for any $h\in \HH$ such that $\Ad_{h}\circ \beta$
  stabilizes the pair $(B_{\HH},T_{\HH})$. More generally, for $\omega_{\HH}\in
  \Omega_{\HH}$ (the ``abstract'' Weyl group of $\HH$), the action of
  $\omega_{\HH}\alpha_{\HH}$ on $\XX_{\HH}$ corresponds to the action of
  $\Ad_{nh}\circ \beta$ on $X^{*}(T_{\HH})$, where $h$ is as above, and  $n\in
  N_{\HH}(T_{\HH})$ is a lift of  $\omega_{\HH}$.
  
  Now, let $(\HB,\HT)$ be a Borel pair in $\HG$ that induces $(B_{\HH},T_{\HH})$ on $\HH$.
  As above, we have an identification  $\Sigma_{\HG}=\Sigma(\HB,\HT)$, and in
  particular $\XX_{\HG}=X^{*}(\HT)$, from which we deduce a surjective morphism
  $\XX_{\HG}\twoheadrightarrow \XX_{\HH}$. 
  With $h$ and $n$ as above, pick also $m\in C_{\HG}(T_{\HH})$ such that $\Ad_{mnh}\circ\beta$
  stabilizes $\HT$. Observe that the action of this automorphism on $X^{*}(\HT)$ induces the
  action of $\Ad_{nh}\circ\beta$ on $X^{*}(T_{\HH})$.
  On the other hand, there is a unique $\omega\in \Omega_{\HG}$ such that, for any
  $n'\in N_{\HG}(\HT)$ above $\omega^{-1}$, the automorphism $\Ad_{n'mnh}\circ\beta$
  stabilizes also $\HB$. Then the action of this automorphism on $\XX_{\HG}$ is $\alpha$.
  Hence it follows that the action of $\omega\alpha$ on $\XX_{\HG}$ induces the action of
  $\omega_{\HH}\alpha_{\HH}$ on $\XX_{\HH}$.
  Therefore the characteristic polynomial
  $\chi_{\omega_{\HH}\alpha_{\HH}|\XX_{\HH}}(T)$ divides the characteristic polynomial
  $\chi_{\omega\alpha|\XX_{\HG}}(T)$. By Proposition \ref{prop_Springer}, we deduce that
  $\chi_{\alpha_{\HH}|\MM_{\HH}^{\bullet}}$ divides $\chi_{\alpha|\MM_{\HG}^{\bullet}}$, as desired.

  (2) The connected centralizer $\HH:=C_{\HG}(t)^{\circ}$ contains $t$ and is stable under
  $\beta$, since $\beta(\HH)=C_{\HG}(t^{q})^{\circ}$ contains $\HH$ and has same dimension as $\HH$. 
  Hence by (1) it suffices to prove the statement when $t$ is central in $\HG$. Then 
  we may compose $\beta$ with some $\Ad_{g}$ so that it fixes a pinning of $\HG$, with
  maximal torus $\HT$.
  Now, consider $t$ as a homomorphism $X^{*}(\HT)\To{} L^{\times}$. Since 
  $\beta(t)=t^{q}$,
  we see that this homomorphism factors over the cokernel of the endomorphism
  $\beta-q$ of $X^{*}(\HT)$. But this cokernel is finite of  order
  $\chi_{\HT,\beta}(q)=\det(q-\beta)$. 
So $t$ has order dividing $\chi_{\HT,\beta}(q)$, hence also dividing $\chi_{\HG,\beta}(q)$. 
\end{proof}

\subsection{The Chevalley-Steinberg formula}

Let now $G$ be a reductive group over $\FF_{q}$.
Let $G^{*}$ be a split form of $G_{\bar\FF_{q}}$ over $\FF_{q}$
and pick an isomorphim
$\psi:\,G_{\bar\FF_{q}}\To\sim G^{*}_{\bar\FF_{q}}$. Then  $\Fr:= {^{\rm Frob}\psi^{-1}}\circ \psi$
is an automorphism of $G_{\bar\FF_{q}}$ (where ${\rm Frob}$ denotes the Frobenius
automorphism of $\bar\FF_{q}$), and we have the following Chevalley-Steinberg
formula for the number of $\FF_{q}$-rational points of $G$.
\begin{thm}[Chevalley-Steinberg] \label{thm:chevallet-steinberg}
$ |G(\FF_{q})| = q^{N}.\chi_{G,\Fr}(q)$, where $N$ is the dimension of a maximal unipotent subgroup of $G_{\bar\FF_{q}}$.
\end{thm}
\begin{proof}
  This formula is stated for absolutely simple adjoint groups in Theorems 25 and 35 of
  \cite{steinberg-lectures}. It is also true for a torus $S$, since we have an isomorphism
  $X_{*}(S)/(q\Fr-1)X_{*}(S)\To\sim S(\FF_{q})$ \cite[(5.2.3)]{DelLu}, from which it
  follows 
  that $|S(\FF_{q})|=|\det(q\Fr-1)|=|\chi_{S,\Fr^{-1}}(q)|=\chi_{S,\Fr}(q)$.

  To prove the formula in general, we first observe that if $G\To{\pi}G'$ is an isogeny, then
  $|G(\FF_{q})|=|G'(\FF_{q})|$. Indeed, the kernel $H:=\ker(\pi)(\bar\FF_{q})$ is a
finite group with an action of the arithmetic Frobenius ${\rm Frob}$ and we have an exact sequence
$$ 1\To{} H^{{\rm Frob}} \To{} G(\FF_{q})\To{} G'(\FF_{q})\To{} H^{1}(\FF_{q}, H)=H_{{\rm Frob}}\To{} 1$$
where the last map is surjective because $H^{1}(\FF_{q},G)=1$. But we also have an exact
sequence $H^{{\rm Frob}}\hookrightarrow H\To{{\rm Frob}-\id}H\twoheadrightarrow H_{{\rm Frob}}$ which shows
that $|H^{{\rm Frob}}|=|H_{{\rm Frob}}|$, so we get $|G(\FF_{q})|=|G'(\FF_{q})|$.

Now we deduce the formula for general $G$ by applying  this observation to the isogeny
$G\To{}G_{\rm ab}\times G_{\rm ad}$ and decomposing $G_{\rm ad}$ as a product of
restriction of scalars of absolutely simple groups.
\end{proof}

\subsection{Kostant's section theorem}\label{sec:kost-sect-theor}
We return to the setting of a reductive group $\HG$ over an algebraically closed field
$L$ and, for simplicity, we assume that $\HG$ is simple adjoint. We also assume that the characteristic
$\ell$ of $L$ does not divide the order of the Weyl group $\Omega_{\HG}$.

Let us fix a pinning $\varepsilon=(\HT, \HB, (X_{\alpha})_{\alpha\in\Delta})$ of $\HG$.
The sum  $E=\sum_{\alpha\in\Delta }X_{\alpha}$ is then a regular nilpotent
  element of $\Lie(\HG)$. The sum $H=\sum_{\beta\in\Phi^{+}}
  \beta^{\vee}\otimes 1\in X_{*}(\HT)\otimes L=\Lie(\HT)$ is a regular semisimple
  element of $\Lie(\HG)$ and the pair $(H,E)$ is part of a unique
  ``principal'' $\mathfrak{s}\mathfrak{l}_{2}$-triple $(F,H,E)$.
  Denote by $\Lie(\HG)_{E}$ the centralizer of $E$ in $\Lie(\HG)$. Under our assumption on $\ell$,  Veldkamp has proved that Kostant' section theorem still
  holds, \cite[Prop 6.3]{veldkamp}.
  This states that the  map
  $$\Lie(\HG)_{E}\To{}  \Lie(\HG)\sslash\HG, \,\,\,\, X\mapsto (F+X)\hbox{ mod }\HG$$ is
  an isomorphism of varieties. Moreover, seeing $\lambda:=\sum_{\beta\in
    \Phi^{+}}\beta^{\vee}$ as a cocharacter of $\HG$, this map is $\mathbb{G}_{m}$-equivariant for
the action $(t,y)\mapsto t\cdot y:= t^{2}\Ad_{\lambda(t)}(y)$ on the LHS and   the 
action $(t,x)\mapsto t^{2}x$ on the RHS.
Composing with the Chevalley isomorphism (which also holds in this context) yields an isomorphism of $\GG_{m}$-varieties
$$\pi:\,\, \Lie(\HG)_{E}\To{\sim}  \XX^{\vee}_{L}/\Omega_{\HG}.$$

Now let  $\Aut(\HG)_{\varepsilon}$ be the group of automorphisms of $\HG$ that preserve the
pinning $\varepsilon$. This group  fixes $E$, so it acts on $\Lie(\HG)_{E}$. It also acts on
 $\Lie(\HG)\sslash\HG$ and $\XX^{\vee}_{L}/\Omega_{\HG}$, and both the Chevalley map and the
 Kostant map are equivariant for these actions.
Identifying ${\rm Out}(\HG)$ with $\Aut(\HG)_{\varepsilon}$, we thus  get on conormal modules at the origin an isomorphism
$$ \MM^{\bullet}_{L} \To\sim  (\Lie(\HG)_{E})^{*}$$
which is ${\rm Out}(\HG)$-equivariant, as well as $\GG_{m}$-equivariant for the (dual) action described above on the RHS
and the action associated with ``twice the $\bullet$-grading'' on the LHS. 
So we deduce the following result.
  
  \begin{prop}\label{prop:kostant}
    For $t\in L^{\times}$ and $\beta\in \Aut(\HG)_{\varepsilon}$ of finite order, we have
    $$
    \det\left(t^{2}\Ad_{\lambda(t)}\Ad_{\beta}-\id\,|\Lie(\HG)_{E}\right) =
     \pm \chi_{\HG,\beta}(t^{2}).
    $$
  \end{prop}
  \begin{proof}
  Indeed, by the foregoing discussion, the LHS equals
  $$\prod_{d}\det\left(t^{2d}\beta^{-1} -\id | \MM^{d}_{L}
  \right)=\det(\beta)^{-1}\prod_{d}\det\left(t^{2d} -\beta | \MM^{d}_{L} \right). $$
  But $\det(\beta)=\pm 1$ since it is a root of unity in $\QQ$, while Remark
  \ref{rk_Demazure} ensures that $\det\left(t^{2d} -\beta | \MM^{d}_{L}
  \right)=\chi_{\HG,\beta}(t^{2})$ in $L$.
  \end{proof}

\bibliographystyle{alpha}
\bibliography{Parameters.bib}

\end{document}